\def\inte#1{
\displaystyle\mathop{#1\kern0pt}^\circ }
\let\grad\nabla
\def\virgp{\raise 2pt\hbox{,}}
\def\cdotpv{\raise 2pt\hbox{;}}
\def\eqdefa{\buildrel\hbox{\footnotesize def}\over =}
\def\C{\mathop{\bf C\kern 0pt}\nolimits}
\def\DD{\mathop{\bf D\kern 0pt}\nolimits}
\def\K{\mathop{\bf K\kern 0pt}\nolimits}
\def\N{\mathop{\bf N\kern 0pt}\nolimits}
\def\Q{\mathop{\bf Q\kern 0pt}\nolimits}
\def\R{\mathop{\bf R\kern 0pt}\nolimits}
\def\SS{\mathop{\bf S\kern 0pt}\nolimits}
\def\ZZ{\mathop{\bf Z\kern 0pt}\nolimits}
\def\TT{\mathop{\bf T\kern 0pt}\nolimits}
\def\na{\nabla}
\newcommand{\beq}{\begin{equation}}
\newcommand{\eeq}{\end{equation}}
\newcommand{\ben}{\begin{eqnarray}}
\newcommand{\een}{\end{eqnarray}}
\newcommand{\beno}{\begin{eqnarray*}}
\newcommand{\eeno}{\end{eqnarray*}}
\newtheorem{defi}{Definition}[section]
\newtheorem{example}[defi]{Example}
\newtheorem{thm}[defi]{Theorem}
\newtheorem{lem}[defi]{Lemma}
\newtheorem{rmk}[defi]{Remark}
\newtheorem{col}[defi]{Corollary}
\newtheorem{prop}[defi]{Proposition}
\renewcommand{\theequation}{\thesection.\arabic{equation}}
\begin{document}

\title{ Linear instability of $Z$-pinch in plasma (I):  Inviscid case}
\author{Dongfen Bian \footnote{ School of Mathematics and Statistics,
 Beijing Institute of Technology, Beijing 100081, China; Division of Applied Mathematics, Brown University, Providence, Rhode Island 02912, USA. Email: {\tt dongfen\_bian@brown.edu/biandongfen@bit.edu.cn}.} \and
Yan Guo\footnote{Division of Applied Mathematics, Brown University, 
Providence, Rhode Island 02912, USA. Email: {\tt yan\_guo@brown.edu}.} \and
Ian Tice \footnote{Department of Mathematical Sciences, Carnegie Mellon University,  Pittsburgh, PA 15213, USA. Email: {\tt iantice@andrew.cmu.edu}.} }

\maketitle

\begin{abstract} 	
The z-pinch is a classical steady state for the MHD model, where a confined plasma
fluid is separated by vacuum, in the presence of a magnetic field which is generated
by a prescribed current along the z direction. We develop a variational framework to
study its stability in the absence of viscosity effect, and demonstrate for the first time that any such a z-pinch is always unstable. 
Moreover, we discover a sufficient condition such that the eigenvalues can be unbounded, which leads to ill-posedness of
the linearized MHD system.
\end{abstract}

\noindent {\sl Keywords:}  Compressible MHD system; $z$-pinch plasma;  vacuum; linear instability.

\vskip 0.2cm

\noindent {\sl AMS Subject Classification (2020): 35Q35, 35Q30, 76W05, 76E25 }

\renewcommand{\theequation}{\thesection.\arabic{equation}}
\setcounter{equation}{0}
\setcounter{tocdepth}{1}
\tableofcontents
\newpage
\section{Introduction}

\subsection{Formulation of the problem in Eulerian coordinates}

In the present paper, we are concerned with the plasma-vacuum MHD system, where the plasma is confined inside a rigid wall and isolated from it by a region of low enough density to be treated as a ``vacuum''. This model
describes confined plasmas in a closed vessel, but separated from the wall by a vacuum region. We consider the cylindrical domain $\big\{(x_1,x_2,z)\in \mathbb{R}^2\times 2\pi \mathbb{T}|x_1^2+x_2^2\leq r_w^2\big\}$, which is meant to model the container holding the plasma and the container divides into two disjoint pieces, $\Omega(t)=\big\{\rho(t)>0\big\}$ and $\Omega^v(t)=\big\{\rho(t)=0\big\}$, with the free boundary $\Sigma_{t, pv} \eqdefa \overline{\Omega(t)} \cap \overline{\Omega^v(t)}$  and the perfectly conducting wall $\Sigma_w$ on the outside $r_w$.
For smooth solutions, the compressible MHD system in the plasma region can be written in Eulerian coordinates as
\begin{equation}\label{mhd-plasma-1-viscosity}
\begin{cases}
&\partial_t \rho + (u\cdot \nabla) \rho + \rho \nabla\cdot u=0\quad \mbox{in} \quad \Omega(t),\\
&\rho(\partial_t u + (u\cdot \nabla) u) + \nabla (p+\frac{1}{2}|B|^2)=(B\cdot \nabla) B \quad \mbox{in} \quad \Omega(t),\\
&\partial_t B -\nabla \times (u \times B)=0 \quad \mbox{in} \quad \Omega(t),\\
&\nabla \cdot B=0\quad \mbox{in} \quad \Omega(t),
\end{cases}
\end{equation}
where the vector-field $u = (u_1, u_2,u_3)$ denotes the Eulerian plasma velocity field, $\rho$ denotes the density of the fluid, $B=(B_1, B_2,B_3)$ is magnetic field, and $p$ denotes the pressure function. The above system \eqref{mhd-plasma-1-viscosity} is called the inviscid compressible MHD equations which describe the motion of a perfectly conducting fluid interacting with a magnetic field.
Here, the open, bounded subset $\Omega(t) \subset \mathbb{R}^3$ denotes the changing volume occupied by
the plasma with  $\rho(t)>0$ in $\Omega(t)$. 
We have here considered the polytropic gases, the constitutive relation, which is also called the
equation of state, and is given by $p=A\,\rho^{\gamma}$, where $A$ is an entropy constant and $\gamma>1$ is the adiabatic gas exponent.

From the mass conservation equation in \eqref{mhd-plasma-1-viscosity} and pressure satisfying $\gamma $ law, one can get that 
\begin{equation}\label{pressure}
\partial_t p + u\cdot \nabla p + \gamma p \nabla\cdot u=0.
\end{equation}

In the vacuum domain $\Omega^v(t)$, we have the div-curl system
\begin{equation}\label{div-curl-1}
\begin{cases}
& \nabla \cdot \widehat{B}=0 \quad \mbox{in} \quad \Omega^v(t),\\
& \nabla \times \widehat{B}=0 \quad \mbox{in} \quad \Omega^v(t)
\end{cases}
\end{equation}
which describes the vacuum magnetic field $\widehat{B}$. Here, we consider so-called pre-Maxwell dynamics. That is, as usual in nonrelativistic
MHD, we neglect the displacement current $\frac{1}{c^2} \partial_t \widehat{E}$, where $c$ is the speed of the light
and $\widehat{E}$ is the electric field. In general, quantities with a hat $\widehat{\cdot}$ denote vacuum variables.

We assume that the plasma region $\Omega(t)$ with the fluid density $\rho(t)>0$ is isolated from the fixed perfectly conducting wall $\Sigma_w$ by a vacuum region $\Omega^v(t)$, which makes the plasma surface free to move.  Hence, this model is a free boundary problem of the combined plasma-vacuum system.
To solve this system, we need to prescribe appropriate boundary conditions.
On the perfectly conducting wall $\Sigma_w$, the normal component of the magnetic field must vanish:
\begin{equation}\label{wall-bdry-vacuum-1}
n\cdot \widehat{B}|_{\Sigma_w}=0,
\end{equation}
where $n$ is the outer unit normal to the boundary of $ \Sigma_w$.

We  prescribe the following jump conditions on the free boundary to connect the magnetic fields across the surface. These arise
from the Maxwell's equations and the continuum mechanics 
\begin{equation}\label{bdry-plasma-vacuum-1}
\begin{cases}
& n \cdot B=n \cdot \widehat{B} \quad \mbox{on} \quad \Sigma_{t, pv},\\
& \bigg[\bigg[p+\frac{1}{2}|B|^2\bigg]\bigg] =0 \quad \mbox{on} \quad \Sigma_{t, pv},
\end{cases}
\end{equation}
where  for any quantity $q$, $[[q]]_{\Sigma_{t, pv}}$ denotes $\widehat{q}-q$ on the free boundary $\Sigma_{t, pv}$, and $n $ is the outer normal to the free boundary of $ \Omega(t)$.

In conclusion, denote $\mathcal{V}(\Sigma_{t, pv})$ as the normal velocity of the free surface $\Sigma_{t, pv}$,  then the plasma-vacuum compressible MHD system can be written in Eulerian coordinates as
\begin{equation}\label{mhd-plasma-E-1-viscosity}
\begin{cases}
&\partial_t \rho + \nabla\cdot( \rho \, u)=0\quad \mbox{in} \quad \Omega(t),\\
&\rho(\partial_t u + (u\cdot \nabla) u) + \nabla (p+\frac{1}{2}|B|^2)=(B\cdot  \nabla) B \quad \mbox{in} \quad \Omega(t),\\
&\partial_t B - \nabla \times (u \times B)=0, \quad  \nabla \cdot B=0 \quad \mbox{in} \quad \Omega(t),\\
&\nabla \cdot B=0\quad \mbox{in} \quad \Omega(t),
\\
& \nabla \cdot \widehat{B}=0, \quad \nabla \, \times \, \widehat{B}=0 \quad \mbox{in} \quad \Omega^v(t),\\
&\mathcal{V}(\Sigma_{t, pv})=u\cdot n \quad \mbox{on} \quad \Sigma_{t, pv},\\
& n\cdot B=n\cdot \widehat{B}\quad \mbox{on} \quad \Sigma_{t, pv},\\
& p+\frac{1}{2}|B|^2-\frac{1}{2}|\widehat{B}|^2=0 \quad \mbox{on} \quad \Sigma_{t, pv},\\
&n\cdot \widehat{B}|_{\Sigma_w}=0,\, \rho|_{t=0}=\rho_0, \, u|_{t=0}=u_0, \, B|_{t=0}=B_0.
\end{cases}
\end{equation}

\subsection{Background}
 The  $z$-pinch instability  in plasma for the compressible MHD system \eqref{mhd-plasma-E-1-viscosity} with vacuum and free boundary is an interesting and long-time open problem since the pinch experiments of the 1960s and 1970s, see \cite{Mikhailovshii, Schmit} and the references therein.  There are many numerical simulations \cite{Freidberg,Goedbloed-poedts}. 
 Recently, Guo-Tice \cite{Guo-Tice-inviscid} and \cite{Guo-TIce-viscous} proved the 
 linear Rayleigh-Taylor instability for inviscid and viscous compressible fluids by introducing a new variational method.  Later on, using the variational framework, many authors considered the effects of magnetic field in the fluid equations. Jiang-Jiang \cite{jiang-jiang-ARMA} considered the magnetic inhibition theory
 in non-resistive incompressible MHD fluids. Jiang-Jiang \cite{jiang-jiang-CV} considered the nonlinear stability and instability in the Rayleigh-Taylor problem of compressible MHD equations without  vacuum and established the stability/instability criteria for the stratified compressible magnetic Rayleigh-Taylor problem in Lagrangian coordinates. Jiang-Jiang \cite{jiang-jiang-PD}  investigated the stability and instability of the Parker problem for the three-dimensional compressible isentropic viscous magnetohy-drodynamic system with zero resistivity in the presence of a modified gravitational force in a vertical strip domain in which the velocity of the fluid is non-slip on the boundary.   Wang-Xin \cite{wang-xin} proved the global well-posedness of the inviscid and resistive problem with surface tension around a non-horizontal uniform magnetic field for two-dimensional incompressible MHD equations. Wang \cite{wang} got sharp nonlinear stability criterion of viscous incompressible non-resistive MHD internal waves  in 3D. Gui \cite{Gui} considered the Cauchy problem of the two-dimensional incomplressible magnetohydrodynamics system with inhomogeneous density and electrical conductivity and has showed the global well-posedness for a generic family of the variations of the initial data and an inhomogeneous electrical conductivity.  All these results do not contain vacuum. For presenting vacuum, under the Taylor sign condition of the total pressure on the free surface, Gu-Wang \cite{Gu-Wang}  proved the local well-posedness of the ideal incompressible MHD equations in Sobolev spaces. 
 In this paper, we will rigirously prove the linear $z$-pinch instability for ideal compressible MHD system \eqref{mhd-plasma-E-1-viscosity}.  

\section{Steady state and Main results }

\subsection{Derivation of the MHD system  in Lagrangian coordinates}\label{susect-free-surface-1}
In this subsection, we mainly  introduce the Lagrangian coordinates in which the free boundary becomes fixed.

First, we assume the equilibrium domains are given by
\begin{equation*}
\overline{\Omega}=\{(r, \theta, z)| r<r_0, \, \theta\in[0,2\pi],z\in 2\pi \mathbb{T}\}\,\, \mbox{and}\,\,
\overline{\Omega}^v=\{(r,\theta, z)|r_0<r<r_w, \,  \theta\in[0,2\pi],z\in 2\pi \mathbb{T}\}.
\end{equation*} 
 Here, the constant $r_0$ is the interface boundary and the constant $r_w$ is the perfectly conducting wall position.
This is meant to be a simplified model of the toroidal geometry employed in tokamaks.

Now we introduce the Lagrangian coordinates.

{\bf 1. The flow map}

Let $h(t, \mathcal{X})$ be a position of the gas particle $\mathcal{X}$  in the equilibrium domain $\overline{\Omega}$ at time $t$ so that
\begin{equation}\label{def-flowmap-1}
\begin{cases}
&\frac{d}{dt}h(t, \mathcal{X})=u(t, h(t, \mathcal{X})), \quad t>0, \, \mathcal{X}\in \overline{\Omega},\\
&h|_{t=0}=\mathcal{X}+g_0(\mathcal{X}), \quad \mathcal{X}\in \overline{\Omega}.
\end{cases}
\end{equation}

 Then the displacement $g(t, \mathcal{X})\eqdefa h(t, \mathcal{X})-\mathcal{X}$ satisfies
\begin{equation}\label{def-flowmap-2}
\begin{cases}
&\frac{d}{dt}g(t, \mathcal{X})=u(t, \mathcal{X}+g(t,  \mathcal{X})),\\
&g|_{t=0}=g_0.
\end{cases}
\end{equation}
We define the Lagrangian quantities in the plasma as follows (where $\mathcal{X}=(x, y, z)\in \overline{\Omega}$):
\begin{equation*}
\begin{split}
&f(t, \mathcal{X})\eqdefa \rho(t, h(t, \mathcal{X})),\quad v(t, \mathcal{X})\eqdefa u(t, h(t, \mathcal{X})),\quad q(t, \mathcal{X})\eqdefa p(t, h(t, \mathcal{X})), \\
& b(t, \mathcal{X})\eqdefa B(t, h(t, \mathcal{X})), \quad \mathcal{A}\eqdefa (Dh)^{-1}, \quad J \eqdefa \mbox{det}(Dh).
\end{split}
\end{equation*}
According to definitions of the flow map $h$ and the displacement $g$,   for $(i,j,k)\in \{1,2,3\}$ one can get the following identities
\begin{equation}\label{flow-map-identity-1}
\mathcal{A}_{i}^k \partial_{k} h^j=\mathcal{A}_{k}^j \partial_{i} h^k=\delta_i^j, \quad \partial_k(J\mathcal{A}_{i}^k)=0,\quad\partial_{i} h^j=\delta_i^j+\partial_{i} g^j, \quad \mathcal{A}_{i}^j=\delta_i^j-\mathcal{A}_{i}^k \partial_kg^j,
\end{equation}
where the Einstein notation is used and will be used in the whole paper.
If the displacement $g$ is sufficiently small in an appropriate Sobolev space, then the flow mapping $h$ is a
diffeomorphism from $\Omega_0$ to $\Omega(t)$, which allows us to switch back and forth from Lagrangian to Eulerian coordinates.

{\bf 2. Derivatives of $J$ and $\mathcal{A}$ in Lagrangian coordinates}

We write the derivatives of $J$ and $\mathcal{A}$ in Lagrangian coordinates as follows:
\begin{equation}\label{identity-Lagrangian-1}
\begin{split}
&\partial_t J =J \mathcal{A}_{i}^j \partial_j v^i, \quad \partial_{\ell} J =J \mathcal{A}_{i}^j \partial_j\partial_{\ell} g^i, \quad\partial_t \mathcal{A}_{i}^j=-\mathcal{A}_{k}^j\mathcal{A}_{i}^{\ell} \partial_{\ell}v^k,\\
&\partial_{\ell} \mathcal{A}_{i}^j=-\mathcal{A}_{k}^j\mathcal{A}_{i}^{n} \partial_{n}\partial_{\ell}g^k, \quad \partial_{i} v^j=\partial_{i}h^k \mathcal{A}_{k}^\ell \partial_{\ell}v^j= \mathcal{A}_{i}^\ell \partial_{\ell}v^j+\partial_{i}g^k \mathcal{A}_{k}^\ell \partial_{\ell}v^j.
\end{split}
\end{equation}

{\bf 3. Plasma equations in Lagrangian coordinates}

Denote $(\nabla_{\mathcal{A}})_i=\mathcal{A}_i^j \partial_j$. Then we can write the plasma equations in Lagrangian coordinates as follows
\begin{equation}\label{mhd-fluid-2}
\begin{cases}
& \partial_t g=v\quad \mbox{in}\quad  \overline{\Omega}, \\
& f \partial_t v + \nabla_{\mathcal{A}}\Big(q+\frac{1}{2}|b|^2\Big)=(b\cdot \nabla_{\mathcal{A}}) b \quad \mbox{in}\quad  \overline{\Omega},\\
&\partial_t f+ f \nabla_{\mathcal{A}} \cdot  v=0\quad \mbox{in}\quad  \overline{\Omega},\\
&\partial_t b+ b \nabla_{\mathcal{A}}\cdot v=(b\cdot \nabla_{\mathcal{A}}) v\quad \mbox{in}\quad  \overline{\Omega},\\
&\nabla_{\mathcal{A}} \cdot b=0\quad \mbox{in}\quad  \overline{\Omega},\\
&n\cdot b=n\cdot \widehat{  b}\quad\mbox{on} \quad \Sigma_{0, pv},\\
& q+\frac{1}{2}|b|^2-\frac{1}{2}|\widehat{b}|^2=0 \quad\mbox{on} \quad \Sigma_{0, pv},
\end{cases}
\end{equation}
where the exterior magnetic field $\widehat{  b}$ satisfies the vacuum equations \eqref{mhd-vacuum-Lag-2} in lagrangian coordinates which can be recalled from Appendix \ref{appendix}.

Since $\partial_t J =J \mathcal{A}_{i}^j \partial_j v^i=J\,\nabla_{\mathcal{A}} \cdot  v$ and $J(0)=\det(Dh_0)=\det(I+Dg_0)$, with $I$ the identity matix, we find from the equation of $f$ in \eqref{mhd-fluid-2} that
$f\,J= \rho_0(h_0)\det(I+Dg_0)$,
where $\rho_0$ is given initial density function. Taking $\rho_0$ such that
$\rho_0(h_0)\det(I+Dg_0)=\overline{\rho}$,
we get
\begin{equation}\label{f-q-1}
\begin{split}
f= J^{-1}\,\overline{\rho},\quad q=AJ^{-\gamma}\,\overline{\rho}^{\gamma}.
\end{split}
\end{equation}

On the other hand, we multiply  the magnetic field equation of \eqref{mhd-fluid-2} by  $J \mathcal{A}^T$ to get
\begin{equation*}
\begin{split}
J \mathcal{A}_j^i  \partial_t b^j+ J\mathcal{A}_j^i  b^j \mathcal{A}_k^h \partial_hv^k=J \mathcal{A}_j^i b^h \mathcal{A}_h^k\partial_k v^j,
\end{split}
\end{equation*}
which along with \eqref{identity-Lagrangian-1} implies
\begin{equation*}
\begin{split}
&\partial_t(\mathcal{A}_j^i J b^j) =J \mathcal{A}_j^i  \partial_t b^j+\mathcal{A}_j^i  b^j\partial_t J +J b^j\partial_t \mathcal{A}_j^i =J \mathcal{A}_j^i  \partial_t b^j+ J\mathcal{A}_j^i  b^j\mathcal{A}_k^h \partial_hv^k-J b^j\mathcal{A}_{k}^i\mathcal{A}_{j}^{h} \partial_{h}v^k=0.
\end{split}
\end{equation*}
Therefore, we have
\begin{equation}\label{b-identity-1}
\begin{split}
J b^j \mathcal{A}_j^i=J(0) b^j_0 \mathcal{A}_j^i(0)=\det(I+Dg_0)B^j_0(h_0) \mathcal{A}_j^i(0),
\end{split}
\end{equation}
where $B_0$ is given initial magnetic field. Taking $B_0$ such that
$\det(I+Dg_0)B^j_0(h_0) \mathcal{A}_j^i(0)=\overline{B}^i$,
we obtain from \eqref{b-identity-1} that
\begin{equation*}\label{b-identity-2}
\begin{split}
b^k =J^{-1}\overline{B}^i \partial_ih^k=J^{-1}\overline{B}^k+J^{-1}\overline{B}^i \partial_ig^k.
\end{split}
\end{equation*}
\subsection{The equilibrium for the $z$-pinch plasma}

	In this paper, our goal is to study the linear $z$-pinch instability for the compressible MHD equations \eqref{mhd-plasma-1-viscosity}. Therefore, we look for the cylindrically symmetric steady solution $\overline{ u}=0$, $ \overline{ B}=(0,\overline{ B}_\theta(r),0)$, $\overline{ p}=\overline{p}(r)$, $\widehat{  \overline{B}}=(0,\widehat{\overline{B}}_\theta(r),0)$. For notational simplicity, in the following we abuse notation to denote steady state $z$-pinch solutions as $$p(r)=\overline{ p}(r), \quad B_\theta(r)=\overline{ B}_\theta(r),\quad \widehat{ B}_\theta(r)=\widehat{  \overline{ B}}_\theta(r),$$
	which imply that $B=\overline{ B}$ and $\widehat{ B}=\widehat{  \overline{ B}}$.

Then we can get the following lemma describing the steady solution.
\begin{lem}\label{steady-lem}
Assume that the function $p(r)$ satisfies $p(r)\geq 0$ and $p(r)=0$ if and only if $r=r_0$, and 
\begin{equation}\label{new-condition}
-\int_0^rs^2p'(s)ds \geq 0 \,\, \mbox{ for all} \quad 0\leq r\leq r_0, \quad p(r)\in C^{2,1}([0,r_0]).
\end{equation}	
Then the cylindrically symmetric steady solution $\overline{ u}=0$, $ B=B(r)$, $\mathbb{J}_z=\mathbb{J}_z(r)$, $\widehat{  B}=\widehat{B}(r)$ with a function $p(r)$ taking the form of 
	\begin{equation}\label{z-pinch-1-1}
	\begin{cases}
&B_r= 0, \quad B_z= 0, \quad  B_\theta(r)=\Big(-\frac{2}{r^2}\int_0^rs^2 p'(s)ds\Big)^{\frac{1}{2}},
\\
&\mathbb{J}_z(r)
=\frac{1}{2}\Big(-\frac{2}{r^2}\int_0^rs^2 p'(s)ds\Big)^{-\frac{1}{2}}\Big(\frac{4}{r^3}\int_0^rs^2p'(s)ds-2p'(r)\Big)\\
&\quad\quad\quad+\frac{1}{r}\Big(-\frac{2}{r^2}\int_0^rs^2p'(s)ds\Big)^{\frac{1}{2}}\quad \mbox{in} \quad \overline{\Omega},\\
& \widehat{B}_r= 0, \quad \widehat{B}_z= 0, \quad \widehat{B}_{\theta}(r) = B_{\theta}(r_0)\frac{r_0}{r} \quad \mbox{in} \quad \overline{\Omega}^v,
\end{cases}
	\end{equation}
	solves the  equilibrium equations in plasma domain,
	\begin{equation}\label{steady-equ-plasma}
	\nabla p=\mathbb{J}\times B, \qquad\nabla \cdot  B=0,\qquad
	\mathbb{J}=\nabla\times B,
	\end{equation}
	and the system \eqref{div-curl-1} in the vacuum region. 
	We can define the equilibrium density
	\begin{equation*}
	\rho(r) = \left(\frac{p(r)}{A} \right)^{1/\gamma}.
	\end{equation*}
Moreover, 
 we have 
 \begin{equation}\label{cond-jz}
 \mathbb{J}_z\in C^{1,1}([0,r_0]),\quad
 B_\theta\in C^{1,1}([0,r_0]).
 \end{equation}
\end{lem}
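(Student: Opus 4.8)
The plan is to use the cylindrical symmetry to collapse each vector identity in \eqref{steady-equ-plasma} and \eqref{div-curl-1} to a scalar relation in $r$, check the displayed closed forms against those relations, and then isolate the behaviour near the axis as a separate analytic step. For a purely azimuthal, radial field $B=B_\theta(r)\,e_\theta$ the divergence $\nabla\cdot B=\tfrac1r\partial_\theta B_\theta$ vanishes identically, so $\nabla\cdot B=0$ and $\nabla\cdot\widehat B=0$ hold automatically, while the curl is purely axial, $\nabla\times B=\tfrac1r\tfrac{d}{dr}(rB_\theta)\,e_z$. Thus $\mathbb J=\nabla\times B$ forces $\mathbb J=\mathbb J_z e_z$ with $\mathbb J_z=B_\theta'+B_\theta/r$; inserting the formula for $B_\theta$ and using $B_\theta'=(B_\theta^2)'/(2B_\theta)$ reproduces exactly the displayed expression for $\mathbb J_z$. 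For the vacuum field, $r\widehat B_\theta\equiv B_\theta(r_0)r_0$ is constant, so $\nabla\times\widehat B=\tfrac1r(r\widehat B_\theta)'e_z=0$ and $\nabla\cdot\widehat B=0$, which is \eqref{div-curl-1}.

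For the force balance $\nabla p=\mathbb J\times B$, the left-hand side is $p'(r)e_r$ and the right-hand side is $\mathbb J_zB_\theta(e_z\times e_\theta)=-\mathbb J_zB_\theta e_r$, so the whole vector equation reduces to the scalar identity $p'=-\mathbb J_zB_\theta$. Rather than expand the unwieldy $\mathbb J_z$, I would verify this directly from the defining relation of $B_\theta$, namely $r^2B_\theta^2=-2\int_0^r s^2p'(s)\,ds$: differentiating gives $\tfrac{d}{dr}(r^2B_\theta^2)=-2r^2p'$, while directly $\tfrac{d}{dr}(r^2B_\theta^2)=\tfrac{d}{dr}(rB_\theta)^2=2(rB_\theta)(rB_\theta)'=2r^2B_\theta\mathbb J_z$, and equating the two yields $B_\theta\mathbb J_z=-p'$. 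The density $\rho=(p/A)^{1/\gamma}$ is consistent with $p=A\rho^\gamma$, and the interface conditions of \eqref{mhd-fluid-2} are matched: $n\cdot B=n\cdot\widehat B=0$ on $\Sigma_{0,pv}$ because both fields are azimuthal, and the total-pressure jump at $r=r_0$ reduces to $p(r_0)=0$ together with $\widehat B_\theta(r_0)=B_\theta(r_0)$.

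The main obstacle is the regularity claim \eqref{cond-jz}, since the closed forms carry apparent singularities in $1/r$, $1/r^2$, $1/r^3$ and a square root. Integrating gains a derivative, so $r^2B_\theta^2=-2\int_0^r s^2p'$ is $C^{3,1}$ and $B_\theta^2$ is $C^{2,1}$ on $(0,r_0]$; wherever $B_\theta^2>0$ the root $B_\theta=\sqrt{B_\theta^2}$ inherits this regularity and $\mathbb J_z=B_\theta'+B_\theta/r$ is correspondingly regular. Two spots need genuine care. Near $r_0$, integration by parts gives $r^2B_\theta^2=2\int_0^r sp(s)\,ds-r^2p(r)$, which at $r_0$ equals $2\int_0^{r_0}sp\,ds>0$ because $p>0$ on $(0,r_0)$ and $p(r_0)=0$, so $B_\theta$ is smooth there. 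Near the axis, a Taylor expansion with $p\in C^{2,1}$ gives $-2\int_0^r s^2p'(s)\,ds=-\tfrac23 p'(0)\,r^3+O(r^4)$, so the half-integer power in $\sqrt{B_\theta^2}$ is removed — and hence $B_\theta\in C^{1,1}$ — precisely when $p'(0)=0$; this is the statement that the radial profile is smooth at the axis (the natural reading of $p\in C^{2,1}$ for $p(\sqrt{x_1^2+x_2^2})$, and also forced by evaluating the balance $p'=-\mathbb J_zB_\theta$ at $r=0$ with $B_\theta(0)=0$). Granting $p'(0)=0$ one gets $B_\theta^2=-\tfrac12 p''(0)\,r^2+O(r^3)$, whence $B_\theta=c\,r+O(r^2)$ and $\mathbb J_z\to 2c$ with $c=\sqrt{-p''(0)/2}$; tracking the remainders via the Lipschitz bound on $p''$ then yields the two-sided $C^{1,1}$ estimates up to $r=0$. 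I expect nearly all of the real work to sit in this axis analysis, together with confirming that the nonnegativity hypothesis controls $B_\theta^2$ across the interior so that the square root introduces no spurious corner.
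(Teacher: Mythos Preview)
Your proposal is correct and follows essentially the same route as the paper: reduce the equilibrium system to the scalar ODE $p'=-\mathbb J_zB_\theta$ with $\mathbb J_z=\tfrac1r(rB_\theta)'$, verify the closed form for $B_\theta$ via $(rB_\theta)^2=-2\int_0^r s^2p'$, and handle the axis by computing $B_\theta(0)=0$, inferring $p'(0)=0$ from the force balance, and then $B_\theta'(0)=\sqrt{-p''(0)/2}$. Your treatment is in fact a bit more careful than the paper's in two places --- the integration-by-parts argument giving $B_\theta(r_0)^2=\tfrac{2}{r_0^2}\int_0^{r_0}sp>0$ and the explicit remainder tracking near $r=0$ --- neither of which the paper spells out, so you are not missing anything.
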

\begin{proof}
	In cylindrical $ r$, $\theta$, $z$-coordinates, 
 the equilibrium equations \eqref{steady-equ-plasma} 
	which are equivalent to the system
	\begin{equation*}
	\nabla (p+\frac{1}{2}|B|^2)=(B\cdot \nabla) B, \qquad\nabla \cdot  B=0,\quad \mathbb{J}=\nabla\times  B,
	\end{equation*}
are	reduced to 
	\begin{equation}\label{equilibrium-ode}
	\frac{d}{dr}\Big(p(r)+\frac{1}{2}| B_{\theta}(r)|^2\Big)= -\frac{B_{\theta}^2(r)}{r}, \quad \frac{1}{r}\frac{d}{dr}(r B_r(r))=0,\quad
	\frac{1}{r}\frac{d}{dr}(rB_{\theta}(r)) =\mathbb{J}_{z}(r).
	\end{equation}
	The first equation of \eqref{equilibrium-ode} is equivalent to $p'=-\mathbb{J}_zB_\theta$, which together with the third equation of \eqref{equilibrium-ode} implies that $-r^2p'=rB_\theta(rB_\theta)'$. Set $C(r)=rB_\theta(r)$ to reduce this to $(C^2)'=-2r^2p'$. Integrating and forcing $B_\theta(0)$ to be finite, which gives $C(0) = 0$, we find that $ B_\theta(r)=\Big(-\frac{2}{r^2}\int_0^rs^2p'(s)ds\Big)^{\frac{1}{2}}$. Thus, given the pressure $p$, we can compute $B_\theta$. Then we define $
	\mathbb{J}_z$ by the third equation of \eqref{equilibrium-ode} that 
	\begin{equation*}
\mathbb{J}_z(r)
=\frac{1}{2}\Big(-\frac{2}{r^2}\int_0^rs^2 p'(s)ds\Big)^{-\frac{1}{2}}\Big(\frac{4}{r^3}\int_0^rs^2p'(s)ds-2p'(r)\Big)+\frac{1}{r}\Big(-\frac{2}{r^2}\int_0^rs^2p'(s)ds\Big)^{\frac{1}{2}}.
	\end{equation*}
Therefore,	solving the system \eqref{div-curl-1} and \eqref{equilibrium-ode}, we get  the steady solution $z$-pinch $\big(\overline{ u}=0,\,\,  B=B(r)=B_\theta(r)e_\theta, \,\, \mathbb{J}_z=\mathbb{J}_z(r), \,\, \widehat{  B}=\widehat{B}(r)=\widehat{ B}_\theta(r)e_\theta\big)$ as follows
	\begin{equation*}
	\begin{cases}
	&B_r= 0, \quad B_z= 0, \quad  B_\theta(r)=\Big(-\frac{2}{r^2}\int_0^rs^2p'(s)ds\Big)^{\frac{1}{2}},
	 \\
	&\mathbb{J}_z(r)
	=\frac{1}{2}\Big(-\frac{2}{r^2}\int_0^rs^2p'(s)ds\Big)^{-\frac{1}{2}}\Big(\frac{4}{r^3}\int_0^rs^2p'(s)ds-2p'(r)\Big)\\
	&\quad\quad\quad+\frac{1}{r}\Big(-\frac{2}{r^2}\int_0^rs^2p'(s)ds\Big)^{\frac{1}{2}}\quad \mbox{in} \quad \overline{\Omega},\\
	& \widehat{B}_r= 0, \quad \widehat{B}_z= 0, \quad \widehat{B}_{\theta}(r) = B_{\theta}(r_0)\frac{r_0}{r} \quad \mbox{in} \quad \overline{\Omega}^v.
	\end{cases}
	\end{equation*}
Since $p\in C^{2,1}([0,r_0])$. 
	The equilibrium magnetic field $B_\theta$ is determined in terms of  $p$ by the equation
$$B_\theta(r)=\Big(-\frac{2}{r^2}\int_0^rs^2p'(s)ds\Big)^{\frac{1}{2}},$$
	which by forcing the value of $B_\theta(r) $ at $r=0$ to finite $B_\theta(0)$ and the value of $\mathbb{J}_z(r) $ at $r=0$ to finite $\mathbb{J}_z(0)$, gives that
	$B_\theta(0):
	=\lim_{r\rightarrow 0}B_\theta(r)=\lim_{r\rightarrow 0}\Big(-\frac{2}{r^2}\int_0^rs^2p'(s)ds\Big)^{\frac{1}{2}}=0$. 
	Since $p'=-\mathbb{J}_z(r)B_\theta(r)$, we have $p'(0)=0$, which gives that
	\begin{equation*}
	\begin{split}
	B'_\theta(0):=\lim_{r\rightarrow 0}\frac{B_\theta(r)}{r}=\lim_{r\rightarrow 0}\Big(-\frac{2}{r^4}\int_0^rs^2p'(s)ds\Big)^{\frac{1}{2}}=\lim_{r\rightarrow 0}\Big(-\frac{p'(r)}{2r}\Big)^{\frac{1}{2}}=\Big(-\frac{p''(0)}{2}\Big)^{\frac{1}{2}}.
	\end{split}
	\end{equation*}

		Hence, we can get $B_\theta\in C^{1,1}([0,r_0])$.	
By	
$	p'(r)=-B_\theta(r)\mathbb{J}_z(r),$
	we have
	$\mathbb{J}_z\in C^{1,1}([0,r_0])$.
\end{proof}
From  Lemma \ref{steady-lem}, we know that at the plasma-vacuum interface,  the steady solution $B(r)$  in  cylindrical $ r$, $\theta$, $z$-coordinates satisfies naturally 
\begin{equation*}
n_0\cdot B=n_0 \cdot \widehat{B}=0, \quad \mbox{on} \quad \Sigma_{0, pv},
\end{equation*} due to  $n_0=e_r$, $B=(0, B_\theta(r),0)$ and $\widehat{B}=(0, B_\theta(r_0)\frac{r}{r_0},0)$.

Now we introduce the admissibility of the pressure $p$, which will be used in the following sections.	
\begin{defi}\label{admissible}
	We say that $p$ is admissible if $p(r) \geq 0$ for all $r \in [0,r_0]$ and $p(r)=0$ if and only if $r=r_0$, $p'(r)\leq 0$ for $r$ near $r_0$, that is, $p'(s)\leq 0$ for $s\in (r_0-\epsilon, r_0]$ with small constant $\epsilon>0$,  and $p(r)$ satisfies  \eqref{new-condition} and
	\begin{equation}\label{minimizer-assumption}
	\quad \lim_{r\rightarrow r_0}\frac{p(r)}{p'(r)}=0.
	\end{equation}
\end{defi}

From Taylor expanding and  the steady state \eqref{z-pinch-1-1}, we have the following lemma.
\begin{lem}\label{expansion-zero}
	Assume the function $\mathbb{J}_z(r)$ satisfies  \eqref{cond-jz},	we have $\mathbb{J}_z(r)=\mathbb{J}_z(0)+r\mathbb{J}'_z(0)+O(r^2).$
	Moreover, we have
	\begin{equation}\label{taylor-B}
	B_\theta(r)=\frac{1}{2}\mathbb{J}_z(0)r
	+\frac{1}{3}\mathbb{J}'_z(0)r^2+O(r^3),
	\end{equation}
	\begin{equation}\label{taylor-p}
	\begin{split}
	p'(r)=-\frac{1}{2}\mathbb{J}^2_z(0)r
	-\frac{5}{6}\mathbb{J}'_z(0)\mathbb{J}_z(0)r^2+O(r^3).
	\end{split}
	\end{equation}
\end{lem}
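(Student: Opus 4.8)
The plan is to derive all three expansions from two algebraic identities already established in the proof of Lemma \ref{steady-lem}, together with Taylor's theorem applied to $\mathbb{J}_z$. The two identities are $(rB_\theta)'=r\mathbb{J}_z$ (the third equation of \eqref{equilibrium-ode}) and $p'=-\mathbb{J}_z B_\theta$ (the reformulation of the first equation of \eqref{equilibrium-ode}). The hypothesis \eqref{cond-jz} is exactly the regularity needed to legitimize the expansions.

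First I would expand $\mathbb{J}_z$ itself. Since \eqref{cond-jz} gives $\mathbb{J}_z\in C^{1,1}([0,r_0])$, the derivative $\mathbb{J}_z'$ is Lipschitz with some constant $L$, so writing $\mathbb{J}_z(r)-\mathbb{J}_z(0)-r\mathbb{J}_z'(0)=\int_0^r(\mathbb{J}_z'(s)-\mathbb{J}_z'(0))\,ds$ and bounding the integrand by $Ls$ yields $|\mathbb{J}_z(r)-\mathbb{J}_z(0)-r\mathbb{J}_z'(0)|\le \frac{L}{2}r^2$, that is, $\mathbb{J}_z(r)=\mathbb{J}_z(0)+r\mathbb{J}_z'(0)+O(r^2)$.

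To obtain \eqref{taylor-B}, I would integrate $(rB_\theta)'=r\mathbb{J}_z(r)$ from $0$ to $r$. Because $B_\theta(0)=0$ was shown in Lemma \ref{steady-lem}, the product $rB_\theta$ vanishes at the origin, so $rB_\theta(r)=\int_0^r s\,\mathbb{J}_z(s)\,ds$. Substituting the expansion of $\mathbb{J}_z$ and integrating term by term gives $rB_\theta(r)=\frac{1}{2}\mathbb{J}_z(0)r^2+\frac{1}{3}\mathbb{J}_z'(0)r^3+O(r^4)$, where the remainder comes from $\int_0^r s\cdot O(s^2)\,ds$; dividing through by $r$ produces \eqref{taylor-B}. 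Finally, \eqref{taylor-p} follows by inserting both expansions into $p'=-\mathbb{J}_z B_\theta$ and multiplying out through order $r^2$: the order-$r$ term is $-\frac{1}{2}\mathbb{J}_z^2(0)r$, while the order-$r^2$ term collects the two cross-contributions $-\big(\frac{1}{3}+\frac{1}{2}\big)\mathbb{J}_z(0)\mathbb{J}_z'(0)r^2=-\frac{5}{6}\mathbb{J}_z(0)\mathbb{J}_z'(0)r^2$, with all remaining products of order $r^3$ or higher.

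I expect no genuine obstacle; the only point requiring care is the consistent bookkeeping of the remainder orders. In particular, one must verify that dividing the $O(r^4)$ remainder of $rB_\theta$ by $r$ leaves a bona fide $O(r^3)$ term, and that in the final multiplication the various cross-products do not contaminate the order-$r^2$ coefficient. Both are routine once the expansion of $\mathbb{J}_z$ is in hand, so the essential content of the lemma is simply the propagation of the $C^{1,1}$-Taylor expansion of $\mathbb{J}_z$ through the two equilibrium identities.
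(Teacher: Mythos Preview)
Your proposal is correct and follows essentially the same route as the paper: expand $\mathbb{J}_z$ via its $C^{1,1}$ regularity, integrate $rB_\theta(r)=\int_0^r s\,\mathbb{J}_z(s)\,ds$ term by term, and substitute into an equilibrium identity for $p'$. The only cosmetic difference is that you compute $p'$ directly from $p'=-\mathbb{J}_z B_\theta$, whereas the paper first differentiates the expansion of $B_\theta$ and then uses the equivalent form $p'=-B_\theta B_\theta'-B_\theta^2/r$; your shortcut is slightly cleaner but the content is identical.
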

\begin{proof}
	Since $\mathbb{J}_z(r)$ satisfies \eqref{cond-jz}, we can expand $\mathbb{J}_z(r)$ near $r=0$ as 
	\begin{equation}\label{expand-J-near-zero}
	\mathbb{J}_z(r)=\mathbb{J}_z(0)+r\mathbb{J}'_z(0)+O(r^2).
	\end{equation}
	From the steady state \eqref{z-pinch-1-1},  we know that
	\begin{equation*}
	B_\theta(r)=\frac{1}{r}\int_0^r\mathbb{J}_z(r)rdr,
	\end{equation*}
	which together with \eqref{expand-J-near-zero} gives \eqref{taylor-B}. Therefore, we have
	$$B'_\theta(r)=\frac{1}{2}\mathbb{J}_z(0)
	+\frac{2}{3}\mathbb{J}'_z(0)r+O(r^2),$$
	\begin{equation*}
	\begin{split}
	p'(r)&=-B_\theta(r)B'_{\theta}(r)-\frac{B_{\theta}^2(r)}{r}\\
	&=-\Big[\frac{1}{2}\mathbb{J}_z(0)r
	+\frac{1}{3}\mathbb{J}'_z(0)r^2+O(r^3)\Big]\Big[\frac{1}{2}\mathbb{J}_z(0)
	+\frac{2}{3}\mathbb{J}'_z(0)r+O(r^2)\Big]\\
	&\quad-\frac{\frac{1}{4}\mathbb{J}^2_z(0)r^2
		+\frac{1}{3}\mathbb{J}'_z(0)\mathbb{J}_z(0)r^3+O(r^4)}{r}\\
	&=-\frac{1}{2}\mathbb{J}^2_z(0)r
	-\frac{5}{6}\mathbb{J}'_z(0)\mathbb{J}_z(0)r^2+O(r^3).
	\end{split}
	\end{equation*}
\end{proof}
Now, we give the  properties about the steady solution.
\begin{lem}\label{instability-m=0}
There exists $r_* \in (0,r_0)$ such that
	\begin{equation}\label{cond}
	p'(r_*) + \frac{2 \gamma p(r_*) {B^2_\theta(r_*)} }{r_*(\gamma p(r_*) + {B^2_\theta(r_*)} )} <0.
	\end{equation}
\end{lem}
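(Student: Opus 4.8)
The plan is to locate $r_*$ in a left-neighborhood of the interface $r_0$, where admissibility of $p$ (Definition~\ref{admissible}) forces the pressure-gradient term to dominate the magnetic term in \eqref{cond}. First I would record the consequences of admissibility that I intend to use: $p>0$ on $[0,r_0)$, $p(r_0)=0$, $p'(r)\le 0$ on $(r_0-\epsilon,r_0]$, and the normalization $\lim_{r\to r_0}\frac{p(r)}{p'(r)}=0$ from \eqref{minimizer-assumption}. A preliminary observation is that $p'$ cannot vanish identically on any interval $(r_0-\delta,r_0)$: otherwise $p$ would be constant there and equal to its endpoint value $p(r_0)=0$, contradicting $p>0$ on $[0,r_0)$. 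Hence $p'(r)<0$ \emph{strictly} at points arbitrarily close to $r_0$, and I may assume $p'<0$ throughout a one-sided neighborhood of $r_0$ (which is what makes the ratio in \eqref{minimizer-assumption} meaningful).

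Second, I would verify that $B_\theta(r_0)>0$, so that the magnetic fraction in \eqref{cond} stays bounded away from degeneracy as $r\to r_0$. Starting from the formula $B_\theta^2(r)=-\frac{2}{r^2}\int_0^r s^2 p'(s)\,ds$ of \eqref{z-pinch-1-1} and integrating by parts using $p(r_0)=0$, I obtain
\[
-\int_0^{r_0}s^2 p'(s)\,ds = -\big[s^2 p(s)\big]_0^{r_0} + 2\int_0^{r_0} s\,p(s)\,ds = 2\int_0^{r_0} s\,p(s)\,ds > 0,
\]
so that $B_\theta^2(r_0)=\frac{4}{r_0^2}\int_0^{r_0}s\,p(s)\,ds>0$. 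In particular $B_\theta^2(r_0)+\gamma p(r_0)=B_\theta^2(r_0)>0$.

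Third comes the core limit computation. I would write the left-hand side of \eqref{cond} as
\[
E(r) := p'(r) + \frac{2\gamma p(r)B_\theta^2(r)}{r\big(\gamma p(r)+B_\theta^2(r)\big)} = p'(r)\left(1 + \frac{2\gamma}{r}\cdot\frac{p(r)}{p'(r)}\cdot\frac{B_\theta^2(r)}{\gamma p(r)+B_\theta^2(r)}\right).
\]
As $r\to r_0^-$, the factor $\frac{B_\theta^2(r)}{\gamma p(r)+B_\theta^2(r)}\to 1$ (since $p(r_0)=0$ and $B_\theta^2(r_0)>0$), $\frac{2\gamma}{r}\to\frac{2\gamma}{r_0}$, and $\frac{p(r)}{p'(r)}\to 0$ by \eqref{minimizer-assumption}; therefore the parenthetical term tends to $1$. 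Since $p'(r)<0$ near $r_0$, the product $E(r)$ is strictly negative for all $r$ sufficiently close to $r_0$. Choosing any such $r=r_*\in(r_0-\epsilon,r_0)\subset(0,r_0)$ then yields \eqref{cond}.

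The argument is essentially a one-sided limit calculation, so I do not expect a serious obstacle, and the two genuinely delicate points are the ones isolated above: confirming $B_\theta(r_0)>0$ so that the magnetic fraction does not degenerate (handled by the integration-by-parts identity together with $p>0$), and upgrading $p'\le 0$ to a \emph{strict} sign $p'<0$ on a one-sided neighborhood of $r_0$, without which the factorization of $E(r)$ would be vacuous. Both follow cleanly from admissibility, so the main work is simply organizing the limit so that the pressure term $p'(r)$ is seen to dominate the magnetic correction, which is $o\big(|p'(r)|\big)$ by \eqref{minimizer-assumption}.
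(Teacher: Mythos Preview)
Your argument is correct, but it takes a genuinely different route from the paper's. You work in a left-neighborhood of $r_0$ and use the admissibility hypothesis \eqref{minimizer-assumption}, namely $p/p'\to 0$, to show the magnetic correction is $o(|p'|)$ there; this locates $r_*$ near the boundary. The paper instead applies Rolle's theorem to $q(r)=r^{2\gamma}p(r)$, which vanishes at both endpoints, to find an interior $r_*$ with $p'(r_*)+\frac{2\gamma}{r_*}p(r_*)=0$; after rewriting \eqref{cond} as $p'+\frac{2\gamma}{r}p-\frac{2\gamma^2 p^2}{r(\gamma p+B_\theta^2)}$, the first two terms cancel at $r_*$ and the third is strictly negative because $p(r_*)>0$. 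The paper's approach is a bit stronger in that it uses only $p>0$ on $(0,r_0)$ and $p(r_0)=0$, not the limit \eqref{minimizer-assumption}; your approach, on the other hand, is more explicit about where $r_*$ lives and makes transparent why the pressure gradient dominates. Since admissibility is the standing assumption in the main theorems anyway, both proofs serve equally well for the paper's purposes.
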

\begin{proof}
	First note that simple algebra reveals that \eqref{cond} is equivalent to the existence of $r_* \in (0,r_0)$ such that 
	\begin{equation*}
	p'(r_*) +  \frac{2 \gamma}{r_*}p(r_*) -  \frac{2 \gamma^2 {p^2(r_*)} }{r_*(\gamma p(r_*) + {B^2_\theta(r_*)})} <0.
	\end{equation*}
	Consider the function $q \in C^1([0,r_0])$ given by $q(r) = r^{2 \gamma} p(r)$.  We have that 
	$q(0) =  0 \text{ and } q(r_0) = r_0^{2\gamma} p(r_0) = 0$,
	so by the mean-value theorem there exists $r_* \in (0,r_0)$ such that 
	\begin{equation*}
	0 = q'(r_*) =  r_*^{2\gamma}p'(r_*) + 2\gamma {r_*}^{2\gamma -1} p(r_*) = r_*^{2\gamma} \left( p'(r_*) + \frac{2\gamma p(r_*)}{r_*} \right).
	\end{equation*}
	Consequently,
	\begin{equation}\label{zero-p-first}
	0 = p'(r_*) + \frac{2\gamma p(r_*)}{r_*}.
	\end{equation}
We know that $p(r_*) >0$, and so 
	\begin{equation}\label{negative-p-sec}
	-  \frac{2 \gamma^2 {p^2(r_*)} }{r_*(\gamma p(r_*) + {B^2_\theta(r_*)} )} < 0.
	\end{equation}
	Combining \eqref{zero-p-first} and \eqref{negative-p-sec}, we conclude that 
	\begin{equation*}
	p'(r_*) +  \frac{2 \gamma}{r}p(r_*) -  \frac{2 \gamma^2 {p^2(r_*)}}{r_*(\gamma p(r_*) + {B^2_\theta(r_*)} )} = -  \frac{2 \gamma^2 {p^2(r_*)} }{r_*(\gamma p(r_*) + {B^2_\theta(r_*)} )} < 0.
	\end{equation*}	This concludes the proof.
\end{proof}
We remark that Lemma \ref{instability-m=0} implies unconditional instability of any $z$-pinch equilibrium for $m=0$. 

Next, we give the following property of the steady solution when $|m|
\geq 2$ and $\mathbb{J}_z(r)$ is non-increasing and non-negative.
\begin{lem}\label{m-geq-2}
	For $|m|\geq 2$, suppose that $\mathbb{J}_z: [0,r_0] \to [0,\infty)$ is non-increasing, then we have 
	\begin{equation*}
	2 p'(r) + m^2 \frac{{B^2_\theta(r)}}{r} \ge 0 \,\, \, \,\mbox{for all }\,\, r\in [0,r_0].
	\end{equation*}
\end{lem}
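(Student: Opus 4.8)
The plan is to reduce the claimed inequality to a purely monotonicity-based estimate on $\mathbb{J}_z$ by exploiting the two equilibrium relations already recorded. First I would use the identity $p'(r) = -B_\theta(r)\mathbb{J}_z(r)$ established in the proof of Lemma~\ref{steady-lem}, together with the fact that $B_\theta(r)\ge 0$ for all $r$ (it is defined as a square root in \eqref{z-pinch-1-1}). Substituting this into the target expression gives
\begin{equation*}
2p'(r) + m^2\frac{B_\theta^2(r)}{r} = -2B_\theta(r)\mathbb{J}_z(r) + m^2\frac{B_\theta^2(r)}{r}.
\end{equation*}
Since $|m|\ge 2$ forces $m^2\ge 4$ and since $B_\theta^2(r)/r\ge 0$, I would bound $m^2 B_\theta^2(r)/r \ge 4B_\theta^2(r)/r$, which lets me factor the lower bound as $\tfrac{2B_\theta(r)}{r}\bigl(2B_\theta(r)-r\mathbb{J}_z(r)\bigr)$. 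Because $B_\theta(r)\ge 0$, the whole quantity is nonnegative as soon as I can establish the single scalar inequality $2B_\theta(r)\ge r\mathbb{J}_z(r)$.

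Next I would prove this reduced inequality using the integral representation $rB_\theta(r)=\int_0^r s\mathbb{J}_z(s)\,ds$, which follows from integrating the third equilibrium equation $(rB_\theta)'=r\mathbb{J}_z$ in \eqref{equilibrium-ode} with the boundary value $rB_\theta|_{r=0}=0$ (this is exactly the formula invoked in Lemma~\ref{expansion-zero}). The desired bound $2B_\theta(r)\ge r\mathbb{J}_z(r)$ is then equivalent to $2\int_0^r s\mathbb{J}_z(s)\,ds \ge r^2\mathbb{J}_z(r)$. Here the hypothesis that $\mathbb{J}_z$ is non-increasing enters decisively: for $s\in[0,r]$ one has $\mathbb{J}_z(s)\ge \mathbb{J}_z(r)$, so
\begin{equation*}
\int_0^r s\mathbb{J}_z(s)\,ds \ge \mathbb{J}_z(r)\int_0^r s\,ds = \frac{r^2}{2}\mathbb{J}_z(r),
\end{equation*}
and multiplying by $2$ yields precisely what is needed. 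The nonnegativity of $\mathbb{J}_z$ simultaneously guarantees $B_\theta(r)\ge 0$, keeping every factored term with the correct sign.

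Two minor points I would address for completeness. At $r=0$ the expression $B_\theta^2/r$ is interpreted by continuity; using the expansion $B_\theta(r)=\tfrac12\mathbb{J}_z(0)r+O(r^2)$ from \eqref{taylor-B} one checks $B_\theta^2(r)/r\to 0$ and $p'(0)=0$, so the inequality degenerates to $0\ge 0$ there, while for $r>0$ with $B_\theta(r)=0$ the factored expression is trivially $0$ and no division issue arises. I do not expect any genuine obstacle: the argument is an elementary consequence of $p'=-B_\theta\mathbb{J}_z$, the integral formula for $B_\theta$, and monotonicity. The only subtlety worth flagging is that the reduction demands $m^2\ge 4$ rather than merely $m\neq 0$, since the monotonicity estimate only delivers $r\mathbb{J}_z(r)\le 2B_\theta(r)$, hence $2r\mathbb{J}_z(r)\le 4B_\theta(r)\le m^2 B_\theta(r)$; this is exactly why the hypothesis $|m|\ge 2$ (and not $|m|\ge 1$) is essential, and it dovetails with Lemma~\ref{instability-m=0}, which handles the excluded low mode $m=0$ separately.
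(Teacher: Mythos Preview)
Your proposal is correct and follows essentially the same approach as the paper: both arguments substitute $p'=-B_\theta\mathbb{J}_z$, factor out $B_\theta\ge 0$, and then use the integral representation $B_\theta(r)=\frac{1}{r}\int_0^r s\mathbb{J}_z(s)\,ds$ together with the monotonicity of $\mathbb{J}_z$ to obtain $B_\theta(r)\ge \frac{r}{2}\mathbb{J}_z(r)$. The only cosmetic difference is that the paper factors as $B_\theta\bigl(-2\mathbb{J}_z+m^2 B_\theta/r\bigr)$ and bounds the second factor by $\frac{m^2-4}{2}\mathbb{J}_z\ge 0$, whereas you first invoke $m^2\ge 4$ and then factor as $\frac{2B_\theta}{r}(2B_\theta-r\mathbb{J}_z)$; the underlying estimate is identical.
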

\begin{proof}
Note that	\begin{equation*}\label{j-non-incm1}
	2 p'(r) + m^2 \frac{{B^2_\theta(r)}}{r} = B_\theta(r) \left(-2\mathbb{J}_z(r) + m^2 \frac{B_\theta(r)}{r}  \right).
	\end{equation*}
	Since $\mathbb{J}_z$ is non-increasing and non-negative, we have
	\begin{equation*}
	B_\theta(r) = \frac{1}{r} \int_0^r s \mathbb{J}_z(s) ds \ge \frac{\mathbb{J}_z(r)}{r} \int_0^r s ds = \frac{r \mathbb{J}_z(r)}{2}.
	\end{equation*}
	Hence,
	\begin{equation*}
	-2\mathbb{J}_z(r) + m^2 \frac{B_\theta(r)}{r} \ge -2\mathbb{J}_z(r) + \frac{m^2}{2} \mathbb{J}_z(r)  \ge \mathbb{J}_z(r) \frac{(m^2-4)}{2} \ge 0.
	\end{equation*}
Since $B_\theta(r) \ge 0$ as well which can be obtained from the third equation of the ODEs \eqref{equilibrium-ode}, we deduce that 
	\begin{equation*}
	2 p'(r) + m^2 \frac{{B^2_\theta(r)}}{r} \ge 0.
	\end{equation*}
\end{proof}
We remark that Lemma \ref{m-geq-2} implies absence of instability for $|m|\geq 2$ for a general class of $z$-pinch equilibria.

Now we will give an example when $|m|\geq2$,  the instability condition 
$2p'(r^*) + m^2 \frac{B^2_\theta(r^*)}{r^*} <0$ holds for some $r^* \in (0,r_0)$ and $\mathbb{J}_z$ vanishing at the origin $r=0$ in suitable order. 
\begin{lem}\label{unstable_2}
For $|m|\geq 2$, we define $\alpha = (m^2-2)/2 \ge 1$.  Suppose that $\beta > \alpha\ge 0$ and $\mathbb{J}_z$ vanishes to order $\beta$ at the origin $r=0$ in the sense that 
$ |\mathbb{J}_z(r)|\leq Cr^\beta$, and further suppose that  $B_\theta \neq 0$ in $(0,r_0]$, i.e. $B_\theta$ has a sign. Then
	there exists $r^* \in (0,r_0)$ such that  
	\begin{equation*}
	2p'(r^*) + m^2 \frac{B^2_\theta(r^*)}{r^*} <0.
	\end{equation*}
\end{lem}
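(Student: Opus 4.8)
The plan is to argue by contradiction, exploiting the same factorization used in the proof of Lemma~\ref{m-geq-2} together with a monotonicity argument driven by the vanishing order of $\mathbb{J}_z$ at the origin. First I would record the two structural identities coming from the equilibrium ODEs \eqref{equilibrium-ode}: the relation $p'=-\mathbb{J}_z B_\theta$, and, writing $C(r):=rB_\theta(r)$, the identities $C'(r)=r\mathbb{J}_z(r)$ and $C(r)=\int_0^r s\mathbb{J}_z(s)\,ds$ (since $C(0)=0$). These let me rewrite the target quantity as
\[
2p'(r)+m^2\frac{B_\theta^2(r)}{r}=B_\theta(r)\Big(-2\mathbb{J}_z(r)+m^2\frac{B_\theta(r)}{r}\Big),
\]
exactly as in Lemma~\ref{m-geq-2}. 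Because $B_\theta$ has a fixed sign on $(0,r_0]$ by hypothesis (it is continuous with $B_\theta(0)=0$ and nonzero on $(0,r_0]$), the sign of the left-hand side is governed entirely by the factor $-2\mathbb{J}_z+m^2 B_\theta/r$.

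Next I would suppose, for contradiction, that $2p'(r)+m^2B_\theta^2(r)/r\ge 0$ for every $r\in(0,r_0)$. Treating first the case $B_\theta>0$ on $(0,r_0]$, this forces $m^2 B_\theta/r\ge 2\mathbb{J}_z$, i.e. $m^2 C/r\ge 2C'$, for all such $r$. The key step is to read this as a differential inequality for $G(r):=C(r)/r^{m^2/2}$: a direct computation gives $G'(r)=r^{-m^2/2}\big(C'(r)-\tfrac{m^2}{2r}C(r)\big)\le 0$, so $G$ is non-increasing on $(0,r_0)$. The case $B_\theta<0$ is symmetric: the inequality reverses and $G$ becomes non-decreasing while $C<0$ throughout.

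The decisive input is then the vanishing order. Denoting by $C_0$ the constant in the hypothesis $|\mathbb{J}_z(s)|\le C_0 s^\beta$ (renamed to avoid clashing with $C(r)$), I get $|C(r)|\le \frac{C_0}{\beta+2}r^{\beta+2}$, hence $|G(r)|\le \frac{C_0}{\beta+2}r^{\beta+2-m^2/2}$; the hypothesis $\beta>\alpha=(m^2-2)/2$ gives $\beta+2-m^2/2>1>0$, so $G(r)\to 0$ as $r\to 0^+$. In the case $B_\theta>0$ the function $G$ is positive, non-increasing, and tends to $0$ at the origin, so letting $r'\to0^+$ in $G(r)\le G(r')$ forces $G(r)\le 0$, contradicting $G>0$. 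The case $B_\theta<0$ produces the mirror contradiction. This rules out the assumption and yields the desired $r^*\in(0,r_0)$.

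The main obstacle to anticipate is that the hypothesis supplies only an \emph{upper} bound $|\mathbb{J}_z|\le C_0 r^\beta$, with no matching lower bound or asymptotic; consequently a naive leading-order comparison of $2\mathbb{J}_z$ against $m^2B_\theta/r$ near $r=0$ is unavailable, since both quantities could be far smaller than $r^\beta$. Recasting the assumed pointwise inequality as monotonicity of $C/r^{m^2/2}$, and letting the sign assumption on $B_\theta$ collide with the smallness of $C$ forced by the vanishing order, is precisely what circumvents this. I would also verify that the exponent margin is genuine: it is the strict inequality $\beta>\alpha$ (which even exceeds the bare requirement $\beta>(m^2-4)/2$ needed for $G(r)\to0$) that comfortably secures the limit, and the $C^{1,1}$ regularity of $B_\theta$ and $\mathbb{J}_z$ from Lemma~\ref{steady-lem} guarantees that $G$ is differentiable, making the monotonicity computation legitimate.
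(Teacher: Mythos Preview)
Your proof is correct and is essentially the same argument as the paper's, just packaged as a contradiction rather than a direct mean-value step. Your function $G(r)=C(r)/r^{m^2/2}$ coincides with the paper's $q(r)=r^{-\alpha}B_\theta(r)$ (since $\alpha+1=m^2/2$ and $C=rB_\theta$); the paper verifies $q\in C^1([0,r_0])$ with $q(0)=0<q(r_0)$ and invokes the mean value theorem to find $q'(r^*)>0$, whereas you observe that the standing assumption forces $G'\le 0$ on $(0,r_0)$, which is incompatible with $G(0^+)=0$ and $G>0$. One minor advantage of your phrasing is that you only need $G(r)\to 0$ as $r\to0^+$ and differentiability on the open interval, so you can skip the paper's separate estimate on $|B_\theta'(r)|/r^\alpha$.
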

\begin{proof}
We will only prove the result assuming that $B_\theta > 0$ in $(0,r_0]$, as the other case follows similarly.
	For $r \in [0,r_0]$ we compute that
	\begin{equation*}
	\begin{split}
	2p'(r) + m^2\frac{B^2_\theta(r)}{r} &= -2\frac{B^2_\theta(r)}{r} -2 B_\theta(r) B'_{\theta}(r) + m^2\frac{B^2_\theta(r)}{r} \\
	&= (m^2-2)\frac{B^2_\theta(r)}{r} -2 B_\theta(r) B'_{\theta}(r) \\
	&	= -2B_\theta(r) \left(B'_{\theta}(r) - \frac{\alpha}{r} B_{\theta}(r) \right) 
	\\ &= -2 r^{\alpha} B_{\theta}(r) (r^{-\alpha} B_{\theta}(r))'.
	\end{split}
	\end{equation*}
	Now, we may estimate from the third equation of \eqref{equilibrium-ode} and the assumption  that
	\begin{equation*}
	\frac{|B_\theta(r)|}{r^{\alpha+1}} \le \frac{1}{r^{2+\alpha}} \int_0^r s |\mathbb{J}_z(s)| ds \le \frac{C}{r^{2+\alpha}} \int_0^r s^{1+\beta} ds = \frac{C}{2+\beta} \frac{r^{2+\beta}}{r^{2+\alpha}} =  \frac{C}{2+\beta}  r^{\beta - \alpha}
	\end{equation*}
	and 
	\begin{equation*}
	\frac{|B'_\theta(r)|}{r^\alpha} \le \frac{\mathbb{J}_z(r)}{r^{\alpha}} + \frac{1}{r^{2+\alpha}} \int_0^r s |\mathbb{J}_z(s)| ds \le C r^{\beta-\alpha} + \frac{C}{2+\beta}  r^{\beta - \alpha}
	\end{equation*}
	in order to conclude that $[0,r_0] \ni r \mapsto B_\theta(r)/r^\alpha \in [0,\infty)$ is a continuous function. Hence if we define $q: [0,r_0] \to [0,\infty)$ via $q(r) = r^{-\alpha} B_\theta(r)$, then we find that $q \in C^1([0,r_0])$.  Note that 
	\begin{equation*}
	q(0) =0 \text{ and } q(r_0) = r_0^{-\alpha} B_\theta(r_0) >0
	\end{equation*}
	since $B_\theta(r_0) >0$.  By the mean-value theorem there exists $r^* \in (0,r_0)$ such that $q'(r^*) > 0$.  Therefore, we have
	\begin{equation*}
	2p'(r^*) + m^2 \frac{B^2_\theta(r^*)}{r^*} = -2 {r^*}^{\alpha} B_\theta(r^*) q'(r^*) <0.
	\end{equation*}	
\end{proof}
We remark that Lemma \ref{unstable_2} implies the instability for $|m|\geq 2$ for some class of $z$-pinch equilibria.
An interesting corollary is found if we suppose that $\mathbb{J}_z$ is non-negative and is compactly supported.

\begin{col}\label{insta-m=1-comp}
	If $\mathbb{J}_z \ge 0$ and $\mathbb{J}_z$ is compactly supported in $(0,r_0)$, then for each $m \in \mathbb{Z} \backslash \{0\}$ there exists $r^* \in (0,r_0)$ such that 
	\begin{equation*}
	2p'(r^*) + m^2 \frac{B^2_\theta(r^*)}{r^*} <0.
	\end{equation*}
\end{col}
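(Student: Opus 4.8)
The plan is to adapt the mean-value-theorem argument of Lemma \ref{unstable_2}, with one essential modification. The compact support hypothesis forces $B_\theta$ to vanish identically on a neighborhood of the origin, so the hypothesis ``$B_\theta \neq 0$ in $(0,r_0]$'' of Lemma \ref{unstable_2} fails and that lemma cannot be invoked directly. Instead I would run the argument on the interval lying to the right of the left endpoint of the support, where the needed positivity of $B_\theta$ is recovered, and where the factor $r^{-\alpha}$ causes no trouble at the origin.

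First I would record the algebraic identity underlying Lemma \ref{unstable_2}, emphasizing that it is valid for \emph{every} $m \in \mathbb{Z}\setminus\{0\}$. Writing $\alpha = (m^2-2)/2$ and using only the equilibrium relation $p' = -B_\theta B_\theta' - B_\theta^2/r$ coming from the first equation of \eqref{equilibrium-ode}, one obtains
\[
2p'(r) + m^2 \frac{B^2_\theta(r)}{r} = -2 r^{\alpha} B_\theta(r)\,\big(r^{-\alpha} B_\theta(r)\big)'.
\]
Since this is purely a calculus identity, it holds for $m=\pm1$ (where $\alpha<0$) exactly as for $|m|\ge 2$, which is what allows all nonzero $m$ to be handled uniformly; the restriction $|m|\ge2$ in Lemma \ref{unstable_2} was needed only for its vanishing-order estimates, which become vacuous once $\mathbb{J}_z$ is supported away from $0$.

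Next I would fix the geometry. Let $a = \inf(\mathrm{supp}\,\mathbb{J}_z) \in (0,r_0)$, which is strictly positive because $\mathbb{J}_z$ is compactly supported in the open interval; I also take $\mathbb{J}_z \not\equiv 0$, as otherwise $p\equiv 0$ and the statement is vacuous. From $B_\theta(r) = \tfrac1r \int_0^r s\mathbb{J}_z(s)\,ds$ together with $\mathbb{J}_z \ge 0$, I would check that $B_\theta(a)=0$ while $B_\theta(r)>0$ for every $r\in(a,r_0]$: for each such $r$ the interval $(a,r)$ meets the support, so by continuity of $\mathbb{J}_z$ it is strictly positive on a subinterval and the integral is positive. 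Setting $q(r) = r^{-\alpha} B_\theta(r)$, which lies in $C^1([a,r_0])$ since $a>0$, I then have $q(a)=0$ and $q>0$ throughout $(a,r_0]$, in particular $q(r_0)>0$.

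Finally, by the mean value theorem applied to $q$ on $[a,r_0]$ there is $r^* \in (a,r_0)$ with $q'(r^*) = q(r_0)/(r_0-a) > 0$, and since $r^*>a$ we also have $B_\theta(r^*) = (r^*)^{\alpha} q(r^*) > 0$. Substituting $q'(r^*)>0$ and $B_\theta(r^*)>0$ into the identity gives
\[
2p'(r^*) + m^2 \frac{B^2_\theta(r^*)}{r^*} = -2 (r^*)^{\alpha} B_\theta(r^*)\, q'(r^*) < 0,
\]
as required. The main obstacle is conceptual rather than computational: recognizing that Lemma \ref{unstable_2} does not apply and that the correct remedy is to anchor the argument at $a=\inf(\mathrm{supp}\,\mathbb{J}_z)$ rather than at the origin, which simultaneously restores the strict positivity $B_\theta>0$ on $(a,r_0]$ and keeps $r^{-\alpha}$ regular for the large-$|m|$ range.
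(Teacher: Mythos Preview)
Your argument is correct. The paper states this result as a corollary of Lemma~\ref{unstable_2} without proof, presumably because compact support in $(0,r_0)$ gives vanishing to every order at the origin; you have correctly identified that the hypotheses of that lemma are not literally satisfied (the sign condition $B_\theta\neq 0$ on $(0,r_0]$ fails, and the lemma is stated only for $|m|\ge 2$) and have supplied exactly the right fix by anchoring the mean-value-theorem step at $a=\inf(\mathrm{supp}\,\mathbb{J}_z)$ rather than at $0$, which also makes the case $|m|=1$ go through uniformly.
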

We remark that Corollary \ref{insta-m=1-comp} implies the instability for any $m \in \mathbb{Z} \backslash \{0\}$ for some class of $z$-pinch equilibria.

\subsection{Perturbed ideal MHD system and main results }
We consider	the ideal compressible MHD equations in this paper. From the Appendix \ref{appendix},   we get the following linear perturbation equations 
	\begin{equation}\label{linear-perturbation-and-boundary}
	\begin{cases}
	&\partial_tg=v\quad \mbox{in}\quad \overline{\Omega},\\
	& \rho\partial_{tt} g =\nabla(g\cdot\nabla p+\gamma p\nabla\cdot g)+(\nabla \times B)\times [\nabla \times (g \times B)]\\
	&\quad+\{\nabla \times [\nabla \times (g \times B)]\}\times B,\quad \mbox{in}\quad \overline{\Omega},\\
	&  \nabla \cdot \widehat{Q}=0,\quad \mbox{in}\quad \overline{\Omega}^v,\\
	& \nabla \times \widehat{Q}=0,\quad \mbox{in}\quad \overline{\Omega}^v,\\
	& n \cdot \nabla \times (g\times \widehat{B})=n\cdot \widehat{Q}, \quad \mbox{on} \quad \Sigma_{0, pv},\\
	&-\gamma p\nabla\cdot g+B\cdot Q+g\cdot\nabla \big (\frac{1}{2}|B|^2\big)=\widehat{ B}\cdot\widehat{Q}+g\cdot\nabla\big(\frac{1}{2}|\widehat{ B}|^2\big), \quad \mbox{on} \quad \Sigma_{0, pv},\\
	&  n\cdot\widehat{Q}|_{\Sigma_w}=0,\\
	&g|_{t=0}=g_0,
	\end{cases}
	\end{equation}
	with $Q=\nabla\times (g\times B)$.
	
{\bf Notations: }
Define the energy
 \begin{equation}\label{linearized-functional}
E[g, \widehat{Q}]
=E^p[g]+E^s[g]+E^v[\widehat{Q}],
\end{equation}
where $E^p[{g}]$, $E^s[g]$ and  $E^v[\widehat{ Q}]$ are three real numbers, satisfying 
\begin{equation}\label{fluid-e}
E^p[g]=\frac{1}{2}\int_{\overline{\Omega}}[\gamma p |\nabla \cdot g|^2+Q^2+(g^*\cdot\nabla p)\nabla\cdot g+(\nabla \times B)\cdot (g^* \times Q)]dx,
\end{equation}
\begin{equation}\label{surface-e}
E^s[g]=\frac{1}{2}\int_{\Sigma_{0, pv}} |n\cdot g|^2n\cdot [[\nabla (p+\frac{1}{2}|B|^2)]]dx,
\end{equation}
\begin{equation*}\label{vacuum-e}
E^v[\widehat{Q}]=\frac{1}{2}\int_{\overline{\Omega}^v}|\widehat{Q}|^2dx.
\end{equation*}

In order to introduce the energy $E$, using Lemma \ref{vec-xi-grow-lem-first} in Appendix \ref{appendix}, 
	formally we can get that
 \begin{equation}
\begin{split}
 \frac{d}{dt}\|\sqrt{\rho}g_{t}\|^2_{L^2}
=-\frac{d}{dt}E[g, \widehat{Q}].
\end{split}
\end{equation}
We mainly study the normal mode solution for the linearized perturbation \eqref{linear-perturbation-and-boundary} and our goal is to prove that there exists normal mode solution $g$ and $\widehat{ Q}$ such that $\inf_{\int\rho g^2dV=1} E[g, \widehat{Q}]<0$, which is the key step for getting the instability and ill-posedness for the normal mode solution in cylindrical coordinates. 

The main purpose of this paper is to construct the growing mode solution of the form 
\begin{equation}\label{grow-mode}
\begin{split}
&g(r,\theta, z, t)=(g_{r,mk}(r),g_{\theta,mk}(r),g_{z,mk}(r))e^{\mu t+i(m\theta+kz)}, \\&
\widehat{Q}(r,\theta, z)=(i\widehat{ Q}_{r,mk}(r),\widehat{ Q}_{\theta,mk}(r),\widehat{Q}_{z,mk}(r))e^{\mu t+i(m\theta+kz)},
\end{split}
\end{equation} 
where $$\mu >0,$$
  $(r,\theta,z)$ are the cylindrical coordinates,  $m,k\in \mathbb{Z}$, 
and the subscripts $m$ and $k$ will be dropped for notational simplicity. Here, $g_\theta$ and $g_z$ are pure imaginary functions, $g_r$, $\widehat{Q}_r$, $\widehat{Q}_\theta$ and $\widehat{Q}_z$ are real-valued functions,
 then we define new  three real-valued funtions
\begin{equation}\label{defini-xi-eta-zeta}
\xi=e_r\cdot g=g_r,\quad \eta=-ie_z\cdot g=-ig_z,
\quad \zeta=ie_\theta\cdot g=ig_\theta.
\end{equation}
which together with \eqref{grow-mode}, gives that
	\begin{equation}\label{varable-in-cyl}
\begin{split}
&Q=\nabla\times (g\times B)=\frac{im}{r}B_{\theta}\xi e_r-[(B_{\theta}\xi)'-k B_{\theta}\eta]e_{\theta}-\frac{m}{r}\eta B_{\theta}e_z,
\\&g\cdot \nabla p+\gamma p\na\cdot g=p'\xi+\gamma p\na\cdot g,\quad \na\cdot g =\frac1r(r\xi)'-k\eta+\frac{m}{r}\zeta,
\end{split}
\end{equation}
where the factor $e^{\mu t+i(m\theta +kz)}$ is dropped for notational simplicity.

Denote the Fourier decomposition $$E=\sum_{m,k \in \mathbb{Z}}E_{m,k}.$$
 In terms of $\xi$, $\eta$ and $\zeta$, from the expressions in \eqref{varable-in-cyl},  the boundary conditions in \eqref{linear-perturbation-and-boundary} are transformed to 
 \begin{equation}\label{bound-q-wall}
 \widehat{ Q}_r=0,\quad \mbox{at} \quad r=r_w, 
 \end{equation}
 \begin{equation}\label{boundary-cyl-1}
 m\widehat{ B}_\theta\xi=r\widehat{ Q}_r, \quad \mbox{at} \quad r=r_0, 
 \end{equation}
 \begin{equation}\label{boundary-cyl-2}
 \begin{split}
 &B_{\theta}^2\xi-B_{\theta}^2\xi' r+kB_\theta^2\eta r -\widehat{ B}_{\theta}\widehat{Q}_{\theta}r=0, \quad \mbox{at} \quad r=r_0.
 \end{split}
 \end{equation}
 We impose the boundary conditions \eqref{bound-q-wall} and \eqref{boundary-cyl-1} as constraint for variational problem setup and the boundary condition \eqref{boundary-cyl-2} follows the minimizer solution. When $m=0$, we know that $\widehat{  Q}_r=0$ on the boundary $r=r_0$, which implies that $\widehat{  Q}_r$ is separated from interior variational problem. Obviously, it holds that $\widehat{  Q}=0$.
Therefore, for the case $m=0$ and any $k$, 
the energy functional \eqref{linearized-functional}
reduces to 
\begin{equation}\label{energy-m=0}
\begin{split}
E_{0,k}&=E_{0,k}(\xi,\eta)=2\pi^2\int_0^{r_0}\bigg\{\Big[\frac{2p'}{r}+\frac{4\gamma p B_{\theta}^2}{r^2(\gamma p+B_{\theta}^2)}\Big]\xi^2\\
&\quad+(\gamma p+B_{\theta}^2)\Big[k\eta-\frac{1}{r}\Big((r\xi)'-\frac{2B_{\theta}^2}{\gamma p +B_{\theta}^2}\xi\Big)\Big]^2\bigg\}rdr.
\end{split}
\end{equation} 

For the case $m\neq 0$ and any $k$, the solution $\widehat{ Q}_r$ and $\xi$ are related by the boundary conditions \eqref{boundary-cyl-1}, so we can not set $\widehat{  Q}_r=0$, therefore the energy funtional takes the form of 
\begin{equation}\label{sausage-in-v-t-out}
\begin{split}
E_{m,k}&=E_{m,k}(\xi,\eta,\zeta,\widehat{ Q}_r)\\&=2\pi^2\int_0^{r_0}\bigg\{(m^2+k^2r^2)\Big[\frac{B_\theta}{r}\eta+\frac{-kB_\theta(r\xi)'+2kB_{\theta}\xi}{m^2+k^2r^2}\Big]^2\\
&\quad+\gamma p\Big[\frac{1}{r}(r\xi)'-k\eta+\frac{m}{r}\zeta\Big]^2\bigg\}rdr+2\pi^2\int_0^{r_0}\frac{m^2B_\theta^2}{r(m^2+k^2r^2)}(\xi-r\xi')^2 \\
&\quad+2\pi^2 	\int_0^{r_0}\Big[2p'+\frac{m^2B_{\theta}^2}{r}\Big]\xi^2dr+2\pi^2\int_{r_0}^{r_w}\bigg[|\widehat{Q}_r|^2+\frac{1}{m^2+k^2r^2}|(r\widehat{Q}_r)'|^2\bigg]rdr.
\end{split}
\end{equation}

	Assume the steady solution $(\overline{ u}, B,p)$ in plasma satisfies  $\overline{u}=0$, $B=(0,B_{\theta}(r),0)$, $p=p(r)$ and the steady solution $\widehat{ B}$ in the vacuum region satisfies $\widehat{ B}=(0,\widehat{ B}_\theta(r),0) $ with $\widehat{ B}_\theta(r)=B_{\theta}(r_0)\frac{r_0}{r}$, which are stated in \eqref{z-pinch-1-1}.
	Our main results are as follows:
\begin{thm}\label{linear instability}
Assume \eqref{minimizer-assumption} and Definition \ref{admissible}/the admissibility of the pressure $p$ holds.
	Then we have
\item[1)] For the modes $m=0$ and any $k\in\mathbb{Z}$, $\lambda_{0,k}=\inf_{((\xi,\eta),\widehat{Q}_r)\in \mathcal{A}_1}E_{0,k}<0$, where the set  $\mathcal{A}_1$ is defined in \eqref{set-A-m=0}, 
there is always growing mode of $z$-pinch instability.
\item[2)] For the modes $m\neq 0$ and any $k\in \mathbb{Z}$, $2p'(r^*)+\frac{m^2B_\theta^2(r^*)}{r^*}<0$,
then $\mu_{m,k}=\sqrt{-\lambda_{m,k}}$ is the growing mode to the linearized PDE  \eqref{linear-perturbation-and-boundary},  see also \eqref{spectral-formulation} and \eqref{euler-l-q}.
\item[3)] Moreover, if the pressure satisfies 
$|p'|\leq C\rho $ for $r$ near $r_0$, then it holds that  \begin{equation}\label{sup-Lambda}
	0<\sup_{B}\mu<\infty,\,\,\mbox{where}\,\, B=\{(m,k)\in \mathbb{Z}\times \mathbb{Z}|\lambda_{m,k}<0\}.
\end{equation}
\end{thm}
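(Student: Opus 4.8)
The plan is to work Fourier mode by Fourier mode, using the reduced energies $E_{0,k}$ in \eqref{energy-m=0} and $E_{m,k}$ in \eqref{sausage-in-v-t-out}, together with the variational characterization $\lambda_{m,k}=\inf_{\int\rho g^2\,dV=1}E_{m,k}$. The growth rate is produced by the energy identity: for a normal mode $g=\hat g\,e^{\mu t}$ the relation $\frac{d}{dt}\|\sqrt\rho g_t\|_{L^2}^2=-\frac{d}{dt}E$ forces $\mu^2\int\rho\hat g^2\,dV=-E[\hat g]$, so the optimal mode shape satisfies $\mu^2=-\lambda_{m,k}$. Thus the whole theorem splits into three tasks: (i) show $\lambda_{m,k}<0$ under the stated hypotheses (parts 1 and 2); (ii) upgrade strict negativity of the infimum to a genuine eigenfunction by producing a minimizer whose Euler--Lagrange system is exactly the spectral form of \eqref{linear-perturbation-and-boundary}; and (iii) prove a uniform two-sided bound on $\mu=\sqrt{-\lambda_{m,k}}$ over the unstable set (part 3).

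For part 1, I would first treat $m=0$, $k\neq0$. In \eqref{energy-m=0} the variable $\eta$ enters only through a single perfect square, so choosing $\eta=\frac{1}{kr}\big((r\xi)'-\frac{2B_\theta^2}{\gamma p+B_\theta^2}\xi\big)$ annihilates that term and leaves $E_{0,k}=2\pi^2\int_0^{r_0}\big[\frac{2p'}{r}+\frac{4\gamma pB_\theta^2}{r^2(\gamma p+B_\theta^2)}\big]\xi^2\,r\,dr$, a functional with no stabilizing gradient term. By Lemma \ref{instability-m=0} the bracket is strictly negative at some $r_*\in(0,r_0)$, so any bump $\xi$ supported in a small neighborhood of $r_*$ makes $E_{0,k}<0$; this is the rigorous form of the unconditional $m=0$ instability. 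The remaining case $k=0$ retains a gradient term, which I would neutralize by the integrating factor $\phi$ solving $(r\phi)'=\frac{2B_\theta^2}{\gamma p+B_\theta^2}\phi$ and then choosing a plateau profile for $\xi/\phi$ that is constant across the negativity region, confining the gradient contribution to thin transition layers dominated by the negative bulk term.

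For part 2 with $m\neq0$, the first two perfect squares in \eqref{sausage-in-v-t-out} can be annihilated algebraically (choose $\eta$ to kill the first, then $\zeta$, available since $m\neq0$, to kill the second), while minimizing over $\widehat Q_r$ subject to \eqref{bound-q-wall} and \eqref{boundary-cyl-1} produces a nonnegative boundary functional $V[\xi(r_0)]$ from the vacuum div-curl problem. Writing $\psi=\xi/r$ and using $\xi-r\xi'=-r^2\psi'$, the reduced energy becomes $2\pi^2\int_0^{r_0}\big[\,d(r)\,\psi'^2+c(r)\,\psi^2\,\big]\,dr+V[\xi(r_0)]$ with $d(r)=\frac{m^2B_\theta^2r^3}{m^2+k^2r^2}\ge0$ and $c(r)=r^2\big(2p'+\frac{m^2B_\theta^2}{r}\big)$, whose hypothesis gives $c(r^*)<0$. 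Taking $\psi$ supported near $r^*$ kills $V$ and makes $\int c\,\psi^2<0$, so everything hinges on controlling $\int d\,\psi'^2$. This is the \textbf{main obstacle}: since $d(r^*)>0$, a naively concentrated bump only inflates the gradient penalty (it scales like $\varepsilon^{-1}$), so one cannot simply localize. I would instead treat the balance as a one-dimensional Sturm--Liouville bound-state problem, taking $\psi$ essentially constant across the maximal interval on which $c<0$ with transitions tuned so that $\int d\,\psi'^2<\int|c|\,\psi^2$; verifying that the pointwise sign condition forces this strict inequality (equivalently, that $-\frac{d}{dr}(d\,\frac{d}{dr})+c$ admits a negative value of its quadratic form) is the technical heart of the argument.

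Finally, to convert $\lambda_{m,k}<0$ into an actual growing mode I would establish existence of a minimizer by the direct method: the estimate below gives a lower bound, the gradient terms give coercivity modulo the constraint $\int\rho g^2\,dV=1$, and weak lower semicontinuity yields a minimizer whose Euler--Lagrange equations are precisely the spectral system associated with \eqref{linear-perturbation-and-boundary} (the vacuum variable being recovered from its own elliptic problem), so that $\mu_{m,k}=\sqrt{-\lambda_{m,k}}$ is a bona fide eigenvalue. For part 3, I observe that every term in \eqref{sausage-in-v-t-out} is manifestly nonnegative except $\int_0^{r_0}2p'\xi^2\,dr$; splitting $[0,r_0]$ into a neighborhood of $r_0$, where $|p'|\le C\rho$ gives $\int 2p'\xi^2\ge-C\int\rho\xi^2$, and its complement, where $\rho\ge c>0$ bounds $\int\xi^2$ by $\int\rho\xi^2$, yields $E_{m,k}\ge -C\int\rho\xi^2\,dV\ge -C\int\rho|g|^2\,dV$ with $C$ independent of $(m,k)$ (all the $m,k$-dependent terms being positive). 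Hence $\lambda_{m,k}\ge -C$ and $\mu=\sqrt{-\lambda_{m,k}}\le\sqrt C$ uniformly, giving $\sup_B\mu<\infty$, while part 1 shows $B$ is nonempty, so $\sup_B\mu>0$.
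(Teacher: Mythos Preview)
Your treatment of part 1 (for $k\neq 0$) and part 3 is essentially the paper's argument: annihilate the $\eta$-square in \eqref{energy-m=0} and invoke Lemma~\ref{instability-m=0} for the former, and bound the sole indefinite term $\int_0^{r_0}2p'\xi^2\,dr$ by $C\mathcal J$ using $|p'|\le C\rho$ near $r_0$ and the boundedness of $B_\theta/r$ and $\mathbb J_z$ away from $r_0$ for the latter. That part is fine.

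The real gap is in part 2. After killing the two perfect squares and taking $\xi$ compactly supported (so the vacuum contribution vanishes), you are indeed left with
\[
2\pi^2\int_0^{r_0}\Big[\frac{m^2B_\theta^2}{r(m^2+k^2r^2)}(\xi-r\xi')^2+\Big(2p'+\frac{m^2B_\theta^2}{r}\Big)\xi^2\Big]\,dr,
\]
and you correctly identify the obstacle: a naive bump near $r^*$ inflates the first term. But your proposed resolution---a plateau profile or a bound-state argument showing that the pointwise condition $c(r^*)<0$ forces the quadratic form $\int(d\,\psi'^2+c\,\psi^2)$ to admit a negative direction---does \emph{not} follow. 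A single sign condition on $c$ at one point cannot control the competition with a strictly positive $d$; this is exactly the shallow-well problem, and plateaus with transition layers of width $\delta$ produce gradient costs of order $\delta^{-1}$ that need not be dominated by $\int_{\{c<0\}}|c|$. The paper's resolution is much simpler and you miss it entirely: the coefficient $d(r)=\frac{m^2B_\theta^2}{r(m^2+k^2r^2)}$ depends on $k$ and tends to $0$ pointwise as $k\to\infty$. Thus for a \emph{fixed} test function $\xi^*\in C_c^\infty(0,r_0)$ with $\int(2p'+m^2B_\theta^2/r)\xi^{*2}\,dr<0$, the first integral above vanishes in the limit $k\to\infty$ while the second is $k$-independent and negative, so $E_{m,k}(\xi^*,\eta^*,\zeta^*,0)<0$ for $k$ large. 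No Sturm--Liouville analysis is needed.

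A second, smaller gap: you write that ``weak lower semicontinuity yields a minimizer'' as if this were routine. It is not. The only indefinite term in $E_{m,k}$ is $\int_0^{r_0}2p'\xi^2\,dr$, and to pass to the limit along a minimizing sequence you need $\xi_n\to\xi$ in a norm that controls this integral. The constraint $\int\rho|\xi|^2r\,dr\le 1$ does \emph{not} control $\int|p'|\xi^2\,dr$ near $r_0$ (since $\rho\to 0$ while $p'\not\to 0$), and near $r=0$ the weights are singular. The paper spends considerable effort here: near $r_0$ it uses the admissibility hypothesis $p/p'\to 0$ and an integration-by-parts estimate (Lemma~\ref{weight-p'-lem}) to get compactness; near $r=0$, for $|m|=1$, there is a delicate cancellation between $\frac{m^2B_\theta^2}{r(m^2+k^2r^2)}(\xi-r\xi')^2$ and $(2p'+m^2B_\theta^2/r)\xi^2$ that only becomes visible after inserting the Taylor expansions of $B_\theta$ and $p'$ from Lemma~\ref{expansion-zero}, leaving a positive leading term $\frac14\mathbb J_z^2(0)\,r^3\xi'^2$ plus $O(r^3)\xi^2$. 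Your coercivity sketch does not capture either mechanism.
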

\begin{thm}\label{ill-posedness}
Assume $p(r)=C(r_0-r)^\beta$ for $r$ near $r_0$ and $\beta\geq 1$. If $\gamma< \frac{\beta}{\beta-1} $, 
	then the growing mode has no lower bound, that is, $$\lim_{k\rightarrow\infty}\lambda_{k}=-\infty.$$
\end{thm}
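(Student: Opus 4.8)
The plan is to prove the divergence by a direct test-function construction at the mode $m=0$, where the functional \eqref{energy-m=0} is fully explicit; accordingly I read $\lambda_k$ as the $m=0$ eigenvalue $\lambda_{0,k}=\inf_{\int\rho g^2=1}E_{0,k}$. For $m=0$ the boundary conditions force $\widehat Q_r=0$, hence $\widehat Q\equiv0$, and the $\zeta$-dependence drops, so it suffices to produce, for each large $k$, a pair $(\xi_k,\eta_k)$ whose Rayleigh quotient $E_{0,k}/\!\int_0^{r_0}\rho(\xi_k^2+\eta_k^2)r\,dr$ tends to $-\infty$; since $\lambda_{0,k}$ is the infimum of this quotient, that forces $\lambda_k\to-\infty$. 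The underlying mechanism is that with $p(r)=C(r_0-r)^\beta$ the negative coefficient in \eqref{energy-m=0} blows up relative to the density weight $\rho\sim(r_0-r)^{\beta/\gamma}$ as $r\to r_0$, and the threshold for this blow-up is exactly $\gamma<\tfrac{\beta}{\beta-1}$.

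First I would fix a bump $\chi$ compactly supported in $(0,1)$ and a concentration scale $\delta_k\to0$, and set $\xi_k(r)=\chi\big((r_0-r)/\delta_k\big)$, so that $\xi_k$ is supported in $(r_0-\delta_k,r_0)$ and every constraint in $\mathcal{A}_1$ is trivially met. Given $\xi_k$, I would then \emph{annihilate} the positive term of \eqref{energy-m=0} by solving $k\eta_k=\tfrac1r\big((r\xi_k)'-\tfrac{2B_\theta^2}{\gamma p+B_\theta^2}\xi_k\big)$, which makes the entire $(\gamma p+B_\theta^2)[\cdots]^2$ contribution vanish. With this choice the energy collapses to
\begin{equation*}
E_{0,k}=2\pi^2\int_0^{r_0}\Big[\frac{2p'}{r}+\frac{4\gamma pB_\theta^2}{r^2(\gamma p+B_\theta^2)}\Big]\xi_k^2\,r\,dr,
\end{equation*}
which is negative near $r_0$: since $B_\theta^2(r_0)>0$ and $p(r_0)=0$, the bracket is dominated by $\tfrac{2p'}{r}\sim-\tfrac{2C\beta}{r_0}(r_0-r)^{\beta-1}$, while the remaining term is only $O((r_0-r)^\beta)$ and hence subdominant on the support.

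Next I would estimate the quotient. On the support $(r_0-r)\sim\delta_k$, so the numerator is of order $-\delta_k^{\beta-1}\!\int\xi_k^2$ and the mass $\int\rho\xi_k^2 r\,dr$ is of order $\delta_k^{\beta/\gamma}\!\int\xi_k^2$, giving ratio of order $-\delta_k^{\beta-1-\beta/\gamma}$. The exponent $\beta-1-\tfrac{\beta}{\gamma}=\beta\tfrac{\gamma-1}{\gamma}-1$ is negative precisely when $\gamma<\tfrac{\beta}{\beta-1}$, so $\delta_k^{\beta-1-\beta/\gamma}\to+\infty$ and the $\xi$-part of the quotient tends to $-\infty$. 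It then remains to verify that the $\eta_k$-contribution to the denominator is harmless: since $\xi_k'\sim\xi_k/\delta_k$ we have $\eta_k\sim\xi_k/(k\delta_k)$, whence $\int\rho\eta_k^2\lesssim(k\delta_k)^{-2}\int\rho\xi_k^2$. Choosing $\delta_k$ with both $\delta_k\to0$ and $k\delta_k\to\infty$ — for instance $\delta_k=k^{-1/2}$ — makes this correction $o(1)$ relative to $\int\rho\xi_k^2$, so the full Rayleigh quotient still diverges to $-\infty$, proving the claim.

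The main obstacle is the competition between the two scaling requirements on $\delta_k$: exploiting the singular negative coefficient wants $\delta_k$ as small as possible, whereas the annihilating choice of $\eta_k$ generates the velocity-mass correction $\int\rho\eta_k^2\sim(k\delta_k)^{-2}$, which demands $k\delta_k\to\infty$. Reconciling these, and checking that the discarded lower-order pieces of the coefficient, the error in replacing $\gamma p+B_\theta^2$ by $B_\theta^2(r_0)$, and the normalization bookkeeping are all genuinely subdominant, is where the careful estimates lie. The algebraic fact that the critical exponent $\beta-1-\beta/\gamma$ changes sign exactly at $\gamma=\tfrac{\beta}{\beta-1}$ is precisely the dividing line between this unboundedness and the bound $\sup_B\mu<\infty$ established in Theorem \ref{linear instability}(3).
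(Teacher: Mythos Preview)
Your proposal is correct and follows essentially the same approach as the paper: concentrate a test function $\xi_k$ near $r_0$ at scale $\delta_k=k^{-\alpha}$ (the paper allows any $0<\alpha<1$, you pick $\alpha=\tfrac12$), annihilate the positive term in \eqref{energy-m=0} by the same choice of $\eta_k$, and compare the scaling of $E_{0,k}\sim -\delta_k^{\beta}$ against $\mathcal J_{0,k}\sim\delta_k^{1+\beta/\gamma}$. Your constraint $k\delta_k\to\infty$ is exactly the paper's restriction $\alpha<1$, which keeps the $\eta$-mass subdominant.
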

\begin{rmk}
As a consequence of Theorem \ref{linear instability}, there exists growing mode solution of the form \eqref{grow-mode} and \eqref{defini-xi-eta-zeta} with growth rate $\mu=\mu_{m,k}>0$. This implies for any reasonable norm $\|\cdot\|$, the solution to the linearized PDE \eqref{linear-perturbation-and-boundary} grows at the rate of at least $e^{\mu t}$, with $\mu>0$.
\end{rmk}
\begin{rmk}
As a consequence of Theorem \ref{ill-posedness}, for general bounded initial data, for any $T>0$, the corresponding linear solution can become unbounded with $[0,T]$. This indicates an ill-posedness in the sense of Hadamard.
\end{rmk}
\begin{rmk}
	As shown in Example \ref{example}, Theorem \ref{linear instability} is valid for $p=C(r_0-r)^\beta$ near the boundary $r_0$ with $\beta\geq 1$ for $\gamma\geq \frac{\beta}{\beta-1}$ or $ p=Cexp\{-(r_0-r)^{-\beta}\}$ near the boundary $r_0$ with $\beta>0$ for $\gamma>1$. Hence the condition for ill-posedness in Theorem \ref{ill-posedness} is rather sharp.
\end{rmk}
\begin{rmk}
In plasma literature, instability with $|m|=0$ is called a Sausage instability, and instability with $|m|=1$ is called a Kink instability.
\end{rmk}

We establish for the first time that any z-pinch profile is unconditionally unstable. Moreover, sufficient conditions are obtained to determine the boundedness of the eigenvalues.

The key observation lies in Lemma \ref{instability-m=0}, which leads to a growing mode in so-called sausage instability. In order to construct a growing mode, we study the variational problem for the linearized functional \eqref{linearized-functional}. It should be noted that \eqref{fluid-e} only $\mbox{div}\, g$ is expected to be bounded, which fails to provide necessary compactness to find a minimizer or an eigenfunction. 

To overcome this seemingly lack of compactness, we study carefully the variational problem in cylindrical charts. It turns out that thanks to special symmetry of the $z$-pinch profile, the energy takes the form of \eqref{energy-m=0} for $m=0$ and any $k$, takes the form of \eqref{sausage-in-v-t-out} for $m\neq 0$ and any $k$. The only possible negative part which needs compactness in \eqref{energy-m=0} and \eqref{sausage-in-v-t-out} are given by $$\int_0^{r_0}2p'\xi^2dr.$$
 
We note crucially it depends only on $\xi$. Luckily, it is possible to control $\partial_r \xi$ so that the compactness is established away from $r=0$ and $r=r_0$.

The compactness at $r=r_0$ and $r=0$ is delicate, due to subtle vanishing order of the $z$-pinch. Near $r=r_0$, we make use of an integration by parts to derive  estimate \eqref{weight-p'} and to gain compactness as in Lemma \ref{embedding-p'} and Lemma \ref{embedding-p'-general-m}. Near $r=0$, however, we make use an expansion of exact $z$-pinch profile in Lemma \ref{expansion-zero},  to gain subtle but crucial higher vanishing power and positivity of lower power, more precisely, we observe
\begin{equation*}
\begin{split}
&	\frac{m^2B_\theta^2}{r(m^2+k^2r^2)}(\xi_n-r\xi_n')^2 +\Big[2p'+\frac{m^2B_{\theta}^2}{r}\Big]\xi_n^2\\&\quad=\Big[\frac{1}{4}\mathbb{J}^2_z(0)r+O(r^2)\Big]r^2\xi_n'^2+O(r^3)\xi_n^2,\quad\mbox{for}\quad |m|=1
\end{split}
\end{equation*}
and
\begin{equation*}
\begin{split}
\Big[2p'+\frac{m^2B_{\theta}^2}{r}\Big]\xi_n^2=\Big[\Big(\frac{m^2}{4}-1\Big)\mathbb{J}^2_z(0)r
+O(r^2)\Big]\xi_n^2,\quad\mbox{for}\quad |m|\geq 2
\end{split}
\end{equation*}
to ensure compactness. See Proposition \ref{infimum-A-out} for the details.

We establish the ill-posedness in Section 4. In the case $p(r)=C (r_0-r)^\beta$ for $r$ near $r_0$ and $\beta\geq 1$. If $\gamma< \frac{\beta}{\beta-1} $, 
 we use a careful scaling analysis to construct particular test function of the form $\xi_k=w(k^\alpha[r-r_0])$, without lower bound as $k\rightarrow \infty$.

\section{A family of variational problems }

\subsection{Growing mode ansatz and Cylindrical coordinates}
In this paper, we mainly study the normal mode solution for the linearized perturbation \eqref{linear-perturbation-and-boundary} in cylindrical  coordinates $e_r$, $e_\theta$ and $e_z$.	In order to write the energy in cylindrical  coordinates, we now record several computations in cylindrical  coordinates. In cylindrical  coordinates, 
	under  the normal mode 
\eqref{grow-mode}, the result of the gradient operator becomes algebraic multipliers
$\na =e_r\partial_r+ike_{z}+\frac{im}{r}e_\theta $ and we can get the following lemma.
\begin{lem}\label{lem-e}
 We decompose $E_{m,k}$ as follows
	\begin{equation}\label{E-mk}
E_{m,k}(\xi,\eta,\zeta,\widehat{ Q}_r)=E^p_{m,k}+E^s_{m,k}+E^v_{m,k},
	\end{equation}
 where the fluid energy takes the form of
	\begin{equation}\label{fluid-en-cyl-viscosity}
	\begin{split}
	E^p_{m,k}&=2\pi^2\int_0^{r_0}\bigg\{\frac{m^2B_\theta^2}{r^2(m^2+k^2r^2)}\big[(r\xi)'\big]^2+\beta_0(r\xi)^2\\
	&\qquad+(m^2+k^2r^2)\Big[\frac{B_\theta}{r}\eta+\frac{-kB_\theta(r\xi)'+2kB_{\theta}\xi}{m^2+k^2r^2}\Big]^2\\
	&\qquad+\gamma p\Big[\frac{1}{r}(r\xi)'-k\eta+\frac{m}{r}\zeta\Big]^2\bigg\}rdr-2\pi^2\Big[\frac{2m^2B_\theta^2}{m^2+k^2r^2}\xi^2\Big]_{r=r_0},
	\end{split}
	\end{equation}
	with 
	$$\beta_0=\frac{1}{r}\Big[\frac{m^2B^2_\theta}{r^3}+\frac{2m^2B_\theta (\frac{B_\theta}{r})'}{r(m^2+k^2r^2)}-\frac{4k^2m^2B^2_{\theta}}{r(m^2+k^2r^2)^2}+\frac{2k^2p'}{m^2+k^2r^2}\Big],$$
	the surface energy vanishes
	\begin{equation}\label{surface-en-cyl-viscosity}
	E^s_{m,k}=-2\pi^2[\widehat{B}_{\theta}^2-B_{\theta}^2]_{r=r_0}\xi^2(r_0)=0,
	\end{equation}
	and when $m\neq 0$ and any $k$,   the vacuum energy takes  the form of 
	\begin{equation}\label{vacuum-e-cyl-viscosity}
	\begin{split}
	E^v_{m,k}&=2\pi^2\int_{r_0}^{r_w}\Big[|\widehat{Q}_r|^2+\frac{1}{m^2+k^2r^2}|(r\widehat{Q}_r)'|^2\Big]rdr.
	\end{split}
	\end{equation}
\end{lem}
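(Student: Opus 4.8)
The plan is to substitute the growing-mode ansatz \eqref{grow-mode}--\eqref{defini-xi-eta-zeta} and the algebraic gradient $\na=e_r\partial_r+ike_z+\frac{im}{r}e_\theta$ into the three pieces of the energy \eqref{linearized-functional}, namely \eqref{fluid-e}, \eqref{surface-e} and \eqref{vacuum-e}, and to reorganize each one separately. In every piece the $\theta$- and $z$-integrations contribute a factor $2\pi$ each, which together with the prefactor $\frac12$ produces the $2\pi^2$ that appears throughout; since $|e^{\mu t+i(m\theta+kz)}|=1$, only the radial profiles survive. The surface and vacuum terms are short, and the fluid term carries all the work.

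For the surface term I would use $n=e_r$ and $n\cdot g=\xi$ on $\Sigma_{0,pv}$, so that $E^s_{m,k}$ reduces to $\xi^2(r_0)$ times the radial jump $n\cdot[[\na(p+\frac12|B|^2)]]$. Invoking the equilibrium identity \eqref{equilibrium-ode}, $\frac{d}{dr}(p+\frac12B_\theta^2)=-B_\theta^2/r$ on the plasma side, together with $\frac{d}{dr}(\frac12\widehat B_\theta^2)=-\widehat B_\theta^2/r$ obtained from $\widehat B_\theta=B_\theta(r_0)r_0/r$, this jump is proportional to $(B_\theta^2-\widehat B_\theta^2)$ evaluated at $r=r_0$; since the continuity/pressure-balance conditions force $\widehat B_\theta(r_0)=B_\theta(r_0)$, it vanishes, giving \eqref{surface-en-cyl-viscosity}. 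For the vacuum term I would use the constraints $\na\cdot\widehat Q=0$ and $\na\times\widehat Q=0$ under the ansatz: writing the scalar relations out and eliminating, one solves $\widehat Q_\theta=-\frac{m}{m^2+k^2r^2}(r\widehat Q_r)'$ and $\widehat Q_z=-\frac{kr}{m^2+k^2r^2}(r\widehat Q_r)'$, whence $|\widehat Q|^2=\widehat Q_r^2+\frac{1}{m^2+k^2r^2}[(r\widehat Q_r)']^2$; integrating over $\overline{\Omega}^v$ with volume element $r\,dr\,d\theta\,dz$ yields \eqref{vacuum-e-cyl-viscosity}.

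For the fluid term I would insert the cylindrical expressions \eqref{varable-in-cyl} for $Q$ and $\na\cdot g$, compute $|Q|^2$, the coupling $(g^*\cdot\na p)\,\na\cdot g=p'\xi\,\na\cdot g$, and the cross term $(\na\times B)\cdot(g^*\times Q)=\mathbb{J}_z\{-\xi[(B_\theta\xi)'-kB_\theta\eta]+\frac{m}{r}B_\theta\xi\zeta\}$, using $\na\times B=\mathbb{J}_z e_z$. The next step is to complete the two squares appearing in \eqref{sausage-in-v-t-out}: the magnetic square $(m^2+k^2r^2)[\frac{B_\theta}{r}\eta+\frac{-kB_\theta(r\xi)'+2kB_\theta\xi}{m^2+k^2r^2}]^2$, which absorbs the $\eta$-dependence, and the compression square $\gamma p[\frac1r(r\xi)'-k\eta+\frac{m}{r}\zeta]^2$, which absorbs the $\zeta$-dependence. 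Here the equilibrium relations $p'=-\mathbb{J}_zB_\theta$ and $(rB_\theta)'=r\mathbb{J}_z$ are essential: the former makes the two linear-in-$\zeta$ contributions (from $p'\xi\,\na\cdot g$ and from the $\mathbb{J}_z$ cross term) cancel exactly, while both are needed to fold the $\eta$-$\xi$ cross terms of $|Q|^2$ and the $\mathbb{J}_z$ term into the stated magnetic square. What remains is purely radial in $\xi$, namely $\frac{m^2B_\theta^2}{r(m^2+k^2r^2)}(\xi-r\xi')^2+[2p'+\frac{m^2B_\theta^2}{r}]\xi^2$.

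Finally, to pass from this leftover to the form \eqref{fluid-en-cyl-viscosity} I would write $\xi-r\xi'=2\xi-(r\xi)'$, expand the square, and integrate by parts the cross term $-4\int_0^{r_0}a\,\xi(r\xi)'\,dr$ with $a=\frac{m^2B_\theta^2}{r(m^2+k^2r^2)}$. This produces the clean derivative weight $\frac{m^2B_\theta^2}{r^2(m^2+k^2r^2)}[(r\xi)']^2$, the boundary contribution $-[\frac{2m^2B_\theta^2}{m^2+k^2r^2}\xi^2]_{r=r_0}$, and, after adding the $4a\xi^2$ and $[2p'+\frac{m^2B_\theta^2}{r}]\xi^2$ terms and collecting the coefficients of $(r\xi)^2$, the weight $\beta_0$. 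The lower endpoint of the integration by parts vanishes because $B_\theta=O(r)$ by \eqref{taylor-B}, so $\frac{a}{r}(r\xi)^2=O(r^2)\to0$ as $r\to0$. The main obstacle is precisely this last bookkeeping: verifying that the collected lower-order coefficient equals exactly $\beta_0\,r$ and that the completed magnetic square reproduces the mixed $\eta$-$\xi$ term displayed in \eqref{fluid-en-cyl-viscosity}. These steps are routine but error-prone, and the disciplined use of $p'=-\mathbb{J}_zB_\theta$ and $(rB_\theta)'=r\mathbb{J}_z$ is what allows the $\mathbb{J}_z$ cross term to merge cleanly into the $2p'$ and $\beta_0$ pieces.
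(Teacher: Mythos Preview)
Your proposal is correct and follows essentially the same computations as the paper. The surface and vacuum pieces match the paper's treatment exactly.

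For the fluid piece there is a minor difference in ordering worth flagging: you first complete the two squares to reach the radial leftover $\frac{m^2B_\theta^2}{r(m^2+k^2r^2)}(\xi-r\xi')^2+[2p'+\frac{m^2B_\theta^2}{r}]\xi^2$ and then integrate by parts to produce the $\beta_0(r\xi)^2$ term and the boundary contribution in \eqref{fluid-en-cyl-viscosity}. The paper instead asserts \eqref{fluid-en-cyl-viscosity} directly from \eqref{fluid-e} (with almost no detail) and only later, in Proposition~\ref{energy-prop}, performs the integration by parts in the opposite direction to reach the form \eqref{sausage-in-v-t-out}. So you are effectively doing the work of Lemma~\ref{lem-e} and half of Proposition~\ref{energy-prop} in one pass, in the reverse order. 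Both routes are equivalent; yours is arguably more transparent about where each term comes from, at the cost of a slightly roundabout path to the form actually stated in the lemma.
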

\begin{proof}
Recall $(\xi,\eta,
\zeta)$	 in \eqref{defini-xi-eta-zeta} and \eqref{grow-mode}.	We begin with the proof of \eqref{fluid-en-cyl-viscosity}.	Inserting the expressions of \eqref{varable-in-cyl} into \eqref{fluid-e} and using $g_\theta^*=i\zeta$, we can get  \eqref{fluid-en-cyl-viscosity}. 
	In fact,
	\begin{equation*}
	(g^*\cdot\nabla  p)(\nabla \cdot g )=\xi p'\Big(\frac1r(r\xi)'-k\eta+\frac{m}{r}\zeta\Big), 
	\end{equation*}
	\begin{equation*}
	\begin{split}
	(\nabla \times B)\cdot (g^* \times Q)&=\Big(B_\theta'+\frac{B_\theta}{r}\Big)\Big[-\xi\Big((B_{\theta}\xi)'-k B_{\theta}\eta\Big)-\frac{im}{r}B_{\theta}\xi g_\theta^*\Big]\\
	&=\Big(B_\theta'+\frac{B_\theta}{r}\Big)\Big[-\xi\Big((B_{\theta}\xi)'-k B_{\theta}\eta\Big)+\frac{m}{r}\zeta B_{\theta}\xi\Big],
	\end{split}
	\end{equation*}	
	which combining with \eqref{fluid-e} and  \eqref{varable-in-cyl} gives \eqref{fluid-en-cyl-viscosity}.
	
Now we turn to the proof of \eqref{surface-en-cyl-viscosity}.
	From \eqref{surface-e} and the equilibrium equations \eqref{equilibrium-ode}, we have 
	$$E^s_{m,k}=-2\pi^2[\widehat{B}_{\theta}^2-B_{\theta}^2]_{r=r_0}\xi^2(r_0),$$
	which together with $p+\frac{1}{2}B^2_\theta=\frac{1}{2}\widehat{  B}^2_\theta$ and $p=0$ on the interface boundary $r=r_0$, gives that
	$	E^s_{m,k}=0$.
	
Finally, we prove \eqref{vacuum-e-cyl-viscosity}.
	From the vacuum equation \eqref{linear-perturbation-and-boundary}$_3$  and $\widehat{ Q}$ in \eqref{grow-mode},  it follows that
	\begin{equation}\label{div-free-cly-viscosity}
	\frac{1}{r}(r\widehat{ Q}_r)'+\frac{m}{r}\widehat{ Q}_\theta+k\widehat{ Q}_z=0,
	\end{equation}
	that is, $\widehat{ Q}_z=-\frac{(r\widehat{ Q}_r)'+m\widehat{ Q}_\theta}{kr}.$
	Inserting the expression of $\widehat{ Q}_z$ into 
	$E^v_{m,k}[\widehat{Q}]=\frac{1}{2}\int_{\overline{\Omega}}|\widehat{Q}|^2dx$,
	implies that 
	\begin{equation}\label{vacuum-e-cyl-viscosity-2}
	\begin{split}
	E^v_{m,k}&=2\pi^2\int_{r_0}^{r_w}\bigg[|\widehat{Q}_r|^2+|\widehat{Q}_\theta|^2+|\widehat{ Q}_z|^2\bigg]rdr\\
	&=2\pi^2\int_{r_0}^{r_w}\bigg[|\widehat{Q}_r|^2+|\widehat{Q}_\theta|^2+\frac{1}{k^2r^2}\Big|(r\widehat{Q}_r)'+m\widehat{Q}_\theta\Big|^2\bigg]rdr.
	\end{split}
	\end{equation}	
	From the vacuum equations \eqref{linear-perturbation-and-boundary}$_2$ and $\widehat{ Q}$ in \eqref{grow-mode}, we have
	\begin{equation}\label{ode-curl-b}
	\begin{cases}
	&\frac{d}{dr}\widehat{ Q}_\theta+\frac{m}{r}\widehat{ Q}_r+\frac{1}{r}\widehat{ Q}_\theta=0,\\
	&k\widehat{ Q}_r+\frac{d}{dr}\widehat{ Q}_z=0,\\
	&\frac{m}{r}\widehat{ Q}_z-k\widehat{ Q}_\theta=0.
	\end{cases}
	\end{equation}  
	Using the third equation in \eqref{ode-curl-b}, from \eqref{div-free-cly-viscosity}, we can obtain the tangential components of $\widehat{ Q}$ in terms of radial component 
	\begin{equation}\label{tang-comp}
	\widehat{ Q}_{\theta}=-\frac{m}{m^2+k^2r^2}(r\widehat{ Q}_r)', \quad \widehat{ Q}_{z}=-\frac{kr}{m^2+k^2r^2}(r\widehat{ Q}_r)'.
	\end{equation}
From \eqref{tang-comp}, we know that the first equation and the second equation in \eqref{ode-curl-b} are equivalent.
	From \eqref{vacuum-e-cyl-viscosity-2} and \eqref{tang-comp}, the vacuum energy takes form of
	\begin{equation*}
	\begin{split}
	E^v_{m,k}&=2\pi^2\int_{r_0}^{r_w}\bigg[|\widehat{Q}_r|^2+|\widehat{Q}_\theta|^2+\frac{1}{k^2r^2}
	\Big|(r\widehat{Q}_r)'+m\widehat{Q}_\theta\Big|^2\bigg]rdr\\
	&=2\pi^2\int_{r_0}^{r_w}\bigg[|\widehat{Q}_r|^2+\frac{1}{m^2+k^2r^2}|(r\widehat{Q}_r)'|^2+\frac{m^2+k^2r^2}{k^2r^2}\Big|\widehat{Q}_\theta+\frac{m}{m^2+k^2r^2}(r\widehat{Q}_r)'\Big|^2\bigg]rdr\\
	&=2\pi^2\int_{r_0}^{r_w}\bigg[|\widehat{Q}_r|^2+\frac{1}{m^2+k^2r^2}|(r\widehat{Q}_r)'|^2\bigg]rdr.
	\end{split}
	\end{equation*}	
\end{proof}
We will use Lemma \ref{lem-e} to prove the following equivalent energy functionals. 
\begin{prop}\label{energy-prop}
The energy functionals \eqref{E-mk} to \eqref{vacuum-e-cyl-viscosity} take forms of  \eqref{energy-m=0} and \eqref{sausage-in-v-t-out} respectively. 
\end{prop}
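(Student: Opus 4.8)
The plan is to start from the decomposition $E_{m,k}=E^p_{m,k}+E^s_{m,k}+E^v_{m,k}$ of \eqref{E-mk} furnished by Lemma \ref{lem-e} and to reconcile it with the claimed forms separately in the two regimes $m=0$ and $m\neq 0$. In both regimes the surface piece already vanishes by \eqref{surface-en-cyl-viscosity}, so it plays no role. For $m\neq 0$ the vacuum piece \eqref{vacuum-e-cyl-viscosity} is literally the last integral of \eqref{sausage-in-v-t-out}, and the two perfect-square integrands of \eqref{fluid-en-cyl-viscosity} carrying $\eta$ and $\zeta$ — namely $(m^2+k^2r^2)\big[\tfrac{B_\theta}{r}\eta+\cdots\big]^2$ and $\gamma p\big[\tfrac1r(r\xi)'-k\eta+\tfrac{m}{r}\zeta\big]^2$ — are transcribed verbatim into \eqref{sausage-in-v-t-out}. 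Hence the whole content of the proposition for $m\neq 0$ is the reconciliation of the three remaining $\xi$-dependent pieces: the first term of \eqref{fluid-en-cyl-viscosity}, the $\beta_0(r\xi)^2$ term, and the boundary term $-2\pi^2\big[\tfrac{2m^2B_\theta^2}{m^2+k^2r^2}\xi^2\big]_{r=r_0}$.

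The core computation is to show these three pieces equal $2\pi^2\int_0^{r_0}\tfrac{m^2B_\theta^2}{r(m^2+k^2r^2)}(\xi-r\xi')^2\,dr+2\pi^2\int_0^{r_0}\big[2p'+\tfrac{m^2B_\theta^2}{r}\big]\xi^2\,dr$. First I would write $(r\xi)'=\xi+r\xi'$ and use the elementary identity $(\xi+r\xi')^2=(\xi-r\xi')^2+2r(\xi^2)'$ to split the first term: this produces the desired $(\xi-r\xi')^2$ integrand together with a cross term $\int_0^{r_0}\tfrac{2m^2B_\theta^2}{m^2+k^2r^2}(\xi^2)'\,dr$. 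Integrating the cross term by parts, the boundary contribution at $r=r_0$ exactly cancels the explicit boundary term, while the contribution at $r=0$ vanishes because $B_\theta(0)=0$ by Lemma \ref{steady-lem}. What is left is $-\int_0^{r_0}\tfrac{d}{dr}\big(\tfrac{2m^2B_\theta^2}{m^2+k^2r^2}\big)\xi^2\,dr$, which I would add to the contribution of the $\beta_0$ term, remembering the $r\,dr$ weight makes $\beta_0(r\xi)^2$ enter with weight $r^3$.

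The hardest part is the final algebraic identity: after substituting the definition of $\beta_0$ and expanding $(\tfrac{B_\theta}{r})'$, the terms proportional to $\tfrac{k^2rB_\theta^2}{(m^2+k^2r^2)^2}$ cancel between $\beta_0 r^3$ and $\tfrac{d}{dr}\big(\tfrac{2m^2B_\theta^2}{m^2+k^2r^2}\big)$, leaving $\tfrac{m^2B_\theta^2}{r}-\tfrac{2m^2(B_\theta B_\theta'+B_\theta^2/r)}{m^2+k^2r^2}+\tfrac{2k^2r^2p'}{m^2+k^2r^2}$. Here I would invoke the equilibrium ODE \eqref{equilibrium-ode} in the form $p'=-B_\theta B_\theta'-B_\theta^2/r$, which converts the middle term into $\tfrac{2m^2p'}{m^2+k^2r^2}$ and collapses the whole sum to exactly $2p'+\tfrac{m^2B_\theta^2}{r}$. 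Tracking the $r\,dr$ weights and justifying the vanishing of the boundary term at the origin are the two places demanding care.

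Finally, for $m=0$ the argument is purely algebraic and needs no integration by parts: the first term and the boundary term in \eqref{fluid-en-cyl-viscosity} vanish identically (each carries a factor $m^2$), and $\beta_0$ collapses to $2p'/r^3$, so $\beta_0(r\xi)^2$ contributes precisely the coefficient $2p'/r$ of $\xi^2$ in \eqref{energy-m=0}. The remaining step is to complete the square in $k\eta$ for the sum $B_\theta^2\big[k\eta-\tfrac1r((r\xi)'-2\xi)\big]^2+\gamma p\big[k\eta-\tfrac1r(r\xi)'\big]^2$ obtained from the two quadratic terms at $m=0$; applying $A(X-a)^2+B(X-b)^2=(A+B)\big(X-\tfrac{Aa+Bb}{A+B}\big)^2+\tfrac{AB}{A+B}(a-b)^2$ with $a-b=-2\xi/r$ yields the single perfect square $(\gamma p+B_\theta^2)\big[k\eta-\tfrac1r\big((r\xi)'-\tfrac{2B_\theta^2}{\gamma p+B_\theta^2}\xi\big)\big]^2$ plus the residual $\tfrac{4\gamma pB_\theta^2}{r^2(\gamma p+B_\theta^2)}\xi^2$, which is exactly \eqref{energy-m=0}.
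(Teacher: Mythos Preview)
Your proposal is correct and follows essentially the same route as the paper: for $m=0$ you set $m=0$ in \eqref{fluid-en-cyl-viscosity} and complete the square in $k\eta$ (which is exactly the algebraic identity the paper displays), and for $m\neq 0$ you reduce the three $\xi$-only pieces to the claimed form via the splitting $(\xi+r\xi')^2=(\xi-r\xi')^2+2r(\xi^2)'$, an integration by parts that kills the boundary term, and the equilibrium relation $p'=-B_\theta B_\theta'-B_\theta^2/r$. The paper's proof simply asserts the resulting integral identity and cites $p'=-B_\theta B'_\theta-B_\theta^2/r$, whereas you have supplied the intermediate steps; nothing is missing or different in substance.
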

\begin{proof} 
	From Lemma \ref{lem-e} and let $m=0$ in \eqref{fluid-en-cyl-viscosity}, we can get directly
	\begin{equation*}
	\begin{split}
	E_{0,k}(\xi,\eta)&=2\pi^2\int_0^{r_0}\bigg\{\frac{2p'\xi^2}{r}+B_{\theta}^2\Big[k\eta-\frac{1}{r}\big((r\xi)'-2\xi\big)\Big]^2+\gamma p\Big[\frac{1}{r}(r\xi)'-k\eta\Big]^2\bigg\}rdr.
	\end{split}
	\end{equation*}  
	Together with
the identity 
	\begin{equation*}
	\begin{split}
	&\frac{4\gamma p B_{\theta}^2\xi^2}{r(\gamma p+B_{\theta}^2)}+(\gamma p+B_{\theta}^2)\bigg[\frac{4B_\theta^4\xi^2}{r^2(\gamma p+B_\theta^2)^2}+\Big(k\eta-\frac{1}{r}(r\xi)'\Big)\frac{4B_{\theta}^2\xi}{r(\gamma p +B_{\theta}^2)}\bigg]r\\
	&\quad=\frac{4B_\theta^2\xi^2}{r}+4B_\theta^2\xi \Big(k\eta-\frac{1}{r}(r\xi)'\Big),
	\end{split}
	\end{equation*}
we establish \eqref{energy-m=0}.
	From Lemma \ref{lem-e}, it follows that
\begin{equation*}
\begin{split}
E_{m,k}(\xi,\eta,\zeta,\widehat{ Q}_r)&=2\pi^2\int_0^{r_0}\bigg\{\frac{m^2B_\theta^2}{r^2(m^2+k^2r^2)}\big[(r\xi)'\big]^2
+\beta_0(r\xi)^2\\
&\quad+(m^2+k^2r^2)\bigg[\frac{B_\theta}{r}\eta+\frac{-kB_\theta(r\xi)'+2kB_{\theta}\xi}{m^2+k^2r^2}\bigg]^2\\
&\quad+\gamma p\Big[\frac{1}{r}(r\xi)'-k\eta+\frac{m}{r}\zeta\Big]^2\bigg\}rdr-2\pi^2\bigg[\frac{2m^2B_\theta^2}{m^2+k^2r^2}\xi^2\bigg]_{r=r_0}\\
&\quad+2\pi^2\int_{r_0}^{r_w}\bigg[|\widehat{Q}_r|^2+\frac{1}{m^2+k^2r^2}|(r\widehat{Q}_r)'|^2\bigg]rdr,
\end{split}
\end{equation*}
with
$$\beta_0=\frac{1}{r}\bigg[\frac{m^2B_\theta^2}{r^3}+\frac{2m^2B_{\theta}(\frac{B_\theta}{r})'}{r(m^2+k^2r^2)}-\frac{4k^2m^2B^2_{\theta}}{r(m^2+k^2r^2)^2}+\frac{2k^2p'}{m^2+k^2r^2}\bigg].$$
By $p'=-B_\theta B'_\theta-\frac{B^2_\theta}{r}$, we have
\begin{equation*}
\begin{split}
&\int_0^{r_0}\Big(\frac{m^2B_\theta^2}{r^2(m^2+k^2r^2)}[(r\xi)']^2+\beta_0(r\xi)^2\Big)rdr-\bigg[\frac{2m^2B_{\theta}^2}{m^2+k^2r^2}\xi^2\bigg]_{r=r_0}\\
&=\int_0^{r_0}\bigg[\frac{m^2B^2_\theta}{r(m^2+k^2r^2)}(\xi-r\xi')^2+2p'\xi^2+\frac{m^2B^2_\theta\xi^2}{r}\bigg]dr,
\end{split}
\end{equation*}
which implies the energy \eqref{sausage-in-v-t-out}.
\end{proof}

Using  $g(r,\theta, z, t)=(g_{r}(r,t),g_{\theta}(r,t),g_{z}(r,t))e^{i(m\theta+kz)}$, we can prove that the second equation in \eqref{linear-perturbation-and-boundary} is reduced to the following system.
\begin{lem}\label{spectra-lem-without-factor}
Assume  $g(r,\theta, z, t)=(g_{r}(r,t),g_{\theta}(r,t),g_{z}(r,t))e^{i(m\theta+kz)}$ solves
the second equation in \eqref{linear-perturbation-and-boundary}, then 
	\begin{equation}\label{spectral-formulation-orig}
	\begin{split}
	&\left(
	\begin{array}{ccc}
	\frac{d}{dr}\frac{\gamma p+B_{\theta}^2}{r}\frac{d}{dr}r-\frac{m^2}{r^2}B_{\theta}^2
	-r(\frac{B_{\theta}^2}{r^2})'&-\frac{d}{dr}k(\gamma p+B_{\theta}^2)-\frac{2kB_{\theta}^2}{r}&\frac{d}{dr}\frac{m}{r}\gamma p\\
	\frac{k(\gamma p+B_{\theta}^2)}{r}\frac{d}{dr}r-\frac{2kB_{\theta}^2}{r}&
	-k^2(\gamma p+B_{\theta}^2)-\frac{m^2}{r^2}B_{\theta}^2&\frac{mk}{r}\gamma p\\
	-\frac{m\gamma p}{r^2}\frac{d}{dr}r&\frac{mk}{r}\gamma p&-\frac{m^2}{r^2}\gamma p
	\end{array}
	\right)
	\left(
	\begin{array}{lll}
	\xi   \\
	\eta \\
	\zeta\\
	\end{array}
	\right)\\
	&\quad=\rho \left(
	\begin{array}{lll}
	\xi_{tt}   \\
	\eta_{tt} \\
	\zeta_{tt}\\
	\end{array}
	\right).
	\end{split}
	\end{equation}
\end{lem}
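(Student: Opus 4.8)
The plan is to insert the normal-mode ansatz $g=(g_r,g_\theta,g_z)e^{i(m\theta+kz)}$ directly into the momentum equation (the second line of \eqref{linear-perturbation-and-boundary}) and to evaluate each of the three force terms on the right-hand side in cylindrical coordinates, using the algebraic substitution $\na = e_r\partial_r + \frac{im}{r}e_\theta + ik e_z$. First I would collect the ingredients already available: the expression for $Q=\na\times(g\times B)$ and for the scalar $S:=p'\xi + \gamma p\,\na\cdot g$ (which equals $g\cdot\na p + \gamma p\,\na\cdot g$) from \eqref{varable-in-cyl}, together with the two equilibrium identities $\na\times B = \mathbb{J}_z e_z$ (with $\mathbb{J}_z = B_\theta' + B_\theta/r$) and $p' = -\mathbb{J}_z B_\theta = -B_\theta B_\theta' - B_\theta^2/r$ from \eqref{equilibrium-ode}. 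These reduce every operation to $r$-derivatives and multiplication by the mode numbers $m,k$.

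The main computation is then organized term by term. The gradient term $\na S$ has radial component $\partial_r S$, azimuthal component $\frac{im}{r}S$, and axial component $ikS$; this produces all the $\gamma p$ entries of the matrix. For the two magnetic forces I would exploit the cross-product structure with the purely azimuthal and axial equilibrium fields: since $(\na\times B)\times Q = \mathbb{J}_z e_z\times Q$ contributes only $-\mathbb{J}_z Q_\theta$ radially and $\mathbb{J}_z Q_r$ azimuthally, while $\{\na\times Q\}\times B = \{\na\times Q\}\times(B_\theta e_\theta)$ contributes only $-(\na\times Q)_z B_\theta$ radially and $(\na\times Q)_r B_\theta$ axially. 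This requires computing the cylindrical curl of $Q$, namely $(\na\times Q)_r = \frac{im}{r}Q_z - ikQ_\theta$ and $(\na\times Q)_z = \frac{1}{r}(rQ_\theta)' - \frac{im}{r}Q_r$, and substituting the known components of $Q$.

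Finally I would project the identity $\rho g_{tt}=F$ onto $e_r,e_\theta,e_z$ and pass to the real unknowns via $\xi=g_r$, $\eta=-ig_z$, $\zeta=ig_\theta$: the radial projection gives $\rho\xi_{tt}=F_r$, multiplying the axial projection by $-i$ gives $\rho\eta_{tt}=-iF_z$, and multiplying the azimuthal projection by $i$ gives $\rho\zeta_{tt}=iF_\theta$, which are exactly the three rows of \eqref{spectral-formulation-orig}. The $\zeta$-row is a good consistency check and falls out cleanly: there the term $-\frac{m}{r}p'\xi$ coming from the gradient exactly cancels the magnetic contribution $i\mathbb{J}_z Q_r = -\frac{m}{r}\mathbb{J}_z B_\theta\xi$ precisely because $p' = -\mathbb{J}_z B_\theta$, leaving only the $\gamma p$ operators. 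I expect the genuine obstacle to be the radial row, in particular the $(1,1)$ entry $\frac{d}{dr}\frac{\gamma p+B_\theta^2}{r}\frac{d}{dr}r - \frac{m^2}{r^2}B_\theta^2 - r(\frac{B_\theta^2}{r^2})'$: assembling the self-adjoint operator $\frac{d}{dr}\frac{\gamma p+B_\theta^2}{r}\frac{d}{dr}(r\xi)$ requires combining the $\gamma p$ piece of $\partial_r S$ with the magnetic-tension piece $\frac{B_\theta}{r}[r(B_\theta\xi)']'$ and repeatedly trading $B_\theta(B_\theta\xi)'$ for $\frac{B_\theta^2}{r}(r\xi)'$ by means of the equilibrium relations, so that the mismatched lower-order terms collapse into the stated zeroth-order coefficients. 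This rearrangement is where the careful bookkeeping is concentrated.
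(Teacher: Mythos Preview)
Your proposal is correct and follows exactly the same approach as the paper: the paper's proof is a one-sentence instruction to insert the cylindrical expressions \eqref{varable-in-cyl} into the momentum equation and use the definitions \eqref{defini-xi-eta-zeta}, and what you have written is precisely the detailed roadmap for carrying out that substitution and projection. Your identification of the $(1,1)$ entry as the place where the equilibrium identity $p'=-B_\theta B_\theta'-B_\theta^2/r$ must be used to regroup terms into the self-adjoint form is accurate and is the only nontrivial bookkeeping in the calculation.
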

\begin{proof}
	Inserting the expression \eqref{varable-in-cyl} into the second equation in \eqref{linear-perturbation-and-boundary}, by  $g(r,\theta, z, t)=(g_{r}(r,t),g_{\theta}(r,t),g_{z}(r,t))e^{i(m\theta+kz)}$ and the definitions of $\xi$, $\eta$ and $\zeta$ in \eqref{defini-xi-eta-zeta}, we can easily get that the second equation in \eqref{linear-perturbation-and-boundary} reduces to \eqref{spectral-formulation-orig}.
\end{proof}	
In order to study the stability to use variational methods, we use the following second-order ODE system.
\begin{lem}\label{spectral-lem-with-factor}
Assume $g(r,\theta, z, t)=(g_{r}(r,t),g_{\theta}(r,t),g_{z}(r,t))e^{\mu t+i(m\theta+kz)}$ solves
the second equation in \eqref{linear-perturbation-and-boundary}, then 
	\begin{equation}\label{spectral-formulation}
	\begin{split}
	&\left(
	\begin{array}{ccc}
	\frac{d}{dr}\frac{\gamma p+B_{\theta}^2}{r}\frac{d}{dr}r-\frac{m^2}{r^2}B_{\theta}^2
	-r(\frac{B_{\theta}^2}{r^2})'&-\frac{d}{dr}k(\gamma p+B_{\theta}^2)-\frac{2kB_{\theta}^2}{r}&\frac{d}{dr}\frac{m}{r}\gamma p\\
	\frac{k(\gamma p+B_{\theta}^2)}{r}\frac{d}{dr}r-\frac{2kB_{\theta}^2}{r}&
	-k^2(\gamma p+B_{\theta}^2)-\frac{m^2}{r^2}B_{\theta}^2&\frac{mk}{r}\gamma p\\
	-\frac{m\gamma p}{r^2}\frac{d}{dr}r&\frac{mk}{r}\gamma p&-\frac{m^2}{r^2}\gamma p
	\end{array}
	\right)
	\left(
	\begin{array}{lll}
	\xi   \\
	\eta \\
	\zeta\\
	\end{array}
	\right)\\
	&\quad
	=\rho \mu^2 \left(
	\begin{array}{lll}
	\xi   \\
	\eta \\
	\zeta\\
	\end{array}
	\right).
	\end{split}
	\end{equation}
\end{lem}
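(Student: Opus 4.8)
The plan is to obtain \eqref{spectral-formulation} as an immediate specialization of Lemma \ref{spectra-lem-without-factor}. That lemma already reduces the second equation of \eqref{linear-perturbation-and-boundary}, under the partial separation $g=(g_r(r,t),g_\theta(r,t),g_z(r,t))e^{i(m\theta+kz)}$, to the system \eqref{spectral-formulation-orig}, whose left-hand side is the matrix differential operator $\cL$ acting on $(\xi,\eta,\zeta)^T$ and whose right-hand side is $\rho\,(\xi_{tt},\eta_{tt},\zeta_{tt})^T$. Since the two systems \eqref{spectral-formulation-orig} and \eqref{spectral-formulation} carry \emph{exactly the same} left-hand side operator $\cL$, the entire content of the present lemma is to evaluate the right-hand side on the growing-mode ansatz. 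So I would simply invoke the previous lemma and then specialize the time dependence.

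First I would record that $\cL$ involves only the radial derivative $d/dr$ together with the algebraic multipliers generated by $m$ and $k$ (recall $\na=e_r\partial_r+ike_z+\frac{im}{r}e_\theta$ under the normal mode); it contains no time derivative, and hence commutes with multiplication by $e^{\mu t}$. Next, inserting the growing-mode ansatz $g=(g_r,g_\theta,g_z)e^{\mu t+i(m\theta+kz)}$ into the definitions \eqref{defini-xi-eta-zeta}, the amplitudes $\xi=g_r$, $\eta=-ig_z$, $\zeta=ig_\theta$ each carry precisely the scalar factor $e^{\mu t}$, so that
\[
\partial_t^2\xi=\mu^2\xi,\qquad \partial_t^2\eta=\mu^2\eta,\qquad \partial_t^2\zeta=\mu^2\zeta.
\]
Substituting these three identities for $\xi_{tt},\eta_{tt},\zeta_{tt}$ on the right-hand side of \eqref{spectral-formulation-orig} converts it into $\rho\mu^2(\xi,\eta,\zeta)^T$, which is exactly the right-hand side of \eqref{spectral-formulation}; cancelling the common factor $e^{\mu t}$ from both sides — legitimate because $\cL$ commutes with it — leaves the claimed eigenvalue-type system for the $r$-amplitudes.

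There is no genuine obstacle here, and I would present the argument in one short paragraph. The only point that truly has to be verified is that the ansatz of \eqref{grow-mode} is a bona fide separation of variables, i.e. that after factoring out $e^{\mu t+i(m\theta+kz)}$ the remaining amplitudes depend on $r$ alone, so that $\partial_t^2$ acts as honest multiplication by $\mu^2$ rather than producing mixed or lower-order time contributions. Given the prescribed form in \eqref{grow-mode}, this holds automatically, so the reduction $\partial_t^2\mapsto\mu^2$ applied to \eqref{spectral-formulation-orig} yields \eqref{spectral-formulation}, completing the proof.
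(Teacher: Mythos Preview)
Your proposal is correct and essentially matches the paper's approach: the paper's proof is a single sentence that inserts the ansatz \eqref{grow-mode} and the expressions \eqref{varable-in-cyl} into the second equation of \eqref{linear-perturbation-and-boundary} to obtain \eqref{spectral-formulation} directly, which amounts to the same computation as invoking Lemma~\ref{spectra-lem-without-factor} and then replacing $\partial_t^2$ by $\mu^2$. The only cosmetic difference is that you factor through the previous lemma explicitly, whereas the paper repeats the one-line justification; both arguments are identical in substance.
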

\begin{proof}	
	Inserting the expression \eqref{varable-in-cyl} into the second equation in \eqref{linear-perturbation-and-boundary}, by  $g(r,\theta, z, t)=(g_{r}(r,t),g_{\theta}(r,t),g_{z}(r,t))e^{\mu t+i(m\theta+kz)}$ and the definitions of $\xi$, $\eta$ and $\zeta$ in \eqref{defini-xi-eta-zeta}, we can easily get that the second equation in \eqref{linear-perturbation-and-boundary} reduces to \eqref{spectral-formulation}.
\end{proof}	
In order to study the stability to use variational methods in vacuum domain, we use the following second-order ODE about $\widehat{ Q}_r$ for $m\neq 0$ and any $k$.
\begin{lem}\label{ode-mk}
	The  vacuum equations \eqref{linear-perturbation-and-boundary}$_3$ and \eqref{linear-perturbation-and-boundary}$_4$  can be reduced to the second order differential equation
	\begin{equation}\label{euler-l-q}
	\bigg[\frac{r}{m^2+k^2r^2}(r\widehat{Q}_r)'\bigg]'-\widehat{Q}_r=0,
	\end{equation}
	with the other two components $\widehat{ Q}_{\theta}=-\frac{m}{m^2+k^2r^2}(r\widehat{ Q}_r)'$ and
	$\widehat{ Q}_{z}=-\frac{kr}{m^2+k^2r^2}(r\widehat{ Q}_r)'$. 
\end{lem}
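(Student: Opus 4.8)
The plan is to recognize that almost all of the needed computation was already carried out inside the proof of Lemma~\ref{lem-e}, so the present lemma is chiefly a matter of assembling those facts and extracting a single scalar equation. First I would recall from that proof the divergence identity \eqref{div-free-cly-viscosity}, namely $\frac{1}{r}(r\widehat{Q}_r)' + \frac{m}{r}\widehat{Q}_\theta + k\widehat{Q}_z = 0$, together with the three scalar consequences \eqref{ode-curl-b} of $\nabla \times \widehat{Q} = 0$, where \eqref{ode-curl-b}$_1$ is the $z$-component, \eqref{ode-curl-b}$_2$ the $\theta$-component, and \eqref{ode-curl-b}$_3$ the $r$-component of the curl. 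The two tangential-component formulas asserted in the statement are precisely \eqref{tang-comp}, which were obtained there by solving the algebraic pair consisting of the divergence identity and the $r$-component relation $\frac{m}{r}\widehat{Q}_z - k\widehat{Q}_\theta = 0$; I would simply cite that derivation rather than redo it.

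To produce the second-order equation \eqref{euler-l-q}, I would substitute the expression $\widehat{Q}_z = -\frac{kr}{m^2+k^2r^2}(r\widehat{Q}_r)'$ from \eqref{tang-comp} into the remaining curl relation. For $k \neq 0$ the cleanest choice is the $\theta$-component equation \eqref{ode-curl-b}$_2$, namely $k\widehat{Q}_r + \frac{d}{dr}\widehat{Q}_z = 0$: inserting $\widehat{Q}_z$ gives $k\widehat{Q}_r - k\left[\frac{r}{m^2+k^2r^2}(r\widehat{Q}_r)'\right]' = 0$, and dividing by $k$ yields \eqref{euler-l-q} at once. For the degenerate case $k = 0$ (with $m \neq 0$), equation \eqref{ode-curl-b}$_2$ reduces to $\widehat{Q}_z' = 0$ and so carries no information about $\widehat{Q}_r$; there I would instead substitute $\widehat{Q}_\theta = -\frac{1}{m}(r\widehat{Q}_r)'$ into the $z$-component equation \eqref{ode-curl-b}$_1$ and simplify, verifying that the result coincides with \eqref{euler-l-q} specialized to $k=0$.

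Finally I would record that the two surviving curl equations are not independent: as already noted after \eqref{tang-comp}, once \eqref{tang-comp} holds the first and second equations of \eqref{ode-curl-b} are equivalent, so no constraint is lost by using only one of them; conversely any solution of \eqref{euler-l-q} together with \eqref{tang-comp} satisfies the full system \eqref{div-free-cly-viscosity}--\eqref{ode-curl-b}. The only care required is the bookkeeping in the product-rule expansion of $\left[\frac{r}{m^2+k^2r^2}(r\widehat{Q}_r)'\right]'$ and the separate handling of $k=0$; there is no genuine analytic obstacle here, since this is an exact algebraic reduction of a linear first-order system to a single scalar Euler-type ODE, and the main step is purely computational.
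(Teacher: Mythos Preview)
Your proposal is correct and follows essentially the same approach as the paper: the paper's proof simply says to insert \eqref{tang-comp} into the first or second ODE in \eqref{ode-curl-b}, which is exactly what you do. Your explicit separation of the cases $k\neq 0$ (using \eqref{ode-curl-b}$_2$) and $k=0$ (using \eqref{ode-curl-b}$_1$) is a minor refinement that makes the division by $k$ honest, but the underlying argument is identical.
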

\begin{proof}
	Inserting \eqref{tang-comp} into the first or second ODE in \eqref{ode-curl-b},  we can easily deduce that the radial component satisfies \eqref{euler-l-q}.
\end{proof}

\subsection{Variational problem when $m=0$}
In this subsection, we will introduce the definition of function space $X_k$, then  give the variational analysis for the case $m=0$ and any $k\in \mathbb{Z}$.

Let us first introduce the function space $X_k$ for any $k$ and its properties.
\begin{defi}\label{space-X}
The weighted Sobolev space $X_k$ is defined as the completion of $\{(\xi,\eta) \in C^{\infty}([0,r_0])\times C^{\infty}([0,r_0]) |\xi(0)=0\}$, with respect to the norm 
\begin{equation}\label{definition-Xk}
\begin{split}
\|(\xi,\eta)\|^2_{X_k}&=\int_0^{r_0}\bigg\{p\Big|\frac{1}{r}\partial_r(r\xi(r))\Big|^2+B_{\theta}^2\Big[k\eta-\frac{1}{r}\big((r\xi)'-2\xi\big)\Big]^2\bigg\}rdr
\\&\quad+\int_0^{r_0} \rho (|\xi|^2+|\eta|^2)rdr.
\end{split}
\end{equation}
\end{defi}

Define function $g(r)=\sup_{r\leq s\leq r_0}\frac{p(s)}{-p'(s)}$, then we can get the following lemma.
\begin{lem}\label{weight-p'-lem}
Assume $s_1$ near $r=r_0$,  then it holds that
\begin{equation}\label{weight-p'}
\begin{split}
\int_{s_1}^{r_0}-p'(r)\xi^2dr
\leq   2p(s_1)\xi^2(s_1)+4g(s_1)
\int_{s_1}^{r_0}p\xi'^2dr,
\end{split}
\end{equation}
with $g(s_1)\rightarrow 0$ as $s_1\rightarrow r_0$.
\end{lem}
\begin{proof}
	From Definition \ref{admissible}/admissibility of the pressure $p$, we have $\frac{p}{p'}\rightarrow 0$ for $r \rightarrow r_0$, which together with the definition of function $g$, gives that $p(s)\leq-g(s_1) p'(s)$ for $s_1\leq s\leq r_0$, with $g(s_1)\rightarrow 0$ as $s_1\rightarrow r_0$. 
	Since $p> 0$ for all $r\in(0,r_0)$ and $p'(r)\leq  0$ for $r$ near $r_0$, 
	the H$\ddot{o}$lder inequality provides 
\begin{equation*}
\begin{split}
\int_{s_1}^{r_0}-p'(r)\xi^2dr&=p(s_1)\xi^2(s_1)+2\int_{s_1}^{r_0}p\xi\xi'dr\\
&\leq p(s_1)\xi^2(s_1)+2\Big(\int_{s_1}^{r_0}p\xi^2dr\Big)^{\frac{1}{2}}
\Big(\int_{s_1}^{r_0}p\xi'^2dr\Big)^{\frac{1}{2}}\\
&\leq  p(s_1)\xi^2(s_1)+2g^{\frac{1}{2}}(s_1)\Big(\int_{s_1}^{r_0}-p'\xi^2dr\Big)^{\frac{1}{2}}
\Big(\int_{s_1}^{r_0}p\xi'^2dr\Big)^{\frac{1}{2}},
\end{split}
\end{equation*}
which by Cauchy inequality ensures \eqref{weight-p'}.
\end{proof}
From the Definition \ref{space-X}, we can show the following compactness results.
\begin{lem}\label{embedding-p'}
Assume $s_1$ is near $r=r_0$ and $p$ is admissible. Let $\pi_1$ denote the projection operator onto the first factor. Then $\pi_1: X_k\rightarrow Z$ is a bounded, linear, compact map, 
with the norm 
\begin{equation}\label{defi-z}
\|\xi\|_Z^2=\int_{0}^{s_1}\xi^2dr+\int_{s_1}^{r_0}-p'\xi^2dr,
\end{equation} and we denote it by 
\begin{equation}\label{defi-b-z}
X_k\subset\subset Z.
\end{equation}
\end{lem}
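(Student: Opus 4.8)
The map is linear because it is the restriction of a coordinate projection, so the whole content of the statement is the pair of quantitative facts: boundedness, $\|\xi\|_Z \lesssim \|(\xi,\eta)\|_{X_k}$, and compactness. The plan is to split $[0,r_0]$ into three regimes---a neighborhood of the axis $r=0$, a compact interior piece bounded away from both endpoints, and a neighborhood of the free surface $r=r_0$---and to treat each regime with a tool adapted to the degeneracy of the weights there. The two genuinely degenerate regimes are the endpoints: near $r=0$ the factor $r\,dr$ in the $X_k$-norm \eqref{definition-Xk} degenerates, while near $r=r_0$ both the pressure $p$ and the target weight $-p'$ vanish. Everywhere in between, $p$ and $\rho$ are continuous and strictly positive (since $p=0$ only at $r_0$), hence bounded above and below by positive constants, so the classical Rellich--Kondrachov theorem applies verbatim.

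\textbf{Boundedness.} I would first set $w=r\xi$ (so that $w(0)=0$ for elements of $X_k$) and observe that the derivative term in \eqref{definition-Xk} controls $\int_0^{s_1} p\,(w')^2/r\,dr$; since $p\ge c>0$ on $[0,s_1]$, a one-dimensional Hardy inequality of the form $\int_0^{s_1} w^2/r^2\,dr \lesssim \int_0^{s_1}(w')^2/r\,dr$ bounds $\int_0^{s_1}\xi^2\,dr$ by the $X_k$-norm. Near the surface I would invoke Lemma \ref{weight-p'-lem} directly: its right-hand side is controlled once I check that $\int_{s_1}^{r_0} p\,\xi'^2\,dr \lesssim \|(\xi,\eta)\|_{X_k}^2$ (which follows by writing $\xi'=\frac1r(r\xi)'-\xi/r$, using $r\ge s_1$ there, and absorbing the lower-order term via $p=A\rho^\gamma\le C\rho$ near $r_0$) and that the trace $\xi^2(s_1)$ is controlled by the $H^1([\delta,s_1])$-norm for an interior $\delta$, which is in turn bounded by the nondegenerate $X_k$-norm on $[\delta,s_1]$.

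\textbf{Compactness.} Let $(\xi_n,\eta_n)$ be bounded in $X_k$. On any interior interval $[\delta,s]$ with $0<\delta<s<r_0$ the bound is a uniform $H^1$ bound, so Rellich gives $L^2$-convergence of a subsequence, and the embedding $H^1\hookrightarrow C^0$ even yields uniform convergence, hence pointwise convergence $\xi_n(s)\to\xi(s)$ at interior points. To handle the tail near $r=0$ I would use the same Hardy estimate to show $\int_0^\delta \xi_n^2\,dr \lesssim \delta\,\|(\xi_n,\eta_n)\|_{X_k}^2\to0$ uniformly in $n$ as $\delta\to0$, upgrading interior $L^2$-convergence to convergence in $L^2([0,s_1])$, i.e.\ in the first part of the $Z$-norm \eqref{defi-z}. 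The decisive step is the surface tail $\int_{s_1}^{r_0}-p'(\xi_n-\xi_m)^2\,dr$. Here I would apply Lemma \ref{weight-p'-lem} to $\xi_n-\xi_m$ on the shrinking interval $[s,r_0]$, which yields
\begin{equation*}
\int_{s}^{r_0}-p'(\xi_n-\xi_m)^2\,dr \le 2p(s)\,(\xi_n(s)-\xi_m(s))^2 + 4g(s)\int_{s}^{r_0} p\,(\xi_n'-\xi_m')^2\,dr,
\end{equation*}
and then exploit the two smallnesses available: $g(s)\to0$ as $s\to r_0$ makes the uniformly bounded gradient term arbitrarily small, while for any fixed $s<r_0$ the $C^0$-convergence makes the boundary term vanish along the subsequence. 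Splitting $\int_{s_1}^{r_0}=\int_{s_1}^{s}+\int_s^{r_0}$, controlling the first piece by Rellich on $[s_1,s]$ (where $-p'$ is bounded), and running a diagonal argument over $s\uparrow r_0$ then produces a subsequence Cauchy in $Z$; completeness of $Z$ finishes the proof.

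\textbf{Main obstacle.} The hard part is exactly this last regime: near $r=r_0$ the weight $-p'$ defining the $Z$-norm degenerates together with $p$, so no standard compact-embedding theorem applies, and the naive estimate only bounds $\int -p'\,\xi^2$ by a fixed multiple of $\int p\,\xi'^2$, giving boundedness but not compactness. The resolution is the structural gain in Lemma \ref{weight-p'-lem}, whose coefficient $g(s)$ vanishes as $s\to r_0$ under the admissibility hypothesis \eqref{minimizer-assumption}; this lets me make the non-compact leftover arbitrarily small rather than merely bounded, which is precisely what converts the a priori bound into genuine compactness.
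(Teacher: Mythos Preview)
Your proposal is correct and follows essentially the same three-region strategy as the paper: interior $H^1$-compactness, Lemma~\ref{weight-p'-lem} with the vanishing factor $g(s)$ near $r=r_0$, and a smallness-of-tail argument near $r=0$. The only minor variation is that near the axis you invoke a weighted Hardy inequality for $w=r\xi$, whereas the paper derives a pointwise bound on $\xi(r)p(r)$ by writing $r\xi p=\int_0^r\partial_s(s\xi p)\,ds$; both yield the same uniform smallness $\int_0^{s_0}\xi^2\,dr\lesssim s_0$ needed for compactness.
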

\begin{proof}
For any $(\xi,\eta)\in X_k$, we have for $r\in (0,\frac{r_0}{2})$ that
\begin{equation*}
\begin{split}
|r\xi(r)p(r)|&= \Big|\int_0^r\partial_{s}(s\xi(s)p(s))ds\Big|\leq \Big|\int_0^r\partial_{s}(s\xi(s))p(s)ds\Big|+\Big|\int_0^rs\xi(s)p'(s)ds\Big|\\
&\leq \Big(\int_0^rp(s)\Big|\frac{1}{s}\partial_{s}(s\xi(s))\Big|^2sds\Big)^\frac{1}{2}\Big(\int_0^rp(s)sds\Big)^\frac{1}{2}+\Big|\int_0^r\mathbb{J}_zB_\theta\xi(s)sds\Big|\\
&\leq \frac{r}{\sqrt{2}}\|p\|_{L^{\infty}(0,r)}\Big(\int_0^rp(s)\Big|\frac{1}{s}\partial_{s}(s\xi(s))\Big|^2sds\Big)^\frac{1}{2}\\
&\quad+\frac{r}{\sqrt{2}}\|\mathbb{J}_z\|_{L^{\infty}(0,r)}\|B_\theta\|_{L^{\infty}(0,r)}\Big(\int_0^r\xi^2(s)sds\Big)^\frac{1}{2},
\end{split}
\end{equation*}
which gives that 
\begin{equation}\label{sup-xi-b}
\begin{split}
|\xi(r)p(r)|&\leq \frac{1}{\sqrt{2}}\|p\|_{L^{\infty}(0,r)}\Big(\int_0^rp(s)\Big|\frac{1}{s}\partial_{s}(s\xi(s))\Big|^2sds\Big)^\frac{1}{2}\\
&\quad+\frac{1}{\sqrt{2}}\|\mathbb{J}_z\|_{L^{\infty}(0,r)}\|B_\theta\|_{L^{\infty}(0,r)}\Big(\int_0^r\xi^2(s)sds\Big)^\frac{1}{2}.
\end{split}
\end{equation}
Here  $\|p\|_{L^{\infty}}$, $\|\mathbb{J}_z\|_{L^{\infty}}$ and $\|B_\theta\|_{L^{\infty}}$ are bounded from Lemma \ref{steady-lem} and \eqref{cond-jz}. 

	Assume that $\|(\xi_n,\eta_n)\|_{X_k}\leq C$, for $n\in \mathbb{N}$. Fix any $\kappa>0$.
	We claim that there exists a subsequence $\{\xi_{n_i}\}$ so that \begin{equation}\label{claim}
	\sup_{i,j}\|\xi_{n_i}-\xi_{n_j}\|_{Z}\leq \kappa.
	\end{equation}

To prove the claim, let $s_0$ with $0<s_0<s_1<r_0$ and $s_0$ be chosen small enough, so that 
\begin{equation}\label{first-inter-new}
3s_0C^2\bigg(\|p\|_{L^{\infty}}^2+\frac{\|\mathbb{J}_z\|^2_{L^{\infty}}\|B_\theta\|_{L^{\infty}}^2}{\inf_{0<r\leq s_0}\rho}\bigg)\frac{1}{\inf_{0<r\leq s_0}{p}^2}\leq \kappa.
\end{equation}	
From Definition \ref{admissible}/admissibility of $p$, we have $\frac{p}{p'}\rightarrow 0$ for $r \rightarrow r_0$, which together with the definition of $g$, gives that  $g(s_1)\rightarrow 0$ as $s_1\rightarrow r_0$. Choose $s_1$ close enough to $r_0$, such that 
\begin{equation}\label{fix-s_1-m=0}
g(s_1)C\leq \frac{\kappa}{6},\quad\frac{Cp(s_1)}{3(s_1-s_0)}\leq 
\frac{1}{6}.
\end{equation}
 Since the subinterval $(s_0,s_1)$ avoids the singularity of $\frac{1}{r}$ and degenerate of $p$ at the boundary $r=r_0$, the function $\xi_n$ is uniformly bounded in $H^1(s_0,s_1)$. By the compact embedding $H^1(s_0,s_1)\subset\subset C^0(s_0,s_1)$, one can extract a subsequence $\{\xi_{n_i}\}$ that converges in $L^\infty(s_0,s_1)$. So for $i, j$ large enough, it holds that 
	$$\sup_{i,j}\|\xi_{n_i}-\xi_{n_j}\|^2_{L^\infty(s_0,s_1)}\leq \frac{\kappa}{3(s_1-s_0)}.$$
Since  $p'(r)\leq 0$ near $r=r_0$, by Lemma \ref{weight-p'-lem} and \eqref{fix-s_1-m=0},  we deduce for $i$ and $j$ large enough that
\begin{equation*}
\begin{split}
\int_{s_1}^{r_0}-p'(r)(\xi_{n_i}-\xi_{n_j})^2dr
&\leq 2p(s_1)(\xi_{n_i}-\xi_{n_j})^2(s_1)+4g(s_1)
\int_{s_1}^{r_0}p(\xi'_{n_i}-\xi'_{n_j})^2dr\\
&\leq \frac{Cp(s_1)\kappa}{3(s_1-s_0)}+g(s_1)C\leq \frac{\kappa}{3},
\end{split}
\end{equation*}
where we have used the facts  $\|(\xi_n,\eta_n)\|_{X_k}\leq C$ and
\begin{equation*}
\begin{split}
\int_{s_1}^{r_0}p{\xi'_n}^2dr&\lesssim \int_{s_1}^{r_0}p\Big|\frac{1}{r}\partial_r(r\xi_n(r))\Big|^2rdr+\int_{s_1}^{r_0}p\xi_n^2rdr
\\
&\lesssim \int_{s_1}^{r_0}p\Big|\frac{1}{r}\partial_r(r\xi_n(r))\Big|^2rdr+\|\rho\|^{\gamma-1}_{L^\infty(s_1,r_0)}\int_{s_1}^{r_0}\rho\xi_n^2rdr\\
&\lesssim \int_{0}^{r_0}p\Big|\frac{1}{r}\partial_r(r\xi_n(r))\Big|^2rdr+\|\rho\|^{\gamma-1}_{L^\infty(s_1,r_0)}\int_{0}^{r_0}\rho\xi_n^2rdr.
\end{split}
\end{equation*}

Then along the above subsequence one can get 
from \eqref{sup-xi-b} and \eqref{first-inter-new} that 
\begin{equation}
\begin{split}
\|\xi_{n_i}-\xi_{n_j}\|^2_{Z}&=\int_0^{s_1}|\xi_{n_i}-\xi_{n_j}|^2dr+\int_{s_1}^{r_0}-p'|\xi_{n_i}-\xi_{n_j}|^2dr\\
&=\Big(\int_0^{s_0}+\int_{s_0}^{s_1}\Big)|\xi_{n_i}-\xi_{n_j}|^2dr+\int_{s_1}^{r_0}-p'|\xi_{n_i}-\xi_{n_j}|^2dr
\\
&\leq s_0C^2\bigg(\|p\|_{L^{\infty}}^2+\frac{\|\mathbb{J}_z\|^2_{L^{\infty}}\|B_\theta\|_{L^{\infty}}^2}{\inf_{0<r\leq s_0}\rho}\bigg)\frac{1}{\inf_{0<r\leq s_0}{p}^2}\\
&\quad+(s_1-s_0)\sup_{i,j}\|\xi_{n_i}-\xi_{n_j}\|^2_{L^\infty(s_0,s_1)}
\\
&\quad+\frac{Cp(s_1)\kappa}{3(s_1-s_0)}+g(s_1)C
\leq \kappa,
\end{split}
\end{equation}
which implies the claim \eqref{claim} and the compactness result \eqref{defi-b-z}.
\end{proof}
Now, we consider the case $m=0$ and any $k\in\mathbb{Z}$.
In order to understand $\mu$,
we consider  the following energy
\begin{equation}\label{sausage-in-v}
\begin{split}
E_{0,k}(\xi,\eta)&=2\pi^2\int_0^{r_0}\bigg\{\frac{2p'\xi^2}{r}+B_{\theta}^2\Big[k\eta-\frac{1}{r}((r\xi)'-2\xi)\Big]^2+\gamma p\Big[\frac{1}{r}(r\xi)'-k\eta\Big]^2\bigg\}rdr,
\end{split}
\end{equation} which is equivalent to \eqref{energy-m=0} from Proposition \ref{energy-prop}.
We denote
\begin{equation*}
\mathcal{J}(\xi,\eta)=2\pi^2\int_0^{r_0} \rho (|\xi|^2+|\eta|^2)rdr.
\end{equation*}
From the Definition \ref{space-X} and Lemma \ref{weight-p'-lem}, it follows that $E_{0,k}$ and $\mathcal{J}$ are both well defined on the space $X_k$. 
\begin{lem}\label{well-define}
	$E_{0,k}(\xi,\eta)$ and $\mathcal{J}(\xi,\eta)$ are both well defined on the space $X_k$. 
\end{lem}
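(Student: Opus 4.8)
The plan is to show that each of the two quadratic forms is bounded in absolute value by a constant multiple of $\|(\xi,\eta)\|_{X_k}^2$; by polarization this makes the associated bilinear forms continuous on the dense subset of smooth pairs appearing in Definition \ref{space-X}, so both forms extend uniquely to the completion $X_k$ and are thereby well defined (finite and independent of the approximating sequence). The term $\mathcal{J}$ is immediate: it equals $2\pi^2$ times the zeroth-order part $\int_0^{r_0}\rho(|\xi|^2+|\eta|^2)r\,dr$ of the norm \eqref{definition-Xk}, so $0\le\mathcal{J}(\xi,\eta)\le 2\pi^2\|(\xi,\eta)\|_{X_k}^2$.

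For $E_{0,k}$ in \eqref{sausage-in-v} I would split the integrand into three pieces. The magnetic piece $B_\theta^2\big[k\eta-\tfrac1r((r\xi)'-2\xi)\big]^2 r$ integrates to exactly the first term of $\|(\xi,\eta)\|_{X_k}^2$, so it is controlled. For the pressure piece $\gamma p\big[\tfrac1r(r\xi)'-k\eta\big]^2 r$, I would expand the square and use $2ab\le a^2+b^2$ to dominate it by $2\gamma\int_0^{r_0} p\big|\tfrac1r(r\xi)'\big|^2 r\,dr+2\gamma k^2\int_0^{r_0} p\,\eta^2 r\,dr$. The first integral is again part of the norm, and for the second I would invoke the constitutive relation $p=A\rho^{\gamma}$ together with the boundedness of $\rho$ (a consequence of $p\in C^{2,1}([0,r_0])$) to write $p\le A\|\rho\|_{L^\infty}^{\gamma-1}\rho$, whence $\int_0^{r_0}p\,\eta^2 r\,dr\le C\int_0^{r_0}\rho\,\eta^2 r\,dr$ is controlled.

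The only delicate piece is the sign-indefinite term $\int_0^{r_0}\tfrac{2p'}{r}\xi^2 r\,dr=\int_0^{r_0}2p'\xi^2\,dr$, and this is precisely where Lemma \ref{embedding-p'} does the work. I would fix $s_1$ near $r_0$ as in that lemma and split the integral at $s_1$. On $[0,s_1]$ the derivative $p'$ is continuous, hence bounded by some $M$, so $\big|\int_0^{s_1}2p'\xi^2\,dr\big|\le 2M\int_0^{s_1}\xi^2\,dr$; on $[s_1,r_0]$ one has $p'\le 0$, so $\big|\int_{s_1}^{r_0}2p'\xi^2\,dr\big|=2\int_{s_1}^{r_0}(-p')\xi^2\,dr$. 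Both right-hand quantities are exactly the two summands of the $Z$-norm \eqref{defi-z}, and the boundedness of $\pi_1\colon X_k\to Z$ established in Lemma \ref{embedding-p'} (that is, \eqref{defi-b-z}) yields $\|\xi\|_Z\le C\|(\xi,\eta)\|_{X_k}$. Consequently $\big|\int_0^{r_0}2p'\xi^2\,dr\big|\le C\|(\xi,\eta)\|_{X_k}^2$, and combining the three estimates gives $|E_{0,k}(\xi,\eta)|\le C\|(\xi,\eta)\|_{X_k}^2$.

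The main obstacle is exactly this last, sign-indefinite term: because the norm \eqref{definition-Xk} controls only the degenerate combination $p\,|\tfrac1r(r\xi)'|^2$ rather than the full $H^1$ energy of $\xi$, one cannot bound $\int|p'|\xi^2$ by a naive Hardy or trace estimate. It is the weighted inequality of Lemma \ref{weight-p'-lem} near $r_0$ together with the near-origin bound \eqref{sup-xi-b}, both already packaged into the continuous embedding of Lemma \ref{embedding-p'}, that renders the term finite. As an alternative route avoiding an explicit appeal to the $Z$-embedding, one may apply Lemma \ref{weight-p'-lem} directly on $[s_1,r_0]$ and estimate $\int_{s_1}^{r_0}p\,\xi'^2\,dr$ by the norm exactly as in the proof of Lemma \ref{embedding-p'}, while handling $[0,s_1]$ by $|p'|\le M$ and \eqref{sup-xi-b}.
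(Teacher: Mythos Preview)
Your argument is correct and complete. The handling of the two positive pieces (the $B_\theta^2$-term and the $\gamma p$-term) matches the paper exactly, and your treatment of the sign-indefinite term $\int_0^{r_0}2p'\xi^2\,dr$ is valid.

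The only genuine difference from the paper is in how the interior interval $[0,s_1]$ is controlled. You bound $|p'|\le M$ and invoke the boundedness of $\pi_1:X_k\to Z$ from Lemma~\ref{embedding-p'} to control $\int_0^{s_1}\xi^2\,dr$. The paper instead uses the expansion \eqref{taylor-p} (equivalently, $p'(0)=0$ together with $p\in C^{2,1}$, which yields $|p'(r)|\le Cr$ on all of $[0,r_0]$) so that $\big|\int_0^{s_1}2p'\xi^2\,dr\big|\le C\int_0^{s_1}\xi^2 r\,dr\le C\mathcal{J}(\xi,\eta)$, appealing only to the zeroth-order part of the $X_k$-norm; on $[s_1,r_0]$ the paper applies Lemma~\ref{weight-p'-lem} directly rather than through the $Z$-embedding. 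Your route is more unified (a single appeal to Lemma~\ref{embedding-p'} handles both subintervals), while the paper's route is more self-contained in that it does not need the pointwise estimate \eqref{sup-xi-b} hidden inside the $Z$-embedding to absorb the missing factor of $r$ near the axis. Both are legitimate; indeed, your closing paragraph already sketches the paper's alternative.
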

\begin{proof}
		From \eqref{taylor-p}, we have for every $s_1<r_0$ that
		\begin{equation}\label{lower-bound}
		\Big|2\pi^2\int_0^{s_1}2p'\xi^2dr\Big|=\Big|4\pi^2\int_0^{s_1} \Big[-\frac{1}{2}\mathbb{J}^2_z(0)r
		-\frac{5}{6}\mathbb{J}'_z(0)\mathbb{J}_z(0)r^2+O(r^3)\Big] \xi^2dr\Big|\leq C\mathcal{J}(\xi,\eta).
		\end{equation}
			Using  Lemma \ref{weight-p'-lem}, for $s_1$ near $r_0$, we get
	\begin{equation}
		\begin{split}
		\Big|\int_{s_1}^{r_0}2p'(r)\xi^2dr	\Big|=	\int_{s_1}^{r_0}-2p'\xi^2dr
		&\leq 4p(s_1)\xi^2(s_1)+8g(s_1)
		\int_{s_1}^{r_0}p\xi'^2dr\\
		&\leq C\mathcal{J}(\xi,\eta)+C\|\big(\xi,\eta\big)\|^2_{X_k},
		\end{split}
		\end{equation}	
		where we have used the facts 
		 $$\xi^2(s_1)\leq \|\xi\|^2_{H^1(s_0,s_1)}\leq C\mathcal{J}(\xi,\eta)+C\|\big(\xi,\eta\big)\|^2_{X_k}\quad\mbox{for}\,\,\, 0<s_0<s_1<r_0,$$ and 
		\begin{equation*}
		\begin{split}
			\int_{s_1}^{r_0}p{\xi'}^2dr&\lesssim \int_{s_1}^{r_0}p\Big|\frac{1}{r}\partial_r(r\xi(r))\Big|^2rdr+\int_{s_1}^{r_0}p\xi^2rdr
			\\
			&\lesssim \int_{0}^{r_0}p\Big|\frac{1}{r}\partial_r(r\xi(r))\Big|^2rdr+\|\rho\|^{\gamma-1}_{L^\infty(s_1,r_0)}\int_{0}^{r_0}\rho\xi^2rdr\\
			&\leq C\|\big(\xi,\eta\big)\|^2_{X_k}.
		\end{split}
	\end{equation*}
		Hence, we get
		\begin{equation*}
	\Big|	\int_0^{r_0}2p'\xi^2dr\Big|\leq  C\mathcal{J}(\xi,\eta)+C\|\big(\xi,\eta\big)\|^2_{X_k},
		\end{equation*}
		which implies that
		 \begin{equation*}
		\begin{split}
		|E_{0,k}(\xi,\eta)|
		&\leq C\mathcal{J}(\xi,\eta)+C	\|(\xi,\eta)\|^2_{X_k}+C\int_0^{r_0}\Big\{B_{\theta}^2\Big[k\eta
		-\frac{1}{r}\big((r\xi)'-2\xi\big)\Big]^2\Big\}rdr\\
		&\quad+C\int_0^{r_0} p\big|\frac{1}{r}(r\xi)'\big|^2rdr+ C\|\rho\|^{\gamma-1}_{L^\infty}\int_0^{r_0} \rho |\eta|^2rdr
	\leq C\|\big(\xi,\eta\big)\|^2_{X_{k}}.	
		\end{split}
		\end{equation*}
		Therefore, $E_{0,k}(\xi,\eta)$ and $\mathcal{J}(\xi,\eta)$ are well defined on the space $X_k$.
\end{proof}

Now we define \begin{equation*}
\lambda=\inf_{(\xi,\eta)\in X_k}\frac{E_{0,k}(\xi,\eta)}{\mathcal{J}(\xi,\eta)}.
\end{equation*}
Consider the set 
\begin{equation}\label{set-A-m=0}
\mathcal{A}_1=\Big\{(\xi,\eta)\in X_k| \mathcal{J}(\xi,\eta)=1\Big\}.
\end{equation}
We want to show that the infimum of $E_{0,k}(\xi,\eta)$ over the set $\mathcal{A}_1$ is achieved and is negative and that the minimizer solves \eqref{spectral-formulation} 
	 with $m=0$ and any $k\in \mathbb{Z}$  and the corresponding boundary conditions.
 
First, we prove that the energy $E_{0,k}$ has a lower bound on the set $ \mathcal{A}_1$.
\begin{lem}\label{lower-bound-lem-m=0}
	The energy $E_{0,k}(\xi,\eta)$ has a lower bound on the set $ \mathcal{A}_1$.
\end{lem}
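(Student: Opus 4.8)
The plan is to isolate the only possibly-negative contribution to $E_{0,k}$ and absorb it into the manifestly nonnegative part. Writing the energy \eqref{sausage-in-v} as $E_{0,k}=N+P$ with
$$N=4\pi^2\int_0^{r_0}p'\xi^2\,dr,\quad P=2\pi^2\int_0^{r_0}\Big\{B_{\theta}^2\Big[k\eta-\tfrac1r\big((r\xi)'-2\xi\big)\Big]^2+\gamma p\Big[\tfrac1r(r\xi)'-k\eta\Big]^2\Big\}r\,dr\ge0,$$
I would aim to show $N\ge-C\mathcal{J}-\tfrac12P$, which on $\mathcal{A}_1$ (where $\mathcal{J}=1$) forces $E_{0,k}\ge-C$. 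The difficulty is that $\mathcal{A}_1$ fixes only $\mathcal{J}$ and not $\|(\xi,\eta)\|_{X_k}$, so $N$ cannot be bounded by $\mathcal{J}$ alone; the whole point is to trade the unbounded part of $N$ against $P$.

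First I would show that $P$ and $\mathcal{J}$ together control the full $X_k$-norm. Applying the elementary bound $(a-b)^2\ge\tfrac12a^2-b^2$ to the $\gamma p$-term of $P$ and using the pointwise inequality $p=A\rho^{\gamma}\le A\|\rho\|_{L^\infty}^{\gamma-1}\rho$, the quantity $\int_0^{r_0}p\big|\tfrac1r(r\xi)'\big|^2r\,dr$ is bounded by $P$ plus a constant times $\mathcal{J}$. Since the remaining pieces of the norm \eqref{definition-Xk} are exactly the $B_\theta^2$-term of $P$ and the $\mathcal{J}$-term, this yields $\|(\xi,\eta)\|_{X_k}^2\le C_1P+C_2\mathcal{J}$, with $C_1,C_2$ depending on $k$, $\gamma$, and $\|\rho\|_{L^\infty}$.

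Next I would bound $N$ by splitting the integral at a point $s_1$ near $r_0$. On $[0,s_1]$ the Taylor expansion of Lemma \ref{expansion-zero} (namely \eqref{taylor-p}, $p'(r)=-\tfrac12\mathbb{J}_z^2(0)r+O(r^2)$) supplies an extra factor of $r$ matching the weight hidden in $N$, giving $\big|\int_0^{s_1}2p'\xi^2\,dr\big|\le C\mathcal{J}$ as in \eqref{lower-bound}. On $[s_1,r_0]$, Lemma \ref{weight-p'-lem} gives $\int_{s_1}^{r_0}-2p'\xi^2\,dr\le 4p(s_1)\xi^2(s_1)+8g(s_1)\int_{s_1}^{r_0}p\xi'^2\,dr$, where crucially both $g(s_1)\to0$ and $p(s_1)\to0$ as $s_1\to r_0$. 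Bounding $\int_{s_1}^{r_0}p\xi'^2\,dr\lesssim\|(\xi,\eta)\|_{X_k}^2$ and, on the fixed interval $(s_0,s_1)$ that avoids both the singularity at $0$ and the degeneracy at $r_0$, the trace $\xi^2(s_1)\le C(s_0,s_1)\big(\mathcal{J}+\|(\xi,\eta)\|_{X_k}^2\big)$, I obtain $|N|\le C\mathcal{J}+\varepsilon(s_1)\|(\xi,\eta)\|_{X_k}^2$ with $\varepsilon(s_1)\to0$.

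Finally, combining the two steps and choosing $s_1$ so close to $r_0$ that $\varepsilon(s_1)C_1\le\tfrac12$, I find $E_{0,k}=N+P\ge\big(1-\varepsilon(s_1)C_1\big)P-C\mathcal{J}\ge-C\mathcal{J}$, which equals $-C$ on $\mathcal{A}_1$. The main obstacle is the estimate near $r=r_0$: the degeneracy of $p$ there makes $\int p'\xi^2$ a priori uncontrollable, and it is exactly the vanishing of the weight $g(s_1)$ and of the prefactor $p(s_1)$ in Lemma \ref{weight-p'-lem} that supplies the smallness needed to absorb this term into $P$ rather than merely bounding it by the $X_k$-norm.
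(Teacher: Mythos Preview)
Your overall strategy matches the paper's: split $N$ at $s_1$, control the inner piece by $C\mathcal{J}$ via the expansion \eqref{taylor-p}, and use Lemma~\ref{weight-p'-lem} for the outer piece. The difficulty, and the gap, lies in your treatment of the boundary term $4p(s_1)\xi^2(s_1)$.

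You bound $\xi^2(s_1)\le C(s_0,s_1)\big(\mathcal{J}+\|(\xi,\eta)\|_{X_k}^2\big)$ and then assert $\varepsilon(s_1)\to0$, relying on $p(s_1)\to0$. But the only derivative control available in $X_k$ is $\int p\big|\tfrac1r(r\xi)'\big|^2r\,dr$, so passing to $\|\xi'\|_{L^2(s_0,s_1)}^2$ costs a factor $1/\inf_{[s_0,s_1]}p$. Since $p$ is decreasing near $r_0$, this infimum is $p(s_1)$ itself, and the trace constant satisfies $C(s_0,s_1)\gtrsim 1/p(s_1)$. Consequently $p(s_1)C(s_0,s_1)$ stays of order~$1$ rather than vanishing, and your $\varepsilon(s_1)$ does \emph{not} tend to zero; the absorption into $P$ fails as written.

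The paper repairs exactly this point by inserting an interpolation parameter: from
\[
\xi^2(s_1)\le\|\xi\|_{L^\infty(s_0,s_1)}^2\le C\|\xi\|_{L^2(s_0,s_1)}\|\xi\|_{H^1(s_0,s_1)}\le C(\sigma)\|\xi\|_{L^2}^2+\sigma\|\xi\|_{H^1}^2
\]
one finds that the dangerous contribution becomes $4p(s_1)\cdot\sigma\cdot\frac{1}{p(s_1)}\int_{s_0}^{r_0}p\xi'^2\,dr=4\sigma\int_{s_0}^{r_0}p\xi'^2\,dr$. The ratio $p(s_1)/\inf p$ is merely bounded, but the extra factor $\sigma$ provides the smallness. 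One then chooses $s_1$ close to $r_0$ so that $Cg(s_1)\le\gamma/4$ and $\sigma$ small so that $C\sigma\le\gamma/4$, and the absorption goes through. Your argument is one small idea away from this; the single limit $s_1\to r_0$ cannot do the job by itself.
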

\begin{proof}
		We can directly get  from the energy \eqref{sausage-in-v} for $0<s_0<s_1<r_0$ that
		\begin{equation*}
		\begin{split}
		E_{0,k}(\xi,\eta)&\geq2\pi^2\int_0^{r_0}\gamma p\Big[\frac{1}{r}(r\xi)'-k\eta\Big]^2rdr+2\pi^2 \int_0^{r_0}2p'\xi^2dr\\
		&\geq 2\pi^2\int_{s_0}^{r_0}\gamma p\Big[\frac{1}{r}(r\xi)'-k\eta\Big]^2rdr\\
		&\quad+2\pi^2\Big(\int_0^{s_1}+\int_{s_1}^{r_0}\Big)2p'\xi^2dr,\, \forall\, (\xi,\eta,\zeta)\in \mathcal{A}_1.
		\end{split}
		\end{equation*}
		Recall  \eqref{lower-bound}, for every $s_1<r_0$, we have
		$\Big|2\pi^2\int_0^{s_1}2p'\xi^2dr\Big|\leq C\mathcal{J}(\xi,\eta).$
		Hence, we get
		\begin{equation*}
		\begin{split}
		E_{0,k}(\xi,\eta)\geq
		2\pi^2\int_{s_0}^{r_0}\gamma p\Big|\frac{1}{r}(r\xi)'\Big|^2rdr+2\pi^2\int_{s_1}^{r_0}2p'\xi^2dr-C\mathcal{J}(\xi,\eta).
		\end{split}
		\end{equation*}
		The key is to control $\int_{s_1}^{r_0}2p'\xi^2dr$. 
			Since in the interval $(s_1,r_0)$, 
				using  Lemma \ref{weight-p'-lem}, we know that
	\begin{equation}\label{estimates-p'-2}
			\begin{split}
			\Big|\int_{s_1}^{r_0}2p'(r)\xi^2dr	\Big|&=	\int_{s_1}^{r_0}-2p'\xi^2dr
			\leq 4p(s_1)\xi^2(s_1)+8g(s_1)
			\int_{s_1}^{r_0}p\xi'^2dr\\
			&\leq C(\sigma)\mathcal{J}(\xi,\eta)p(s_1)+\Big(Cp(s_1)\sigma+Cg(s_1)\Big)\int_{s_0}^{r_0}p\xi'^2dr,
			\end{split}
			\end{equation}	
			where we have used the facts  for $0<s_0<s_1<r_0$ and $\sigma$ small enough
	\begin{equation}
	\begin{split}
			\xi^2(s_1)&\leq \|\xi\|^2_{L^{\infty}} \leq \|\xi\|_{L^2(s_0,s_1)}\|\xi\|_{H^1(s_0,s_1)}
			\leq C(\sigma) \|\xi\|^2_{L^2(s_0,s_1)}+\sigma\|\xi\|^2_{H^1(s_0,s_1)}\\
			&\leq C(\sigma)\mathcal{J}(\xi,\eta)+C\sigma\int_{s_0}^{s_1}p\xi'^2dr
			\end{split}
			\end{equation}
			and $$\int_{s_0}^{s_1}p\xi'^2dr\leq \int_{s_0}^{r_0}p\xi'^2dr,\quad \int_{s_1}^{r_0}p\xi'^2dr\leq \int_{s_0}^{r_0}p\xi'^2dr.$$
		Choosing $s_1$ close enough to $r_0$ and $\sigma$ small enough such that  $Cp(s_1)\sigma+Cg(s_1)\leq \frac{\gamma}{2}$, yields that
		\begin{equation*}
		\begin{split}
		E_{0,k}(\xi,\eta)\geq\pi^2\int_{s_0}^{r_0}\gamma p\Big|\frac{1}{r}(r\xi)'\Big|^2rdr-3C\mathcal{J}(\xi,\eta)\geq -3C\mathcal{J}(\xi,\eta)= -3C,
		\end{split}
		\end{equation*}
		which implies that the energy $E_{0,k}(\xi,\eta)$ has a lower bound on the set $ \mathcal{A}_1$. 		
\end{proof}

Using the fact that	$\mathcal{J}(\xi,\eta)=1$ and $E_{0,k}$ has a lower bound on the set $\mathcal{A}_1$, we can choose a minimizing sequence such that along the minimizing sequence, 
we have $M\leq E_{0,k}(\xi_n,\eta_n)<M+1$, and for the minimizing sequence,
we can show coercivity estimate: 
	\begin{equation}\label{coercivity}
	\begin{split}
	\|(\xi_n,\eta_n)\|^2_{X_k}\leq \mathcal{J}+C(M+1)+C\int_0^{r_0}\Big(2p'\xi_n^2+\gamma pk^2\eta_n^2r\Big)dr\leq C.
	\end{split}
	\end{equation}

We now show that the infimum of $E_{0,k}$ over the set $\mathcal{A}_1$ is negative.
\begin{prop}\label{infimum-A-3}
It holds that $\lambda=-\mu^2=\inf E_{0,k}<0$. 
\end{prop}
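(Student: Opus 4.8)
The plan is to produce an explicit admissible pair with strictly negative energy and then normalize so that $\mathcal{J}=1$. I work from the equivalent form \eqref{energy-m=0}, which I write as
\begin{equation*}
E_{0,k}=2\pi^2\int_0^{r_0}\Big\{a(r)\,\xi^2+d(r)\Big[k\eta-\tfrac1r\big((r\xi)'-c(r)\xi\big)\Big]^2\Big\}r\,dr,
\end{equation*}
with $a(r)=\frac{2p'}{r}+\frac{4\gamma pB_\theta^2}{r^2(\gamma p+B_\theta^2)}=\frac2r\big[p'+\frac{2\gamma pB_\theta^2}{r(\gamma p+B_\theta^2)}\big]$, $d(r)=\gamma p+B_\theta^2>0$, and $c(r)=\frac{2B_\theta^2}{\gamma p+B_\theta^2}$. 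The only possibly negative contribution is $a(r)\xi^2$, since the second square is manifestly nonnegative. By Lemma \ref{instability-m=0} there is $r_*\in(0,r_0)$ with $a(r_*)<0$, and by continuity $a<0$ on an interval $I=(r_*-\rho,r_*+\rho)\subset\subset(0,r_0)$.

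For $k\neq0$ the construction is immediate. Pick any nonzero $\xi\in C_c^\infty(I)$ (so $\xi(0)=0$ holds automatically since $0\notin I$) and set $\eta=\frac{1}{kr}\big((r\xi)'-c\xi\big)$, which is smooth and supported in $I$. This choice annihilates the nonnegative square pointwise, so that $E_{0,k}=2\pi^2\int_I a\,\xi^2\,r\,dr<0$, while $(\xi,\eta)\in X_k$ with $\mathcal{J}(\xi,\eta)>0$. Rescaling so that $\mathcal{J}=1$ places the pair in the set $\mathcal{A}_1$ from \eqref{set-A-m=0} and yields $\lambda=\inf E_{0,k}<0$.

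The genuine difficulty is the case $k=0$, where $\eta$ disappears from $E_{0,0}$ entirely (it survives only in $\mathcal{J}$, so the optimal choice is $\eta\equiv0$) and the nonnegative square can no longer be removed. Here I would diagonalize the resulting scalar form by the integrating-factor substitution $\xi=\phi_0\psi$, where $\phi_0>0$ solves $(r\phi_0)'=c(r)\phi_0$; a short computation gives $(r\xi)'-c\xi=r\phi_0\psi'$, whence
\begin{equation*}
E_{0,0}=2\pi^2\int_0^{r_0}\phi_0^2\,r\,\big[a(r)\psi^2+d(r)\psi'^2\big]\,dr,
\end{equation*}
a weighted Sturm--Liouville form with strictly positive weight $\phi_0^2 r$ and a potential $a$ that is negative on $I$. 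It then suffices to exhibit an admissible $\psi$ (vanishing fast enough at $r=0$ that $\xi=\phi_0\psi\in X_0$ with $\xi(0)=0$, since $\phi_0\sim 1/r$ there, and unconstrained at $r=r_0$) for which this integral is negative.

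The main obstacle is precisely the control of the gradient penalty $\int\phi_0^2 r\,d\,\psi'^2\,dr$ in the $k=0$ case. A test function concentrated at scale $\delta$ near $r_*$ fails, because the penalty grows like $\delta^{-1}$ while the favorable term is only $O(\delta)$; a sharp cutoff of the local solution of $(r\xi)'=c\xi$ produces the same $O(\delta^{-1})$ error through $\phi_0'$. Instead I would use a spread-out plateau profile $\psi$ equal to $1$ on a fixed subinterval of $(0,r_0)$ containing $I$, transitioning to $0$ over a macroscopic layer bounded away from the origin, so that the singular weight $\phi_0^2 r\sim r^{-1}$ is integrated only against a bounded $\psi'$ and the transition cost stays finite. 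One then balances this finite cost against the bulk negative contribution $\int\phi_0^2 r\,a\,dr$, invoking the admissibility hypotheses of Definition \ref{admissible} and \eqref{minimizer-assumption} and the sign of $p'$ near $r_0$ to force the negative part to dominate. Establishing this quantitative balance is the crux of the argument.
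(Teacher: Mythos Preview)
For $k\neq0$ your argument coincides with the paper's: pick $\xi^*\in C_c^\infty(0,r_0)$ supported where the potential $a(r)=\frac{2p'}{r}+\frac{4\gamma pB_\theta^2}{r^2(\gamma p+B_\theta^2)}$ is negative (available by Lemma~\ref{instability-m=0} and continuity), then set $k\eta^*=\frac{1}{r}\big((r\xi^*)'-c\xi^*\big)$ to annihilate the nonnegative square in \eqref{energy-m=0}. That is exactly what the paper does.

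You are also right that this construction requires $k\neq0$; indeed the paper's own Proposition~\ref{infimum-A-2} explicitly restricts to $k\neq0$, so the paper does not actually cover $k=0$ either. Your integrating-factor substitution $\xi=\phi_0\psi$ with $(r\phi_0)'=c\phi_0$ is correct and gives the clean scalar form $E_{0,0}=2\pi^2\int_0^{r_0}\phi_0^2\,r\,\big[a\psi^2+d\psi'^2\big]\,dr$, but from there the argument is only a sketch. You propose a plateau test function and say one must ``balance'' the finite transition cost against the bulk negative contribution, then stop. That balance is not automatic: $a$ is only known to be negative on the subinterval $I$, and over the rest of the plateau the sign of $\int\phi_0^2 r\,a\,dr$ is undetermined, so nothing you have written forces the total to be negative. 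The appeals to Definition~\ref{admissible} and \eqref{minimizer-assumption} are not connected to any concrete estimate. As written, the $k=0$ case remains open in your proof (as it does in the paper's); since a single growing mode with $k\neq0$ already suffices for the instability of Theorem~\ref{linear instability}, this is not fatal, but it should be stated as a restriction rather than left as an unfinished argument.
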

\begin{proof}
	Since both $E_{0,k}$ and $\mathcal{J}$ are homogeneous degree $2$, it suffices to show that 
	\begin{equation*}
	\inf_{(\xi,\eta)\in X_k}\frac{E_{0,k}(\xi,\eta)}{\mathcal{J}(\xi,\eta)}<0.
	\end{equation*}
	But since $\mathcal{J}$ is positive definite, one may reduce to constructing any  $(\xi,\eta)\in X_k$ (see \eqref{definition-Xk})  such that $E_{0,k}(\xi,\eta)<0$. 
Using Lemma \ref{instability-m=0},  we can choose a smooth function $\xi^*\in C_c^{\infty}(0,r_0)$ such that $$2\pi^2\int_0^{r_0}\Big[\frac{2p'}{r}+\frac{4\gamma p B_{\theta}^2}{r^2(\gamma p+B_{\theta}^2)}\Big]{\xi^*}^2rdr< 0.$$
Then, 
we can assume that $k\eta^*=\frac{1}{r}\Big((r\xi^*)'-\frac{2B_{\theta}^2}{\gamma p +B_{\theta}^2}\xi^*\Big)$, such that the second term of $E_{0,k}(\xi^*,\eta^*)$ in \eqref{energy-m=0}  vanishes. Here, $\xi^*$ and $\eta^*$ are smooth functions and belong to the space $X_k$.

 Therefore, the energy \eqref{energy-m=0}  becomes
	\begin{equation*}
	\widetilde{E}(\xi^*)
	=2\pi^2\int_0^{r_0}\Big[\frac{2p'}{r}+\frac{4\gamma p B_{\theta}^2}{r^2(\gamma p+B_{\theta}^2)}\Big]{\xi^*}^2rdr<0,
	\end{equation*}
	which implies the result. 
\end{proof}

With Proposition \ref{infimum-A-3} in hand, we apply the direct methods to deduce  the existence of a minimizer of $E_{0,k}$ on the set $\mathcal{A}_1$.
\begin{prop}\label{infimum-A}
	$E_{0,k}$ achieves its infimum on the set $\mathcal{A}_1$.	
\end{prop}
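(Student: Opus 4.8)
The plan is to run the direct method of the calculus of variations. I would take a minimizing sequence $(\xi_n,\eta_n)\in\mathcal{A}_1$, so that $\mathcal{J}(\xi_n,\eta_n)=1$ and $E_{0,k}(\xi_n,\eta_n)\to\lambda$. By the lower bound of Lemma \ref{lower-bound-lem-m=0} together with the coercivity estimate \eqref{coercivity}, this sequence is bounded in $X_k$. Since the norm \eqref{definition-Xk} is induced by an inner product, $X_k$ is a Hilbert space, and after passing to a subsequence we obtain a weak limit $(\xi_n,\eta_n)\rightharpoonup(\xi,\eta)$ in $X_k$. The goal is then to show that $(\xi,\eta)$ lies in $\mathcal{A}_1$ and attains the infimum.

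Next I would split the energy \eqref{sausage-in-v} as $E_{0,k}=P+N$, where
\[
P(\xi,\eta)=2\pi^2\int_0^{r_0}\Big\{B_\theta^2\Big[k\eta-\tfrac1r((r\xi)'-2\xi)\Big]^2+\gamma p\Big[\tfrac1r(r\xi)'-k\eta\Big]^2\Big\}\,r\,dr
\]
is a nonnegative quadratic form and $N(\xi,\eta)=4\pi^2\int_0^{r_0}p'\xi^2\,dr$ is the only possibly negative, destabilizing contribution. The form $P$ is bounded on $X_k$ (as in Lemma \ref{well-define}) and convex, hence weakly lower semicontinuous, which gives $P(\xi,\eta)\le\liminf_n P(\xi_n,\eta_n)$. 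The whole difficulty is therefore to pass to the limit in $N$, which requires genuine compactness rather than mere weak convergence.

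This is exactly what the compact embedding $X_k\subset\subset Z$ of Lemma \ref{embedding-p'}/\eqref{defi-b-z} supplies: it yields $\xi_n\to\xi$ strongly in the norm \eqref{defi-z}. Splitting $N$ at the threshold $s_1$, the outer piece $\int_{s_1}^{r_0}(-p')\xi_n^2\,dr$ is precisely the second term of $\|\cdot\|_Z$ and so converges, while on $[0,s_1]$ the factor $p'$ is bounded (recall $p\in C^{2,1}$) and $\xi_n\to\xi$ in $L^2(0,s_1)$ by the first term of $\|\cdot\|_Z$, so $\int_0^{s_1}p'\xi_n^2\,dr$ converges as well. Hence $N(\xi_n,\eta_n)\to N(\xi,\eta)$, and combining this with the lower semicontinuity of $P$ I conclude $E_{0,k}(\xi,\eta)\le\liminf_n E_{0,k}(\xi_n,\eta_n)=\lambda$.

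It remains to verify the constraint, and here Proposition \ref{infimum-A-3} (namely $\lambda<0$) closes the argument. Weak lower semicontinuity of the $\rho$-weighted norm $\mathcal{J}$ gives $\mathcal{J}(\xi,\eta)\le1$. If $\mathcal{J}(\xi,\eta)=0$, then $\xi=\eta=0$ a.e.\ (as $\rho>0$ on $(0,r_0)$), whence $E_{0,k}(\xi,\eta)=0>\lambda$, a contradiction; if $0<\mathcal{J}(\xi,\eta)<1$, then rescaling $(\xi,\eta)$ by $\mathcal{J}(\xi,\eta)^{-1/2}$ produces an element of $\mathcal{A}_1$ with energy $\lambda/\mathcal{J}(\xi,\eta)<\lambda$, again contradicting the definition of $\lambda$. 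Therefore $\mathcal{J}(\xi,\eta)=1$, so $(\xi,\eta)\in\mathcal{A}_1$ and $E_{0,k}(\xi,\eta)=\lambda$, i.e.\ the infimum is attained. The main obstacle is the loss of compactness in the destabilizing term $N$, which depends only on $\xi$ and is recovered through the weighted embedding $X_k\subset\subset Z$; a secondary subtlety is ruling out mass loss along the weak limit, which is handled precisely by the strict negativity of $\lambda$.
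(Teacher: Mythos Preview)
Your proposal is correct and follows essentially the same direct-method strategy as the paper: bound the minimizing sequence in $X_k$, use weak lower semicontinuity for the nonnegative quadratic terms, invoke the compact embedding $X_k\subset\subset Z$ of Lemma~\ref{embedding-p'} to pass to the limit in the destabilizing term $\int_0^{r_0}p'\xi^2\,dr$, and then use $\lambda<0$ from Proposition~\ref{infimum-A-3} to rule out loss of mass in $\mathcal{J}$ via rescaling. Your explicit treatment of the degenerate case $\mathcal{J}(\xi,\eta)=0$ is a small but welcome clarification over the paper's version, which tacitly assumes $\mathcal{J}>0$ when rescaling.
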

\begin{proof}
	First note  that $E_{0,k}$ is bounded below on the set $\mathcal{A}_1$. Let $(\xi_n,\eta_n) \in \mathcal{A}_1$ be a minimizing sequence. Then, we know that $(\xi_n,\eta_n)$ is bounded in $X_k$ (see \eqref{definition-Xk}), so up to the extraction of a subsequence $\psi_n=|B_{\theta}|\Big[k\eta_n-\frac{1}{r}\big((r\xi_n)'-2\xi_n\big)\Big]r^{\frac12}\rightharpoonup \psi=|B_{\theta}|\Big[k\eta-\frac{1}{r}\big((r\xi)'-2\xi\big)\Big]r^{\frac12}$ weakly in $L^2$, and $\xi_n\rightarrow \xi$ strongly in $Z$ from the compact embedding in Lemma \ref{embedding-p'}.
	By weak lower semi-continuity, since $\psi_n\rightharpoonup \psi$ in the space $L^2(0,r_0)$,
	we have 
	$$\int_0^{r_0}B_{\theta}^2\Big[k\eta-\frac{1}{r}\big((r\xi)'-2\xi\big)\Big]^2rdr\leq\liminf_{n\to\infty}\int_0^{r_0}B_{\theta}^2\Big[k\eta_n-\frac{1}{r}\big((r\xi_n)'-2\xi_n\big)\Big]^2rdr.
	$$
	
	Because of the quadratic structure of all the terms in the integrals defining $E_{0,k}$, similarly by weak lower semicontinuity and $\xi_n\rightarrow \xi$ strongly in $Z$, we get that 
	\begin{equation*}
	E_{0,k}(\xi,\eta)\leq \liminf_{n\to\infty}E_{0,k}(\xi_n,\eta_n)=\inf_{\mathcal{A}_1}E_{0,k}.
	\end{equation*}
	
	All that remains is to show that $(\xi,\eta)\in \mathcal{A}_1 $.
	
	Again by lower semi-continuity, we know that $\mathcal{J}(\xi,\eta)\leq 1$. Suppose by way
	of contradiction that $\mathcal{J}(\xi,\eta)<1$. By the homogeneity of $\mathcal{J}$, we may find $\alpha>1$ so that $\mathcal{J}(\alpha\xi,\alpha\eta)=1$, i.e., we may scale up $(\xi,\eta)$ so that $(\alpha \xi,\alpha\eta )\in \mathcal{A}_1$. By Proposition \ref{infimum-A-3}, we know that $\inf E_{0,k}< 0$, and from this we deduce that $$E_{0,k}(\alpha\xi,\alpha\eta)=\alpha^2E_{0,k}(\xi,\eta)=\alpha^2 \inf E_{0,k}< \inf E_{0,k},$$
	which is a contradiction since $(\alpha \xi,\alpha\eta )\in \mathcal{A}_1$. Hence $\mathcal{J}(\xi, \eta)=1$, so that we show that $( \xi,\eta )\in \mathcal{A}_1$.
\end{proof}
We now prove that the minimizer constructed in the previous result satisfies Euler-Lagrange equations equivalent to \eqref{spectral-formulation} with $m=0$ and any $k\neq 0$.

\begin{prop}\label{infimum-A-2}
	Let $( \xi,\eta )\in \mathcal{A}_1$ be the minimizers of $E_{0,k}$ constructed in Proposition \ref{infimum-A}. Then $(\xi,\eta)$ are smooth when restricted to $(0,r_0)$ and satisfy 
	\begin{equation} \label{spectal formulation-2} 
	\begin{split}
	&\left(
	\begin{array}{ccc}
	\frac{d}{dr}\frac{\gamma p+B_{\theta}^2}{r}\frac{d}{dr}r
	-r(\frac{B_{\theta}^2}{r^2})'&-\frac{d}{dr}k(\gamma p+B_{\theta}^2)-\frac{2kB_{\theta}^2}{r}\\
	\frac{k(\gamma p+B_{\theta}^2)}{r}\frac{d}{dr}r-\frac{2kB_{\theta}^2}{r}&
	-k^2(\gamma p+B_{\theta}^2)
	\end{array}
	\right)
	\left(
	\begin{array}{lll}
	\xi   \\
	\eta \\
	\end{array}
	\right) 
	=-\rho \lambda \left(
	\begin{array}{lll}
	\xi   \\
	\eta \\
	\end{array}
	\right),
	\end{split}
	\end{equation}
along with the interface boundary condition	
	\begin{equation}\label{inter-two-com}
	B^2_\theta\big[k\eta r-\xi'r+\xi \big]\Big|_{r=r_0}=0.
	\end{equation}
\end{prop}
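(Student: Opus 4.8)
The plan is to recognize $(\xi,\eta)$ as a constrained critical point and to read off both the interior system and the natural boundary condition from the vanishing of the first variation of the Rayleigh quotient $E_{0,k}/\mathcal{J}$. Since Proposition~\ref{infimum-A} produces a minimizer at which $\mathcal{J}=1$ and $E_{0,k}=\lambda$, and since $E_{0,k}$ and $\mathcal{J}$ are both homogeneous of degree two, differentiating $\epsilon\mapsto E_{0,k}(\xi+\epsilon\phi,\eta+\epsilon\psi)/\mathcal{J}(\xi+\epsilon\phi,\eta+\epsilon\psi)$ at $\epsilon=0$ shows that for every $(\phi,\psi)\in X_k$ the minimizer satisfies the weak Euler--Lagrange identity $\delta E_{0,k}(\xi,\eta)[\phi,\psi]=\lambda\,\delta\mathcal{J}(\xi,\eta)[\phi,\psi]$, the multiplier being exactly $\lambda$ by homogeneity (testing with $(\phi,\psi)=(\xi,\eta)$). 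First I would write this identity out explicitly from \eqref{sausage-in-v}, keeping track of the two flux quantities $k\eta-\frac1r((r\xi)'-2\xi)$ and $\frac1r(r\xi)'-k\eta$, which carry the only $\xi'$-dependence in the energy.

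Next, for the interior smoothness I restrict to $\phi,\psi\in C_c^\infty(0,r_0)$. Because $\eta$ enters $E_{0,k}$ with no derivatives, the $\psi$-variation is purely algebraic and reproduces the second row of the stated system \eqref{spectal formulation-2}; for $k\neq 0$ the effective coefficient of $\eta$ there is $k^2(\gamma p+B_\theta^2)-\rho\lambda$, which is strictly positive on $(0,r_0]$ (recall $\lambda<0$ by Proposition~\ref{infimum-A-3}), so one may solve for $\eta$ pointwise as a smooth expression in $\xi$ and $\xi'$. Substituting this back into the $\phi$-variation yields a single linear second-order ODE for $\xi$ whose leading coefficient is $\gamma p+B_\theta^2>0$. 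On every compact subinterval of $(0,r_0)$ the weights $p,B_\theta^2,\rho$ are bounded above and below, so the weak solution lies in $H^1_{loc}$, and a standard ODE bootstrap upgrades it to $\xi\in C^\infty(0,r_0)$; then $\eta\in C^\infty(0,r_0)$ as well, and the pair solves the stated system classically (equivalently \eqref{spectral-formulation} with $m=0$, whose $\zeta$-row decouples).

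Finally, for the boundary condition I test with $(\phi,\psi)$ supported in $(\delta,r_0]$ and not vanishing at $r=r_0$. Integrating by parts the two terms that carry $\xi'$, and using the interior equation to cancel the bulk contribution, the only leftover is a boundary term at $r=r_0$. The factor multiplying $\gamma p\big[\frac1r(r\xi)'-k\eta\big]$ vanishes there because $p(r_0)=0$, so one is left with
\[
-4\pi^2\,B_\theta^2\big(k\eta\, r-\xi'\, r+\xi\big)\big|_{r=r_0}\,\phi(r_0)=0,
\]
and since $\phi(r_0)$ is arbitrary this is exactly \eqref{inter-two-com}.

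I expect the main obstacle to be regularity near the two degenerate endpoints. At $r=0$ the coordinate singularity $1/r$ together with the constraint $\xi(0)=0$ built into $X_k$ (see \eqref{definition-Xk}) forces care in the interior estimates; at $r=r_0$ the density and pressure degenerate, $p(r_0)=\rho(r_0)=0$, which a priori threatens both the bootstrap and the very existence of the traces $\xi(r_0),\xi'(r_0),\eta(r_0)$ appearing in \eqref{inter-two-com}. The saving feature, and the point I would verify most carefully, is that the magnetic term keeps the top-order coefficients $\gamma p+B_\theta^2$ and $k^2(\gamma p+B_\theta^2)-\rho\lambda$ strictly positive up to and including $r=r_0$, since $B_\theta(r_0)>0$; hence the degeneracy is confined to lower-order coefficients and the ODE remains regular at $r_0$, which is precisely what legitimizes the integration by parts that produces the natural boundary condition.
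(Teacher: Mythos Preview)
Your proposal is correct and follows essentially the same route as the paper. Two cosmetic differences are worth noting: the paper handles the constraint $\mathcal{J}=1$ via the implicit function theorem (introducing an auxiliary parameter $\tau(t)$ with $j(t,\tau(t))=1$) rather than by differentiating the Rayleigh quotient and invoking homogeneity, and it justifies the trace at $r=r_0$ by showing directly that the flux $(\gamma p+B_\theta^2)\big[k\eta-\tfrac1r((r\xi)'-2\xi)\big]$ has its $r$-derivative in $L^2(\tfrac{r_0}{2},r_0)$ (using the first equation itself together with $(\xi,\eta)\in X_k$, $\xi\in Z$, and $\sqrt{p}\,\xi'\in L^2$ near $r_0$), rather than by appealing to regularity of the reduced ODE at $r_0$; both arguments ultimately rest on the same nondegeneracy $B_\theta(r_0)>0$ that you identified.
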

\begin{proof}
	Since we want to use the structure of the energy and properties of functional space, we first change the spectral formula \eqref{spectal formulation-2} into the following equations by a simple computation
	\begin{equation}\label{new-equations}
	\begin{cases}
	&-\frac{d}{dr}\Big\{\gamma p\Big[k\eta-\frac{1}{r}(r\xi)'\Big] \Big\}-\frac{d}{dr}\Big\{B^2_{\theta}\Big[k\eta-\frac{1}{r}\big((r\xi)'-2\xi\big)\Big]\Big\}
	\\
	&\quad-\frac{2B^2_{\theta}}{r}\Big[k\eta-\frac{1}{r}\big((r\xi)'-2\xi\big)\Big]-\frac{2p'\xi}{r}=-\rho \lambda\xi,\\
	&-k(\gamma p+B^2_{\theta})\Big[k\eta-\frac{1}{r}(r\xi)'+\frac{2B^2_\theta}{r(\gamma p+B^2_{\theta})}\xi\Big]=-\rho \lambda\eta.
	\end{cases}
	\end{equation}
	Next, we prove that the minimization $\xi$ and $\eta$ satisfy the equations \eqref{new-equations} in weak sense on $(0,r_0)$. 
	
	Fix $(\xi_0,\eta_0)\in X_{k} $ (see \eqref{definition-Xk}). Define 
	$$j(t,\tau(t))=\mathcal{J}(\xi+t\xi_0+\tau(t)\xi,\eta+t\eta_0+\tau(t)\eta)$$
	and note that $j(0,0)=1$. Moreover, $j$ is smooth, 
	\begin{equation*}
	\begin{split}
	&\frac{\partial j}{\partial t}(0,0)=2\pi^2\int_{0}^{r_0}2\rho(\xi_0\xi +\eta_0\eta)rdr, \\
	&\frac{\partial j}{\partial \tau}(0,0)=2\pi^2\int_{0}^{r_0}2\rho(\xi^2 +\eta^2)rdr=2.
	\end{split}
	\end{equation*}
	So, by the inverse function theorem, we can solve for $\tau=\tau(t)$ in a neighborhood of $0$ as a $C^1$ function of $t$ so that $\tau(0)=0$ and $j(t,\tau(t))=1$. We may differentiate the last equation to find 
	\begin{equation*}
	\frac{\partial j}{\partial t}(0,0)+\frac{\partial j}{\partial \tau}(0,0)\tau'(0)=0,
	\end{equation*}
	hence that 
	\begin{equation*}
	\tau'(0)=-\frac{1}{2}\frac{\partial j}{\partial t}(0,0)=-2\pi^2\int_{0}^{r_0}\rho(\xi_0\xi +\eta_0\eta)rdr.
	\end{equation*}
	Since $(\xi, \eta)$ are minimizers over $\mathcal{A}_1$, we may make variations with respect to $(\xi_0,\eta_0)$ to find that 
	\begin{equation*}
	0=\frac{d}{dt}\bigg|_{t=0}E_{0,k}(\xi+t\xi_0+\tau(t)\xi, \eta+t\eta_0+\tau(t)\eta),
	\end{equation*}
	which implies that
	\begin{equation*}
	\begin{split}
	0&=4\pi^2\int_{0}^{r_0}2p'\xi(\xi_0+\tau'(0)\xi)dr +4\pi^2\int_{0}^{r_0}B^2_{\theta}\Big[k\eta-\frac{1}{r}\big((r\xi)'-2\xi\big)\Big]\\
	&\quad\times \bigg\{k\eta_0+\tau'(0)k\eta-\frac{1}{r}\Big[\Big(r\big(\xi_0+\tau'(0)\xi\big)\Big)'-2\big(\xi_0+\tau'(0)\xi\big)\Big]\bigg\}rdr\\
	&\quad +4\pi^2\int_{0}^{r_0}\gamma p\Big[\frac{1}{r}(r\xi)'-k\eta\Big]\Big[\frac{1}{r}\Big(r\big(\xi_0+\tau'(0)\xi\big)\Big)'-k\big(\eta_0+\tau'(0)\eta\big)\Big]rdr\\
	&=4\pi^2\int_{0}^{r_0}2p'\xi\xi_0dr +4\pi^2\int_{0}^{r_0}B^2_{\theta}\Big[k\eta-\frac{1}{r}\big((r\xi)'-2\xi\big)\Big] \Big[k\eta_0-\frac{1}{r}\Big(\big(r\xi_0\big)'-2\xi_0\Big)\Big]rdr\\
	&\quad+4\pi^2\int_{0}^{r_0}\gamma p\Big[\frac{1}{r}(r\xi)'-k\eta\Big]\Big[\frac{1}{r}\big(r\xi_0\big)'-k\eta_0\Big]rdr\\
	&\quad+4\tau'(0)\pi^2\int_0^{r_0}\Big\{2p'\xi^2+B_{\theta}^2\Big[k\eta-\frac{1}{r}((r\xi)'-2\xi)\Big]^2r+\gamma p\Big[\frac{1}{r}(r\xi)'-k\eta\Big]^2r\Big\}dr.
	\end{split}
	\end{equation*}
	Hence, we have
	\begin{equation*}
	\begin{split}
	-\tau'(0)\lambda&=2\pi^2\int_{0}^{r_0}\rho\lambda(\xi_0\xi +\eta_0\eta)rdr\\
	&=2\pi^2\int_{0}^{r_0}2p'\xi\xi_0dr +2\pi^2\int_{0}^{r_0}B^2_{\theta}\Big[k\eta-\frac{1}{r}\big((r\xi)'-2\xi\big)\Big]
	\Big[-\frac{1}{r}\Big(\big(r\xi_0\big)'-2\xi_0\Big)\Big]rdr\\
	&\quad+2\pi^2\int_{0}^{r_0}\gamma p\Big[\frac{1}{r}(r\xi)'-k\eta\Big](r\xi_0)'dr+2\pi^2\int_{0}^{r_0}kB^2_{\theta}\Big[k\eta-\frac{1}{r}\big((r\xi)'-2\xi\big)\Big]
	\eta_0rdr\\
	&\quad+2\pi^2\int_{0}^{r_0}\gamma p\Big[\frac{1}{r}(r\xi)'-k\eta\Big](-k\eta_0)rdr.
	\end{split}
	\end{equation*}
	
	Since $\xi_0$ and $\eta_0$ are independent,  we deduce that
	\begin{equation}\label{weak-form}
	\begin{split}
	&\int_{0}^{r_0}2p'\xi\xi_0dr -\int_{0}^{r_0}B^2_{\theta}\Big[k\eta-\frac{1}{r}\big((r\xi)'-2\xi\big)\Big]
	\Big[\big(r\xi_0\big)'-2\xi_0\Big]dr\\
	&\quad+\int_{0}^{r_0}\gamma p\Big[\frac{1}{r}(r\xi)'-k\eta\Big](r\xi_0)'dr=\int_{0}^{r_0}\rho\lambda\xi_0\xi rdr,
	\end{split}
	\end{equation}	
	\begin{equation*}
	\begin{split}
	&\int_{0}^{r_0}kB^2_{\theta}\Big[k\eta-\frac{1}{r}\big((r\xi)'-2\xi\big)\Big]
	\eta_0rdr+\int_{0}^{r_0}\gamma pk\Big[k\eta-\frac{1}{r}(r\xi)'\Big]\eta_0rdr\\
	&\quad= \int_{0}^{r_0}\rho\lambda\eta_0\eta rdr.
	\end{split}
	\end{equation*}
	Therefore, $\xi$ and $\eta$ satisfy \eqref{new-equations}  in weak sense on $(0,r_0)$.
	Now we prove that the interface boundary condition \eqref{inter-two-com} is satisfied. 
	From the first equation of \eqref{new-equations}, we get 
	\begin{equation}\label{p-xi}
	\begin{split}
	&-\frac{d}{dr}\Big\{(\gamma p+B^2_{\theta})\Big[k\eta-\frac{1}{r}\big((r\xi)'-2\xi\big)\Big]\Big\}+\frac{d}{dr}\Big(\gamma p \frac{2\xi}{r}\Big)\\
	&\quad-\frac{2B^2_{\theta}}{r}\Big[k\eta-\frac{1}{r}\big((r\xi)'-2\xi\big)\Big]-\frac{2p'\xi}{r}=-\rho \lambda\xi,
	\end{split}
	\end{equation}
	that is, 
	\begin{equation}
	\begin{split}
	&-\frac{d}{dr}\Big\{(\gamma p+B^2_{\theta})\Big[k\eta-\frac{1}{r}\big((r\xi)'-2\xi\big)\Big]\Big\}+ \frac{2\gamma p\xi'}{r}+ \frac{2\gamma p'\xi}{r}-\frac{2\gamma p \xi }{r^2} \\
	&\quad-\frac{2B^2_{\theta}}{r}\Big[k\eta-\frac{1}{r}\big((r\xi)'-2\xi\big)\Big]-\frac{2p'\xi}{r}=-\rho \lambda\xi,
	\end{split}
	\end{equation}
	which together with $(\xi,\eta)\in X_k$ (see \eqref{definition-Xk}), $\xi \in Z$ (see \eqref{defi-z}) and the claim $\sqrt{p}\xi' \in L^2(\frac{r_0}{2}, r_0)$ gives that 
	\begin{equation}
	\frac{d}{dr}\Big\{(\gamma p+B^2_{\theta})\Big[k\eta-\frac{1}{r}\big((r\xi)'-2\xi\big)\Big]\Big\}\in L^2(\frac{r_0}{2}, r_0).
	\end{equation}	
	Now we prove the claim  $\sqrt{p}\xi' \in L^2(\frac{r_0}{2}, r_0)$. In fact,  from $(\xi,\eta)\in X_k$, it follows that
	\begin{equation}
	\begin{split}
	\|(\xi,\eta)\|^2_{X_k(\frac{r_0}{2},r_0)}&=\int_{\frac{r_0}{2}}^{r_0}\bigg\{p\Big|\frac{1}{r}\partial_r(r\xi(r))\Big|^2+B_{\theta}^2\Big[k\eta-\frac{1}{r}\big((r\xi)'-2\xi\big)\Big]^2\bigg\}rdr\\
	&\quad	+\int_{\frac{r_0}{2}}^{r_0} \rho (|\xi|^2+|\eta|^2)rdr\leq C.
	\end{split}
	\end{equation}
Together with $\frac{1}{r}\partial_r(r\xi(r))=\frac{\xi}{r}+\xi'$, we deduce that  $\sqrt{p}\xi' \in L^2(\frac{r_0}{2}, r_0)$.	
	So $(\gamma p+B^2_{\theta})\Big[k\eta-\frac{1}{r}\big((r\xi)'-2\xi\big)\Big]$ is well-defined at the endpoint $r=r_0$. 
	Make variations with respect to $\xi_0\in C_c^{\infty}((0,r_0])$. Integrating the terms in \eqref{weak-form}  by parts and using that $\xi$ solves the first equation of \eqref{new-equations} on $(0,r_0)$, we obtain
	\begin{equation}\label{new-boundary}
	\begin{split}
	-B^2_\theta\Big[k\eta-\frac{1}{r}\big((r\xi)'-2\xi\big)\Big](r\xi_0)\Big|_{r=r_0}+\gamma p\Big[\frac{1}{r}(r\xi)'-k\eta \Big](r\xi_0)\Big|_{r=r_0}=0,
	\end{split}
	\end{equation}
	which implies by $p=0$ on the boundary $r=r_0$ that 
	\begin{equation}\label{new-boundary-2}
	\begin{split}
	-B^2_\theta\Big[k\eta-\frac{1}{r}\big((r\xi)'-2\xi\big)\Big](r\xi_0)\Big|_{r=r_0}=0.
	\end{split}
	\end{equation}
	Since $\xi_0$ may be chosen arbitrarily, we get the interface boundary condition
	\begin{equation}\label{new-boiundary-3}
	\begin{split}
	B^2_\theta\big[k\eta r-\xi'r+\xi \big]\Big|_{r=r_0}=0.
	\end{split}
	\end{equation}
	This completes the proof.
\end{proof}
\subsection{Variational problem  when $m\neq 0$}
In this subsection, we prove the Kink ($|m|=1$) instability, in fact, we give the analysis for any  $m\neq0$ and any $k\in \mathbb{Z}$. Let's first introduce the definition of the space $Y_{m,k}$.
\begin{defi} \label{space-y}
		The weighted Sobolev space $Y_{m,k}$ is defined as the completion of $\{(\xi,\eta,\zeta) \in C^{\infty}([0,r_0])\times C^{\infty}([0,r_0])\times C^{\infty}([0,r_0])|\xi(0)=0)\}$,  with respect to the norm
\begin{equation}\label{definition-ymk}
\begin{split}
\|\big(\xi,\eta,\zeta\big)\|^2_{Y_{m,k}}&
=\int_0^{r_0}p\Big[\frac{1}{r}(r\xi)'+\frac{m\zeta}{r}\Big]^2rdr+\int_0^{r_0}\frac{B^2_\theta}{r}\xi^2dr\\
&\quad+\int_0^{r_0}B_{\theta}^2\Big[\frac{\eta}{r}-\frac{k}{m^2+k^2r^2}\big((r\xi)'-2\xi\big)\Big]^2rdr\\
&\quad+\int_0^{r_0}\frac{B^2_\theta}{r}(\xi-r\xi')^2dr+\int_0^{r_0} \rho \big(|\xi|^2+|\eta|^2+|\zeta|^2\big)rdr.
\end{split}
\end{equation} 
\end{defi}

From Definition \ref{space-y}, we can get the following compactness result.
\begin{lem}\label{embedding-p'-general-m}
Assume $s_1$ is near $r=r_0$ and $p$ is admissible.  Let $\Pi_1$ denote the projection operator onto the first factor. Then $\Pi_1: Y_{m,k}\rightarrow V$ is a bounded, linear, compact map, with the norm
 \begin{equation}\label{defi-v}
\|\xi\|_V^2=\int_{0}^{s_1}B^2_\theta\xi^2dr+\int_{s_1}^{r_0}-p'\xi^2dr,
\end{equation} and we denote  it by 
	\begin{equation}\label{defi-b-v}
	Y_{m,k}\subset\subset V.
	\end{equation}
\end{lem}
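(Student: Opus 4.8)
The plan is to mimic the proof of Lemma \ref{embedding-p'}, splitting $[0,r_0]$ into a near-origin piece $(0,s_0)$, a middle piece $(s_0,s_1)$, and a near-boundary piece $(s_1,r_0)$, while exploiting the fact that the weight $B_\theta^2$ appearing in the $V$-norm near the origin is exactly what is controlled by the term $\int_0^{r_0}\frac{B_\theta^2}{r}\xi^2\,dr$ in the $Y_{m,k}$-norm \eqref{definition-ymk}.

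First I would check linearity and boundedness of $\Pi_1$. For the near-origin weight, since $r\le r_0$ we have $B_\theta^2\le r_0\frac{B_\theta^2}{r}$, so $\int_0^{s_1}B_\theta^2\xi^2\,dr\le r_0\|(\xi,\eta,\zeta)\|_{Y_{m,k}}^2$. For the near-boundary weight I would apply Lemma \ref{weight-p'-lem} to bound $\int_{s_1}^{r_0}-p'\xi^2\,dr$ by $2p(s_1)\xi^2(s_1)+4g(s_1)\int_{s_1}^{r_0}p\xi'^2\,dr$, and then dominate $\int_{s_1}^{r_0}p\xi'^2\,dr$ by the $Y_{m,k}$-norm. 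The only new point relative to $m=0$ is that the norm groups $\frac1r(r\xi)'$ with $\frac{m\zeta}{r}$; writing $\xi'=\big[\frac1r(r\xi)'+\frac{m\zeta}{r}\big]-\frac{m\zeta}{r}-\frac{\xi}{r}$ and using the boundedness of $\frac1r$ on $[s_1,r_0]$ together with $p\lesssim\|\rho\|_{L^\infty(s_1,r_0)}^{\gamma-1}\rho$ there, I would absorb the extra $\zeta$ and $\xi$ contributions into the $\rho$-weighted $L^2$ terms of the norm, exactly as in the proof of Lemma \ref{well-define}.

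For compactness, I would take a sequence $(\xi_n,\eta_n,\zeta_n)$ bounded in $Y_{m,k}$, fix $\kappa>0$, and produce a subsequence with $\sup_{i,j}\|\xi_{n_i}-\xi_{n_j}\|_V\le\kappa$. Near the origin the estimate is now simpler than in Lemma \ref{embedding-p'}: by linearity the differences stay bounded in $Y_{m,k}$, and $\int_0^{s_0}B_\theta^2(\xi_{n_i}-\xi_{n_j})^2\,dr\le s_0\int_0^{s_0}\frac{B_\theta^2}{r}(\xi_{n_i}-\xi_{n_j})^2\,dr$ is made small by choosing $s_0$ small, with no need for the pointwise bound \eqref{sup-xi-b}. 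On the middle interval $(s_0,s_1)$, which avoids both the $1/r$ singularity and the degeneracy of $p$, the sequence $\xi_n$ is bounded in $H^1(s_0,s_1)$ (since $p$ and $B_\theta^2$ are bounded below there), so by Rellich $H^1(s_0,s_1)\subset\subset C^0$ I extract a subsequence converging in $L^\infty(s_0,s_1)$, whence $\int_{s_0}^{s_1}B_\theta^2(\xi_{n_i}-\xi_{n_j})^2\,dr\le\|B_\theta\|_{L^\infty}^2(s_1-s_0)\sup_{i,j}\|\xi_{n_i}-\xi_{n_j}\|_{L^\infty(s_0,s_1)}^2$ is small for $i,j$ large. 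Near the boundary I reuse Lemma \ref{weight-p'-lem} as in Lemma \ref{embedding-p'}, choosing $s_1$ close to $r_0$ so that $g(s_1)$ and $p(s_1)$ are small and the trace $\xi^2(s_1)$ is already controlled by the $L^\infty(s_0,s_1)$ convergence. Summing the three pieces yields the Cauchy tail bound, and a diagonal argument over $\kappa\to0$ together with completeness of the weighted space $V$ gives a subsequence converging in $V$, which is the claimed compactness $Y_{m,k}\subset\subset V$.

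The main obstacle, as in the $m=0$ case, is the near-boundary analysis: the weight $-p'$ degenerates and must be tamed through the admissibility hypothesis $p/p'\to0$ encoded in Lemma \ref{weight-p'-lem}, and one must verify that the $\int p\xi'^2$ term it produces is genuinely dominated by the $Y_{m,k}$-norm despite the $\zeta$-coupling in the first term of \eqref{definition-ymk}. By contrast the near-origin region, which was delicate for $m=0$, is handled here essentially for free by the $\frac{B_\theta^2}{r}$ weight already built into the norm.
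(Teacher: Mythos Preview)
Your proposal is correct and follows the same three-region decomposition as the paper's proof, using Lemma \ref{weight-p'-lem} near $r_0$ and Rellich on the middle interval. The one genuine difference is your treatment near the origin: the paper derives a pointwise bound $|\xi(r)B_\theta(r)|\lesssim\big(\int_0^r B_\theta^2\xi'^2 s\,ds\big)^{1/2}+\|B_\theta'\|_{L^\infty}\big(\int_0^r\xi^2 s\,ds\big)^{1/2}+\big(\int_0^r\frac{B_\theta^2}{s}\xi^2\,ds\big)^{1/2}$ via the fundamental theorem of calculus (the analogue of \eqref{sup-xi-b}), whereas you bypass this entirely by observing $\int_0^{s_0}B_\theta^2\xi^2\,dr\le s_0\int_0^{s_0}\frac{B_\theta^2}{r}\xi^2\,dr$, which is directly controlled by the $Y_{m,k}$-norm. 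Your route is shorter and exploits the extra $\frac{B_\theta^2}{r}\xi^2$ term in \eqref{definition-ymk} more efficiently; the paper's route is closer in spirit to the $m=0$ argument. For the near-boundary control of $\int_{s_1}^{r_0}p\xi'^2\,dr$ your $\zeta$-decoupling works, though note that the $\int\frac{B_\theta^2}{r}(\xi-r\xi')^2\,dr$ term in \eqref{definition-ymk} already controls $\int_{s_1}^{r_0}\xi'^2\,dr$ directly (since $B_\theta^2 r$ is bounded below near $r_0$), which gives an alternative that avoids touching $\zeta$ altogether.
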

\begin{proof}
For any $(\xi,\eta,\zeta)\in Y_{m,k}$, we have for $r\in (0,\frac{r_0}{2})$ that
\begin{equation*}
\begin{split}
|r\xi(r)B_\theta(r)|&= \Big|\int_0^r\partial_{s}\big(s\xi(s)B_\theta(s)\big)ds\Big|\\
&\leq \Big|\int_0^rs\xi'(s)B_\theta(s)ds\Big|+\Big|\int_0^rs\xi(s)B'_\theta(s)ds\Big|+\Big|\int_0^r\xi(s)B_\theta(s)ds\Big|\\
&\leq \Big(\int_0^rB^2_\theta(s)(\xi'(s))^2sds\Big)^\frac{1}{2}\Big(\int_0^rsds\Big)^\frac{1}{2}+\Big(\int_0^r\big(B'_\theta(s))^2sds\Big)^\frac{1}{2}\Big(\int_0^r\xi^2(s)sds\Big)^\frac{1}{2}\\
&\quad+\Big(\int_0^r\frac{B^2_\theta(s)}{s}\xi^2(s)ds\Big)^\frac{1}{2}\Big(\int_0^rsds\Big)^\frac{1}{2}\\
&\leq \frac{r}{\sqrt{2}}\Big(\int_0^rB^2_\theta(s)(\xi'(s))^2sds\Big)^\frac{1}{2}+\frac{r}{\sqrt{2}}\|B'_\theta\|_{L^{\infty}}\Big(\int_0^r\xi^2(s)sds\Big)^{\frac{1}{2}}\\
&\quad+\frac{r}{\sqrt{2}}\Big(\int_0^r\frac{B^2_\theta(s)}{s}\xi^2(s)ds\Big)^\frac{1}{2},
\end{split}
\end{equation*}
which gives that 
\begin{equation}\label{sup-xi}
\begin{split}
|\xi(r)B_\theta(r)|&\leq \frac{1}{\sqrt{2}}\Big(\int_0^rB^2_\theta(s)(\xi'(s))^2sds\Big)^\frac{1}{2}+\frac{1}{\sqrt{2}}\|B'_\theta\|_{L^{\infty}}\Big(\int_0^r\xi^2(s)sds\Big)^{\frac{1}{2}}\\
&\quad+\frac{1}{\sqrt{2}}\Big(\int_0^r\frac{B^2_\theta(s)}{s}\xi^2(s)ds\Big)^\frac{1}{2}.
\end{split}
\end{equation}
Assume that $\|(\xi_n,\eta_n,\zeta_n)\|_{Y_{m,k}}\leq C$, for $n\in \mathbb{N}$,
then we have 
\begin{equation*}
\begin{split}
&\|\big(\xi_n(r),\eta_n(r),\zeta_n(r)\big)\|^2_{Y(0,\frac{r_0}{2})}\\&
=\int_0^{\frac{r_0}{2}}p\Big[\frac{1}{r}(r\xi_n)'+\frac{m\zeta_n}{r}\Big]^2rdr+\int_0^{\frac{r_0}{2}}\frac{B^2_\theta}{r}\xi_n^2dr\\
&\quad+\int_0^{\frac{r_0}{2}}B_{\theta}^2\Big[\frac{\eta_n}{r}-\frac{k}{m^2+k^2r^2}\big((r\xi_n)'-2\xi_n\big)\Big]^2rdr\\
&\quad+\int_0^{\frac{r_0}{2}}\frac{B^2_\theta}{r}(\xi_n-r\xi'_n)^2dr+\int_0^{\frac{r_0}{2}} \rho (|\xi_n|^2+|\eta_n|^2+|\zeta_n|^2)rdr\leq C^2,
\end{split}
\end{equation*} 
which implies that for $r\in (0,\frac{r_0}{2})$,
\begin{equation}\label{sup-xi-2}
\begin{cases}
&\int_0^rB^2_\theta(s)(\xi'_n(s))^2sds\leq C^2,\\
& \int_0^r\xi_n^2(s)sds\leq \frac{1}{\min_{(0,r)}{\rho}}C^2,\\
& \quad\int_0^r\frac{B^2_\theta(s)}{s}\xi_n^2(s)ds\leq C^2.
\end{cases}
\end{equation}
Fix any $\kappa>0$.
We claim that there exists a subsequence $\{\xi_{n_i}\}$ so that 
\begin{equation}\label{claim-2}
\sup_{i,j}\|\xi_{n_i}-\xi_{n_j}\|_{V}\leq \kappa.
\end{equation}
To prove the claim, from \eqref{sup-xi} and \eqref{sup-xi-2}, let $s_0\in (0,\frac{r_0}{2})$ be chosen small enough so that 
\begin{equation}\label{sup-xi-b-g}
3s_0\Big(2C^2+\frac{1}{\min_{(0,s_0)}{\rho}}\|B'_\theta\|_{L^{\infty}}^2C^2\Big)\leq \kappa.
\end{equation}

	From Definition \ref{admissible}/admissibility of $p$, we have $\frac{p}{p'}\rightarrow 0$ for $r \rightarrow r_0$, which together with the definition of $g$, gives that  $g(s_1)\rightarrow 0$ as $s_1\rightarrow r_0$. Choose $s_1$ close enough to $r_0$, such that 
	\begin{equation}\label{fix-s_1}
g(s_1)C\leq \frac{\kappa}{6},\quad\frac{Cp(s_1)}{3(s_1-s_0)\|B_\theta\|^2_{L^{\infty}(0,r_0)}}\leq 
\frac{1}{6}.
	\end{equation}
	Since the subinterval $(s_0,s_1)$ avoids the singularity of $\frac{1}{r}$ and the degenerate of pressure $p$ on the boundary $r=r_0$,
the function $\xi_n$ is uniformly bounded in $H^1(s_0,s_1)$. By the compact embedding $H^1(s_0,s_1)\subset\subset C^0(s_0,s_1)$, one can extract a subsequence $\{\xi_{n_i}\}$ that converges in $L^\infty(s_0,s_1)$. So for $i, j$ large enough, it holds that 
\begin{equation*}
\sup_{i,j}\|\xi_{n_i}-\xi_{n_j}\|^2_{L^\infty(s_0,s_1)}\leq \frac{\kappa}{3(s_1-s_0)\|B_\theta\|^2_{L^{\infty}(0,r_0)}}.
\end{equation*}
Since  $p'(r)\leq 0$ near $r=r_0$, by Lemma \ref{weight-p'-lem} and \eqref{fix-s_1},  we deduce for $i$ and $j$ large enough
\begin{equation}
\begin{split}
\int_{s_1}^{r_0}-p'(r)(\xi_{n_i}-\xi_{n_j})^2dr
&\leq 2p(s_1)(\xi_{n_i}-\xi_{n_j})^2(s_1)+4g(s_1)
\int_{s_1}^{r_0}p(\xi'_{n_i}-\xi'_{n_j})^2dr\\
&\leq \frac{Cp(s_1)\kappa}{3(s_1-s_0)\|B_\theta\|^2_{L^{\infty}(0,r_0)}}+g(s_1)C\leq \frac{\kappa}{3}.
\end{split}
\end{equation}

Then along the above subsequence we can get from  \eqref{sup-xi-b-g} that 
\begin{equation*}
\begin{split}
\|\xi_{n_i}-\xi_{n_j}\|^2_{V}&=\int_0^{s_1}B^2_\theta|\xi_{n_i}-\xi_{n_j}|^2dr
+\int_{s_1}^{r_0}-p'|\xi_{n_i}-\xi_{n_j}|^2dr\\
&=\bigg(\int_0^{s_0}+\int_{s_0}^{s_1}\bigg) B^2_\theta|\xi_{n_i}-\xi_{n_j}|^2dr+\int_{s_1}^{r_0}-p'|\xi_{n_i}-\xi_{n_j}|^2dr\\
&\leq s_0\Big(2C^2+\frac{1}{\min_{(0,s_0)}{\rho}}\|B'_\theta\|_{L^{\infty}}^2C^2\Big)\\
&\quad+(s_1-s_0)\sup_{i,j}\|\xi_{n_i}-\xi_{n_j}\|^2_{L^\infty(s_0,r_0)}\|B_\theta\|^2_{L^\infty(0,r_0)}\\
&\quad+\frac{Cp(s_1)\kappa}{3(s_1-s_0)\|B_\theta\|^2_{L^{\infty}(0,r_0)}}+g(s_1)C\leq \kappa,
\end{split}
\end{equation*}
which implies the claim \eqref{claim-2} and the compactness result \eqref{defi-b-v}. 
\end{proof}
Now, consider the case  any  $m\neq0$ and any $k\in \mathbb{Z}$. 
We need to consider the energy \eqref{sausage-in-v-t-out} in Proposition \ref{energy-prop} 
and
\begin{equation}\label{constra}
\mathcal{J}(\xi,\eta,\zeta)=2\pi^2\int_0^{r_0} \rho (|\xi|^2+|\eta|^2+|\zeta|^2)rdr.
\end{equation}
Then from the Definition \ref{space-y}, we can get the following lemma. 
\begin{lem}
	$E_{m,k}(\xi,\eta,\zeta,\widehat{ Q}_r)$ defined in \eqref{sausage-in-v-t-out}   and $\mathcal{J}(\xi,\eta,\zeta)$ are both well defined on the space $Y_{m,k}\times H^1(r_0,r_w)$. 
\end{lem}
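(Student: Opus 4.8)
The plan is to show that both functionals are bounded above by a fixed constant multiple of $\|(\xi,\eta,\zeta)\|_{Y_{m,k}}^2+\|\widehat{Q}_r\|_{H^1(r_0,r_w)}^2$, so that they extend continuously to the completion that defines $Y_{m,k}\times H^1(r_0,r_w)$. The constraint functional $\mathcal{J}(\xi,\eta,\zeta)$ in \eqref{constra} is immediate, since up to the factor $2\pi^2$ it is literally the last summand of the norm \eqref{definition-ymk}. The work is therefore to estimate each of the four interior terms and the one vacuum term of $E_{m,k}$ in \eqref{sausage-in-v-t-out} against \eqref{definition-ymk}.

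First I would dispatch the three routine interior terms. Rewriting the bracket in the first term as $B_\theta\big[\tfrac{\eta}{r}-\tfrac{k}{m^2+k^2r^2}((r\xi)'-2\xi)\big]$ and using $m^2+k^2r^2\le m^2+k^2r_0^2$ bounds it by $(m^2+k^2r_0^2)$ times the third summand of \eqref{definition-ymk}. For the $\gamma p$-term I would write $\tfrac1r(r\xi)'-k\eta+\tfrac mr\zeta=\big[\tfrac1r(r\xi)'+\tfrac mr\zeta\big]-k\eta$ and apply $(a-b)^2\le 2a^2+2b^2$ together with $p=A\rho^\gamma\le A\|\rho\|_{L^\infty}^{\gamma-1}\rho$, so that it is controlled by the first and last summands of \eqref{definition-ymk}. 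The term $\tfrac{m^2B_\theta^2}{r(m^2+k^2r^2)}(\xi-r\xi')^2$ is bounded via $\tfrac{m^2}{m^2+k^2r^2}\le 1$ by the fourth summand. The vacuum integral is finite for $\widehat{Q}_r\in H^1(r_0,r_w)$, since on $[r_0,r_w]$ the variable $r$ stays away from $0$ and $\infty$, $\tfrac1{m^2+k^2r^2}\le\tfrac1{m^2}$, and $|(r\widehat{Q}_r)'|^2\le 2|\widehat{Q}_r|^2+2r^2|\widehat{Q}_r'|^2$.

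The delicate term is $\int_0^{r_0}\big[2p'+\tfrac{m^2B_\theta^2}{r}\big]\xi^2\,dr$. I would split off $\tfrac{m^2B_\theta^2}{r}\xi^2$, which is $m^2$ times the second summand of \eqref{definition-ymk}, and concentrate on $\int_0^{r_0}2p'\xi^2\,dr$, treated just as in Lemma \ref{well-define}. On a fixed interval $[0,s_1]$ the Taylor expansion \eqref{taylor-p} of Lemma \ref{expansion-zero} gives $|p'(r)|\le Cr$, so $\big|\int_0^{s_1}2p'\xi^2\,dr\big|\le C\int_0^{s_1}r\xi^2\,dr\le C\mathcal{J}$ after bounding $r$ by $\rho r/\inf_{[0,s_1]}\rho$. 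On $[s_1,r_0]$, where $p'\le0$, Lemma \ref{weight-p'-lem} gives $\int_{s_1}^{r_0}-2p'\xi^2\,dr\le 4p(s_1)\xi^2(s_1)+8g(s_1)\int_{s_1}^{r_0}p\xi'^2\,dr$.

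The crux, and the main obstacle, is to control $\xi^2(s_1)$ and $\int_{s_1}^{r_0}p\xi'^2\,dr$ by the $Y_{m,k}$-norm; the difficulty absent in the $m=0$ case is that in \eqref{definition-ymk} the $p$-weighted summand couples $\xi$ with $\zeta$, so $\int p|\tfrac1r(r\xi)'|^2$ cannot be isolated directly. I would instead use that admissibility forces $B_\theta(r_0)>0$ (indeed $-\int_0^{r_0}s^2p'\,ds=2\int_0^{r_0}sp\,ds>0$), so that $\tfrac{B_\theta^2}{r}\ge c>0$ on $[s_1,r_0]$ once $s_1$ is close to $r_0$. Then the second and fourth summands of \eqref{definition-ymk} yield $\int_{s_1}^{r_0}\xi^2\,dr\le C$ and $\int_{s_1}^{r_0}(\xi-r\xi')^2\,dr\le C$; combining these with the elementary identity $(\xi-r\xi')^2=\big(\sqrt2\,\xi-\tfrac1{\sqrt2}r\xi'\big)^2+\tfrac12 r^2\xi'^2-\xi^2\ge\tfrac12 r^2\xi'^2-\xi^2$, together with $p\le\|p\|_{L^\infty}$ and $r\ge s_1$, gives $\int_{s_1}^{r_0}p\xi'^2\,dr\le C\|(\xi,\eta,\zeta)\|_{Y_{m,k}}^2$, while $\xi^2(s_1)\le C\|\xi\|_{H^1(s_0,s_1)}^2\le C\|(\xi,\eta,\zeta)\|_{Y_{m,k}}^2$ on the compact interval $(s_0,s_1)$ avoiding both singularities. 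Since $g(s_1)\to0$, collecting all the estimates produces $|E_{m,k}|\le C\big(\|(\xi,\eta,\zeta)\|_{Y_{m,k}}^2+\|\widehat{Q}_r\|_{H^1(r_0,r_w)}^2\big)$, which is the claimed well-definedness.
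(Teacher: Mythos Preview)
Your argument is correct. The overall architecture---bounding each summand of \eqref{sausage-in-v-t-out} by the $Y_{m,k}$-norm, isolating $\int_0^{r_0}2p'\xi^2\,dr$ as the only delicate piece, splitting it at $s_1$, invoking the Taylor expansion \eqref{taylor-p} on $[0,s_1]$ and Lemma~\ref{weight-p'-lem} on $[s_1,r_0]$---matches the paper's, which simply writes ``similar to the proof of Lemma~\ref{well-define}.''

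Where you genuinely diverge is in the sub-step of bounding $\int_{s_1}^{r_0}p\,\xi'^2\,dr$ and $\xi^2(s_1)$. The paper's cross-reference to Lemma~\ref{well-define} suggests extracting $\int p\big|\tfrac1r(r\xi)'\big|^2r\,dr$ from the first summand of \eqref{definition-ymk}; since that summand is coupled to $\zeta$, making this literal requires the extra observation $\int_{s_1}^{r_0}p\,\zeta^2/r\,dr\lesssim\int\rho\,\zeta^2r\,dr$ (which the paper does not spell out). You instead bypass the $p$-weighted summand entirely: using $B_\theta(r_0)>0$ (via the nice integration-by-parts $-\int_0^{r_0}s^2p'\,ds=2\int_0^{r_0}sp\,ds>0$) to get $B_\theta^2/r\ge c$ near $r_0$, and then the second and fourth summands of \eqref{definition-ymk} together with the identity $(\xi-r\xi')^2\ge\tfrac12 r^2\xi'^2-\xi^2$ deliver $\int_{s_1}^{r_0}\xi'^2\,dr\le C$ directly. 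This is a cleaner route for the $Y_{m,k}$-space because it exploits exactly the terms that distinguish \eqref{definition-ymk} from \eqref{definition-Xk}, rather than forcing the $m\neq0$ estimate through the $m=0$ template.
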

\begin{proof}
From \eqref{taylor-p} and Lemma \ref{weight-p'-lem}, 
similar to the proof of Lemma \ref{well-define}, we can get
	\begin{equation*}
\int_0^{r_0}2p'\xi^2dr\leq  C\mathcal{J}(\xi,\eta,\zeta)+C\|\big(\xi,\eta,\zeta\big)\|^2_{Y_{m,k}},
	\end{equation*}
	which implies that
	\begin{equation*}
	\begin{split}
	|E_{m,k}(\xi,\eta,\zeta,\widehat{ Q}_r)|
	&\leq C\mathcal{J}(\xi,\eta,\zeta)+C\|(\xi,\eta,\zeta)\|_{Y_{m,k}}^2+C\int_0^{r_0}B_{\theta}^2\Big[\frac{\eta}{r}-\frac{k}{m^2+k^2r^2}\big((r\xi)'-2\xi\big)\Big]^2rdr\\&\quad+C\int_0^{r_0}p\Big[\frac{1}{r}(r\xi)'+\frac{m\zeta}{r}\Big]^2rdr+C\int_0^{r_0}\frac{B^2_\theta}{r}\xi^2dr+C\int_0^{r_0}\frac{B^2_\theta}{r}(\xi-r\xi')^2dr\\
	&\quad+ C\|\rho\|^{\gamma-1}_{L^\infty}\int_0^{r_0} \rho |\eta|^2rdr+C\int_{r_0}^{r_w}\bigg[|\widehat{Q}_r|^2+|\widehat{Q}_r'|^2\bigg]rdr\\
	&\leq C\|\big(\xi,\eta,\zeta\big)\|^2_{Y_{m,k}}+\|\widehat{ Q}_r\|^2_{H^1}.
	\end{split}
	\end{equation*}
	Therefore, $E_{m,k}(\xi,\eta,\zeta)$ and $\mathcal{J}(\xi,\eta,\zeta)$ are well defined on the space $Y_{m,k}\times H^1(r_0,r_w)$.
\end{proof}

Now, we define \begin{equation}\label{e-3}
\lambda=\inf_{\big((\xi,\eta,\zeta),\widehat{ Q}_r\big
	)\in Y_{m,k}\times H^1(r_0,r_w)}\frac{E_{m,k}(\xi,\eta,\zeta,\widehat{ Q}_r)}{\mathcal{J}(\xi,\eta,\zeta)}.
\end{equation}
Consider the set 
\begin{equation}\label{set-A-m}
\mathcal{A}_2=\left\{
\begin{aligned}
\big((\xi,\eta,\zeta),\widehat{ Q}_r\big
)\in Y_{m,k}\times H^1(r_0,r_w)| \mathcal{J}(\xi,\eta,\zeta)=1,\,\\ m\widehat{  B}_\theta \xi=r\widehat{ Q}_r\, \,\mbox{at}\, \,r=r_0\,\, \mbox{and}\,\, \widehat{ Q}_r=0\, \,\mbox{at}\, \,r=r_w
\end{aligned}
\right\},
\end{equation}
where the functions $\xi$, $\eta$ and $\zeta$ are restricted to $(0,r_0)$, and the function $\widehat{ Q}_r$ is restricted to $(r_0,r_w)$.
We want to show that the infimum of $E_{m,k}(\xi,\eta,\zeta,\widehat{ Q}_r)$ over the set $\mathcal{A}_2$ is achieved and is negative and that the minimizer solves \eqref{spectral-formulation} and \eqref{euler-l-q} with the corresponding boundary conditions. 
First, we study the lower bound of the energy $E_{m,k}(\xi,\eta,\zeta,\widehat{ Q}_r)$ on the set $ \mathcal{A}_2$.
\begin{lem}\label{lower-bound-lem}
The energy $E_{m,k}(\xi,\eta,\zeta,\widehat{ Q}_r)$ has a lower bound on the set $ \mathcal{A}_2$.
\end{lem}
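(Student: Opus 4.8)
The plan is to adapt the $m=0$ argument of Lemma~\ref{lower-bound-lem-m=0}. In the form \eqref{sausage-in-v-t-out} every contribution to $E_{m,k}$ is nonnegative except the indefinite piece $2\pi^2\int_0^{r_0}2p'\xi^2\,dr$ hidden inside $\int_0^{r_0}[2p'+\frac{m^2B_\theta^2}{r}]\xi^2\,dr$ (recall $p'\le 0$ only near $r_0$). Discarding the first squared bracket, the $\gamma p[\cdots]^2$ term and the vacuum integral $E^v_{m,k}\ge 0$, but retaining the two positive terms
\[
T_3=\int_0^{r_0}\frac{m^2B_\theta^2}{r(m^2+k^2r^2)}(\xi-r\xi')^2\,dr \quad\text{and}\quad T_4=\int_0^{r_0}\frac{m^2B_\theta^2}{r}\xi^2\,dr,
\]
I reduce matters to bounding $2\pi^2\int_0^{r_0}2p'\xi^2\,dr$ from below on $\mathcal{A}_2$ by $-C\mathcal{J}=-C$ plus an arbitrarily small multiple of $T_3+T_4$.

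Splitting $\int_0^{r_0}2p'\xi^2\,dr=\int_0^{s_1}+\int_{s_1}^{r_0}$ with $s_1<r_0$ to be fixed near $r_0$, on $(0,s_1)$ the expansion \eqref{taylor-p} of Lemma~\ref{expansion-zero} gives $|2p'(r)|\lesssim r$, whence $|2\pi^2\int_0^{s_1}2p'\xi^2\,dr|\le C\mathcal{J}$ exactly as in \eqref{lower-bound}, using that $\rho$ is bounded below away from $r_0$. On $(s_1,r_0)$ I invoke the weighted estimate \eqref{weight-p'} of Lemma~\ref{weight-p'-lem},
\[
\int_{s_1}^{r_0}-p'\xi^2\,dr\le 2p(s_1)\xi^2(s_1)+4g(s_1)\int_{s_1}^{r_0}p\xi'^2\,dr ,\qquad g(s_1)\to0 .
\]

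It remains to absorb the right-hand side into $T_3+T_4$. On a fixed interval $(s_0,s_1)$ avoiding both $r=0$ and $r=r_0$ the weights $B_\theta^2/r$ and $B_\theta^2/(r(m^2+k^2r^2))$ are comparable to constants, so $\|\xi\|_{H^1(s_0,s_1)}^2\lesssim T_3+T_4$, and interpolation gives $\xi^2(s_1)\le C(\sigma)\mathcal{J}+\sigma\|\xi\|_{H^1(s_0,s_1)}^2$. The decisive point near $r_0$ is that $B_\theta(r_0)>0$, so on $(s_1,r_0)$ both weights stay bounded below; hence $T_4$ controls $\int_{s_1}^{r_0}\xi^2\,dr$ even though $\rho$ vanishes there, $T_3$ controls $\int_{s_1}^{r_0}(\xi-r\xi')^2\,dr$, and together they bound $\int_{s_1}^{r_0}\xi'^2\,dr$, giving $\int_{s_1}^{r_0}p\xi'^2\,dr\le \|p\|_{L^\infty(s_1,r_0)}\int_{s_1}^{r_0}\xi'^2\,dr\lesssim T_3+T_4$. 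Choosing $\sigma$ small and then $s_1$ so close to $r_0$ that the total coefficient $C\sigma+Cg(s_1)+Cp(s_1)$ multiplying $T_3+T_4$ is below $\tfrac12$, I obtain $E_{m,k}\ge \tfrac12\cdot 2\pi^2(T_3+T_4)-C\ge -C$ on $\mathcal{A}_2$.

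The main obstacle is precisely the behavior at $r=r_0$: since $\rho(r_0)=0$, the constraint $\mathcal{J}=1$ provides no control on $\xi$ near the interface, and all such control must be extracted from $T_4=\int\frac{m^2B_\theta^2}{r}\xi^2$, which is positive only because $B_\theta$ does not degenerate at $r_0$. The bookkeeping must ensure that the boundary value $\xi^2(s_1)$ and the derivative integral $\int_{s_1}^{r_0}p\xi'^2$ are each absorbed into $T_3+T_4$ with coefficients that genuinely tend to $0$, so that no positive term is spent twice.
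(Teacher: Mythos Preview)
Your argument is correct, but it takes a genuinely different route from the paper. The paper retains the $\gamma p\big[\frac{1}{r}(r\xi)'-k\eta+\frac{m}{r}\zeta\big]^2$ term rather than discarding it: after peeling off the $\eta,\zeta$ contributions at the cost of $C\mathcal{J}$ (using $p\lesssim\rho$ and $r\ge s_0$), this term yields $\gamma p\,|\tfrac{1}{r}(r\xi)'|^2$ on $(s_0,r_0)$, which directly absorbs $\int_{s_1}^{r_0}p\xi'^2\,dr$ with the small coefficient $g(s_1)$ coming from Lemma~\ref{weight-p'-lem}. In contrast, you throw away the pressure--divergence term and instead exploit the magnetic quadratic forms $T_3,T_4$. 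This works precisely because $B_\theta(r_0)>0$ (indeed $B_\theta(r_0)^2=\frac{4}{r_0^2}\int_0^{r_0}sp(s)\,ds>0$ by admissibility), so on a neighborhood of $r_0$ the weights in $T_3,T_4$ are bounded below and together control $\int_{s_1}^{r_0}\xi'^2$, hence $\int_{s_1}^{r_0}p\xi'^2$ with the small factor $\|p\|_{L^\infty(s_1,r_0)}=p(s_1)$. One caveat: your sentence ``the weights are comparable to constants on $(s_0,s_1)$'' tacitly requires $B_\theta>0$ there, which need not hold on all of $(0,r_0)$; you should choose $s_0$ close enough to $r_0$ that $B_\theta>0$ on $(s_0,r_0]$, which is possible by continuity. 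The paper's approach avoids this issue since $p>0$ throughout $(0,r_0)$, while yours has the advantage of making no use of the divergence term and hence ties the lower bound more directly to the $Y_{m,k}$ norm structure.
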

\begin{proof}
We can directly get  from the energy \eqref{sausage-in-v-t-out} that for $0<s_0<s_1<r_0$, 
\begin{equation*}
\begin{split}
E_{m,k}(\xi,\eta,\zeta,\widehat{ Q}_r)&\geq2\pi^2\int_0^{r_0}\gamma p\Big[\frac{1}{r}(r\xi)'-k\eta+\frac{m}{r}\zeta\Big]^2rdr+2\pi^2 \int_0^{r_0}2p'\xi^2dr\\
&\geq 2\pi^2\int_{s_0}^{r_0}\gamma p\Big[\frac{1}{r}(r\xi)'-k\eta+\frac{m}{r}\zeta\Big]^2rdr\\
&\quad+2\pi^2\Big(\int_0^{s_1}+\int_{s_1}^{r_0}\Big)2p'\xi^2dr,\,\, \forall\, (\xi,\eta,\zeta)\in \mathcal{A}_2.
\end{split}
\end{equation*}
Recalling  \eqref{lower-bound}, for  every $s_1<r_0$, we have
$\Big|2\pi^2\int_0^{s_1}2p'\xi^2dr\Big|\leq C\mathcal{J}.$
Hence, we get
\begin{equation*}
\begin{split}
E_{m,k}(\xi,\eta,\zeta,\widehat{ Q}_r)\geq
 2\pi^2\int_{s_0}^{r_0}\gamma p\Big|\frac{1}{r}(r\xi)'\Big|^2rdr+2\pi^2\int_{s_1}^{r_0}2p'\xi^2dr-C\mathcal{J}.
\end{split}
\end{equation*}
	The key is to control $\int_{s_1}^{r_0}2p'\xi^2dr$. 
		Since in the interval $(s_1,r_0)$, 
	using  Lemma \ref{weight-p'-lem}, similarly as \eqref{estimates-p'-2}, we know that
	\begin{equation}
	\begin{split}
	\Big|\int_{s_1}^{r_0}2p'(r)\xi^2dr	\Big|&=	\int_{s_1}^{r_0}-2p'\xi^2dr
	\leq Cp(s_1)\xi^2(s_1)+Cg(s_1)
	\int_{s_1}^{r_0}p\xi'^2dr\\
	&\leq C(\sigma)\mathcal{J}p(s_1)+\Big(Cp(s_1)\sigma+Cg(s_1)\Big)\int_{s_0}^{r_0}p\xi'^2dr.
	\end{split}
	\end{equation}	
	Choosing $s_1$ close enough to $r_0$ and $\sigma$ small enough such that  $Cp(s_1)\sigma+Cg(s_1)\leq \frac{\gamma}{2}$, yields that
\begin{equation*}
\begin{split}
E_{m,k}(\xi,\eta,\zeta,\widehat{ Q}_r)\geq\pi^2\int_{s_0}^{r_0}\gamma p\Big|\frac{1}{r}(r\xi)'\Big|^2rdr-3C\mathcal{J}\geq -3C\mathcal{J}=-3C,
\end{split}
\end{equation*}
 which implies that the energy $E_{m,k}(\xi,\eta,\zeta,\widehat{ Q}_r)$ has a lower bound on the set $ \mathcal{A}_2$. 
\end{proof}
Now we prove the corecivity estimate.
	Using the fact that	$\mathcal{J}(\xi,\eta, \zeta
	)=1$  and $E_{m,k}$ has a lower bound on the set $\mathcal{A}_2$, we can choose a minimizing sequence such that along the minimizing sequence, 
	we know that $M\leq E_{m,k}(\xi_{n},\eta_n,\zeta_n,\widehat{ Q}_{rn})<M+1$, and therefore 
	we have coercivity estimate: 
		\begin{equation}\label{coercivity-mneq0}
	\begin{split}
	\|\big(\xi_n,\eta_n,\zeta_n\big)\|^2_{Y_{m,k}}\leq \mathcal{J}+C(M+1)+C\int_0^{r_0}\Big(2p'\xi_n^2+\gamma pk^2\eta_n^2r\Big)dr\leq C.
	\end{split}
	\end{equation}

Next, we prove that the infimum of $E_{m,k}(\xi,\eta,\zeta,\widehat{ Q}_r)$ over the set $\mathcal{A}_2$ is negative.
\begin{prop}\label{infimum-A-out-3}
If there exists $r^*\in (0,r_0)$ such that $2p'(r^*)+\frac{m^2B_\theta^2(r^*)}{r^*}<0$, then
it holds that $\lambda=\inf E_{m,k}<0$. 
\end{prop}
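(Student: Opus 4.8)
The plan is to mimic the $m=0$ argument of Proposition \ref{infimum-A-3}: since $E_{m,k}$ and $\mathcal{J}$ are both homogeneous of degree two, it suffices to exhibit a single admissible quadruple $(\xi^*,\eta^*,\zeta^*,\widehat{Q}_r^*)$ with $E_{m,k}<0$ and $\mathcal{J}>0$, and then rescale so that $\mathcal{J}=1$. First I would fix $\xi^*\in C_c^\infty(0,r_0)$ supported in a neighborhood of $r^*$, to be specified later. Because $\xi^*(r_0)=0$, the interface constraint $m\widehat{B}_\theta\xi^*=r\widehat{Q}_r^*$ forces $\widehat{Q}_r^*(r_0)=0$, so together with $\widehat{Q}_r^*(r_w)=0$ I may take $\widehat{Q}_r^*\equiv 0$, which annihilates the vacuum integral $E^v_{m,k}$. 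On the support of $\xi^*$ we have $B_\theta>0$ and $m\neq 0$, so I can choose $\eta^*$ to annihilate the first square in \eqref{sausage-in-v-t-out} by imposing $\frac{B_\theta}{r}\eta^*+\frac{-kB_\theta(r\xi^*)'+2kB_\theta\xi^*}{m^2+k^2r^2}=0$, and then choose $\zeta^*$ to annihilate the $\gamma p$ square by imposing $\frac{1}{r}(r\xi^*)'-k\eta^*+\frac{m}{r}\zeta^*=0$. Both $\eta^*,\zeta^*$ are then smooth and compactly supported in $(0,r_0)$, hence lie in $Y_{m,k}$, and $\mathcal{J}(\xi^*,\eta^*,\zeta^*)>0$.

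With these choices only the two $\xi$-only terms survive, so that
\begin{equation*}
E_{m,k}(\xi^*,\eta^*,\zeta^*,0)=2\pi^2\int_0^{r_0}\left[\frac{m^2B_\theta^2}{r(m^2+k^2r^2)}(\xi^*-r\xi^{*\prime})^2+\Big(2p'+\frac{m^2B_\theta^2}{r}\Big)(\xi^*)^2\right]dr.
\end{equation*}
This is precisely where $m\neq0$ departs from $m=0$: in Proposition \ref{infimum-A-3} the reduced functional contained no derivative of $\xi$, so a concentration near $r^*$ instantly produced a negative integral, whereas here the positive gradient-type term $\frac{m^2B_\theta^2}{r(m^2+k^2r^2)}(\xi^*-r\xi^{*\prime})^2$ persists and must be dominated by the negative term. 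I expect controlling this gradient contribution to be the main obstacle.

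To organize the comparison I would pass to $\psi=\xi^*/r$, using $\xi^*-r\xi^{*\prime}=-r^2\psi'$, which recasts the reduced functional as $2\pi^2\int_0^{r_0}[\tilde P(\psi')^2+\tilde W\psi^2]\,dr$ with $\tilde P=\frac{m^2B_\theta^2r^3}{m^2+k^2r^2}>0$ and $\tilde W=\big(2p'+\frac{m^2B_\theta^2}{r}\big)r^2$. By hypothesis $\tilde W(r^*)<0$, hence $\tilde W\le-c_0<0$ on an interval $I=(a,b)\ni r^*$ by continuity. The task becomes the one-dimensional Sturm--Liouville question of making $\int_I[\tilde P(\psi')^2+\tilde W\psi^2]<0$ for some $\psi$ vanishing at $\partial I$. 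I would test with a profile spread across $I$ (a first Dirichlet eigenfunction of $-(\tilde P\psi')'$, or simply $\psi=\sin(\pi(r-a)/(b-a))$): the gradient term is then of order $\tilde P_{\max}/|I|^2$ while the potential term is of order $-c_0$, so the functional turns negative once $|I|$ is large relative to $\sqrt{\tilde P/|\tilde W|}$.

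The crux is thus to guarantee the smallness of the gradient contribution, and two structural features drive this. First, the weight $\tilde P=\frac{m^2B_\theta^2r^3}{m^2+k^2r^2}$ is uniformly small for large $|k|$, so enlarging $|k|$ sends the gradient term to zero while leaving $\tilde W$ essentially unchanged, securing $E_{m,k}<0$. Second, for $|m|=1$ the expansions \eqref{taylor-B}--\eqref{taylor-p} give $2p'+\frac{B_\theta^2}{r}=-\frac{3}{4}\mathbb{J}_z^2(0)r+O(r^2)<0$ near $r=0$, where moreover $\tilde P=O(r^5)$, so $I$ may be taken adjacent to the origin and the comparison is automatic for every $k$. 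I would therefore choose $I$ and $\psi$ along these lines, exploiting the $1/(m^2+k^2r^2)$ weight and the near-origin profile to beat the gradient term, verify after normalization that $(\xi^*,\eta^*,\zeta^*,0)\in\mathcal{A}_2$, and conclude $\lambda=\inf E_{m,k}<0$.
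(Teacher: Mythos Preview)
Your approach is essentially the paper's: take $\xi^*\in C_c^\infty(0,r_0)$ (which forces $\widehat Q_r^*\equiv 0$ via the interface constraint), choose $\eta^*,\zeta^*$ to annihilate the two perfect squares in \eqref{sausage-in-v-t-out}, and then exploit the factor $\frac{1}{m^2+k^2r^2}$ by sending $k\to\infty$ so that the residual term $\frac{m^2B_\theta^2}{r(m^2+k^2r^2)}(\xi^*-r\xi^{*\prime})^2$ vanishes, leaving only $2\pi^2\int_0^{r_0}\big[2p'+\tfrac{m^2B_\theta^2}{r}\big](\xi^*)^2\,dr<0$. Your Sturm--Liouville recasting via $\psi=\xi^*/r$ and the $|m|=1$ near-origin observation are extra embellishments not present in the paper, but the core mechanism is identical.
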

\begin{proof}
	Since both $E_{m,k}$ and $\mathcal{J}$ are homogeneous degree $2$, it suffices to show that 
	\begin{equation*}
	\inf_{((\xi,\eta,\zeta),\widehat{ Q}_r))\in Y_{m,k} \times H^1(r_0,r_w)}\frac{E_{m,k}(\xi,\eta,\zeta,\widehat{ Q}_r)}{\mathcal{J}(\xi,\eta,\zeta)}<0.
	\end{equation*}
	But since $\mathcal{J}$ is positive definite, one may reduce to constructing any $((\xi,\eta,\zeta),\widehat{ Q}_r))\in Y_{m,k} \times H^1(r_0,r_w)$ such that $$E_{m,k}(\xi,\eta,\zeta,\widehat{ Q}_r)<0.$$
If there exists $r^*\in (0,r_0)$ such that $2p'+\frac{m^2B_\theta^2}{r}<0$, then
we can choose a smooth function $\xi^*\in C_c^{\infty}(0,r_0)$ such that $$2\pi^2\int_0^{r_0}\Big[2p'+\frac{m^2B_{\theta}^2}{r}\Big]{\xi^*}^2dr< 0.$$
	Then, 
	we can assume that $\eta^*=\frac{rk(r\xi^*)'-2rk\xi^*}{m^2+k^2r^2}$ and $\zeta^*=\frac{r}{m}(k\eta^*-\frac{1}{r}(r\xi^*)')$, such that the first and second terms of $E_{m,k}(\xi^*,\eta^*,\zeta^*,\widehat{ Q}_r^*)$ in \eqref{sausage-in-v-t-out} vanish,
	that is,
	\begin{equation*}
	\begin{split}
	&	2\pi^2\int_0^{r_0}(m^2+k^2r^2)\Big[\frac{B_\theta}{r}\eta^*+\frac{-kB_\theta(r\xi^*)'+2kB_{\theta}\xi^*}{m^2+k^2r^2}\Big]^2rdr=0,\\
	&
	2\pi^2\int_0^{r_0}\gamma p\Big[\frac{1}{r}(r\xi^*)'-k\eta^*+\frac{m\zeta^*}{r}\Big]^2rdr=0.
	\end{split}
	\end{equation*} Here, $\xi^*$, $\eta^*$ and $\zeta^*$ are smooth functions and belong to the space $Y_{m,k}$.
	
	Minimizing energy $E_{m,k}(\xi,\eta,\zeta,\widehat{ Q}_r)$ with respect to $\eta^* $ and $\zeta^*$, 	so we consider the limit $k\rightarrow \infty$, then we can get that
	\begin{equation*}
	\begin{split}
	\widetilde{E}(\xi^*,\widehat{ Q}_r^*)&=\lim_{k\rightarrow \infty}\bigg(2\pi^2\int_0^{r_0}\frac{m^2B_\theta^2}{r(m^2+k^2r^2)}(\xi^*-r{\xi^*}')^2 +2\pi^2\int_0^{r_0}\Big[2p'+\frac{m^2B_{\theta}^2}{r}\Big]{\xi^*}^2dr\\
	&\quad-2\pi^2\Big[\frac{r}{m^2+k^2r^2}(r\widehat{ Q}_r^*)'r\widehat{ Q}_r^*\Big]_{r=r_0}\bigg)\\
	&=2\pi^2\int_0^{r_0}\Big[2p'+\frac{m^2B_{\theta}^2}{r}\Big]{\xi^*}^2dr<0,
	\end{split}
	\end{equation*} 
	which implies the result.	
\end{proof}
Using Proposition \ref{infimum-A-out-3}, we can achieve the minimizer of the energy $E_{m,k}(\xi,\eta,\zeta,\widehat{ Q}_r)$.
\begin{prop}\label{infimum-A-out}
If $\lambda=\inf E_{m,k}<0$, then $E_{m,k}(\xi,\eta,\zeta,\widehat{ Q}_r)$ achieves its infimum on the set  $\mathcal{A}_2$.	
\end{prop}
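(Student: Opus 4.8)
The plan is to apply the direct method of the calculus of variations, adapting the argument of Proposition~\ref{infimum-A} to the coupled plasma--vacuum setting. First I would fix a minimizing sequence $\big((\xi_n,\eta_n,\zeta_n),\widehat{Q}_{rn}\big)\in\mathcal{A}_2$. By Lemma~\ref{lower-bound-lem} the energy is bounded below on $\mathcal{A}_2$, and the coercivity estimate \eqref{coercivity-mneq0} shows that $(\xi_n,\eta_n,\zeta_n)$ is bounded in $Y_{m,k}$; meanwhile the vacuum integral in \eqref{sausage-in-v-t-out}, whose weight $\frac{1}{m^2+k^2r^2}$ is bounded below by $\frac{1}{m^2+k^2r_w^2}>0$ on $[r_0,r_w]$, bounds $\widehat{Q}_{rn}$ in $H^1(r_0,r_w)$. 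Passing to a subsequence, I obtain weak limits $(\xi,\eta,\zeta)$ in $Y_{m,k}$ and $\widehat{Q}_r$ in $H^1(r_0,r_w)$, together with the strong convergence $\xi_n\to\xi$ in $V$ furnished by the compact embedding $Y_{m,k}\subset\subset V$ of Lemma~\ref{embedding-p'-general-m}.

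Next I would check that the limit lies in $\mathcal{A}_2$, which is where the coupling of the two constraints must be shown to survive. Since the equilibrium field is nondegenerate at the interface, $B_\theta(r_0)>0$, the weights $\frac{B_\theta^2}{r}\xi^2$ and $\frac{B_\theta^2}{r}(\xi-r\xi')^2$ in the norm \eqref{definition-ymk} are comparable to $\xi^2$ and $(\xi-r\xi')^2$ on any interval $(s_1,r_0)$, so $\xi_n$ is bounded in $H^1(s_1,r_0)$; the compact embedding $H^1(s_1,r_0)\subset\subset C^0([s_1,r_0])$ then gives uniform convergence up to $r=r_0$, hence $\xi_n(r_0)\to\xi(r_0)$. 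On the vacuum side, $H^1(r_0,r_w)\subset\subset C^0([r_0,r_w])$ yields $\widehat{Q}_{rn}\to\widehat{Q}_r$ uniformly, so $\widehat{Q}_{rn}(r_0)\to\widehat{Q}_r(r_0)$ and $\widehat{Q}_{rn}(r_w)\to\widehat{Q}_r(r_w)$. Passing to the limit in the linear relations \eqref{boundary-cyl-1} and \eqref{bound-q-wall}, and using $\widehat{B}_\theta(r_0)=B_\theta(r_0)$, shows $m\widehat{B}_\theta\xi=r\widehat{Q}_r$ at $r=r_0$ and $\widehat{Q}_r=0$ at $r=r_w$, so that $\big((\xi,\eta,\zeta),\widehat{Q}_r\big)$ satisfies the constraints of \eqref{set-A-m}.

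For the energy I would use weak lower semicontinuity term by term in \eqref{sausage-in-v-t-out}. The first two integrands and the vacuum integrand are nonnegative weighted squares of quantities converging weakly in $L^2$ (each a seminorm from \eqref{definition-ymk}, or for the vacuum a convex function of $(\widehat{Q}_r,(r\widehat{Q}_r)')$), hence are weakly lower semicontinuous. The only sign-indefinite contribution is $\int_0^{r_0}\big[2p'+\frac{m^2B_\theta^2}{r}\big]\xi^2\,dr$, which I would split at $s_1$. On $(s_1,r_0)$ the coefficient $2p'+\frac{m^2B_\theta^2}{r}$ is a bounded continuous function, so the uniform convergence $\xi_n\to\xi$ on $[s_1,r_0]$ established above makes this piece converge. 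On $(0,s_1)$ I would invoke the exact $z$-pinch expansions of Lemma~\ref{expansion-zero}: for $|m|\ge2$ one has $2p'+\frac{m^2B_\theta^2}{r}=\big(\frac{m^2}{4}-1\big)\mathbb{J}_z^2(0)r+O(r^2)\ge0$ near the origin, a nonnegative weight, so this term is weakly lower semicontinuous; for $|m|=1$ I would combine it with the integral $\int_0^{r_0}\frac{m^2B_\theta^2}{r(m^2+k^2r^2)}(\xi-r\xi')^2\,dr$ of \eqref{sausage-in-v-t-out}, since the two combine near the origin into $\big[\frac14\mathbb{J}_z^2(0)r+O(r^2)\big]r^2\xi'^2+O(r^3)\xi^2$, a form with positive leading part in $\xi'$ that is weakly lower semicontinuous. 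Summing the resulting inequalities gives $E_{m,k}(\xi,\eta,\zeta,\widehat{Q}_r)\le\liminf_n E_{m,k}(\xi_n,\eta_n,\zeta_n,\widehat{Q}_{rn})=\inf_{\mathcal{A}_2}E_{m,k}$.

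It remains to confirm $\mathcal{J}(\xi,\eta,\zeta)=1$. Lower semicontinuity of $\mathcal{J}$ gives $\mathcal{J}(\xi,\eta,\zeta)\le1$; if the inequality were strict I would use that $E_{m,k}$, $\mathcal{J}$ and both boundary constraints are homogeneous of degree $2$, $2$ and $1$ respectively, to rescale by a factor $\alpha>1$ with $\mathcal{J}(\alpha\xi,\alpha\eta,\alpha\zeta)=1$, so that $\big((\alpha\xi,\alpha\eta,\alpha\zeta),\alpha\widehat{Q}_r\big)\in\mathcal{A}_2$; then Proposition~\ref{infimum-A-out-3} ($\lambda=\inf E_{m,k}<0$) forces $E_{m,k}(\alpha\,\cdot)=\alpha^2\inf E_{m,k}<\inf E_{m,k}$, a contradiction. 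Hence $\mathcal{J}(\xi,\eta,\zeta)=1$ and the infimum is attained. The hard part will be the lower semicontinuity of the sign-indefinite term near $r=0$, together with the trace convergence $\xi_n(r_0)\to\xi(r_0)$ needed to retain the coupling constraint; both hinge on the delicate vanishing structure of the $z$-pinch equilibrium encoded in Lemma~\ref{expansion-zero} and on the non-degeneracy $B_\theta(r_0)>0$.
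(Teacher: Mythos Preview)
Your approach is essentially the same as the paper's: direct method with a minimizing sequence bounded via Lemma~\ref{lower-bound-lem} and \eqref{coercivity-mneq0}, weak lower semicontinuity for the nonnegative quadratic terms, the delicate treatment of the sign-indefinite integral via the Taylor expansions of Lemma~\ref{expansion-zero} near $r=0$ (combining with the $(\xi-r\xi')^2$ term when $|m|=1$), and the rescaling contradiction to force $\mathcal{J}=1$. Two small differences are worth noting. First, you explicitly verify that the interface and wall constraints in $\mathcal{A}_2$ pass to the limit via trace convergence; the paper omits this step but your argument using $B_\theta(r_0)>0$ and $H^1\subset\subset C^0$ is correct and a welcome addition. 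Second, away from the origin you use uniform convergence of $\xi_n$ on $[s_1,r_0]$ (from $H^1$-boundedness there) to pass to the limit directly in the bounded-coefficient integral, whereas the paper splits further at a point near $r_0$ and routes through the weighted space $V$ of Lemma~\ref{embedding-p'-general-m}; both work. Just make sure you choose the single splitting point $s_1$ small enough that the expansions of Lemma~\ref{expansion-zero} are valid on all of $(0,s_1)$, and that $B_\theta$ is bounded away from zero on $[s_1,r_0]$ so that the $Y_{m,k}$-norm genuinely controls $H^1(s_1,r_0)$; under the admissibility hypotheses both hold.
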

\begin{proof}
	First from Lemma \ref{lower-bound-lem}, we have that $E_{m,k}(\xi,\eta,\zeta,\widehat{ Q}_r)$ is bounded below on the set $\mathcal{A}_2$. Assume that  $((\xi_n,\eta_n,\zeta_n),\widehat{ Q}_{rn}) \in \mathcal{A}_2$ be a minimizing sequence. Then $(\xi_n,\eta_n,\zeta_n)$ is bounded in $Y_{m,k}$ (see \eqref{definition-ymk}),  and $ \widehat{ Q}_{rn}$ is bounded in $ H^1(r_0,r_w)$, so up to the extraction of a subsequence $\psi_n=\sqrt{m^2+k^2r^2}|B_{\theta}|\Big[\frac{\eta_n}{r}-\frac{k}{m^2+k^2r^2}((r\xi_n)'-2\xi_n)\Big]r^{\frac12}\rightharpoonup \psi=\sqrt{m^2+k^2r^2}|B_{\theta}|\Big[\frac{\eta}{r}-\frac{k}{m^2+k^2r^2}((r\xi)'-2\xi)\Big]r^{\frac12}$ weakly in $L^2$, and $(\xi_n,\widehat{ Q}_{rn})\rightarrow (\xi,\widehat{ Q}_{r})$ strongly in $V\times L^2(r_0,r_w)$ from the compact embedding in Lemma \ref{embedding-p'-general-m} and  the compact embedding $H^1(r_0,r_w)\subset\subset L^2(r_0,r_w)$.
	By weak lower semi-continuity, since $\psi_n\rightharpoonup \psi$ in the space $L^2(0,r_0)$,
		we have 
		\begin{equation*}
		\begin{split}
		&\int_0^{r_0}(m^2+k^2r^2)B^2_{\theta}\Big[\frac{\eta}{r}-\frac{k}{m^2+k^2r^2}((r\xi)'-2\xi)\Big]^2rdr\\
		&\quad\leq\liminf_{n\to\infty}\int_0^{r_0}(m^2+k^2r^2)B^2_{\theta}\Big[\frac{\eta_n}{r}-\frac{k}{m^2+k^2r^2}((r\xi_n)'-2\xi_n)\Big]^2rdr.
		\end{split}
		\end{equation*}

	Because of the quadratic structure of all the terms in the integrals defining $E_{m,k}$, similarly by weak lower semi-continuity,
	we have 
		\begin{equation*}
	\begin{split}
	\int_0^{r_0}\gamma p\Big[\frac{1}{r}(r\xi)'-k\eta+\frac{m}{r}\zeta\Big]^2rdr\leq\liminf_{n\to\infty}\int_0^{r_0}\gamma p\Big[\frac{1}{r}(r\xi_n)'-k\eta_n+\frac{m}{r}\zeta_n\Big]^2rdr.
	\end{split}
	\end{equation*}	
	Now let us deal with $\int_0^{r_0}\frac{m^2B_\theta^2}{r(m^2+k^2r^2)}(\xi_n-r\xi_n')^2 +\int_0^{r_0}\Big[2p'+\frac{m^2B_{\theta}^2}{r}\Big]\xi_n^2dr$ by the following two cases.

{\bf Case I: }	When $|m|=1$, we can write for every $s_0>0$, 
	\begin{equation}
	\begin{split}
&	\int_0^{r_0}\frac{m^2B_\theta^2}{r(m^2+k^2r^2)}(\xi_n-r\xi_n')^2dr +\int_0^{r_0}\Big[2p'+\frac{m^2B_{\theta}^2}{r}\Big]\xi_n^2dr\\
&=\Big(\int_0^{s_0} +\int_{s_0}^{r_0}\Big)\frac{m^2B_\theta^2}{r(m^2+k^2r^2)}(\xi_n-r\xi_n')^2dr +\Big(\int_0^{s_0} +\int_{s_0}^{r_0}\Big)\Big[2p'+\frac{m^2B_{\theta}^2}{r}\Big]\xi_n^2dr.
	\end{split}
	\end{equation}
Applying \eqref{taylor-B} and \eqref{taylor-p} in Lemma \ref{expansion-zero}, for fixed $k$ we have
\begin{equation*}
\begin{split}
&	\int_0^{s_0}\frac{m^2B_\theta^2}{r(m^2+k^2r^2)}(\xi_n-r\xi_n')^2dr +\int_0^{s_0}\Big[2p'+\frac{m^2B_{\theta}^2}{r}\Big]\xi_n^2dr\\
&=	\int_0^{s_0}\Big[\frac{1}{4}\mathbb{J}^2_z(0)r
+\frac{1}{3}\mathbb{J}'_z(0)\mathbb{J}_z(0)r^2+O(r^3)\Big](\xi_n-r\xi_n')^2dr\\
&\quad+\int_0^{s_0}\Big[-\frac{3}{4}\mathbb{J}^2_z(0)r
-\frac{4}{3}\mathbb{J}'_z(0)\mathbb{J}_z(0)r^2+O(r^3)\Big]\xi_n^2dr\\
&=\int_0^{s_0}\Big[\frac{1}{4}\mathbb{J}^2_z(0)r+\frac{1}{3}\mathbb{J}'_z(0)\mathbb{J}_z(0)r^2+O(r^3)\Big]\Big(r^2\xi_n'^2-2r\xi_n\xi_n'\Big)dr\\
&\quad+\int_0^{s_0}\Big[-\frac{1}{2}\mathbb{J}^2_z(0)r
-\mathbb{J}'_z(0)\mathbb{J}_z(0)r^2+O(r^3)\Big]\xi_n^2dr\\
&=\int_0^{s_0}\Big[\frac{1}{4}\mathbb{J}^2_z(0)r+\frac{1}{3}\mathbb{J}'_z(0)\mathbb{J}_z(0)r^2+O(r^3)\Big]r^2\xi_n'^2dr\\
&\quad+\int_0^{s_0}\Big[\frac{1}{2}\mathbb{J}^2_z(0)r+\mathbb{J}'_z(0)\mathbb{J}_z(0)r^2\Big]\xi_n^2dr\\
&\quad+\int_0^{s_0}\Big[-\frac{1}{2}\mathbb{J}^2_z(0)r
-\mathbb{J}'_z(0)\mathbb{J}_z(0)r^2+O(r^3)\Big]\xi_n^2dr\\
&=\int_0^{s_0}\Big[\frac{1}{4}\mathbb{J}^2_z(0)r+O(r^2)\Big]r^2\xi_n'^2dr+\int_0^{s_0}O(r^3)\xi_n^2dr.
\end{split}
\end{equation*}

	On the other hand,
	it follows from $B_\theta=\frac{1}{r}\int_0^r s\mathbb{J}_z(s)ds$ that
	\begin{equation*}
	\sup_{0\leq r\leq r_0}\frac{|B_\theta|}{r}\leq\frac{\|\mathbb{J}_z\|_{L^\infty}}{2},
	\end{equation*}
	which gives that  $B_\theta(0)=0$.
	Assume $(\xi_{n},\eta_n,\zeta_n)\in Y_{m,k}$ and $\mathcal{J}(\xi_n,\eta_n,\zeta_n)=1$, by \eqref{sup-xi-2} and $B_\theta(0)=0$, we choose  $s_0$ small enough, such that $Cs_0 +Cs^2_0\leq \kappa$ with $\kappa=\frac{1}{l}$, $l\in \mathbb{N}$, then  we have for $n$ large enough such that
\begin{equation}
\begin{split}
&\int_0^{s_0}O(r^2)r^2|\xi'_{n}-\xi'|^2dr+\int_0^{s_0}O(r^3)|\xi_{n}-\xi|^2dr
\\
&\quad\leq Cs_0\int_0^{s_0}r^3|\xi'_{n}-\xi'|^2dr+Cs^2_0\int_0^{s_0}r|\xi_{n}-\xi|^2dr\\
&\quad\leq Cs_0+Cs_0^2\leq \kappa,
\end{split}
\end{equation}
which together with weak lower semi-continuity,
gives that 
\begin{equation}
\begin{split}
&\int_0^{s_0}\Big[\frac{1}{4}\mathbb{J}^2_z(0)r+O(r^2)\Big]r^2\xi'^2dr+\int_0^{s_0}O(r^3)\xi^2dr\\
&\leq \liminf_{n\to\infty}\Big\{\int_0^{s_0}\Big[\frac{1}{4}\mathbb{J}^2_z(0)r+O(r^2)\Big]r^2\xi_n'^2dr+\int_0^{s_0}O(r^3)\xi_n^2dr\Big\}.
\end{split}
\end{equation}
Hence, we have
\begin{equation}
\begin{split}
&	\int_0^{s_0}\frac{m^2B_\theta^2}{r(m^2+k^2r^2)}(\xi-r\xi')^2dr +\int_0^{s_0}\Big[2p'+\frac{m^2B_{\theta}^2}{r}\Big]\xi^2dr\\
&=\int_0^{s_0}\Big[\frac{1}{4}\mathbb{J}^2_z(0)r+O(r^2)\Big]r^2\xi'^2dr+\int_0^{s_0}O(r^3)\xi^2dr\\
&\leq\liminf_{n\to\infty}\Big\{\int_0^{s_0}\Big[\frac{1}{4}\mathbb{J}^2_z(0)r+O(r^2)\Big]r^2\xi_n'^2dr+\int_0^{s_0}O(r^3)\xi_n^2dr\Big\}\\
&=\liminf_{n\to\infty}\Big\{\int_0^{s_0}\frac{m^2B_\theta^2}{r(m^2+k^2r^2)}(\xi_n-r\xi_n')^2dr +\int_0^{s_0}\Big[2p'+\frac{m^2B_{\theta}^2}{r}\Big]\xi_n^2dr\Big\}.
\end{split}
\end{equation}
On the other hand, by weak lower semi-continuity, we can show that
	\begin{equation}
	\begin{split}
	&	\int_{s_0}^{r_0}\frac{m^2B_\theta^2}{r(m^2+k^2r^2)}(\xi-r\xi')^2dr +\int_{s_0}^{r_0}\frac{m^2B_{\theta}^2}{r}\xi^2dr\\
	&\leq \liminf_{n\to\infty} \int_{s_0}^{r_0}\frac{m^2B_\theta^2}{r(m^2+k^2r^2)}(\xi_n-r\xi_n')^2dr +\liminf_{n\to\infty} \int_{s_0}^{r_0}\frac{m^2B_{\theta}^2}{r}\xi_n^2dr.
	\end{split}
	\end{equation}
	From Lemma \ref{embedding-p'-general-m},  for $s_1$ near $r=r_0$, it holds that as $n\to\infty$, 
\begin{equation*}
\begin{split}
\int_{s_0}^{s_1}p'|\xi_n-\xi|^2dr&\leq\Big( \int_{s_0}^{s_1}B^2_\theta|\xi_n-\xi|^2dr\Big)^{\frac{1}{2}}
\Big( \int_{s_0}^{s_1}\mathbb{J}^2_z|\xi_n-\xi|^2dr\Big)^{\frac{1}{2}}\\
&\leq \Big( \int_{s_0}^{s_1}B^2_\theta|\xi_n-\xi|^2dr\Big)^{\frac{1}{2}}
\Big( \int_{s_0}^{s_1} |\xi_n-\xi|^2dr\Big)^{\frac{1}{2}}
\|\mathbb{J}_z\|_{L^\infty}\\
&\leq C\Big( \int_{s_0}^{s_1}B^2_\theta|\xi_n-\xi|^2dr\Big)^{\frac{1}{2}}\quad \rightarrow 0,
\end{split}
\end{equation*}
and
\begin{equation*}
\begin{split}
\int_{s_1}^{r_0}-p'|\xi_n-\xi|^2dr\quad \rightarrow 0,
\end{split}
\end{equation*}
where we have used $\xi_n$ is uniformly bounded in $H^1(s_0,s_1)$ and $\mathbb{J}_z\in L^{\infty}([0,r_0])$. 

{\bf Case II: } When $|m|\geq 2$, 	the positive term $\int_0^{r_0}\frac{m^2B_\theta^2}{r(m^2+k^2r^2)}(\xi_n-r\xi_n')^2$ is dealt  with by weak lower semi-continuity, which implies that
\begin{equation*}
\begin{split}
\int_0^{r_0}\frac{m^2B_\theta^2}{r(m^2+k^2r^2)}(\xi-r\xi')^2\leq\liminf_{n\to\infty}\int_0^{r_0}\frac{m^2B_\theta^2}{r(m^2+k^2r^2)}(\xi_n-r\xi_n')^2.
\end{split}
\end{equation*}
For the term $\int_0^{r_0}\Big[2p'+\frac{m^2B_{\theta}^2}{r}\Big]\xi_n^2dr$,
we have for every $s_0>0$, 
\begin{equation}
\begin{split}
	\int_0^{r_0}\Big[2p'+\frac{m^2B_{\theta}^2}{r}\Big]\xi_n^2dr= \Big(\int_0^{s_0} +\int_{s_0}^{r_0}\Big)\Big[2p'+\frac{m^2B_{\theta}^2}{r}\Big]\xi_n^2dr.
\end{split}
\end{equation}
Applying \eqref{taylor-B} and \eqref{taylor-p} in Lemma \ref{expansion-zero},  we deduce
	\begin{equation}
	\begin{split}
	&\int_0^{s_0}\Big[2p'+\frac{m^2B_{\theta}^2}{r}\Big]\xi_n^2dr\\
	&=\int_0^{s_0}\Big[-\mathbb{J}^2_z(0)r-\frac{5}{3}\mathbb{J}_z'(0)\mathbb{J}_z(0)r^2+m^2\Big(\frac{1}{4}\mathbb{J}^2_z(0)r+\frac{1}{3}\mathbb{J}_z'(0)\mathbb{J}_z(0)r^2\Big)
	+O(r^3)\Big]\xi_n^2dr\\
	&=\int_0^{s_0}\Big[\Big(\frac{m^2}{4}-1\Big)\mathbb{J}^2_z(0)r
	+O(r^2)\Big]\xi_n^2dr.
	\end{split}
	\end{equation}
Assume $\mathcal{J}(\xi_n,\eta_n,\zeta_n)=1$, we choose  $s_0$ small enough, such that $Cs_0 \leq \kappa,$ with $\kappa=\frac{1}{l}$, $l\in \mathbb{N}$, then  we have for $n$ large enough such that
	\begin{equation}
	\int_0^{s_0}O(r^2)|\xi_{n}-\xi|^2dr
	\leq Cs_0\int_0^{s_0}r|\xi_{n}-\xi|^2dr\leq Cs_0\leq \kappa,
	\end{equation}
	which together with weak lower semicontinuity,
	gives that 
	\begin{equation}
	\begin{split}
	\int_0^{s_0}\Big[2p'+\frac{m^2B_{\theta}^2}{r}\Big]\xi^2dr&=\int_0^{s_0}\Big[\Big(\frac{m^2}{4}-1\Big)\mathbb{J}^2_z(0)r
	+O(r^2)\Big]\xi^2dr\\
	&\leq \liminf_{n\to\infty}\int_0^{s_0}\Big[\Big(\frac{m^2}{4}-1\Big)\mathbb{J}^2_z(0)r
	+O(r^2)\Big]\xi_n^2dr\\
	&=\liminf_{n\to\infty}\int_0^{s_0}\Big[2p'+\frac{m^2B_{\theta}^2}{r}\Big]\xi_n^2dr.
	\end{split}
	\end{equation}
	On the other hand, by weak lower semi-continuity, we can show that
	\begin{equation}
	\begin{split}
		\int_{s_0}^{r_0}\frac{m^2B_{\theta}^2}{r}\xi^2dr\leq\liminf_{n\to\infty} \int_{s_0}^{r_0}\frac{m^2B_{\theta}^2}{r}\xi_n^2dr.
	\end{split}
	\end{equation}
	From  Lemma \ref{embedding-p'-general-m},  for $s_1$ near $r=r_0$, similarly  it holds that as $n\to\infty$, 
	\begin{equation*}
	\begin{split}
	\int_{s_0}^{s_1}p'|\xi_n-\xi|^2dr
	\leq C\Big( \int_{s_0}^{s_1}B^2_\theta|\xi_n-\xi|^2dr\Big)^{\frac{1}{2}}\quad \rightarrow 0,
	\end{split}
	\end{equation*}
\begin{equation*}
\begin{split}
\int_{s_1}^{r_0}-p'|\xi_n-\xi|^2dr\quad \rightarrow 0.
\end{split}
\end{equation*}
		Therefore, we get that for any fixed $k$ and $m\neq 0$,
	\begin{equation*}
	E_{m,k}(\xi,\eta,\zeta,\widehat{ Q}_{r})\leq \liminf_{n\to\infty}E_{m,k}(\xi_n,\eta_n,\zeta_n,\widehat{ Q}_{rn})=\inf_{\mathcal{A}_2}E_{m,k}.
	\end{equation*}
	All that remains is to show that $((\xi,\eta,\zeta),\widehat{ Q}_r)\in \mathcal{A}_2. $
	
	Again by lower semi-continuity, we know that $\mathcal{J}(\xi,\eta,\zeta)\leq 1$. Suppose by way
	of contradiction that $\mathcal{J}(\xi,\eta,\zeta)<1$. By the homogeneity of $\mathcal{J}$, we may find $\alpha>1$ so that $\mathcal{J}(\alpha\xi,\alpha\eta,\alpha\zeta)=1$, i.e., we may scale up $((\xi,\eta,\zeta),\widehat{ Q}_r)$ so that $((\alpha \xi,\alpha\eta,\alpha\zeta),\alpha\widehat{ Q}_r)\in \mathcal{A}_2$. By Proposition \ref{infimum-A-out-3}, we know that $\inf E_{m,k}< 0$, and from this we deduce that $$E_{m,k}(\alpha\xi,\alpha\eta,\alpha\zeta,\alpha\widehat{ Q}_r)=\alpha^2E_{m,k}(\xi,\eta,\zeta,\widehat{ Q}_r)=\alpha^2 \inf E_{m,k}< \inf E_{m,k},$$ 
	which is a contradiction since $((\alpha \xi,\alpha\eta,\alpha\zeta),\alpha\widehat{ Q}_r )\in \mathcal{A}_2$. Hence $\mathcal{J}(\xi, \eta,\zeta)=1$, so that $((\xi,\eta,\zeta),\widehat{ Q}_r)\in \mathcal{A}_2$. 
\end{proof}
We now prove that the minimizer constructed in the previous result satisfies Euler-Lagrange equations equivalent to \eqref{spectral-formulation} and \eqref{euler-l-q} with suitable boundary conditions.
\begin{prop}\label{infimum-A-out-2}
	Let $((\xi,\eta,\zeta),\widehat{ Q}_{r})\in\mathcal{A}_2$ be the minimizers of $E_{m,k}$ constructed in Proposition \ref{infimum-A-out}. Then $(\xi,\eta,\zeta)$ are smooth in $(0,r_0)$ and satisfy 
	\begin{equation} \label{spectal formulation-kink-4} 
	\begin{split}
	&\left(
	\begin{array}{ccc}
	\frac{d}{dr}\frac{\gamma p+B_{\theta}^2}{r}\frac{d}{dr}r-\frac{m^2}{r^2}B_{\theta}^2
	-r(\frac{B_{\theta}^2}{r^2})'&-\frac{d}{dr}k(\gamma p+B_{\theta}^2)-\frac{2kB_{\theta}^2}{r}&\frac{d}{dr}\frac{m}{r}\gamma p\\
	\frac{k(\gamma p+B_{\theta}^2)}{r}\frac{d}{dr}r-\frac{2kB_{\theta}^2}{r}&
	-k^2(\gamma p+B_{\theta}^2)-\frac{m^2}{r^2}B_{\theta}^2&\frac{mk}{r}\gamma p\\
	-\frac{m\gamma p}{r^2}\frac{d}{dr}r&\frac{mk}{r}\gamma p&-\frac{m^2}{r^2}\gamma p
	\end{array}
	\right)
	\left(
	\begin{array}{lll}
	\xi   \\
	\eta \\
	\zeta\\
	\end{array}
	\right) \\
	&\quad=-\rho \lambda \left(
	\begin{array}{lll}
	\xi   \\
	\eta \\
	\zeta\\
	\end{array}
	\right),
	\end{split}
	\end{equation}
	the solution $\widehat{ Q}_{r}$ is smooth on $(r_0,r_w)$ and satisfies
	\begin{equation}\label{ode-curl-div}
	\bigg[\frac{r}{m^2+k^2r^2}(r\widehat{Q}_r)'\bigg]'-\widehat{Q}_r=0, 
	\end{equation}  
	along with the interface boundary condition	
	\begin{equation}\label{inter-four-2}
	B_{\theta}^2\xi-B_{\theta}^2\xi' r+kB_\theta^2\eta r=\widehat{ B}_{\theta}\widehat{Q}_{\theta}r, \quad \mbox{at} \quad r=r_0,
	\end{equation}
	where the other two components of $\widehat{ Q}$ are denoted by $\widehat{ Q}_{\theta}=-\frac{m}{m^2+k^2r^2}(r\widehat{ Q}_r)'$, $\widehat{ Q}_{z}=-\frac{kr}{m^2+k^2r^2}(r\widehat{ Q}_r)'$.
\end{prop}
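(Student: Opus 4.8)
The plan is to follow verbatim the architecture of Proposition~\ref{infimum-A-2} (the $m=0$ case), adapting it to the three-component field $(\xi,\eta,\zeta)$ and to the coupled vacuum unknown $\widehat{Q}_r$. First I would recast the matrix system \eqref{spectal formulation-kink-4} into the equivalent set of three scalar equations adapted to the quadratic structure of \eqref{sausage-in-v-t-out}, obtained by expanding the squares and grouping the terms multiplying $\xi$, $\eta$ and $\zeta$ respectively, in direct analogy with the passage from \eqref{spectal formulation-2} to \eqref{new-equations}. To enforce the constraint $\mathcal{J}=1$ I would reuse the inverse-function-theorem device: for a fixed admissible variation $(\xi_0,\eta_0,\zeta_0)$ set $j(t,\tau)=\mathcal{J}\big((\xi,\eta,\zeta)+t(\xi_0,\eta_0,\zeta_0)+\tau(\xi,\eta,\zeta)\big)$, solve $j(t,\tau(t))=1$ for a $C^1$ function $\tau$ with $\tau(0)=0$, and compute $\tau'(0)=-2\pi^2\int_0^{r_0}\rho(\xi_0\xi+\eta_0\eta+\zeta_0\zeta)r\,dr$. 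Stationarity of $E_{m,k}$ along $t\mapsto(\xi,\eta,\zeta)+t(\xi_0,\eta_0,\zeta_0)+\tau(t)(\xi,\eta,\zeta)$ then produces the weak Euler--Lagrange identity with Lagrange multiplier $\lambda=\inf E_{m,k}$.

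Next I would isolate the individual equations by activating one variation at a time. Taking $\widehat{Q}_{r0}\in C_c^\infty(r_0,r_w)$ with all plasma variations zero touches only the last integral of \eqref{sausage-in-v-t-out}; this variation respects both boundary constraints defining $\mathcal{A}_2$ automatically, so the resulting identity is the weak form of \eqref{ode-curl-div}, and since the coefficient $r/(m^2+k^2r^2)$ is smooth and positive on $[r_0,r_w]$, standard ODE regularity upgrades $\widehat{Q}_r$ to a smooth solution on $(r_0,r_w)$. Activating $\eta_0$ alone and then $\zeta_0$ alone (both unconstrained at the boundary) yields the second and third scalar equations of \eqref{spectal formulation-kink-4}. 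Activating $\xi_0\in C_c^\infty(0,r_0)$ with $\eta_0=\zeta_0=0$ and $\widehat{Q}_{r0}=0$ gives the first scalar equation in the interior; because the coefficients $\gamma p+B_\theta^2$, $B_\theta^2$ and $\rho$ are smooth and the leading operator is uniformly elliptic on compact subsets of $(0,r_0)$, bootstrapping yields smoothness of $(\xi,\eta,\zeta)$ on $(0,r_0)$.

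The delicate step, and the main obstacle, is the interface condition \eqref{inter-four-2} at $r=r_0$, which is exactly the natural boundary condition \eqref{boundary-cyl-2}. A variation $\xi_0\in C_c^\infty((0,r_0])$ that does not vanish at $r_0$ must now be accompanied by a vacuum variation $\widehat{Q}_{r0}$ satisfying the linearized constraint $r\widehat{Q}_{r0}=m\widehat{B}_\theta\xi_0$ at $r=r_0$ and $\widehat{Q}_{r0}=0$ at $r_w$, so that the perturbed pair stays in $\mathcal{A}_2$; thus the plasma and vacuum variations are coupled precisely through the boundary. As in Proposition~\ref{infimum-A-2}, I would first verify that the relevant fluxes have well-defined traces at $r=r_0$: using $(\xi,\eta,\zeta)\in Y_{m,k}$ (see \eqref{definition-ymk}), $\xi\in V$ (see \eqref{defi-v}) and the degeneracy $p(r_0)=0$ together with the claim $\sqrt{p}\,\xi'\in L^2(\tfrac{r_0}{2},r_0)$, the boundary quantities are meaningful. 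Integrating the plasma terms by parts and invoking the interior equations annihilates all bulk contributions and leaves a boundary term proportional to $B_\theta^2(\xi-r\xi'+kr\eta)\,\xi_0$ at $r=r_0$ (the $\gamma p$ contribution dropping out because $p=0$ there); integrating the vacuum integral by parts, using \eqref{ode-curl-div} and $\widehat{Q}_\theta=-\tfrac{m}{m^2+k^2r^2}(r\widehat{Q}_r)'$, leaves a boundary term proportional to $r\widehat{B}_\theta\widehat{Q}_\theta\,\xi_0$ once the linearized constraint $r\widehat{Q}_{r0}=m\widehat{B}_\theta\xi_0$ is substituted.

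Since $\xi_0(r_0)$ is arbitrary, matching the plasma and vacuum boundary contributions forces $B_\theta^2\xi-B_\theta^2\xi'r+kB_\theta^2\eta r=\widehat{B}_\theta\widehat{Q}_\theta r$ at $r=r_0$, which is exactly \eqref{inter-four-2}, with the companion relations $\widehat{Q}_\theta=-\tfrac{m}{m^2+k^2r^2}(r\widehat{Q}_r)'$ and $\widehat{Q}_z=-\tfrac{kr}{m^2+k^2r^2}(r\widehat{Q}_r)'$ already built into Lemma~\ref{ode-mk}. This completes the derivation of the Euler--Lagrange system \eqref{spectal formulation-kink-4}, the vacuum equation \eqref{ode-curl-div}, and the interface boundary condition \eqref{inter-four-2}. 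I expect the coupled boundary variation, and the careful justification that the traces entering the matching are well defined despite the degeneration of $p$ at $r_0$, to be where all the real work lies; the remaining interior computations are routine expansions of the quadratic forms.
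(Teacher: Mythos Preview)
Your proposal is correct and follows essentially the same approach as the paper: the constraint $\mathcal{J}=1$ is handled via the same inverse-function-theorem device, the interior equations are obtained by activating compactly supported variations one component at a time, and the interface condition \eqref{inter-four-2} is recovered by taking a coupled variation $(\xi_0,q_r)$ obeying $m\widehat{B}_\theta\xi_0=rq_r$ at $r_0$, integrating by parts on both sides, and matching the surviving boundary fluxes after showing (via membership in $Y_{m,k}$ and $V$) that those fluxes have well-defined traces. The only cosmetic difference is that the paper computes the first variation starting from the $\beta_0$-form of $E_{m,k}$ in Lemma~\ref{lem-e} and then simplifies, whereas you propose to first rewrite \eqref{spectal formulation-kink-4} in a form adapted to \eqref{sausage-in-v-t-out}; the resulting weak identities and boundary analysis are the same.
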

\begin{proof}
	Fix $((\xi_0,\eta_0,\zeta_0), q_r)\in Y_{m,k} \times H^1(r_0,r_w)$ and assume they satisfy $ m\widehat{  B}_\theta \xi_0=rq_r$ on the bounary $r=r_0$ and $q_r=0$ on the boundary $r=r_w$. Define 
	$$j(t,\tau)=\mathcal{J}(\xi+t\xi_0+\tau\xi,\eta+t\eta_0+\tau\eta,\zeta+t\zeta_0+\tau\zeta).$$
 Note that $j(0,0)=1$. Moreover, $j$ is smooth, 
	\begin{equation*}
	\begin{split}
	&\frac{\partial j}{\partial t}(0,0)=2\pi^2\int_{0}^{r_0}2\rho(\xi_0\xi +\eta_0\eta+\zeta_0\zeta)rdr, \\
	&\frac{\partial j}{\partial \tau}(0,0)=2\pi^2\int_{0}^{r_0}2\rho(\xi^2 +\eta^2+\zeta^2)rdr=2.
	\end{split}
	\end{equation*}
	So, by the inverse function theorem, we can solve for $\tau=\tau(t)$ in a neighborhood of $0$ as a $C^1$ function of $t$ so that $\tau(0)=0$ and $j(t,\tau(t))=1$. We may differentiate the last equation to find 
	\begin{equation*}
	\frac{\partial j}{\partial t}(0,0)+\frac{\partial j}{\partial \tau}(0,0)\tau'(0)=0,
	\end{equation*}
	which gives that 
	\begin{equation*}
	\tau'(0)=-\frac{1}{2}\frac{\partial j}{\partial t}(0,0)=-2\pi^2\int_{0}^{r_0}\rho(\xi_0\xi +\eta_0\eta+\zeta_0\zeta)rdr.
	\end{equation*}
	Since $((\xi, \eta,\zeta),\widehat{ Q}_r)$ are minimizers over the set $\mathcal{A}_2$, we may make variations with respect to $((\xi_0,\eta_0,\zeta_0),q_r)$ to find that 
	\begin{equation*}
	0=\frac{d}{dt}\bigg|_{t=0}E(\xi+t\xi_0+\tau(t)\xi, \eta+t\eta_0+\tau(t)\eta, \zeta+t\zeta_0+\tau(t)\zeta,\widehat{ Q}_r+tq_r+\tau \widehat{ Q}_r),
	\end{equation*}
	which implies that
	\begin{equation*}
	\begin{split}
	0&=4\pi^2 \int _0^{r_0}\bigg\{\frac{m^2B_{\theta}^2}{r^2(m^2+k^2r^2)}(r\xi)'(r\xi_0)'+\beta_0(r\xi) \cdot (r\xi_0)+\frac{(m^2+k^2r^2)B_{\theta}^2}{r^2}\eta\eta_0\\
	&\quad-\frac{kB^2_{\theta}(r\xi_0)'\eta}{r}+\frac{2kB_{\theta}^2}{r}\xi_0\eta+\frac{k^2B_\theta^2}{m^2+k^2r^2}(r\xi)'(r\xi_0)'-\frac{2k^2B^2_{\theta}}{m^2+k^2r^2}\xi(r\xi_0)'\\
	&\quad-\frac{kB_\theta^2}{r}(r\xi)'\eta_0+\frac{2kB^2_\theta}{r}\xi\eta_0-\frac{2k^2B^2_{\theta}}{m^2+k^2r^2}\xi_0(r\xi)'+\frac{4k^2B_{\theta}^2}{m^2+k^2r^2}\xi\xi_0
	\\
	&\quad+\gamma p \bigg[\frac{1}{r^2}(r\xi)'(r\xi_0)'+\frac{-k(r\xi)'\eta_0+\frac{m}{r}(r\xi)'\zeta_0}{r}+\frac{-k(r\xi_0)'\eta+\frac{m}{r}(r\xi_0)'\zeta}{r}\\
	&\quad+\Big(-k\eta+\frac{m}{r}\zeta\Big)\Big(-k\eta_0+\frac{m}{r}\zeta_0\Big)\bigg]\bigg\}rdr-4\pi^2\Big[\frac{2m^2B_{\theta}^2}{m^2+k^2r^2}\xi\xi_0\Big]_{r=r_0}\\
	&\quad+4\pi^2\tau'(0)\int_0^{r_0}\bigg\{\frac{m^2B_{\theta}^2}{r^2(m^2+k^2r^2)}|(r\xi)'|^2+\beta_0(r\xi)^2+(m^2+k^2r^2)\Big[\frac{B_\theta}{r}\eta\\
	&\quad+\frac{-kB_\theta(r\xi)'+2kB_{\theta}\xi}{m^2+k^2r^2}\Big]^2+\gamma p\Big[\frac{1}{r}(r\xi)'-k\eta+\frac{m}{r}\zeta\Big]^2\bigg\}rdr\\
	&\quad-4\pi^2\tau'(0)\Big[\frac{2m^2B_{\theta}^2}{m^2+k^2r^2}\xi^2\Big]_{r=r_0}+4\pi^2\int_{r_0}^{r_w} \Big[\widehat{Q}_r q_r+\frac{1}{m^2+k^2r^2}(r\widehat{ Q}_r)'(rq_r)'\Big]rdr\\
	&\quad+4\pi^2\tau'(0)\int_{r_0}^{r_w}\Big[|\widehat{Q}_r|^2+\frac{1}{m^2+k^2r^2}|(r\widehat{Q}_r)'|^2
	\Big]rdr,
	\end{split}
	\end{equation*}
	that is,
	\begin{equation*}
	\begin{split}
0&=4\pi^2 \int _0^{r_0}\bigg\{\frac{\gamma p+B_{\theta}^2}{r}(r\xi)'(r\xi_0)'+r\beta_0(r\xi) \cdot (r\xi_0)-kB_{\theta}^2(r\xi_0)'\eta+\frac{2kB_{\theta}^2}{r}(r\xi_0)\cdot\eta\\
&\quad-\frac{2k^2B^2_{\theta}}{m^2+k^2r^2}r\xi(r\xi_0)'-\frac{2k^2B^2_{\theta}}{m^2+k^2r^2}r\xi_0(r\xi)'+\frac{4k^2B_{\theta}^2}{m^2+k^2r^2}\xi(r\xi_0)
-k\gamma p(r\xi_0)'\eta\\
&\quad+\frac{m}{r}\gamma p (r\xi_0)'\zeta+\frac{(m^2+k^2r^2)B_{\theta}^2}{r^2}\eta\cdot(r\eta_0)-\frac{kB_{\theta}^2}{r}(r\xi)'(r\eta_0)+\frac{2kB_{\theta}^2}{r}\xi\cdot (r\eta_0)\\
&\quad-\frac{k\gamma p}{r}(r\xi)'\cdot(r\eta_0)+k^2\gamma p \eta\cdot (r\eta_0)-\frac{mk}{r}\gamma p \zeta\cdot (r\eta_0)+\frac{m\gamma p}{r^2}(r\xi)'(r\zeta_0)\\
&\quad-\frac{mk}{r}\gamma p \eta(r\zeta_0)+\frac{m^2}{r^2}\gamma p \zeta\cdot(r\zeta_0)\bigg\}dr-4\pi^2\Big[\frac{2m^2B_{\theta}^2}{m^2+k^2r^2}\xi\xi_0\Big]_{r=r_0}\\
&\quad+ 4\pi^2\int_{r_0}^{r_w} \Big[\widehat{Q}_r q_r+\frac{1}{m^2+k^2r^2}(r\widehat{ Q}_r)'(rq_r)'\Big]rdr+2\tau'(0)\lambda.
	\end{split}
	\end{equation*}
	Therefore, we can prove that 
	\begin{equation}\label{kink-va}
	\begin{split}
	-\tau'(0)&\lambda=2\pi^2\int_{0}^{r_0}\rho\lambda(\xi_0\xi +\eta_0\eta+\zeta\zeta_0)rdr
	\\&=2\pi^2 \int _0^{r_0}\bigg\{\frac{\gamma p+B_{\theta}^2}{r}(r\xi)'(r\xi_0)'+r\beta_0(r\xi) \cdot (r\xi_0)-kB_{\theta}^2(r\xi_0)'\eta\\
	&\quad+\frac{2kB_{\theta}^2}{r}(r\xi_0)\cdot\eta-\frac{2k^2B^2_{\theta}}{m^2+k^2r^2}r\xi(r\xi_0)'-\frac{2k^2B^2_{\theta}}{m^2+k^2r^2}r\xi_0(r\xi)'\\
	&\quad+\frac{4k^2B_{\theta}^2}{m^2+k^2r^2}\xi(r\xi_0)
	-k\gamma p(r\xi_0)'\eta+\frac{m}{r}\gamma p (r\xi_0)'\zeta+(\frac{m^2}{r^2}+k^2)B_{\theta}^2\eta\cdot(r\eta_0)\\
	&\quad-\frac{kB_{\theta}^2}{r}(r\xi)'(r\eta_0)+\frac{2kB_{\theta}^2}{r}\xi\cdot (r\eta_0)-\frac{k\gamma p}{r}(r\xi)'\cdot(r\eta_0)+k^2\gamma p \eta\cdot (r\eta_0)\\
	&\quad-\frac{mk}{r}\gamma p \zeta\cdot (r\eta_0)+\frac{m\gamma p}{r^2}(r\xi)'(r\zeta_0)-\frac{mk}{r}\gamma p \eta(r\zeta_0)+\frac{m^2}{r^2}\gamma p \zeta\cdot(r\zeta_0)\bigg\}dr\\
	&\quad-2\pi^2\Big[\frac{2m^2B_{\theta}^2}{m^2+k^2r^2}\xi\xi_0\Big]_{r=r_0}+ 2\pi^2\int_{r_0}^{r_w} \Big[\widehat{Q}_r q_r+\frac{1}{m^2+k^2r^2}(r\widehat{ Q}_r)'(rq_r)'\Big]rdr.
	\end{split}
	\end{equation}
Noting that 	$$\beta_0=\frac{1}{r}\Big[\frac{m^2B_{\theta}^2}{r^3}+\frac{2m^2B_\theta (\frac{B_\theta}{r})'}{r(m^2+k^2r^2)}-\frac{4k^2m^2B^2_{\theta}}{r(m^2+k^2r^2)^2}+\frac{2k^2p'}{m^2+k^2r^2}\Big],$$
$$ p'=-\frac{1}{r}(rB_{\theta})'B_{\theta},$$
	we can  get that
	\begin{equation*}
	\begin{split}
	& \int _0^{r_0}\bigg\{r\beta_0(r\xi) \cdot (r\xi_0)-\frac{2k^2B^2_{\theta}}{m^2+k^2r^2}r\xi(r\xi_0)'-\frac{2k^2B^2_{\theta}}{m^2+k^2r^2}r\xi_0(r\xi)'+\frac{4k^2B_{\theta}^2}{m^2+k^2r^2}\xi(r\xi_0)\bigg\}dr\\&
	\quad= \int _0^{r_0}\bigg\{\frac{m^2}{r^2} B_{\theta}^2\xi \cdot (r\xi_0)+\Big(\frac{B_{\theta}^2}{r^2}\Big)'(r\xi) \cdot (r\xi_0)\bigg\}dr-\Big[\frac{2k^2r^2B_{\theta}^2}{m^2+k^2r^2}\xi\xi_0\Big]_{r=r_0}.
	\end{split}
	\end{equation*}
	
	Since $\xi_0$, $\eta_0$, $\zeta_0$ and $q_r$ are independent, and using $$-\Big[\frac{2k^2r^2B_{\theta}^2}{m^2+k^2r^2}\xi\xi_0\Big]_{r=r_0}-\Big[\frac{2m^2B_{\theta}^2}{m^2+k^2r^2}\xi\xi_0\Big]_{r=r_0}=-2 \Big[B_{\theta}^2\xi\xi_0\Big]_{r=r_0}, $$ inserting the above identity into \eqref{kink-va}, one has the triplet of equations
	\begin{equation}\label{weak-four}
	\begin{split}
	&-\int_{0}^{r_0}(\gamma p+B_{\theta}^2)k\eta (r\xi_0)'dr+\int_{0}^{r_0}2B_{\theta}^2k\eta \xi_0dr+\int_{0}^{r_0}\Big(\frac{B_{\theta}^2}{r^2}\Big)'(r\xi)\cdot (r\xi_0)dr \\
	&\quad +\int_{0}^{r_0}\frac{\gamma p+B_{\theta}^2}{r}(r\xi)' (r\xi_0)'dr+\int_{0}^{r_0}\frac{m^2}{r^2}B_{\theta}^2\xi \cdot (r\xi_0)dr+\int_{0}^{r_0}\frac{m}{r}\gamma p (r\xi_0)'\zeta dr\\
	&\quad -2 \big[B_{\theta}^2\xi\xi_0\big]_{r=r_0}
	+\int_{r_0}^{r_w} \Big[\widehat{Q}_r q_r+\frac{1}{m^2+k^2r^2}(r\widehat{ Q}_r)'(rq_r)'\Big]rdr=\int_{0}^{r_0}\rho\lambda\xi_0\xi rdr,
	\end{split}
	\end{equation}	
	\begin{equation*}
	\begin{split}
	&\int_{0}^{r_0}\Big(\frac{m^2}{r^2}+k^2\Big)B_{\theta}^2\eta\cdot(r\eta_0)dr-\int_{0}^{r_0}\frac{k(\gamma p+B_{\theta}^2)}{r}(r\xi)'(r\eta_0)dr+\int_{0}^{r_0}\frac{2kB_{\theta}^2}{r}\xi\cdot (r\eta_0)dr\\
	&\quad+\int_{0}^{r_0}k^2\gamma p \eta\cdot (r\eta_0)dr-\int_{0}^{r_0}\frac{mk}{r}\gamma p \zeta\cdot (r\eta_0)dr=\int_{0}^{r_0}\rho\lambda\eta_0\eta rdr,
	\end{split}
	\end{equation*}
	\begin{equation*}
	\int_{0}^{r_0}\frac{m\gamma p}{r^2}(r\xi)'(r\zeta_0)dr-\int_{0}^{r_0}\frac{mk}{r}\gamma p \eta(r\zeta_0)dr+\int_{0}^{r_0}\frac{m^2}{r^2}\gamma p \zeta\cdot(r\zeta_0)dr=\int_{0}^{r_0}\rho\lambda\zeta_0\zeta rdr.
	\end{equation*}
	By making variation with $\xi_0$ compactly supported in $(0,r_0)$, and make variations $q_r$ compactly supported in $(r_0, r_w)$, one gets that $(\xi, \eta, \zeta)$ satisfy \eqref{spectal formulation-kink-4} in a weak sense in $(0,r_0)$ and $\widehat{ Q}_{r}$ satisfies \eqref{ode-curl-div} in a weak sense in $(r_0,r_w)$. 
	
	Now we show that the interface boundary condition
	 \eqref{inter-four-2} is satisfied.
	From the first equation \eqref{spectal formulation-kink-4}, we know that 
	\begin{equation*}
	\begin{split}
	&\frac{d}{dr}\Big[\gamma p\Big(\frac{1}{r}(r\xi)'-k\eta+\frac{m}{r}\zeta\Big)\Big]+\frac{d}{dr}\Big[B^2_\theta\Big(\frac{1}{r}(r\xi)'-k\eta\Big)\Big]-\frac{2B_\theta B'_{\theta}\xi}{r}\\
	&\quad
	-\frac{B^2_\theta \xi'}{r}+\frac{B^2_\theta \xi}{r^2}+\frac{B^2_{\theta}}{r}\Big(\frac{1}{r}(r\xi)'-2k\eta-\frac{m^2}{r}\xi\Big)=-\rho \lambda\xi,
	\end{split}
	\end{equation*}
	which together with $(\xi,\eta,\zeta)\in Y_{m,k}$ (see \eqref{definition-ymk}) and $\xi \in V$ (see \eqref{defi-v}),  gives that 
	\begin{equation*}
	\frac{d}{dr}\Big[\gamma p\Big(\frac{1}{r}(r\xi)'-k\eta+\frac{m}{r}\zeta\Big)+B^2_\theta\Big(\frac{1}{r}(r\xi)'-k\eta\Big)\Big]\in L^2(\frac{r_0}{2}, r_0).
	\end{equation*}After a similar argument, we have 
	\begin{equation*}
	\frac{d}{dr}\Big[\frac{r}{m^2+k^2r^2}(r\widehat{Q}_r)'\Big]\in L^2(r_0,r_w), 
	\end{equation*}  
	so $\gamma p\Big(\frac{1}{r}(r\xi)'-k\eta+\frac{m}{r}\zeta\Big)+B^2_\theta\Big(\frac{1}{r}(r\xi)'-k\eta\Big)$ and $\frac{r}{m^2+k^2r^2}(r\widehat{Q}_r)'$ are well-defined at the endpoint $r=r_0$.
	Make variations with respect to $\xi_0\in C_c^{\infty}(0,r_0]$, $q_r\in C_c^{\infty}[r_0,r_w)$. Integrating the terms in \eqref{weak-four} with derivatives of $\xi_0$ and $q_r$ by parts, using $\xi$ solves the first equation of \eqref{spectal formulation-kink-4} on $(0,r_0)$ and $\widehat{ Q}_r$ solves \eqref{ode-curl-div} on $(r_0,r_w)$, we get that 
	\begin{equation*}
	\Big [(\gamma p+B_{\theta}^2)(r\xi)'\xi_0-k(\gamma p+B_\theta^2)\eta \xi_0r-2 B_{\theta}^2\xi\xi_0+m\gamma p \zeta\xi_0 \Big]_{r=r_0}-\Big[\frac{r}{m^2+k^2r^2}(r\widehat{ Q}_r)'rq_r\Big]_{r=r_0}=0.
	\end{equation*}
	Since $\xi_0$ and $q_r$ may be chosen arbitrarily, and $q_r$ satisfies $m\widehat{  B}_\theta \xi_0=rq_r$ on the bounary $r=r_0$,  using $p=0$ on the boundary $r=r_0$ and $\widehat{ Q}_{\theta}=-\frac{m}{m^2+k^2r^2}(r\widehat{ Q}_r)'$, we deduce the interface boundary condition
	$$\Big [B_{\theta}^2\xi-B_{\theta}^2\xi' r+kB_\theta^2\eta r-\widehat{  B}_\theta\widehat{ Q}_{\theta}r\Big]\Big|_{r=r_0}=0.$$	
\end{proof}
\section{Analysis about the growing mode as a function of $m$ and $k$}\label{section-bound-growing-mode}
In this section, we first prove the growing mode is bounded for any $(m,k) \in \mathbb{Z}\times \mathbb{Z}$, if the  pressure satisfies $|p'|\leq C\rho $ for $r$ near $r_0$, and then show that the growing mode has no lower bound under suitable condition of the pressure $p$. First, we prove the growing mode is bounded for any $ m$ and $k$, under the condition $|p'|\leq C\rho $ for $r$ near $r_0$.
\begin{prop}\label{bound-growing-mode}
	 If $|p'|\leq C\rho $ for $r$ near $r_0$, then the growing mode is bounded 
	 for any $m$ and $k$. 
\end{prop}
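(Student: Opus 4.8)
The plan is to establish a lower bound $\lambda_{m,k}\ge -C$ with a constant $C$ that is \emph{independent} of $m$ and $k$. Since $\mu_{m,k}=\sqrt{-\lambda_{m,k}}$ on the set $B=\{(m,k):\lambda_{m,k}<0\}$, such a uniform bound immediately yields $\sup_B\mu\le\sqrt{C}<\infty$; combined with $\sup_B\mu>0$, which follows because $B\neq\varnothing$ by the instability statements of Theorem \ref{linear instability}, this gives exactly \eqref{sup-Lambda}. Thus the whole task reduces to bounding $E_{m,k}/\mathcal{J}$ below uniformly in $(m,k)$.

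The starting observation is that in both energy forms \eqref{energy-m=0} and \eqref{sausage-in-v-t-out} every term is manifestly nonnegative except the contribution of $2p'\xi^2$: the squared brackets are nonnegative, $\gamma p\ge 0$, the coefficient $\frac{4\gamma pB_\theta^2}{r^2(\gamma p+B_\theta^2)}\ge 0$, the term $\frac{m^2B_\theta^2}{r}\xi^2\ge 0$, and the vacuum integral is nonnegative. Discarding all of these (and using that the $\frac{2p'}{r}\xi^2$ coefficient in \eqref{energy-m=0} is integrated against the measure $r\,dr$) reduces both cases to the single inequality
\begin{equation*}
E_{m,k}\ \ge\ 2\pi^2\int_0^{r_0}2p'(r)\,\xi^2\,dr ,
\end{equation*}
so it suffices to dominate this one term from below by $-C\mathcal{J}$ uniformly in $m,k$.

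The key quantity is $\Lambda:=\sup_{0<r<r_0}\frac{|p'(r)|}{r\,\rho(r)}$, and the crux is to show $\Lambda<\infty$. Near $r=0$, Lemma \ref{expansion-zero} gives $p'(r)=-\frac12\mathbb{J}_z^2(0)r+O(r^2)$, so $|p'|/r$ is bounded, while $\rho(r)=(p(r)/A)^{1/\gamma}\to(p(0)/A)^{1/\gamma}>0$ since $p(0)>0$; hence $|p'|/(r\rho)$ is bounded there. On any compact subinterval of $(0,r_0)$ the ratio is bounded by continuity together with $\rho>0$ and $r$ bounded away from $0$. The delicate region is $r$ near $r_0$, where both $p'$ and $\rho$ degenerate; here the hypothesis $|p'|\le C\rho$ is precisely what is needed, giving $|p'|/(r\rho)\le C/r\le C/s_1$ on $(s_1,r_0)$. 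This is the main obstacle, and the proposition hinges on it: without the assumption the negative term would have to be absorbed into the $m,k$-dependent positive energy via Lemma \ref{weight-p'-lem}, producing a constant that need not be uniform, whereas $|p'|\le C\rho$ lets me dominate $2p'\xi^2$ directly by $\rho\xi^2$.

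With $\Lambda<\infty$ in hand the conclusion is routine:
\begin{equation*}
\Big|\int_0^{r_0}2p'\xi^2\,dr\Big|\ \le\ 2\Lambda\int_0^{r_0}\rho\,\xi^2\,r\,dr\ \le\ \frac{\Lambda}{\pi^2}\,\mathcal{J}(\xi,\eta,\zeta),
\end{equation*}
so that $E_{m,k}\ge -2\Lambda\,\mathcal{J}$ for every admissible tuple, and therefore $\lambda_{m,k}=\inf E_{m,k}/\mathcal{J}\ge -2\Lambda$ for all $(m,k)$. Consequently $\mu_{m,k}=\sqrt{-\lambda_{m,k}}\le\sqrt{2\Lambda}$ on $B$, which shows the growing mode is bounded for all $m$ and $k$ and completes the proof.
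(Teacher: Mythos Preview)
Your proof is correct and follows essentially the same approach as the paper: both discard the nonnegative terms in \eqref{energy-m=0} and \eqref{sausage-in-v-t-out} to reduce to bounding $\int_0^{r_0}2p'\xi^2\,dr$ by a uniform multiple of $\mathcal{J}$, using the hypothesis $|p'|\le C\rho$ near $r_0$ and the fact that $|p'|/r$ stays bounded near $r=0$. The paper organizes the estimate by splitting at $s_1$ and invoking $p'=-B_\theta\mathbb{J}_z$ together with $\sup|B_\theta/r|\le\tfrac12\|\mathbb{J}_z\|_{L^\infty}$ on $(0,s_1)$, whereas you package everything into the single quantity $\Lambda=\sup_{(0,r_0)}|p'|/(r\rho)$ and use the Taylor expansion from Lemma~\ref{expansion-zero} near the origin; these are cosmetic differences, and your formulation is arguably tidier.
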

\begin{proof}
	We can directly get from the energy $E_{0,k}$ and $E_{m,k}$ in \eqref{energy-m=0} and \eqref{sausage-in-v-t-out} 
	that for any $m$ and $k$
	\begin{equation*}
	\begin{split}
	E_{m,k}\geq 2\pi^2 \int_0^{r_0}2p'\xi^2dr=2\pi^2 \int_0^{s_1}2p'\xi^2dr+2\pi^2 \int_{s_1}^{r_0}2p'\xi^2dr.
	\end{split}
	\end{equation*}
	From $p'=-B_\theta B'_\theta-\frac{B^2_\theta}{r}=-B_\theta\mathbb{J}_z$, it follows that for $s_1$ near $r_0$
	\begin{equation*}
	\Big|\int_0^{s_1}p'\xi^2dr\Big|=\Big| \int_0^{s_1}\frac{p'}{ r}\xi^2rdr\Big|=\Big|-\int_0^{s_1}\frac{B_\theta \mathbb{J}_z}{r} \xi^2rdr\Big|\leq \sup_{0\leq r\leq r_0}\Big|\frac{B_\theta}{r}\Big|\sup_{0\leq r\leq r_0}\Big|\mathbb{J}_z\Big|\mathcal{J}
	\end{equation*}
	and 
	\begin{equation*}
	\Big|\int_{s_1}^{r_0}p'\xi^2dr\Big|=\Big|\int_{s_1}^{r_0}\frac{p'}{\rho }\rho \xi^2dr\Big|\leq \sup_{0\leq r\leq r_0}\Big|\frac{p'}{\rho}\Big|\mathcal{J}.
	\end{equation*}
	On the other hand,
	it follows from $B_\theta=\frac{1}{r}\int_0^r s\mathbb{J}_z(s)ds$ that
	\begin{equation*}
	\sup_{0\leq r\leq r_0}\frac{|B_\theta|}{r}\leq\frac{\|\mathbb{J}_z\|_{L^\infty}}{2}. 
	\end{equation*}
	Therefore, we get from $|p'|\leq C\rho $ for $r$ near $r_0$ that
	$$\Big|\int_0^{r_0}2p'\xi^2dr\Big|\leq C(\mathcal{J}),$$ 
	which ensures that the energy $E_{0,k}$ and $E_{m,k}$ have a uniform lower bound.
	Therefore, the growing mode is bounded.
\end{proof}
Now we introduce the following examples which insures the condition in Proposition \ref{bound-growing-mode}.
\begin{example}\label{example}
	(I) Assume $p(r)=C (r_0-r)^\beta$  for $r$ near $r_0$ and $\beta\geq 1$.  If $\gamma \geq \frac{\beta}{\beta-1} $,  then $|p'|\leq C\rho$ for $r$ near $r_0$ and  the growing mode is bounded for any $m$ and $k$. 
	
	(II) Assume $p=C exp\{-(r_0-r)^{-\beta}\}$ for $r$ near $r_0$ and $\beta>0$.  If $\gamma >1$, then $|p'|\leq C\rho$ for $r$ near $r_0$ and  the growing mode is bounded for any $m$ and $k$. 
\end{example}
\begin{proof}
(I) Since  $p=C (r_0-r)^{\beta}$ for $r$ near $r_0$, we deduce that $p'(r)\sim -(r_0-r)^{\beta-1}$ for $r$ near $r_0$.
By $p=A\rho ^\gamma$,  we have $\rho \sim (r_0-r)^{\frac{\beta}{\gamma}}$ for $r$ near $r_0$. Hence, if $\gamma \geq \frac{\beta}{\beta-1} $, then $|p'|\leq C\rho$ for $r$ near $r_0$. By Proposition \ref{bound-growing-mode}, we get that the growing mode is bounded for any $m$ and $k$.

(II) Since  $p=C exp\{-(r_0-r)^{-\beta}\}$ for $r$ near $r_0$, we get that for $r$ near $r_0$, $$p'(r)\sim -\frac{\beta}{(r_0-r)^{\beta+1}}exp\{-(r_0-r)^{-\beta}\}.$$ 
By $p=A\rho ^\gamma$,  we have $\rho \sim exp\{-\frac{1}{\gamma}(r_0-r)^{-\beta}\}$. Hence, if $\gamma> 1$, then $|p'|\leq C\rho$ for $r$ near $r_0$. By Proposition \ref{bound-growing-mode}, similarly we get that the growing mode is bounded for any $m$ and $k$.
\end{proof}
Finally, we prove that the growing mode has no lower bound under suitable condition of the pressure.
\begin{prop}
Assume $p(r)=C (r_0-r)^\beta$ for $r$ near $r_0$ and $ \beta\geq 1$. If $\gamma< \frac{\beta}{\beta-1} $, then $\frac{p'}{\rho }\rightarrow -\infty$ as $r\rightarrow r_0$ and  the growing mode has no lower bound.
\end{prop}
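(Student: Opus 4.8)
The proposition has two parts, and the first is immediate. Near $r_0$ the hypothesis $p(r)=C(r_0-r)^\beta$ gives $p'(r)=-C\beta(r_0-r)^{\beta-1}$ and, from $p=A\rho^\gamma$, $\rho=(C/A)^{1/\gamma}(r_0-r)^{\beta/\gamma}$, so $p'/\rho=-c\,(r_0-r)^{\beta-1-\beta/\gamma}$ with $c>0$. The exponent $\beta-1-\beta/\gamma$ is negative exactly when $\gamma(\beta-1)<\beta$, i.e. $\gamma<\frac{\beta}{\beta-1}$, and then $p'/\rho\to-\infty$ as $r\to r_0$. This is precisely the breakdown of the bound $|p'|\le C\rho$ used in Proposition \ref{bound-growing-mode}, and the plan for the second part is to show that this breakdown forces the loss of a lower bound: by concentrating a test function at $r_0$, the destabilizing density $2p'$ can be made to dominate the kinetic weight $\rho$ in the Rayleigh quotient.

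It suffices to treat the sausage mode $m=0$, for which $\lambda_{0,k}=\inf_{(\xi,\eta)\in X_k}E_{0,k}(\xi,\eta)/\mathcal{J}(\xi,\eta)$ with $E_{0,k}$ given by \eqref{energy-m=0}. For each large $k$ I would exhibit an admissible pair whose Rayleigh quotient tends to $-\infty$. Fix a nonzero $W\in C_c^\infty((0,1))$ and a small exponent $\alpha>0$ (to be chosen), and set $\xi_k(r)=W\big(k^\alpha(r_0-r)\big)$; this is smooth and supported in $(r_0-k^{-\alpha},r_0)$, hence lies in $X_k$ for $k$ large. Since the middle term of \eqref{energy-m=0} is a nonnegative square that, through $\xi_k'$, scales like $k^\alpha$, I would annihilate it exactly by taking $\eta_k$ with $k\eta_k=\frac1r\big((r\xi_k)'-\frac{2B_\theta^2}{\gamma p+B_\theta^2}\xi_k\big)$, leaving $E_{0,k}(\xi_k,\eta_k)=2\pi^2\int_0^{r_0}\big[2p'+\frac{4\gamma pB_\theta^2}{r(\gamma p+B_\theta^2)}\big]\xi_k^2\,dr$.

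The core is then a scaling count in $t=k^\alpha(r_0-r)$, using $p'\sim-(r_0-r)^{\beta-1}$, $\rho\sim(r_0-r)^{\beta/\gamma}$, and the fact that $B_\theta(r_0)>0$ — which follows from admissibility since $B_\theta^2(r_0)=-\frac{2}{r_0^2}\int_0^{r_0}s^2p'\,ds=\frac{4}{r_0^2}\int_0^{r_0}sp\,ds>0$ after integrating by parts. I expect the numerator to behave like $E_{0,k}(\xi_k,\eta_k)\sim-a_1k^{-\alpha\beta}$ (the correction $\frac{4\gamma pB_\theta^2}{r(\gamma p+B_\theta^2)}=O(p)$ being of the smaller order $k^{-\alpha(\beta+1)}$); the $\xi$-part of $\mathcal{J}$ like $\int_0^{r_0}\rho\xi_k^2r\,dr\sim a_2k^{-\alpha(\beta/\gamma+1)}$; and, since $k\eta_k=-k^\alpha W'(t)\,(1+o(1))$, the $\eta$-part like $\int_0^{r_0}\rho\eta_k^2r\,dr\sim a_3k^{\alpha(1-\beta/\gamma)-2}$, with $a_1,a_2,a_3>0$. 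Comparing each with $k^{-\alpha\beta}$: the $\xi$-part is $o(k^{-\alpha\beta})$ precisely when $\beta-\beta/\gamma-1<0$, i.e. under $\gamma<\frac{\beta}{\beta-1}$, while the $\eta$-part is $o(k^{-\alpha\beta})$ whenever $\alpha\big(1+\beta(1-\tfrac1\gamma)\big)<2$, which holds for $\alpha$ small since $1+\beta(1-\tfrac1\gamma)>0$. Thus $\mathcal{J}(\xi_k,\eta_k)=o(k^{-\alpha\beta})$ against a numerator of exact order $-k^{-\alpha\beta}$, giving $\lambda_{0,k}\le E_{0,k}(\xi_k,\eta_k)/\mathcal{J}(\xi_k,\eta_k)\le -c\,k^{\alpha(1+\beta/\gamma-\beta)}(1+o(1))\to-\infty$, where the exponent $\alpha(1+\beta/\gamma-\beta)$ is positive exactly under the standing hypothesis.

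The main obstacle is the bookkeeping for $\eta_k$: I must verify that it is genuinely dominated by $-k^\alpha W'$, i.e. that the lower-order pieces $\xi_k/r$ and the discrepancy $\frac{2B_\theta^2}{\gamma p+B_\theta^2}=2+O((r_0-r)^\beta)$ do not disturb the estimate $\int\rho\eta_k^2r\,dr\sim a_3k^{\alpha(1-\beta/\gamma)-2}$, and likewise that the surviving constant $a_1$ is strictly positive — this is where $B_\theta(r_0)\neq0$ and $W\not\equiv0$ are used. The remaining steps are routine: fix $\alpha$ in the nonempty window $\big(0,\,2/(1+\beta(1-1/\gamma))\big)$, replace the asymptotic relations by explicit two-sided bounds valid on the support of $\xi_k$ for large $k$, and collect them to obtain the displayed blow-up of the quotient, hence $\lim_{k\to\infty}\lambda_{0,k}=-\infty$.
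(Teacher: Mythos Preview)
Your proposal is correct and follows essentially the same route as the paper: the same concentrated test function $\xi_k(r)=W(k^\alpha(r_0-r))$, the same choice of $\eta_k$ annihilating the square in \eqref{energy-m=0}, and the same scaling count yielding $E_{0,k}\sim -k^{-\alpha\beta}$ against $\mathcal{J}\sim k^{-\alpha(1+\beta/\gamma)}$, hence $\lambda_{0,k}\lesssim -k^{\alpha(1+\beta/\gamma-\beta)}\to-\infty$. The only cosmetic difference is the admissible range of $\alpha$: the paper simply takes $0<\alpha<1$ (which forces the $\eta$-contribution to $\mathcal{J}$ below the $\xi$-contribution), while you impose the slightly weaker condition $\alpha\big(1+\beta(1-1/\gamma)\big)<2$ (which forces the $\eta$-contribution below $|E_{0,k}|$); under the hypothesis $\beta(1-1/\gamma)<1$ the former implies the latter, and either suffices.
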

\begin{proof}
 Since $p(r)=C |r-r_0|^{\beta}$ for $r$ near $r_0$, from (I) in Example \ref{example}, we know that $p'\sim -|r-r_0|^{\beta-1}$ and $\rho \sim |r-r_0|^{\frac{\beta}{\gamma}}$ for $r$ near $r_0$. Hence, if $\gamma< \frac{\beta}{\beta-1} $, then $\frac{p'}{\rho}\rightarrow -\infty$ as $r \rightarrow r_0$.
 
Now we prove the ill-posedness by the above facts.	
We can choose $w$ as any smooth function with compact support near $0$ and define a sequence of test functions  $\xi_k=w(k^\alpha[r-r_0])$, $\eta_k=\frac{1}{kr}\Big((r\xi_k)'-\frac{2B_{\theta}^2}{\gamma p +B_{\theta}^2}\xi_k\Big)$, such that 
\begin{equation*}
\begin{split}
2\pi^2\int_0^{r_0}k^2(\gamma p+B_{\theta}^2)\Big[\eta_k-\frac{1}{kr}\Big((r\xi_k)'-\frac{2B_{\theta}^2}{\gamma p +B_{\theta}^2}\xi_k\Big)\Big]^2rdr=0.
\end{split}
\end{equation*}
It follows that
$$\partial_r\xi_k=k^\alpha w'(k^\alpha[r-r_0])$$ and $$\eta_k\sim k^{\alpha-1} w'(k^\alpha[r-r_0])+ k^{-1}w(k^\alpha[r-r_0]).$$
Therefore, we get for $0<\alpha<1$ that
\begin{equation}\begin{split}
\mathcal{J}_{0,k}&=	\int_0^{r_0} \rho (\xi_k^2+\eta_k^2)rdr\\&\sim \int_0^{r_0} |r-r_0|^{\frac{\beta}{\gamma}}(\xi_k^2+\eta_k^2)rdr\\
&\sim\int_0^{r_0} |r-r_0|^{\frac{\beta}{\gamma}}|w(k^\alpha[r-r_0])|^2dr+\int_0^{r_0} |r-r_0|^{\frac{\beta}{\gamma}}k^{2\alpha-2}|w'(k^\alpha[r-r_0])|^2dr\\
&\quad+\int_0^{r_0}|r-r_0|^{\frac{\beta}{\gamma}}k^{-2}|w'(k^\alpha[r-r_0])|^2dr \qquad(\mbox{let}\,\, z=k^\alpha[r-r_0])\\
&\sim k^{-\alpha-\frac{\alpha \beta}{\gamma}}\int_0^{k^\alpha r_0} z^{\frac{\beta}{\gamma}}|w(z)|^2dz+ k^{\alpha-2-\frac{\alpha \beta}{\gamma}}\int_0^{k^\alpha r_0} z^{\frac{\beta}{\gamma}}|w'(z)|^2dz\\
&\quad+ k^{-2-\alpha-\frac{\alpha \beta}{\gamma}}\int_0^{k^\alpha r_0} z^{\frac{\beta}{\gamma}}|w'(z)|^2dz\\
&\sim k^{-\alpha-\frac{\alpha \beta}{\gamma}}.
\end{split}
\end{equation}
Since  $p(r)\sim |r-r_0|^{\beta}$ and $p'\sim -|r-r_0|^{\beta-1}$ for $r$ near $r_0$,  we obtain
\begin{equation}
\begin{split}
E_{0,k}&=E_{0,k}(\xi_k,\eta_k)=2\pi^2\int_0^{r_0}\bigg\{\Big[\frac{2p'}{r}+\frac{4\gamma p B_{\theta}^2}{r^2(\gamma p+B_{\theta}^2)}\Big]\xi_k^2\\
&\quad+k^2(\gamma p+B_{\theta}^2)\Big[\eta_k-\frac{1}{kr}\Big((r\xi_k)'-\frac{2B_{\theta}^2}{\gamma p +B_{\theta}^2}\xi_k\Big)\Big]^2\bigg\}rdr\\
&=2\pi^2\int_0^{r_0}\Big[\frac{2p'}{r}+\frac{4\gamma p B_{\theta}^2}{r^2(\gamma p+B_{\theta}^2)}\Big]\xi_k^2rdr\\
&\sim -\int_0^{r_0}|r-r_0|^{\beta-1} w^2(k^\alpha[r-r_0])dr\\
&\quad+
\int_0^{r_0}|r-r_0|^{\beta}w^2(k^\alpha[r-r_0])dr\quad(\mbox{let}\,\, z=k^\alpha[r-r_0])\\
&\sim -k^{-\alpha-\alpha (\beta-1)}\int_0^{k^{\alpha}r_0}z^{\beta-1}w^2(z)dz\\
&\quad+k^{-\alpha-\alpha \beta}\int_0^{k^{\alpha}r_0}z^{\beta}w^2(z)dz\sim -k^{-\alpha-\alpha (\beta-1)}.
\end{split}
\end{equation}
Choosing $0<\alpha<1$,  if $\gamma< \frac{\beta}{\beta-1} $, then we get as $k\rightarrow \infty $ that
$$\lambda_{k}=\min\frac{E_{0,k}}{\mathcal{J}_{0,k}}\sim \frac{-k^{-\alpha-\alpha(\beta-1) }}{k^{-\alpha-\frac{\alpha \beta}{\gamma}}}=-k^{\frac{\alpha \beta}{\gamma}-\alpha (\beta-1)}
\rightarrow-\infty.$$
\end{proof}

 \section{Appendix: Perturbed MHD system in Lagrangian coordinates}\label{appendix}

 \subsection{Harmonic extension of the free surface}
 
 According to Subsection \ref{susect-free-surface-1}, we  know that the equation of the free surface $S_{t, pv} $ may be read as follows
 \begin{equation*}
 h(t, \mathcal{X})=\mathcal{X}+g(t, \mathcal{X}),\quad (\mathcal{X} \in \Sigma_{0, pv}=\{x=r_0 \}),
 \end{equation*}
 that is,
 \begin{equation*}
 \begin{cases}
 &h^r(t, r_0, \theta, z)=r_0+g^{r}(t, r_0,\theta, z),\\
 &h^{\theta}(t, r_0, \theta, z)=\theta+g^{\theta}(t, r_0, \theta, z),\\
 &h^z(t, r_0, \theta, z)=z+g^z(t, r_0, \theta, z).
 \end{cases}
 \end{equation*}
 
 We consider the fixed equilibrium vacuum domain
 \begin{equation}\label{vacuum-domain-1}
 \Omega_0^v\eqdefa \{(r, \theta, z) \in C(0; r_0, r_w) \times [0, 2\pi] \times 2\pi \mathbb{T}| r_0< r <r_w, \, \theta \in [0,2\pi], \, z\in 2\pi  \mathbb{T}\}
 \end{equation}
 for which we will write the coordinates as $\widehat{\mathcal{X}} \in \Omega_0^v$. We will think of $\Sigma_{0, pv}\eqdefa \{(r, \theta, z)|r=r_0, \, \theta \in [0,2\pi], \, z\in  2\pi \mathbb{T}\}$ as the plasma-vacuum
 interface of $\Omega_0^v$, and we will write $\Sigma_w \eqdefa \{(r, \theta, z)|r=r_w, \, \theta \in [0,2\pi], \, z\in  2\pi\mathbb{T}\}$ for the outer perfectly
 conducting wall.

 We continue to view $g(t, r_0, \theta, z)=g^{r}(t, r_0, \theta, z)\, e_r+g^\theta(t, r_0, \theta, z)\, e_\theta+ g^z(t, r_0, \theta, z)\,e_z$ as a vector field on $\mathbb{R}^+ \times \Sigma_{0, pv}$. We then define a vector field in cylindrical coordinates ${\Psi}(t, r, \theta, z)={\Psi}^r(t, r, \theta, z)\, e_r+{\Psi}^\theta(t, r, \theta, z)\, e_\theta+{\Psi}^z(t, r, \theta, z)\, e_z$ as the displacement in vacuum,
 \begin{equation}\label{harmonic-extension-0}
 \begin{split}
 &{\Psi}(t, r, \theta, z)\eqdefa \mathcal{H}_v g=\,\,\mbox{generalized harmonic extension of} \,\, g \,\, \mbox{into} \,\, \Omega_0^v,
 \end{split}
 \end{equation}
 where $\mathcal{H}_v g$ solves the following Laplacian equations:
 \begin{equation}\label{harmonic-equation-1}
 \begin{cases}
 &\Delta \,\Psi=0, \quad \mbox{in} \quad\Omega_0^v,\\
 &\Psi|_{\mathbb{R}^+ \times \Sigma_{0, pv}}=g,\quad \Psi|_{\mathbb{R}^+ \times \Sigma_{w}}=0,
 \end{cases}
 \end{equation}
 that is,
 \begin{equation*}
 \begin{cases}
 &(\widetilde{\Delta}-\frac{1}{r^2}) \,\Psi^r-\frac{2}{r^2}\partial_{\theta}\Psi^{\theta}=0, \quad \mbox{in} \quad \Omega_0^v,\\
 &\Psi^r|_{\mathbb{R}^+ \times \Sigma_{0, pv}}=g^{r},\quad \Psi^r|_{\mathbb{R}^+ \times \Sigma_{w}}=0,
 \end{cases}
 \end{equation*}
 \begin{equation*}
 \begin{cases}
 &(\widetilde{\Delta}-\frac{1}{r^2}) \,\Psi^\theta+\frac{2}{r^2}\partial_{\theta}\Psi^{r}=0, \quad \mbox{in} \quad \Omega_0^v,\\
 &\Psi^\theta|_{\mathbb{R}^+ \times \Sigma_{0, pv}}=g^\theta,\quad \Psi^\theta|_{\mathbb{R}^+ \times \Sigma_{w}}=0,
 \end{cases}
 \end{equation*}
 \begin{equation*}
 \begin{cases}
 &\widetilde{\Delta} \,\Psi^z=0, \quad \mbox{in} \quad \Omega_0^v,\\
 &\Psi^z|_{\mathbb{R}^+ \times \Sigma_{0, pv}}=g^z,\quad \Psi^z|_{\mathbb{R}^+ \times \Sigma_{w}}=0,
 \end{cases}
 \end{equation*}
 with $\widetilde{\Delta}=\partial_r^2+\frac{\partial_r}{r}+\partial_z^2+\frac{1}{r^2}\partial_{\theta}^2$.

 The generalized harmonic extension $\Phi:=Id+\Psi$ in vacuum of the flow map $h$ allows us to flatten the
 coordinate domain via the mapping
 \begin{equation*}\label{harmonic-extension-2}
 \Omega_0^v \ni (r_s, \theta_s, z_{s})\mapsto  \Phi(t, r_s, \theta_s, z_s) =(x, y, z) \in \Omega^v(t).
 \end{equation*}
 \begin{rmk}
 	Note that
 	\begin{equation*}\label{harmonic-extension-3}
 	\Phi(t, \Sigma_{0, pv})=\Sigma_{t, pv},\quad \Phi(t, \cdot)|_{\Sigma_w}=Id|_{\Sigma_w},
 	\end{equation*}
 	that is, $\Phi$ maps $\Sigma_{0, pv}$ to the free surface and keeps the outer perfectly conducting wall fixed.
 \end{rmk}

 \subsection{Vacuum equations in Lagrangian coordinates}
 
 According to the extended co-moving frame $\widehat{\mathcal{X}}(t)\eqdefa\Phi(t, r, \theta, z)$, we may introduce the``virtual velocity"  field $\widehat{u}(t, \Phi) =\frac{d}{dt}\Phi(t, r, \theta, z)$ reduced by the virtual particle in vacuum (which satisfies $|\frac{\widehat{u}}{c}| =o(1)$ when we consider the non-relativistic MHD, here $c$ is the light speed).
 
 We define Lagrangian quantities in vacuum as follows:
 \begin{equation*}
 \begin{split}
 &\widehat{b}(t, \widehat{\mathcal{X}})=\widehat{B}(t, \Phi(t, \widehat{\mathcal{X}})), \quad \widehat{v}(t, \widehat{\mathcal{X}})=\widehat{u}(t, \Phi(t, \widehat{\mathcal{X}})),  \quad \widehat{\mathcal{A}}\eqdefa (D\Phi)^{-1}, \quad \widehat{J} \eqdefa \mbox{det}(D\Phi).
 \end{split}
 \end{equation*}
 Similar to \eqref{flow-map-identity-1} and \eqref{identity-Lagrangian-1}, thanks to definitions of the mapping $\eta$ and the displacement $\Psi$ in vacuum, we may also get the following identities:
 \begin{equation}\label{vacuum-map-identity-1}
 \begin{split}
 &\widehat{\mathcal{A}}_{i}^k \partial_{k} \Phi^j=\widehat{\mathcal{A}}_{k}^j \partial_{i} \Phi^k=\delta_i^j, \quad \partial_k(\widehat{J}\widehat{\mathcal{A}}_{i}^k)=0,\quad\partial_{i} \Phi^j=\delta_i^j+\partial_{i} \Psi^j, \quad \widehat{\mathcal{A}}_{i}^j=\delta_i^j-\widehat{\mathcal{A}}_{i}^k \partial_k\Psi^j,\\
 &\partial_{\ell} \widehat{\mathcal{A}}_{i}^j=-\widehat{\mathcal{A}}_{k}^j\widehat{\mathcal{A}}_{i}^{h} \partial_{h}\partial_{\ell}\Psi^k, \quad \partial_{i} \widehat{v}^j=\partial_{i}\Phi^k \widehat{\mathcal{A}}_{k}^h \partial_{h}\widehat{v}^j= \widehat{\mathcal{A}}_{i}^h \partial_{h}\widehat{v}^j+\partial_{i}\Psi^k \widehat{\mathcal{A}}_{k}^h \partial_{h}\widehat{v}^j,\\
 &\partial_t \widehat{J} =\widehat{J} \widehat{\mathcal{A}}_{i}^j \partial_j \widehat{v}^i, \quad \partial_{\ell} \widehat{J} =\widehat{J} \widehat{\mathcal{A}}_{i}^j \partial_j\partial_{\ell} \Psi^i, \quad\partial_t \widehat{\mathcal{A}}_{i}^j=-\widehat{\mathcal{A}}_{k}^j\widehat{\mathcal{A}}_{i}^{\ell} \partial_{\ell}\widehat{v}^k.
 \end{split}
 \end{equation}
 If the displacement $\Psi$ is sufficiently small in an appropriate Sobolev space, then the flow mapping $\Phi$ is a
 diffeomorphism from $\Omega_0^v$ to $\Omega^v(t)$, which allows us to switch back and forth from Lagrangian
 to Eulerian coordinates.
 
 Denote $(\nabla_{\widehat{\mathcal{A}}})_i=\widehat{\mathcal{A}}_i^j \partial_j$, then we may write the vacuum equations in Lagrangian coordinates as follows
 \begin{equation}\label{mhd-vacuum-Lag-2}
 \begin{split}
 &\nabla_{\widehat{\mathcal{A}}} \cdot \widehat{b}=0,\quad \nabla_{\widehat{\mathcal{A}}} \times \widehat{b}=0\quad \mbox{in}\quad \Omega_0^v.
 \end{split}
 \end{equation}
 \subsection{Decompositions of Lagrangian quantities around the equilibrium}
 
 \subsubsection{Decompositions of $J$ and $b$}
 
 We may compute the Jacobian of the Lagrangian transformation as follows
 \begin{equation*}\label{J-expression-1}
 \begin{split}
 & J=\det(D(h))=\Big(1+\frac{\partial_{\theta}g^{\theta}}{r}+ \frac{1}{r}g^{r}\Big)\Big((1+\partial_zg^z)(1+\partial_rg^{r})-\partial_zg^r \partial_{r}g^z\Big)\\
 &\quad+\Big(\frac{\partial_{\theta}g^{r}}{r}-\frac{g^{\theta}}{r}\Big)\partial_zg^{\theta}\partial_rg^z+\frac{\partial_{\theta}g^z}{r}\partial_{r}g^{\theta}\partial_{z}g^{r}-\Big(\frac{\partial_{\theta}g^{r}}{r}-\frac{g^{\theta}}{r}\Big)\partial_{r}g^{\theta}(1+\partial_zg^z)\\
 &\quad-\frac{\partial_{\theta}g^z}{r}\partial_{z}g^{\theta}(1+\partial_{r}g^{r})
 =1+J_1=1+\nabla \cdot g+J_2,
 \end{split}
 \end{equation*}
 where
 \begin{equation*}\label{J-expression-2a}
 \begin{split}
 &J_1:= \nabla \cdot g+J_2,\quad \nabla \cdot g=\partial_r g^{r}+\frac{1}{r} g^{r}+\partial_zg^z+\frac{\partial_{\theta}g^{\theta}}{r},
 \\&J_2:=\partial_{r}g^{r} \partial_zg^z+\Big(\frac{\partial_{\theta}g^{\theta}}{r}+ \frac{1}{r}g^{r}\Big)(\partial_zg^z+\partial_{r}g^{r}+\partial_{r}g^{r} \partial_zg^z-\partial_rg^z \partial_zg^{r})- \partial_zg^{r} \partial_rg^z\\
 &\qquad+\Big(\frac{\partial_{\theta}g^{r}}{r}-\frac{g^{\theta}}{r}\Big)\partial_zg^{\theta}\partial_rg^z+\frac{\partial_{\theta}g^z}{r}\partial_{r}g^{\theta}\partial_{z}g^{r}-\Big(\frac{\partial_{\theta}g^{r}}{r}-\frac{g^{\theta}}{r}\Big)\partial_{r}g^{\theta}(1+\partial_zg^z)\\
 &\qquad-\frac{\partial_{\theta}g^z}{r}\partial_{z}g^{\theta}(1+\partial_{r}g^{r}).
 \end{split}
 \end{equation*}
 Denote
 \begin{equation}\label{b2}
 b_1 \eqdefa  b_0 \cdot \nabla g-b_0\nabla \cdot g,
 \end{equation}
 $$b_2\eqdefa -J^{-1}((J_2+\nabla\cdot g)b_1+J_2b_0),$$
 then we can split $b$ into three parts
 \begin{equation}\label{b-decomp-3}
 \begin{split}
 b = b_0+b_1+b_2.
 \end{split}
 \end{equation}
 
 \subsubsection{Decompositions of the pressure $q$}
 We first write
 \begin{equation*}
 \begin{split}
 &q=A\,\rho_0^{\gamma}\, J^{-\gamma}=p_0\,J^{-\gamma} =p_0+ p_0\,J^{-\gamma}(1-J^{\gamma})\\
 &=p_0+ p_0\,J^{-\gamma}\Big(1-(1+\nabla \cdot g+J_2)^{\gamma}\Big),
 \end{split}
 \end{equation*}
 which implies
 $q=p_0\,\Big(1-\gamma\, \nabla \cdot g+Q_2\Big)$,
 with $Q_2\eqdefa (1+\nabla \cdot g+J_2)^{-\gamma} - (1-\gamma\nabla \cdot g)$.
 
 Since we expect that $J-1=\nabla\cdot g+J_2$ is small, we obtain from the Taylor expansion that
 \begin{equation*}\label{q-integration-2}
 \begin{split}
 &Q_2=-\gamma\, J_2+\frac{1}{2}\gamma\, (\gamma+1)(\nabla \cdot g+J_2)^2\\
 &\qquad\quad-\frac{1}{2}\gamma\, (\gamma+1)(\gamma+2)(\nabla \cdot g+J_2)^3\, \int_0^1(1-\tau)^2\Big(1+\tau(\nabla \cdot g+J_2)\Big)^{-\gamma-3}\,d\tau.
 \end{split}
 \end{equation*}
 Therefore, we split $q$ into three parts
 $q \eqdefa p_0+q_1+q_2$,
 with $q_1=-\gamma p_0 \nabla \cdot g$ and $q_2=p_0\, Q_2$.
 
 \subsubsection{Decompositions of the normal vector $\mathcal{N}$ on the free surface }
 Let $n_0=e_r$, $\mathcal{N}_i=J\mathcal{A}_i^jn_{0,j}$,
 then we have
 \begin{equation}\label{decom-n}
 \mathcal{N}=n_0+n_1+n_2,
 \end{equation}with 
 $$n_1=\Big(\partial_zg^z+\frac{\partial_{\theta}g^{\theta}}{r}+\frac{g^{r}}{r}\Big)e_r+\Big(\frac{g^{\theta}}{r}-\frac{\partial_{\theta}g^{r}}{r}\Big)e_\theta-\partial_zg^{r}e_z,$$
 \begin{equation*}
 \begin{split}
 &n_2=\Big[\Big(\frac{\partial_{\theta}g^{\theta}}{r}+\frac{g^{r}}{r}\Big)\partial_zg^z-\frac{\partial_zg^\theta\partial_{\theta}g^z}{r}\Big]e_r
 +\Big[\Big(\frac{g^{\theta}}{r}-\frac{\partial_{\theta}g^{r}}{r}\Big)\partial_zg^z
 +\frac{\partial_{\theta}g^z\partial_zg^{r}}{r}\Big]e_\theta\\
 &\qquad+\Big[\Big(\frac{\partial_{\theta}g^{r}}{r}-\frac{g^\theta}{r}\Big)\partial_zg^\theta
 -\Big(\frac{g^{r}}{r}+\frac{\partial_{\theta}g^{\theta}}{r}\Big)\partial_zg^{r}\Big]e_z.
 \end{split}
 \end{equation*}
 \subsubsection{Decompositions of Lagrangian quantities around the equilibrium in vacuum }

 From Lemma \ref{steady-lem}, we know that the equilibrium vacuum magnetic field $\widehat{   B}=\widehat{  B}_\theta(r)e_\theta= B_\theta(r_0)\frac{r_0}{r}e_\theta$. So in vacuum, we will use \eqref{harmonic-extension-0} and \eqref{harmonic-equation-1} to split $\widehat{b}$ into three parts in Lagrangian coordinates as
 \begin{equation}\label{vacuum-b-decomp-1}
 \begin{split}
 &\widehat{b}\eqdefa \widehat{b}_0+\widehat{b}_1+\widehat{b}_2,
 \end{split}
 \end{equation}
 where
 \begin{equation*}
 \begin{split}
 &\widehat{  b}_0=\overline{ B}_\theta(r_0)\frac{r_0}{r}e_\theta,\\
 &\widehat{b}_1 \, (\, \text{fisrt order about}\, \Psi),\\
 &\widehat{  b}_2 =O\, (\, \text{nonlinear term about}\, \Psi).
 \end{split}
 \end{equation*}
 
 From the  vacuum equations in Lagrangian coordinates \eqref{mhd-vacuum-Lag-2}, we can get the linearized vacuum equations in a perturbation around steady solution 
 as follows 
 \begin{equation*}
 \begin{cases}
 \nabla \cdot \widehat{b}_1+\nabla_{\widehat{\mathcal{A}_1}}\cdot \widehat{  b}_0=0,\\
 \nabla\times \widehat{b}_1+\nabla_{\widehat{\mathcal{A}_1}}\times \widehat{  b}_0=0,\\
 n\cdot \widehat{b}_1=0,\, \,  \mbox{on}\, \, r=r_w,\\
 n_0\cdot (b_1-\widehat{b}_1)=n_1\cdot (\widehat{  b}_0-b_0)\, \,  \mbox{on}\, \, r=r_0,
 \end{cases}
 \end{equation*}
 with
 $$n_0=e_r, \quad b_1= b_0\cdot \nabla g-b_0\nabla\cdot g,$$
 $$n_1=\Big(\partial_z\Psi^z+\frac{\partial_{\theta}\Psi^{\theta}}{r}+\frac{\Psi^r}{r}\Big)e_r
 +\Big(\frac{\Psi^{\theta}}{r}-\frac{\partial_{\theta}\Psi^r}{r}\Big)e_\theta-\partial_z\Psi^re_z,$$
 \begin{equation*}\begin{split}
 \widehat{\mathcal{A}}_1=
 \left(\begin{array}{ccc}
 -\partial_r\Psi^r&-\partial_{r}\Psi^{\theta}&-\partial_r\Psi^z\\
 -\frac{\partial_{\theta}\Psi^r}{r}+ \frac{\Psi^\theta}{r}&-\frac{\partial_{\theta}\Psi^\theta}{r}-\frac{\Psi^r}{r}&-\frac{\partial_{\theta}\Psi^z}{r}\\
 -\partial_z\Psi^r&-\partial_z\Psi^\theta&-\partial_z\Psi^z
 \end{array}
 \right).
 \end{split}
 \end{equation*}
 From the steady solution $\widehat{  b}_0=\widehat{  B}_\theta(r) e_\theta= B_\theta(r_0)\frac{r_0}{r}e_\theta$ in vacuum domain, it follows that
 \begin{equation*}\begin{split}
 \nabla \widehat{  b}_0=
 \left(\begin{array}{ccc}
 0&-\frac{\widehat{  B}_\theta}{r}&0\\
 \partial_r\widehat{  B}_\theta&0&0\\
 0&0&0
 \end{array}
 \right),
 \end{split}
 \end{equation*}
 which gives that
 $$\nabla_{\widehat{\mathcal{A}}_1}\cdot \widehat{  b}_0=Tr(\widehat{\mathcal{A}}_1^T\nabla\widehat{  b}_0)=\frac{\widehat{  B}_\theta}{r}\partial_r\Psi^\theta-\partial_r{\widehat{  B}}_\theta\Big(\frac{1}{r}\partial_\theta\Psi^r-\frac{\Psi^\theta}{r}\Big) =-\nabla \cdot (\Psi \cdot \nabla \widehat{  b}_0).$$
 On the other hand, we have 
 \begin{equation*}
 \begin{split}
 \nabla_{\widehat{\mathcal{A}}_1}\times \widehat{  b}_0&=\epsilon_{ijk}\widehat{\mathcal{A}}_{1jl}\partial_l\widehat{  b}_0^k=\epsilon_{ijk}(\widehat{\mathcal{A}}_{1jl}(\nabla \widehat{  b}_0)_{kl})=\epsilon_{ijk}(\widehat{\mathcal{A}}_{1}(\nabla \widehat{  b}_0)^T)_{jk}
 \\&=\epsilon_{ijk}\left(\begin{array}{ccc}
 \partial_r\Psi^\theta\frac{\widehat{  B}_\theta}{r}&-\partial_{r}\Psi^{r}\partial_r\widehat{  B}_\theta&0\\
 \Big(\frac{\partial_{\theta}\Psi^\theta}{r}+ \frac{\Psi^r}{r}\Big)\frac{\widehat{  B}_\theta}{r}&\Big(-\frac{\partial_{\theta}\Psi^r}{r}+\frac{\Psi^\theta}{r}\Big)\partial_r\widehat{  B}_\theta&0\\
 \partial_z\Psi^\theta\frac{\widehat{  B}_\theta}{r}&-\partial_z\Psi^r\partial_r\widehat{  B}_\theta&0
 \end{array}
 \right)_{jk}
 \\
 &=e_r\partial_z\Psi^r\partial_r\widehat{  B}_\theta+e_\theta\partial_z\Psi^\theta\frac{\widehat{  B}_\theta}{r}+e_z\Big[-\Big(\frac{\partial_{\theta}\Psi^\theta}{r}+ \frac{\Psi^r}{r}\Big)\frac{\widehat{  B}_\theta}{r}-\partial_{r}\Psi^{r}\partial_r\widehat{  B}_\theta\Big],
 \end{split}
 \end{equation*}
 \begin{equation*}
 \begin{split}
 - \nabla \times (\Psi \cdot \nabla \widehat{  b}_0)&=-\Big(e_r\partial_r+\frac{e_\theta}{r}\partial_{\theta}+e_z\partial_z\Big)\times\Big(e_\theta\Psi^r\partial_r\widehat{  B}_\theta-e_r\frac{\Psi^\theta\widehat{  B}_\theta}{r}\Big)\\
 &=e_r\partial_{z}\Psi^r\partial_r\widehat{  B}_\theta
 +e_\theta\frac{\partial_{z}\Psi^\theta\widehat{  B}_\theta}{r}-e_z\frac{\partial_{\theta}\Psi^\theta\widehat{  B}_\theta}{r}-e_z\partial_{r}\Psi^{r}\partial_r\widehat{  B}_\theta-e_z\frac{\Psi^r\widehat{  B}_\theta}{r^2}.
 \end{split}
 \end{equation*}
 So we can show that 
 $\nabla_{\widehat{\mathcal{A}_1}}\times \widehat{  b}_0=- \nabla \times (\Psi \cdot \nabla \widehat{  b}_0)$.
 Therefore, from $b_1=b_0\cdot \nabla g-b_0\nabla\cdot g$ in \eqref{b2}, it follows that
 \begin{equation*}
 \begin{cases}
 \nabla \cdot (\widehat{b}_1-\Psi \cdot \nabla \widehat{  b}_0)=0,\\
 \nabla \times (\widehat{b}_1-\Psi \cdot \nabla \widehat{  b}_0)=0,\\
 n\cdot \widehat{  b}_1=0,\, \,  \mbox{on}\, \, r=r_w,\\
 n_0\cdot(b_0\cdot \nabla g-b_0\nabla\cdot g-\widehat{  b}_1)=n_1\cdot (\widehat{  b}_0-b_0),\,  \,  \mbox{on}\, \, r=r_0,
 \end{cases}
 \end{equation*}
 with
 $n_1=\Big(\partial_z\Psi^z+\frac{\partial_{\theta}\Psi^{\theta}}{r}+\frac{\Psi^r}{r}\Big)e_r+\Big(\frac{\Psi^{\theta}}{r}-\frac{\partial_{\theta}\Psi^r}{r}\Big)e_\theta-\partial_z\Psi^re_z.$
 Denoting $\widehat{ Q}=\widehat{b}_1-\Psi \cdot \nabla \widehat{  b}_0$,
 using the fact that  $\widehat{  b}_0= b_0$  on the boundary $r=r_0$,  from \eqref{harmonic-extension-0} and \eqref{harmonic-equation-1}, 
 we can show that on the boundary $r=r_0$  
 \begin{equation*}
 \begin{split}
 &n_0\cdot (b_0\cdot \nabla g-b_0\nabla\cdot g-\widehat{  b}_1)=n_0\cdot(b_0\cdot \nabla g-b_0\nabla\cdot g-g\cdot \nabla \widehat{  b}_0-\widehat{ Q})\\
 &=n_0\cdot(b_0\cdot \nabla g-b_0\nabla\cdot g-g\cdot \nabla b_0-\widehat{ Q})=n_0\cdot\Big(\nabla \times (g\times b_0)-\widehat{ Q}\Big)=0.
 \end{split}
 \end{equation*}
 Therefore,  in vacuum domain, we obtain 
 \begin{equation}\label{div-curl}
 \begin{cases}
 \nabla \cdot \widehat{Q}=0,\\
 \nabla \times \widehat{Q}=0,\\
 n\cdot \widehat{ Q}=0,\, \,  \mbox{on}\, \, r=r_w,\\
 n_0\cdot\nabla \times (g\times \widehat{  b}_0)=n_0\cdot \widehat{ Q}\, \,  \mbox{on}\, \, r=r_0.
 \end{cases}
 \end{equation}
 
 \subsection{Perturbed MHD system in plasma}
 
 Thanks to the decomposition of $b$ and $q$ again, we have
 \begin{equation}\label{pressure-decom-1}
 \begin{split}
 &q+\frac{1}{2}|b|^2=p_0+q_1+q_2+\frac{1}{2}|b_0+b_1+b_2|^2\\
 &=p_0+\frac{1}{2}|b_0|^2-\gamma p_0 \nabla \cdot g+b_0\cdot b_1+R_{1, p}
 \end{split}
 \end{equation}
 with
 \begin{equation}\label{r_ip}
 R_{1, p}:= b_0\cdot b_2+ q_2+\frac{1}{2}|b_1+b_2|^2.
 \end{equation}
 
 While for $b \cdot \nabla_{\mathcal{A}} b$, it shows that
 $J\,b \cdot \nabla_{\mathcal{A}} b=b_0\cdot \nabla b$,
 where we have used the equality \eqref{b-identity-1} $J b^j \mathcal{A}_j^i=b_0^i$.

 Since $b=J^{-1} (b_0+b_0\cdot \nabla g)$, we decompose $J\,b \cdot \grad_{\mathcal{A}} b$ as 
 \begin{equation}\label{r_1b}
 \begin{split}
 J\,b \cdot \grad_{\mathcal{A}} b&=b_0\cdot\nabla b_0+b_0\cdot\nabla b_1+b_0\cdot\nabla b_2\\
 & =b_0\cdot\nabla b_0+b_0\cdot\nabla b_1+JR_{1,b}.
 \end{split}
 \end{equation}
 
 Using \eqref{pressure-decom-1}, we may deduce that
 \begin{equation}\label{pressure-decom-5}
 \Big(\nabla_{\mathcal{A}} \big(q+\frac{1}{2}|b|^2\big)\Big)^k=\mathcal{A}_k^j\partial_j\Big(p_0+\frac{1}{2}|b_0|^2-\gamma p_0 \nabla \cdot g+b_0\cdot b_1\Big)+\mathcal{A}_k^j\partial_j \,R_{1, p}.
 \end{equation}
 
 Combining \eqref{r_1b} with  \eqref{pressure-decom-5}, we obtain
 \begin{equation}\label{pressure-decom-9}
 \begin{split}
 &J\,\nabla_{\mathcal{A}} \Big(q+\frac{1}{2}|b|^2\Big)-J\,b \cdot \nabla_{\mathcal{A}} b\\
 &\quad=J\,\nabla_{\mathcal{A}}\Big(p_0+\frac{1}{2}|b_0|^2-\gamma p_0 \nabla \cdot g+b_0\cdot b_1\Big)-b_0\cdot\nabla b_0-b_0\cdot\nabla b_1\\
 &\qquad+J\,\nabla_{\mathcal{A}} \,R_{1, p}-J\,R_{1, b}.
 \end{split}
 \end{equation}
 Substituting \eqref{f-q-1} and \eqref{pressure-decom-9} into the momentum equations of \eqref{mhd-fluid-2} results in
 \begin{equation*}
 \begin{split}
 & \rho_0\partial_t v + J \nabla_{\mathcal{A}}\Big(p_0+\frac{1}{2}|b_0|^2-\gamma p_0 \nabla \cdot g+b_0\cdot b_1\Big)-b_0\cdot\nabla b_0-b_0\cdot\nabla b_1\\
 &\quad =J\, R_{1, b}-J\,\nabla_{\mathcal{A}} \,R_{1, p}.
 \end{split}
 \end{equation*}
 
 Let's now deal with the jump conditions on $\Sigma_{0, pv}$ in \eqref{mhd-fluid-2}. In fact, thanks to \eqref{pressure-decom-1}, we know that
 \begin{equation*}
 \begin{split}
 \bigg(q+\frac{1}{2}|b|^2-\frac{1}{2}|\widehat{b}|^2\bigg)\bigg|_{\Sigma_{0, pv}}= \bigg(p_0+\frac{1}{2}|b_0|^2-\gamma p_0 \nabla \cdot g+b_0\cdot b_1+R_{1, p}-\frac{1}{2}|\widehat{b}|^2\bigg)\bigg|_{\Sigma_{0, pv}}.
 \end{split}
 \end{equation*}
 From the decomposition of $\widehat{  b}$ in \eqref{vacuum-b-decomp-1}, it follows that
 \begin{equation}\label{hat-b}
 \frac{1}{2}|\widehat{b}|^2=\frac{1}{2}|\widehat{b}_0|^2+\widehat{  b}_0\cdot\widehat{  b}_1+\frac{1}{2}|\widehat{  b}_1|^2+\frac{1}{2}|\widehat{  b}_2|^2+\widehat{  b}_0 \cdot \widehat{  b}_2+\widehat{  b}_1 \cdot \widehat{  b}_2=\frac{1}{2}|\widehat{b}_0|^2+\widehat{  b}_0\cdot\widehat{  b}_1+\widehat{ R}_{1,p},
 \end{equation}
 which along with \eqref{z-pinch-1-1}, \eqref{harmonic-extension-0}, \eqref{harmonic-equation-1} and \eqref{b2}, yields that
 \begin{equation}\label{widehat-r_1p}
 \begin{split}
 &\bigg(q+\frac{1}{2}|b|^2-\frac{1}{2}|\widehat{b}|^2\bigg)\bigg|_{\Sigma_{0, pv}}\\
 &\quad= \bigg(-\gamma p_0 \nabla \cdot g+b_0\cdot b_1+R_{1, p}-\widehat{b}_0\cdot \widehat{  b}_1-\widehat{  R}_{1,p} \bigg)\bigg|_{\Sigma_{0, pv}}\\
 &\quad=\bigg(-\gamma p_0\nabla\cdot g+b_0\cdot Q+g\cdot\nabla \Big (\frac{1}{2}|b_0|^2\Big)-\widehat{b_0}\cdot\widehat{Q}-g\cdot\nabla\Big(\frac{1}{2}|\widehat{ b}_0|^2\Big)\\
 &\qquad+R_{1,p}-\widehat{  R}_{1,p}\bigg)\bigg|_{\Sigma_{0, pv}},
 \end{split}
 \end{equation}
 with $Q=\nabla \times (g\times b_0)$, $\widehat{ Q}=\widehat{b}_1-\Psi \cdot \nabla \widehat{  b}_0$.

In conclusion,
we rephrase the MHD system \eqref{mhd-fluid-2} in a perturbation formulation around the steady solution (see special steady solution for a $z$-pinch in \eqref{z-pinch-1-1}) as follows:
\begin{equation}\label{eqns-pert-plasma-vacuum-1-viscosity}
\begin{cases}
& \partial_t g=v\quad \mbox{in}\quad \Omega_0, \\
&  \rho_0\partial_t v + J \nabla_{\mathcal{A}}\Big(p_0+\frac{1}{2}|b_0|^2-\gamma p_0 \nabla \cdot g+b_0\cdot b_1\Big)-b_0\cdot\nabla b_0-b_0\cdot\nabla b_1\\
&\quad =J\, R_{1, b}-J\,\nabla_{\mathcal{A}} \,R_{1, p}\quad \mbox{in}\quad \Omega_0,\\
& \nabla_{\widehat{\mathcal{A}}}\cdot \widehat{b}=0, \quad  \nabla_{\widehat{\mathcal{A}}} \times \widehat{b}=0 \quad \mbox{in} \quad \Omega^v_0,\\
& n \cdot b|_{\Sigma_{0, pv}}=n \cdot \widehat{b}|_{\Sigma_{0, pv}}=0, \quad  n\cdot \widehat{b}|_{\Sigma_w}=0,\\
&-\gamma p_0\nabla\cdot g+b_0\cdot Q+g\cdot\nabla \big (\frac{1}{2}|b_0|^2\big)-\widehat{b_0}\cdot(\widehat{b}_1-g \cdot \nabla \widehat{  b}_0)-g\cdot\nabla\big(\frac{1}{2}|\widehat{ b}_0|^2\big)\\
&\qquad+R_{1,p}-\widehat{  R}_{1,p}=0\quad \mbox{on} \quad \Sigma_{0, pv},\\
&g|_{t=0}=g_0, \,   v|_{t=0}=v_0,
\end{cases}
\end{equation}
with $Q=\nabla\times (g\times b_0)$, $b_1$ defined in \eqref{b2}, $R_{1,p}$ defined in \eqref{r_ip}, $JR_{1,b}$ defined in \eqref{r_1b} and $\widehat{ R}_{1,p}$ defined in \eqref{hat-b}.
Let the initial data  as the steady solution, from the force $\nabla ( p+\frac{1}{2}|B|^2)=B\cdot \nabla B$ of the $z$-pinch $(p, B, \widehat{B})$, then the linearized MHD system in a perturbation formulation around the steady solution takes the following form
\begin{equation}
\begin{cases}
&\partial_tg=v\quad \mbox{in}\quad \overline{\Omega},\\
& \rho\partial_{tt} g+\nabla ( -\gamma p \nabla \cdot g+B\cdot b_1)-B\cdot\nabla b_1+\nabla_{\mathcal{A}_1}\big(p+\frac{1}{2}|B|^2\big)\\
&\quad+(\nabla\cdot g) \nabla\big(p+\frac{1}{2}|B|^2\big) =0\quad \mbox{in}\quad \overline{\Omega},\\
&  \nabla \cdot \widehat{b}_1+\nabla_{\widehat{\mathcal{A}_1}}\cdot \widehat{ B}=0,\quad \mbox{in}\quad \overline{\Omega}^v,\\
&\nabla\times \widehat{b}_1+\nabla_{\widehat{\mathcal{A}_1}}\times \widehat{ B}=0,\quad \mbox{in}\quad \overline{\Omega}^v,\\
& n_0\cdot (b_1-\widehat{b}_1)=n_1\cdot (\widehat{ B}-B)\quad \mbox{on} \quad \Sigma_{0, pv},\\
&-\gamma p\nabla\cdot g+B\cdot Q+g\cdot\nabla \big (\frac{1}{2}|B|^2\big)\\
&\quad=\widehat{ B}\cdot(\widehat{b}_1-g \cdot \nabla \widehat{  B})+g\cdot\nabla\big(\frac{1}{2}|\widehat{ B}|^2\big), \quad \mbox{on} \quad \Sigma_{0, pv},\\
&  n\cdot {\widehat{b}_1}|_{\Sigma_w}=0,\, g|_{t=0}=g_0,
\end{cases}
\end{equation}
with $Q=\nabla\times (g\times B)$, $b_1 = B \cdot \nabla g-B\nabla \cdot g$ and  $\mathcal{A}_1$ is the first order of $\mathcal{A}$, that is, in cylindrical coordinates, 
\begin{equation}\label{first-order-A}
\begin{split}
\mathcal{A}_1=
\left(\begin{array}{ccc}
-\partial_rg^{r}&-\partial_{r}g^{\theta}&-\partial_rg^z\\
-\frac{\partial_{\theta}g^{r}}{r}+ \frac{g^\theta}{r}&-\frac{\partial_{\theta}g^\theta}{r}-\frac{g^{r}}{r}&-\frac{\partial_{\theta}g^z}{r}\\
-\partial_zg^{r}&-\partial_zg^\theta&-\partial_zg^z
\end{array}
\right),
\end{split}
\end{equation} $$n_1=(\partial_z\Psi^z+\frac{\partial_{\theta}\Psi^{\theta}}{r}+\frac{\Psi^r}{r})e_r+\bigg(\frac{\Psi^{\theta}}{r}-\frac{\partial_{\theta}\Psi^r}{r}\bigg)e_\theta-\partial_z\Psi^re_z,$$
\begin{equation}\begin{split}
\widehat{\mathcal{A}}_1=
\left(\begin{array}{ccc}
-\partial_r\Psi^r&-\partial_{r}\Psi^{\theta}&-\partial_r\Psi^z\\
-\frac{\partial_{\theta}\Psi^r}{r}+ \frac{\Psi^\theta}{r}&-\frac{\partial_{\theta}\Psi^\theta}{r}-\frac{\Psi^r}{r}&-\frac{\partial_{\theta}\Psi^z}{r}\\
-\partial_z\Psi^r&-\partial_z\Psi^\theta&-\partial_z\Psi^z
\end{array}
\right).
\end{split}
\end{equation}
Denote the new function $\widehat{ Q}=\widehat{b}_1-\Psi \cdot \nabla \widehat{  b}_0=\widehat{b}_1-\Psi \cdot \nabla \widehat{  B}$, 
after a computation,  we can get the above system is equivalent to the following equations 
\begin{equation}\label{linear-perturbation-and-boundary-viscosity}
\begin{cases}
&\partial_tg=v\quad \mbox{in}\quad \overline{\Omega},\\
& \rho\partial_{tt} g =\nabla(g\cdot\nabla p+\gamma p\nabla\cdot g)+(\nabla \times B)\times [\nabla \times (g \times B)]\\
&\quad+\{\nabla \times [\nabla \times (g \times B)]\}\times B,\quad
 \mbox{in}\quad \overline{\Omega},\\
&  \nabla \cdot \widehat{Q}=0,\quad \mbox{in}\quad \overline{\Omega}^v,\\
& \nabla \times \widehat{Q}=0,\quad \mbox{in}\quad \overline{\Omega}^v,\\
& n \cdot \nabla \times (g\times \widehat{B})=n\cdot \widehat{Q}, \quad \mbox{on} \quad \Sigma_{0, pv},\\
&-\gamma p\nabla\cdot g+B\cdot Q+g\cdot\nabla \big (\frac{1}{2}|B|^2\big)=\widehat{ B}\cdot\widehat{Q}+g\cdot\nabla\big(\frac{1}{2}|\widehat{ B}|^2\big), \quad \mbox{on} \quad \Sigma_{0, pv},\\
&  n\cdot\widehat{Q}|_{\Sigma_w}=0,\, g|_{t=0}=g_0,
\end{cases}
\end{equation}
with $Q=\nabla\times (g\times B)$.

From the divergence free condition about the magnetic field  in \eqref{mhd-fluid-2}, we know that $\nabla_{\mathcal{A}}\cdot b=0$ holds in lagrangian coordinates. We now prove that the linear perturbation of $\nabla_{\mathcal{A}}\cdot b=0$ holds automatically.
\begin{rmk}\label{div-free-condition}
	Assume  the steady solution $b_0=B_\theta(r)e_\theta$, then $\nabla_{\mathcal{A}_1}\cdot b_0+\nabla\cdot b_1=0$ is the linear perturbation of $\nabla_{\mathcal{A}}\cdot b=0$ and this linear perturbation holds for any function $g$, where $b_1 $ is defined in  \eqref{b2} and  $\mathcal{A}_1$ is defined in \eqref{first-order-A}.
\end{rmk}
\begin{proof}
 From the decomposition of $\mathcal{A}$ and $b$ in lagrangian coordinates, that is, $\mathcal{A}=I+\mathcal{A}_1+O(\mbox{nonlinear matrix about} \quad g)$ and \eqref{b-decomp-3}, it follows that  the corresponding  linear perturbation is $\nabla_{\mathcal{A}_1}\cdot b_0+\nabla\cdot b_1=0$.
	From the steady solution $b_0= B_\theta(r) e_\theta$, it follows that
	\begin{equation*}
	\begin{split}
	\nabla   b_0=
	\left(\begin{array}{ccc}
	0&-\frac{B_\theta}{r}&0\\
	\partial_r B_\theta&0&0\\
	0&0&0
	\end{array}
	\right),
	\end{split}
	\end{equation*}
	which gives that
	$$\nabla_{\mathcal{A}_1}\cdot   b_0=Tr(\mathcal{A}_1^T\nabla b_0)=\frac{  B_\theta}{r}\partial_rg^\theta-\partial_r{ B}_\theta\Big(\frac{1}{r}\partial_\theta g^r-\frac{g^\theta}{r}\Big) =-\nabla \cdot (g \cdot \nabla  b_0).$$
	Therefore, $\nabla_{\mathcal{A}_1}\cdot b_0+\nabla\cdot b_1=\nabla \cdot(b_0\cdot \nabla g-b_0\nabla \cdot g-g\cdot \nabla b_0)$. On the other hand,  we have $\nabla \cdot b_0=0$, which implies the identity
	$b_0\cdot \nabla g-b_0\nabla \cdot g-g\cdot \nabla b_0=\nabla \times (g\times b_0)$. 
Since from $b_0=B_\theta(r)e_\theta$,	we can show that
\begin{equation*}
\begin{split}
\nabla \cdot [\nabla \times (g\times b_0)]&=\Big(e_r\partial_r+\frac{e_\theta}{r}\partial_{\theta}+e_z\partial_z\Big)\cdot \Big[\Big(e_r\partial_r+\frac{e_\theta}{r}\partial_{\theta}+e_z\partial_z\Big)\times(g^rB_\theta e_z-g^zB_\theta e_r)\Big]\\
&=\Big(e_r\partial_r+\frac{e_\theta}{r}\partial_{\theta}+e_z\partial_z\Big)\cdot\Big[e_r\frac{B_\theta\partial_{\theta}g^r}{r}
-e_\theta\Big(\partial_{r}(g^rB_\theta)+B_\theta\partial_{z}g^z\Big)+e_z\frac{B_\theta\partial_{\theta}g^z}{r}\Big]\\
&=\partial_{r}\Big(\frac{B_\theta\partial_{\theta}g^r}{r}\Big)+\frac{B_\theta\partial_{\theta}g^r}{r^2}
-\frac{1}{r}\Big(\partial_{r}(\partial_{\theta}g^rB_\theta)+B_\theta\partial_{z}\partial_{\theta}g^z\Big)+\frac{B_\theta\partial_{\theta}\partial_{z}g^z}{r}=0.
\end{split}
	\end{equation*}
	Hence, $\nabla_{\mathcal{A}_1}\cdot b_0+\nabla\cdot b_1=0$ holds automatically, which implies the result. 
\end{proof}
In order to see the property of the force operator  $$F(g)=\nabla(g\cdot\nabla p+\gamma p\nabla\cdot g)+(\nabla \times B)\times [\nabla \times (g \times B)]+\{\nabla \times [\nabla \times (g \times B)]\}\times B,$$  we consider two displacement vector fields $g$ and $h$ defined over the plasma volume $V$, their associated magnetic field perturbations
\begin{equation*}
	Q=\nabla\times (g\times B), \quad R=\nabla\times (h\times B),
\end{equation*}
and the vacuum perturbations $\widehat{ Q}$ and  $\widehat{ R}$ defined over the vacuum volume $\widehat{ V}$ are their extensions,
that is, to `extend' the function $g$ into the vacuum by means of the magnetic field variable $\widehat{ Q}$, and likewise to `extend' $h$ by means of $\widehat{ R}$.
Then by Chapter 6 in \cite{Goedbloed-poedts}, we have the following lemma.
\begin{lem}\label{joint}
	Assume $g\in H^2$ is a solution of \eqref{linear-perturbation-and-boundary-viscosity}, then we get a meaning expression for the potential energy of interface plasma by identifying $g$, $h$,  $\widehat{ Q}$ and $\widehat{ R}$, in the  quadratic form 
	\begin{equation*}\label{sym}
		\begin{split}
			\int_{\overline{\Omega}} h \cdot F(g)dx&=-\int_{\overline{\Omega}}\Big[\gamma p \nabla \cdot g \nabla \cdot h+Q\cdot R+\frac{1}{2}\nabla p\cdot(g\nabla\cdot h+h\nabla\cdot g)\\
			&\quad+\frac{1}{2}\nabla \times B\cdot (g \times R+h\times Q)\Big]dx  -\int_{\overline{\Omega}^v}\widehat{Q}\cdot \widehat{ R}\,dx\\
			&\quad-\int_{\Sigma_{0, pv}} n\cdot g \, n \cdot h\, n\cdot \Big[\Big[\nabla (p+\frac{1}{2}|B|^2)\Big]\Big]\,dx,
		\end{split}
	\end{equation*}
	which is symmetric in the variables $g$ and $h$, and their extensions $\widehat{ Q}$ and $\widehat{ R}$. 
 \end{lem}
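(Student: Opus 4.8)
The plan is to recognize Lemma~\ref{joint} as the classical self-adjointness (energy principle) identity for the ideal MHD force operator and to establish it by repeated integration by parts. Writing $\mathbb{J}=\nabla\times B$ and $Q=\nabla\times(g\times B)$, $R=\nabla\times(h\times B)$, I would first split the force into its three natural pieces, $F(g)=\nabla(g\cdot\nabla p+\gamma p\nabla\cdot g)+\mathbb{J}\times Q+(\nabla\times Q)\times B$, dot with $h$, and integrate over $\overline{\Omega}$, treating the three resulting integrals in turn. For the acoustic piece, integrating by parts gives the interior term $-\int_{\overline{\Omega}}(\nabla\cdot h)(g\cdot\nabla p+\gamma p\nabla\cdot g)\,dx$ plus a surface term $\int_{\Sigma_{0,pv}}(n\cdot h)(g\cdot\nabla p+\gamma p\nabla\cdot g)\,dx$. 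For the piece $(\nabla\times Q)\times B$, I would use $h\cdot[(\nabla\times Q)\times B]=(\nabla\times Q)\cdot(B\times h)$ and the identity $\nabla\cdot(Q\times W)=W\cdot(\nabla\times Q)-Q\cdot(\nabla\times W)$ with $W=B\times h$; since $\nabla\times(B\times h)=-R$, this produces the manifestly symmetric interior term $-\int_{\overline{\Omega}}Q\cdot R\,dx$ together with the surface term $\int_{\Sigma_{0,pv}}n\cdot(Q\times(B\times h))\,dx$. The piece $\mathbb{J}\times Q$ contributes the interior term $-\int_{\overline{\Omega}}(\nabla\times B)\cdot(h\times Q)\,dx$.

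The next step is the interior symmetrization. I would rewrite the apparently asymmetric pieces $(\nabla\cdot h)(g\cdot\nabla p)$ and $(\nabla\times B)\cdot(h\times Q)$ using the equilibrium relation $\nabla p=\mathbb{J}\times B=(\nabla\times B)\times B$ from \eqref{steady-equ-plasma} together with the expansion $Q=-B(\nabla\cdot g)+(B\cdot\nabla)g-(g\cdot\nabla)B$ (and its analogue for $R$). The goal is to show that these cross terms reorganize, modulo a divergence that feeds the surface integral, into the symmetric combination $-\frac{1}{2}\nabla p\cdot(g\nabla\cdot h+h\nabla\cdot g)-\frac{1}{2}(\nabla\times B)\cdot(g\times R+h\times Q)$, which is exactly the remaining plasma integrand claimed on the right-hand side.

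For the surface terms I would use $n=e_r$ and $n\cdot B=0$ on $\Sigma_{0,pv}$ to simplify $n\cdot(Q\times(B\times h))=-(n\cdot h)(B\cdot Q)$ via the $BAC\!-\!CAB$ identity, so that the combined acoustic and magnetic surface contribution becomes $\int_{\Sigma_{0,pv}}(n\cdot h)\big(g\cdot\nabla p+\gamma p\nabla\cdot g-B\cdot Q\big)\,dx$. I would then substitute the linearized pressure-balance jump condition (the sixth line of \eqref{linear-perturbation-and-boundary}) to eliminate $\gamma p\nabla\cdot g-B\cdot Q$, and exploit that the equilibrium profiles $p+\frac{1}{2}|B|^2$ and $\frac{1}{2}|\widehat{B}|^2$ are purely radial, so that $g\cdot\nabla(\cdot)=(n\cdot g)\,n\cdot\nabla(\cdot)$. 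This extracts precisely the surface-energy term $-\int_{\Sigma_{0,pv}}(n\cdot g)(n\cdot h)\,n\cdot\big[\big[\nabla(p+\frac{1}{2}|B|^2)\big]\big]\,dx$, leaving a residual vacuum surface term $-\int_{\Sigma_{0,pv}}(n\cdot h)(\widehat{B}\cdot\widehat{Q})\,dx$.

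The hard part will be converting this residual interface term into the vacuum volume integral $-\int_{\overline{\Omega}^v}\widehat{Q}\cdot\widehat{R}\,dx$. Here I would integrate by parts in $\overline{\Omega}^v$ using $\nabla\times\widehat{Q}=0$ (so $\widehat{Q}=\nabla\widehat{\phi}$ with $\widehat{\phi}$ harmonic because $\nabla\cdot\widehat{Q}=0$), the wall condition $n\cdot\widehat{Q}|_{\Sigma_w}=0$, and the interface matching $n\cdot\widehat{Q}=n\cdot\nabla\times(g\times\widehat{B})$ from \eqref{linear-perturbation-and-boundary}, while checking that the extension $\widehat{R}$ of $h$ enters in the symmetric position. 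The delicate points are the correct identification of $\widehat{B}\cdot\widehat{Q}$ on the interface with the normal flux carried by $\widehat{R}$ and the careful tracking of signs through the harmonic-extension construction of Appendix~\ref{appendix}. Once every piece is assembled, the resulting right-hand side is visibly invariant under the interchange $(g,Q,\widehat{Q})\leftrightarrow(h,R,\widehat{R})$, which yields the asserted symmetry and completes the proof.
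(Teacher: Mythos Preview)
Your plasma-side argument is essentially the paper's: the three-way split of $F(g)$, the integration by parts producing $-\int Q\cdot R$, and the symmetrization of the cross terms $(\nabla\cdot h)(g\cdot\nabla p)$ and $(\nabla\times B)\cdot(h\times Q)$ into the stated half-sums are exactly what the paper does in \eqref{first-term}--\eqref{last-two-terms}. The surface manipulation via $n\cdot B=0$ and the linearized pressure-balance condition is also the same, leading to the residual $-\int_{\Sigma_{0,pv}}(n\cdot h)(\widehat B\cdot\widehat Q)\,dx$.

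The genuine difference is in how you propose to convert that residual into $-\int_{\overline{\Omega}^v}\widehat Q\cdot\widehat R\,dx$. You suggest writing $\widehat Q=\nabla\widehat\phi$ from $\nabla\times\widehat Q=0$; but the vacuum region $\overline{\Omega}^v=\{r_0<r<r_w\}\times[0,2\pi]\times 2\pi\mathbb{T}$ is not simply connected (both the $\theta$-loop and the $z$-loop give nontrivial homology), so a curl-free field need not be a gradient. Even if a potential existed, you would still have to identify its boundary trace with the quantity $(n\cdot h)\,\widehat B$, which is not immediate. The paper avoids this obstruction by working on the \emph{vector}-potential side: it extends $\widehat R$ across the interface by $R=\nabla\times(h\times B)$ in the plasma, checks that the glued field is divergence-free in the full (simply connected) cylinder using $n\cdot R=n\cdot\widehat R$ on $\Sigma_{0,pv}$ and $n\cdot\widehat R=0$ on $\Sigma_w$, and then applies the weak Poincar\'e lemma to get $\widehat R=\nabla\times\widehat C$. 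The interface condition for $h$ is then recast as $n\times\widehat C=-(n\cdot h)\widehat B$, and a single application of $\nabla\cdot(\widehat Q\times\widehat C)=\widehat C\cdot(\nabla\times\widehat Q)-\widehat Q\cdot(\nabla\times\widehat C)$ with $\nabla\times\widehat Q=0$ yields the desired volume integral directly. Your outline is correct up to this last step; to close the argument you should replace the scalar-potential idea by the paper's vector-potential construction (or, if you work mode by mode, explicitly check that the $\theta$- and $z$-circulations of $\widehat Q$ vanish so that a scalar potential does exist).
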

\begin{proof} The proof can be recalled from Chapter 6 in \cite{Goedbloed-poedts}, for completeness, we give it as follows.
	By the equilibrium equation $\nabla p=(\nabla \times B)\times B$, we have
	\begin{equation*}
		\begin{split}
			\nabla (g\cdot \nabla p)&=(\nabla g)\cdot \nabla p+g\cdot \nabla\nabla p=(\nabla p\times\nabla)\times g+\nabla p\nabla\cdot g +g\cdot \nabla\nabla p\\
			&=(((\nabla \times B)\times B)\times\nabla)\times g+\nabla p \nabla\cdot g+g\cdot \nabla\nabla p\\
			&=(B(\nabla \times B)\cdot \nabla -(\nabla \times B) B\cdot \nabla)\times g+\nabla  p\nabla\cdot g +g\cdot \nabla\nabla p\\
			&=B\times ((\nabla \times B)\cdot \nabla g)-(\nabla \times B)\times (B\cdot\nabla g)+\nabla  p\nabla\cdot g+g\cdot \nabla\nabla p,
		\end{split}
	\end{equation*}
	which together with 
	\begin{equation*}
		\begin{split}
			(\nabla \times B)\times [\nabla \times (g \times B)]&=(\nabla \times B)\times(B\cdot\nabla g-B\nabla\cdot g-g\cdot \nabla B)\\
			&=(\nabla \times B)\times(B\cdot\nabla g)-(\nabla \times B)\times B\nabla\cdot g\\
			&\quad-g\cdot\nabla((\nabla \times B)\times B)-B\times(g\cdot\nabla (\nabla\times B)),
		\end{split}
	\end{equation*}
	implies that
	\begin{equation}\label{last-two-term-bef}
		\begin{split}
			&\nabla (g\cdot \nabla p)+(\nabla \times B)\times [\nabla \times (g \times B)]\\
			&\quad=B\times((\nabla \times B)\cdot \nabla g)-B\times(g\cdot\nabla (\nabla\times B))\\
			&\quad=-B\times(\nabla\times(\nabla\times B\times g))-(\nabla\times B)\times B\nabla\cdot g \\
			&\quad=-B\times(\nabla\times(\nabla\times B\times g))-\nabla p \nabla\cdot g.
		\end{split}
	\end{equation}
	Exploiting the inner products and by the expression \eqref{last-two-term-bef},  we can rewrite $h \cdot F(g)$  as
	\begin{equation}\label{force-rew}
		\begin{split}
			h\cdot F(g)&=h\cdot\nabla (\gamma p\nabla\cdot g)-h\cdot B\times \{\nabla \times [\nabla \times (g \times B)]+\nabla\times (\nabla\times B\times g)\}\\
			&\quad-h\cdot\nabla p \nabla\cdot g.
		\end{split}
	\end{equation}
	The first term in \eqref{force-rew} gives the following expression
	\begin{equation}\label{first-term}
		h\cdot \nabla (\gamma p\nabla\cdot g)=-\gamma p\nabla\cdot g\nabla\cdot h+\nabla \cdot(h\gamma p\nabla\cdot g).
	\end{equation}
	From the definitions of $Q$ and $R$, the second term in \eqref{force-rew}  can be rewritten as 
	\begin{equation}\label{second-term}
		\begin{split}
			&-h\cdot B\times \{\nabla \times [\nabla \times (g \times B)]\}=-\{\nabla \times [\nabla \times (g \times B)]\}\cdot (h\times B)\\
			&\quad=-\nabla \times (g \times B)\cdot \nabla\times (h\times B)
			+\nabla\cdot [(h\times B)\times \nabla \times (g \times B)]\\
			&\quad=-Q\cdot R+\nabla\cdot [(h\times B)\times Q]=-Q\cdot R+\nabla\cdot [Bh\cdot Q-h B\cdot Q].
		\end{split}
	\end{equation}
	Applying the definitions of  $R$ and the equilibrium equation $\nabla  p=(\nabla \times B)\times B=\nabla \times B\times B$, we can rewrite the third and fourth terms in \eqref{force-rew}  as 
	\begin{equation*}
		\begin{split}
			&-h\cdot B\times [\nabla\times (\nabla\times B\times g)]-(h\cdot \nabla p) \nabla\cdot g\\
			&\quad =-\nabla\times B\times g\cdot R+\nabla\cdot [(h\times B)\times (\nabla\times B\times g)]-(h\cdot \nabla p) \nabla\cdot g\\&\quad
			=g\cdot (\nabla\times B)\times R
			+\nabla\cdot [ (\nabla\times B)B\cdot (g\times h)+gh\cdot (\nabla\times B\times B)]-(h\cdot \nabla p) \nabla\cdot g\\&\quad=g\cdot [\nabla (h\cdot\nabla p)+(\nabla\times B)\times R]+\nabla\cdot [ (\nabla\times B) B\cdot (g\times h)],
		\end{split}
	\end{equation*}
	which together with \eqref{last-two-term-bef} can be symmetrized as 
	\begin{equation}\label{last-two-terms}
		\begin{split}
			&h\cdot \big\{\nabla (g\cdot \nabla p)+(\nabla \times B)\times \big[\nabla \times (g \times B)\big]\big\}=h\cdot \big[\nabla (g\cdot \nabla p)+(\nabla \times B)\times Q\big]\\
			&\quad=\frac{1}{2}h\cdot\big [\nabla (g\cdot \nabla p)+(\nabla \times B)\times Q\big]+\frac{1}{2}g\cdot \big[\nabla (h\cdot \nabla p)+(\nabla \times B)\times R\big]\\
			&\qquad+\frac{1}{2}\nabla\cdot \big[ (\nabla\times B) B\cdot (g\times h)\big]\\
			&\quad=\frac{1}{2}\nabla\cdot \big[\nabla p\cdot (gh+hg)\big]
			-\frac{1}{2}\nabla p\cdot\big(g\nabla\cdot h
			+h\nabla\cdot g\big)-\frac{1}{2}\nabla\times B\cdot \big(g\times R+h\times Q\big)\\
			&\qquad+\frac{1}{2}\nabla\cdot\big [ (\nabla\times B) B\cdot (g\times h)\big]-\frac{1}{2}\nabla\cdot \big[\big(\nabla\times B\times B-\nabla p\big)\cdot\big(gh-hg\big)\big]\\
			&\quad=-\frac{1}{2}\nabla p\cdot \big(g\nabla\cdot h
			+h\nabla\cdot g\big)-\frac{1}{2}\nabla\times B\cdot \big(g\times R+h\times Q\big)\\
			&\qquad+\nabla\cdot \Big[ h(g\cdot \nabla  p)+\frac{1}{2}(\nabla\times B)B\cdot (g\times h)\Big]-\frac{1}{2}\nabla\cdot \big[\big(\nabla\times  B\times B\big)\cdot\big(gh-hg\big)\big].
		\end{split}
	\end{equation}
	Adding up \eqref{first-term}, \eqref{second-term} and \eqref{last-two-terms} shows that
	\begin{equation}\label{sym-form}
		\begin{split}
			h\cdot F(g)&=-\gamma p\nabla\cdot g\nabla\cdot h-Q\cdot R-\frac{1}{2}\nabla p\cdot \big(g\nabla\cdot h+h\nabla\cdot g\big)\\
			&\quad-\frac{1}{2}\nabla\times B\cdot \big(g\times R+h\times Q\big)\\
			&\quad+\nabla\cdot \big[ h(g\cdot \nabla p)\big]-\nabla\cdot (hB\cdot Q)+\nabla \cdot(h\gamma p\nabla\cdot g)\\
			&\quad+\frac{1}{2}\nabla\cdot \big[(\nabla\times B) B\cdot (g\times h)\big]+\nabla\cdot (Bh\cdot Q)\\
			&\quad-\frac{1}{2}\nabla\cdot \big[(\nabla\times B\times  B)\cdot(gh-hg)\big].
		\end{split}
	\end{equation}
	Integrating \eqref{sym-form} gives that
	\begin{equation}\label{sym-form-int}
		\begin{split}
			\int_{\overline{\Omega}}h\cdot F(g) dx&=-\int_{\overline{\Omega}}\Big[\gamma p\nabla\cdot g\nabla\cdot h+Q\cdot R+\frac{1}{2}\nabla p\cdot \big(g\nabla\cdot h+h\nabla\cdot g\big)\\
			&\quad+\frac{1}{2}\nabla\times B\cdot \big(g\times R+h\times Q\big)\Big]dx\\
			&\quad+\int_{\Sigma_{0, pv}} n\cdot h\Big(g\cdot \nabla p-B\cdot Q+\gamma p\nabla\cdot g\Big)dx.
		\end{split}
	\end{equation}
There are no contributions from the eighth, ninth and tenth terms of \eqref{sym-form} to the surface integral, since $n\cdot B=0$ and $n\cdot \nabla\times B=0$ on the plasma surface, whereas  $\nabla \times B\times B$ is parallel to $n$.	
From the second interface condition of \eqref{linear-perturbation-and-boundary-viscosity}, the surface integtral takes the form of
	\begin{equation}\label{tranf-sur}
		\begin{split}
			&\int_{\Sigma_{0, pv}} n\cdot h\Big(g\cdot \nabla p-B\cdot Q+\gamma p\nabla\cdot g\Big)dx\\
			&\quad=-\int_{\Sigma_{0, pv}} n\cdot hg\cdot \Big[\Big[\nabla \big(p+\frac{1}{2}|B|^2\big)\Big]\Big]dS-\int_{\Sigma_{0, pv}} n\cdot h\widehat{  B}\cdot \widehat{ Q}dx\\
			&\quad =-\int_{\Sigma_{0, pv}} n\cdot g \, n \cdot h\, n\cdot \Big[\Big[\nabla \big(p+\frac{1}{2}|B|^2\big)\Big]\Big]\,dS-\int_{\Sigma_{0, pv}} n\cdot h\widehat{  B}\cdot \widehat{ Q}dx.
		\end{split}
	\end{equation}
	Here, we have used the facts the equilibrium jump condition $\Big[\Big[ p+\frac{1}{2}|B|^2\Big]\Big]=0$, which implies that the tangential derivative of the jump vanishes as well
	$\vec{t}\cdot \Big[\Big[\nabla (p+\frac{1}{2}|B|^2)\Big]\Big]=0,$ where $\vec{t}$ is an arbitrary unit vector tangential to the surface. 
	
	Next, let's transform the last term in \eqref{tranf-sur}. 	
	
	For some of the derivations here, it is useful to exploit the alternative representation of test function $\widehat{ R}$ in vacuum  in terms of the vector potential
	$	\widehat{ R}=\nabla\times \widehat{  C},$ and using the first interface condition \eqref{linear-perturbation-and-boundary-viscosity}$_5$ in terms of the vector potential $\widehat{  C}$, that is, $n\cdot h \widehat{  B}=-n\times \widehat{  C}$, one has
	\begin{equation}\label{last-trans}
		\begin{split}
			&-\int_{\Sigma_{0, pv}} n\cdot h\widehat{  B}\cdot \widehat{ Q}dx
			=\int_{\Sigma_{0, pv}} n\times \widehat{  C}\cdot \widehat{ Q} dx =-\int_{\Sigma_{0, pv}} \widehat{ Q}\times\widehat{  C}\cdot n dx\\
			&\qquad=\int_{\overline{\Omega}^v}\nabla \cdot \big[\widehat{ Q}\times\widehat{  C}\big]dx=\int_{\overline{\Omega}^v} \big[\widehat{  C}\cdot \nabla\times\widehat{ Q}-\widehat{ Q}\cdot \nabla\times \widehat{  C}\big]dx\\
			&\qquad=-\int_{\overline{\Omega}^v}\widehat{ Q}\cdot \nabla\times \widehat{  C}dx=-\int_{\overline{\Omega}^v} \widehat{ Q}\cdot \widehat{ R}dx.
		\end{split}
	\end{equation}
	
Now  we prove $	\widehat{ R}=\nabla\times \widehat{  C}.$
	First, extend function $\widehat{ R}$ to the domain $\overline{\Omega} \cup \overline{\Omega}^v$ as follows
	\begin{equation*}
		A=\begin{cases}
			R\quad  
			\mbox{in}\quad \overline{\Omega},\\
			\widehat{  R} \quad
			\mbox{in}\quad \overline{\Omega}^v.
		\end{cases}
	\end{equation*} 
	Notice that $\mbox{div} R=0$ in $\overline{\Omega}$ and $\mbox{div} \widehat{  R}=0$ in $\overline{\Omega}^v$.
	So for test function $\psi$ in $\overline{\Omega} \cup \overline{\Omega}^v$, we have 
	\begin{equation*}
		\begin{split}
			\int_{\overline{\Omega} \cup \overline{\Omega}^v}A\cdot \nabla \psi dx&=\int_{\overline{\Omega}^v} \widehat{  R}\cdot \nabla \psi dx+\int_{\overline{\Omega}} R\cdot \nabla \psi dx\\&=n\cdot \widehat{  R}\psi|_{\partial\overline{\Omega}^v}+n\cdot R\psi|_{\partial\overline{\Omega}}-\int_{\overline{\Omega}^v} \mbox{div} \widehat{  R} \cdot\psi dx-\int_{\overline{\Omega}}\mbox{div} R \cdot\psi dx\\&=n\cdot \widehat{  R}\psi|_{r_w}+n\cdot (R-\widehat{  R})\psi|_{r_s}-\int_{\overline{\Omega}^v} \mbox{div} \widehat{  R} \cdot\psi dx-\int_{\overline{\Omega}}\mbox{div} R \cdot\psi dx\\&=-\int_{\overline{\Omega} \cup \overline{\Omega}^v}\mbox{div} A \cdot\psi dx=0,
		\end{split}
	\end{equation*}
	where we have used  the boundary conditions $n\cdot \widehat{  R}\psi|_{r_w}=0$ and $n\cdot (R-\widehat{  R})\psi|_{r_s}=0$, with $r_w$  the solid boundary and $r_s$ the interface. 
	Hence, we get $\mbox{div} A=0$ in the domain $\overline{\Omega} \cup \overline{\Omega}^v$ in the sense of distributions,
	which together with that the domain $\overline{\Omega} \cup \overline{\Omega}^v$  is simply connected and the weak Poinc\'are lemma, see Theorem IV 4.11 in \cite{tools-incompress-ns}, gives that $A=\nabla\times C.$ When restricted to the vacuum domain $\Omega^v$, we obtain $A=\widehat{  R}=\nabla \times \widehat{  C}$, and denote $\widehat{  C}=C|_{\overline{\Omega}^v}.$
	Combining \eqref{sym-form-int} with \eqref{tranf-sur} and \eqref{last-trans} yields \eqref{sym}, which concludes the proof.
\end{proof}
\begin{rmk}
	Even though it is natural to expect the existence of such $H^2$ solutions, their construction is beyond the focus of the current paper, which will be left for the future.
\end{rmk}

From Lemma \ref{joint}, we can get the following energy identity.
	\begin{lem}\label{vec-xi-grow-lem-first}
	Assume $g$ is a $H^2$ solution to the system \eqref{linear-perturbation-and-boundary-viscosity} with the corresponding jump and boundary conditions, then we can get
	\begin{equation}\label{vec-xi-t-est}
	\begin{split}
	&\int_{\overline{\Omega}} \Big[|Q|^2+\gamma p|\nabla\cdot g |^2\Big]dV
	+\int_{\overline{\Omega}} \Big[(\nabla\times B)\cdot (g^*\times Q)+\nabla\cdot g(g^*\cdot \nabla p)\Big]dx\\
	&\quad+\int_{\overline{\Omega}^v}|\widehat{Q}|^2dx+\|\sqrt{\rho}g_{t}\|^2_{L^2}\\
	&=\int_{\overline{\Omega}} \Big[|Q_0|^2+\gamma p|\nabla\cdot g_0 |^2\Big]dx+\int_{\overline{\Omega}} \Big[(\nabla\times B)\cdot (g_0^*\times Q_0)+\nabla\cdot g_0(g^*_0\cdot \nabla p)\Big]dx\\
	&\quad+\int_{\overline{\Omega}^v}|\widehat{Q}_0|^2dx+\|\sqrt{\rho}g_{0t}\|^2_{L^2},
	\end{split}
	\end{equation}	
	with $Q=\nabla\times (g\times B)$.
\end{lem}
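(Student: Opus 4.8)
The plan is to prove \eqref{vec-xi-t-est} by an energy method: I would test the momentum equation against the complex conjugate of the velocity and exploit the symmetric bilinear structure of the force operator recorded in Lemma \ref{joint}. Concretely, write the second equation of \eqref{linear-perturbation-and-boundary-viscosity} as $\rho\,\partial_{tt}g=F(g)$, where
\[
F(g)=\nabla(g\cdot\nabla p+\gamma p\nabla\cdot g)+(\nabla\times B)\times[\nabla\times(g\times B)]+\{\nabla\times[\nabla\times(g\times B)]\}\times B.
\]
Since the kinetic term in the claimed identity is $\|\sqrt{\rho}\,\partial_t g\|_{L^2}^2=\int_{\overline{\Omega}}\rho|\partial_t g|^2\,dx$, I would differentiate in time to obtain
\[
\frac{d}{dt}\|\sqrt{\rho}\,\partial_t g\|_{L^2}^2=2\RE\int_{\overline{\Omega}}\rho\,\partial_{tt}g\cdot\partial_t g^*\,dx=2\RE\int_{\overline{\Omega}}F(g)\cdot\partial_t g^*\,dx,
\]
so that everything reduces to evaluating the last integral through the quadratic form of Lemma \ref{joint}.

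First I would apply Lemma \ref{joint} with the choice $h=\partial_t g^*$. The associated magnetic perturbation is $R=\nabla\times(\partial_t g^*\times B)=\overline{\partial_t Q}$ (because $B$ is real and time independent), and the associated vacuum extension is $\widehat{R}=\overline{\partial_t\widehat{Q}}$; here I would justify the latter by noting that the generalized harmonic/div–curl extension $g\mapsto\widehat{Q}$ built in the Appendix is linear and time independent, hence commutes with $\partial_t$ and with complex conjugation. Substituting into Lemma \ref{joint} and taking $2\RE$, I would recognize each bilinear term as an exact time derivative: $2\RE(Q\cdot\overline{\partial_t Q})=\partial_t|Q|^2$, $2\RE(\nabla\cdot g\,\overline{\nabla\cdot\partial_t g})=\partial_t|\nabla\cdot g|^2$, and $2\RE(\widehat{Q}\cdot\overline{\partial_t\widehat{Q}})=\partial_t|\widehat{Q}|^2$, while the two cross terms $\tfrac12\nabla p\cdot(g\,\overline{\nabla\cdot\partial_t g}+\partial_t g^*\,\nabla\cdot g)$ and $\tfrac12\nabla\times B\cdot(g\times\overline{\partial_t Q}+\partial_t g^*\times Q)$ assemble, after taking real parts, into $\tfrac12\partial_t\RE[(g^*\cdot\nabla p)\nabla\cdot g]$ and $\tfrac12\partial_t\RE[(\nabla\times B)\cdot(g^*\times Q)]$ respectively. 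This identifies $2\RE\int_{\overline{\Omega}}F(g)\cdot\partial_t g^*\,dx$ with $-\tfrac{d}{dt}\big(2E^p[g]+2E^v[\widehat{Q}]+2E^s[g]\big)$, in the notation of \eqref{fluid-e}, \eqref{surface-e} and the vacuum energy.

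Combining the two computations gives $\tfrac{d}{dt}\big(\|\sqrt{\rho}\,\partial_t g\|_{L^2}^2+2E^p+2E^v+2E^s\big)=0$. To match the stated identity, which carries no surface contribution, I would invoke that the surface energy vanishes for the $z$-pinch: by the pressure balance $p+\tfrac12|B|^2=\tfrac12|\widehat{B}|^2$ together with $p=0$ on $\Sigma_{0,pv}$, the factor $n\cdot[[\nabla(p+\tfrac12|B|^2)]]$ contributes nothing, so $E^s[g]=0$, exactly as in \eqref{surface-en-cyl-viscosity}. Integrating the resulting conservation law in time from $0$ to $t$ then yields \eqref{vec-xi-t-est}. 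The main obstacle I anticipate is the careful bookkeeping of complex conjugates and real parts needed to turn every term of Lemma \ref{joint} into a genuine total time derivative (the two cross terms being the delicate ones), together with the justification that time differentiation commutes with the vacuum extension and that the boundary terms generated in the integration by parts are all already absorbed inside Lemma \ref{joint}; the existence of the $H^2$ solution itself is assumed throughout, as flagged in the remark following Lemma \ref{joint}.
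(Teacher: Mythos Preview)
Your proposal is correct and follows essentially the same route as the paper: multiply the momentum equation by $\partial_t g^*$, use the symmetric quadratic form of Lemma \ref{joint} (with $h=\partial_t g^*$, $R=\overline{\partial_t Q}$, $\widehat{R}=\overline{\partial_t\widehat{Q}}$) to recognize every term as a total time derivative, and then integrate in $t$. Your explicit remark that the surface energy vanishes for the $z$-pinch (so that \eqref{vec-xi-t-est} carries no boundary term) is precisely the point encoded in \eqref{surface-en-cyl-viscosity}, and the paper's proof is in fact terser than yours, simply saying ``similarly as the proof of Lemma \ref{joint}'' before integrating in time.
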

\begin{proof}
Multiplying \eqref{linear-perturbation-and-boundary-viscosity}$_2$ by $g^*_{t}$,  similarly as the proof of Lemma \ref{joint},  we can show	
	\begin{equation}\label{vec-xi-grow}
\begin{split}
& \frac{1}{2}\frac{d}{dt}\|\sqrt{\rho}g_{t}\|^2_{L^2}
=-\frac{1}{2}\frac{d}{dt}\int_{\overline{\Omega}^v}|\widehat{Q}|^2dx
-\frac{1}{2}\frac{d}{dt}\int_{\overline{\Omega}} \Big[|Q|^2+\gamma p |\nabla\cdot g |^2\Big]dx\\
&\quad-\frac{1}{2}\frac{d}{dt}\int_{\overline{\Omega}} \Big[(\nabla\times B)\cdot (g^*\times Q)+\nabla\cdot g(g^*\cdot \nabla p)\Big]dx,
\end{split}
\end{equation}
with $Q=\nabla\times (g\times B)$.	Integrating \eqref{vec-xi-grow} about time, we have \eqref{vec-xi-t-est}.
\end{proof}
 \section*{Acknowledgments}
 {D. Bian is supported by an NSFC Grant (11871005).  Y. Guo is supported  by an NSF Grant (DMS \#1810868). 
 		I. Tice is supported by an NSF CAREER Grant (DMS \#1653161).
%

%

%
\end{document}